\numberwithin{equation}{section}
\newcommand{\RR}{\mathbb{R}}
\newcommand{\ZZ}{\mathbb{Z}}
\newcommand{\eps}{\varepsilon}
\newcommand{\vol}{\lambda}
\newtheorem{thm}{Theorem}[section]
\newtheorem{lem}[thm]{Lemma}
\newtheorem{prop}[thm]{Proposition}
\newtheorem{cor}[thm]{Corollary}
\newtheorem*{main-discV2Thm}{Theorem \ref{main-discV2}}
\newtheorem*{curvatureProjectionsThm}{Theorem \ref{curvatureProjections}$^*$}
\newtheorem*{convexProjectionsThm}{Theorem \ref{convexProjections}$^*$}
\newtheorem*{dimension-expanderThm}{Theorem \ref{dimension-expander}}
\newtheorem*{main-entropy-growthThm}{Theorem \ref{main-entropy-growth}}
\newtheorem*{main-energy-dispersionThm}{Theorem \ref{main-energy-dispersion}}
\theoremstyle{remark}
\newtheorem{rem}[thm]{Remark}
\newtheorem{example}{Example}
\newtheorem{defn}[thm]{Definition}
\begin{document}
\title{On the dimension of exceptional parameters for nonlinear projections, and the discretized Elekes-R\'onyai theorem}
\author{Orit E. Raz\thanks{Einstein Institute of Mathematics, The Hebrew University of Jerusalem, Jerusalem, Israel. oritraz@mail.huji.ac.il.} 
\and Joshua Zahl\thanks{Department of Mathematics, University of British Columbia, Vancouver, BC. jzahl@math.ubc.ca.}}
\maketitle

\begin{abstract}
We consider four related problems. (1) Obtaining dimension estimates for the set of exceptional vantage points for the pinned Falconer distance problem. (2) Nonlinear projection theorems, in the spirit of Kaufman, Bourgain, and Shmerkin. (3) The parallelizability of planar $d$-webs. (4) The Elekes-R\'onyai theorem on expanding polynomials. 

Given a Borel set $A$ in the plane, we study the set of exceptional vantage points, for which the pinned distance $\Delta_p(A)$ has small dimension, that is, close to $(\dim A)/2$. We show that if this set has positive dimension, then it must have very special structure. This result follows from a more general single-scale nonlinear projection theorem, which says that if $\phi_1,\phi_2,\phi_3$ are three smooth functions whose associated 3-web has non-vanishing Blaschke curvature, and if $A$ is a $(\delta,\alpha)_2$-set in the sense of Katz and Tao, then at least one of the images $\phi_i(A)$ must have measure much larger than $|A|^{1/2}$, where $|A|$ stands for the measure of $A$. We prove analogous results for $d$ smooth functions $\phi_1,\ldots,\phi_d$, whose associated $d$-web is not parallelizable. 

We use similar tools to characterize when bivariate real analytic functions are ``dimension expanding'' when applied to a Cartesian product: if $P$ is a bivariate real analytic function, then $P$ is either locally of the form $h(a(x) + b(y))$, or $P(A,B)$ has dimension at least $\alpha+c$ whenever $A$ and $B$ are Borel sets with Hausdorff dimension $\alpha$. Again, this follows from a single-scale estimate, which is an analogue of the Elekes-R\'onyai theorem in the setting of the Katz-Tao discretized ring conjecture. 
\end{abstract}

\section{Introduction}
We consider four related problems. (1) Obtaining dimension estimates for the set of exceptional vantage points for the pinned Falconer distance problem. (2) Nonlinear projection theorems, in the spirit of Kaufman, Bourgain, and Shmerkin. (3) The parallelizability of planar $d$-webs. (4) The Elekes-R\'onyai theorem on expanding polynomials. We will briefly discuss each of these items.

We will begin with projection theory, and specifically the study of exceptional parameters for nonlinear analogues of the linear projection function $\pi_e\colon\RR^2\to\RR$, which is the orthogonal projection onto $\operatorname{span}(e)$. A direction $e\in S^1$ is called exceptional for a planar set $A$ if the dimension of the projection $\pi_e(A)$ is smaller than the dimension of the projection in a ``typical'' direction. Results of Kaufman and Bourgain quantify the size of the set of exceptional directions. There are several results concerning nonlinear generalizations of the projection function $\pi_e$. An important example of a nonlinear projection is the pinned distance function $d_p(q) = |p-q|$. The theme of many of these results is that under suitable constraints, nonlinear projections are as well-behaved as linear projections, with respect to the size of the set of exceptional parameters. We discover a new and unexpected phenomenon, which says that often nonlinear projections are much better behaved than linear ones. 

Our results can also be expressed in the discretized setting of $(\delta,\alpha)_2$ sets, which were introduced by Katz and Tao \cite{KT} as a single-scale model for planar Borel sets that have Hausdorff dimension $\alpha$. In brief, a $(\delta,\alpha)_2$ set is a union of about $\delta^{-\alpha}$ interior-disjoint squares of side-length $\delta$, which satisfy a Frostman-type non-concentration condition. It is straightforward to construct an example of a $(\delta,\alpha)_2$ set $A$ that has projection of size about $|A|^{1/2}$ in many different directions, or equivalently, a set that has small image under may different linear functions $\pi_e$. We prove that morally speaking, this phenomenon can only occur for linear functions. 

The fibers of each function $\pi_{e}$ are parallel lines; more generally, we say a set of smooth functions $\phi_1,\ldots,\phi_d$ determine a {\emph parallelizable (planar) $d$-web} if after a change of coordinates, the fibers of each function $\phi_i$ are parallel lines. We prove that if a set of smooth functions $\phi_1,\ldots,\phi_d$ do not determine a parallelizable planar $d$-web (in a certain quantitative sense), and if $A$ is a $(\delta,\alpha)_2$ set, then at least one of the images $\phi_i(A)$ must be much larger than $|A|^{1/2}$. 

Finally, we shall discuss the connection with the Elekes-R\'onyai theorem on expanding polynomials. If $\phi_1,\phi_2,\phi_3$ are smooth functions, then after a change of coordinates we can locally write $\phi_1(x,y)=x$, $\phi_2(x,y)=y$, and $\phi_3(x,y)=P(x,y)$ for some smooth function $P$. Suppose that $A$ is a $(\delta,\alpha)_2$ set, and that $\phi_1(A)$ and $\phi_2(A)$ each have size about $|A|^{1/2}$. If we write $A_1=\phi_1(A)$ and $A_2=\phi_2(A)$, then $A\subset A_1\times A_2$, and the sets $A$ and $A_1\times A_2$ have comparable size. Following the discussion above, we would like to show that $|P(A)|$ is much larger than $|A|^{1/2}$, or almost equivalently, that $P(A_1\times A_2)$ is much larger than $|A|^{1/2}$. If $P$ was a polynomial and if $A_1$ and $A_2$ were discrete sets of cardinality $n$, then Elekes and R\'onyai \cite{ER} proved that either $P(A_1\times A_2)$ is much larger than $n$, or $P$ has a very specific form (for example $P(x,y) = x+y$). We will reduce the results discussed above about non-parallelizable $d$-webs to a discretized analogue of the Elekes-R\'onyai theorem for smooth functions.

\subsection{Exceptional directions for nonlinear projections}\label{exceptionalDirectionsIntroSec}
In \cite{Ka}, Kaufman proved that if $A \subset \RR^2$ is a Borel set and if $\pi_e\colon\RR^2\to\RR$ is the orthogonal projection onto $\operatorname{span}(e)$, then $\pi_e(A)$ can only have small dimension for a small exceptional set of directions $e \in \mathbb{S}^1$. 
\begin{thm}[Kaufman]\label{KaufmanThm}
Let $A\subset\RR^2$ be a Borel set and let $s\leq \dim A$. Then 
\begin{equation}\label{KaufmanBd}
\dim \{ e \in \mathbb{S}^1 : \dim \pi_e(A) < s \} \leq s.
\end{equation}
\end{thm}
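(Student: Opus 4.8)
The plan is to use the potential‑theoretic (energy) characterization of Hausdorff dimension together with an averaging argument over the exceptional set — the classical route to projection theorems of this kind. Recall that for a finite Borel measure $\mu$ and $t>0$ the $t$‑energy is $I_t(\mu)=\iint|x-y|^{-t}\,d\mu(x)\,d\mu(y)$; that any set carrying a probability measure of finite $t$‑energy has Hausdorff dimension at least $t$; that $\dim A>t$ guarantees a compactly supported probability measure $\mu$ on $A$ with $I_t(\mu)<\infty$; and that, by Frostman's lemma, a set of dimension greater than $u$ carries a probability measure $\nu$ with $\nu(B(x,r))\lesssim r^u$ for all $x$ and $r$. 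Since we only care about dimensions of images, we may identify $\pi_e(x)$ with the scalar $\langle x,e\rangle$.

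It suffices to prove the following claim: for every $t<\dim A$, $\dim\{e\in\mathbb{S}^1:\dim\pi_e(A)<t\}\le t$. Granting this, if $s\le\dim A$ and $\dim\pi_e(A)<s$, then $\dim\pi_e(A)<t<s$ for some rational $t$ (which then satisfies $t<\dim A$); hence $\{e:\dim\pi_e(A)<s\}$ is a countable union, over rationals $t\in(0,s)$, of the sets appearing in the claim, so its dimension is at most $\sup\{t\in\QQ:0<t<s\}=s$, as desired.

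To prove the claim, fix $t<\dim A$, put $E=\{e:\dim\pi_e(A)<t\}$, and suppose for contradiction that $\dim E>t$ (in particular $t<1$). Choose $u$ with $t<u<\dim E$, a probability measure $\nu$ on $E$ with $\nu(B(e,r))\lesssim r^u$, and a compactly supported probability measure $\mu$ on $A$ with $I_t(\mu)<\infty$. Since $\pi_{e\#}\mu$ is supported on $\pi_e(A)$ and $I_t(\pi_{e\#}\mu)=\iint|\langle x-y,e\rangle|^{-t}\,d\mu(x)\,d\mu(y)$, I would average in $e$ and apply Tonelli's theorem to obtain
$$\int_{\mathbb{S}^1}I_t(\pi_{e\#}\mu)\,d\nu(e)=\iint\Big(\int_{\mathbb{S}^1}|\langle x-y,e\rangle|^{-t}\,d\nu(e)\Big)\,d\mu(x)\,d\mu(y).$$
The key point is the uniform bound $\sup_{v\neq 0}|v|^{t}\int_{\mathbb{S}^1}|\langle v,e\rangle|^{-t}\,d\nu(e)<\infty$: writing $\omega=v/|v|$, the quantity $|\langle\omega,e\rangle|$ is comparable to $\operatorname{dist}(e,\{\pm\omega^\perp\})$, so a dyadic decomposition about those two points gives $\int_{\mathbb{S}^1}|\langle\omega,e\rangle|^{-t}\,d\nu(e)\lesssim\sum_{j\ge 0}2^{jt}\,\nu\big(\{e:\operatorname{dist}(e,\{\pm\omega^\perp\})\lesssim 2^{-j}\}\big)\lesssim\sum_{j\ge 0}2^{jt}2^{-ju}<\infty$, using $u>t$. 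Feeding this back in yields $\int_{\mathbb{S}^1}I_t(\pi_{e\#}\mu)\,d\nu(e)\lesssim I_t(\mu)<\infty$, so $I_t(\pi_{e\#}\mu)<\infty$ for $\nu$‑almost every $e$, whence $\dim\pi_e(A)\ge t$ for $\nu$‑almost every $e$ — contradicting that $\nu$ is supported on $E$.

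The main obstacle is precisely the uniform estimate on $\int_{\mathbb{S}^1}|\langle v,e\rangle|^{-t}\,d\nu(e)$: although it is short, it is the one place where the Frostman exponent $u>t$ of the measure on the exceptional set is used, and it is the mechanism converting ``the exceptional set is large'' into ``the projected measures have finite energy.'' The only other delicate point is a routine measurability check that $E$ (or a Borel/Souslin superset adequate for the argument) is regular enough for Frostman's lemma to apply; the remaining ingredients — the energy bounds relating measures to dimension, the reduction to compact subsets, and Fubini–Tonelli — are standard.
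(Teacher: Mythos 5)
Your proposal is correct: it is the classical potential-theoretic argument — a Frostman measure $\nu$ of exponent $u>t$ on the exceptional set, the uniform bound $\sup_{v\neq 0}|v|^t\int_{\mathbb{S}^1}|\langle v,e\rangle|^{-t}\,d\nu(e)<\infty$ via dyadic decomposition, and Tonelli to conclude finite $t$-energy of the projected measures for $\nu$-a.e.\ $e$ — which is essentially Kaufman's original proof; the paper states Theorem \ref{KaufmanThm} as a cited result from \cite{Ka} and gives no proof of its own, so there is nothing to compare beyond noting that your route is the standard one. The measurability caveat you flag is resolved exactly as you suggest: run the argument on the Borel superset $\{e\in\mathbb{S}^1\colon I_t(\text{pushforward of }\mu\text{ under }\pi_e)=\infty\}$, which contains $E$ (since $\mu$ may be taken supported on a compact subset of $A$) and is Borel because the energy is lower semicontinuous in $e$.
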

Here and throughout the paper, $\dim$ refers to Hausdorff dimension. Recently, Orponen and Shmerkin~\cite{OS} obtained a quantitatively small (but qualitatively very significant) improvement to the RHS of \eqref{KaufmanBd} whenever $0<s<\dim A$ and $s<1$. See also \cite{OS2} for related results on this theme. In a slightly different direction, Bourgain~\cite{B10} (see also \cite{Ob12}) obtained a large improvement over \eqref{KaufmanBd} when $s$ is close to $(\dim A)/2$. As a special case of his result, we have the following.
\begin{thm}[Bourgain]\label{bourgainThm}
Let $A\subset\RR^2$ be a Borel set. Then 
\begin{equation}\label{dimA2}
\dim\{e\in \mathbb{S}^1\colon \dim \pi_e(A) \leq (\dim A)/2 \}=0.
\end{equation}
\end{thm}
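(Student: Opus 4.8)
Write $\alpha:=\dim A$; we assume $0<\alpha<2$, the endpoint cases $\alpha\in\{0,2\}$ being special. Theorem~\ref{bourgainThm} lies strictly deeper than Theorem~\ref{KaufmanThm}: Kaufman's bound only yields that the exceptional set has dimension $\le\alpha/2$, and lowering this to $0$ requires genuine sum--product input. The plan is to (i) reduce to a single-scale, discretized statement, and (ii) deduce it from Bourgain's discretized projection theorem, whose proof rests on the discretized sum--product theorem.

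\emph{Step 1: reduction to a single scale.} Suppose toward a contradiction that $E:=\{e\in\mathbb{S}^1:\dim\pi_e(A)\le\alpha/2\}$ has $\dim E\ge\kappa$ for some $\kappa>0$, and fix a small $\eta>0$. By Frostman's lemma choose a compactly supported probability measure $\mu$ on $A$ with $\mu(B(x,r))\le r^{\alpha-\eta}$, and a compactly supported probability measure $\nu$ on $E$ with $\nu(B(e,r))\le r^{\kappa}$. For each $e\in E$ we have $\dim\pi_e(A)<\alpha/2+\eta/2$, so $\pi_e\mu$ fails to be $(\alpha/2+\eta)$-Frostman at infinitely many scales. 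A standard but somewhat technical multiscale pigeonholing over dyadic scales, together with the usual passage to uniform ``$(\delta,\cdot)$-subsets'' (in the Katz--Tao sense) of $\mu$ and $\nu$, then produces a sequence of scales $\delta\to 0$ such that at each one there are a $(\delta,\kappa/2)$-set $E_\delta\subseteq E$ and a $(\delta,\alpha-2\eta)$-set $A_\delta\subseteq A$ with
\[
|\pi_e(A_\delta)|_\delta\ \le\ \delta^{-\alpha/2-\eta}\qquad\text{for every }e\in E_\delta,
\]
where $|X|_\delta$ denotes the minimal number of $\delta$-intervals covering $X$.

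\emph{Step 2: the discretized projection theorem.} Bourgain's discretized projection theorem~\cite{B10} asserts that for all $0<\alpha\le2$ and $\kappa>0$ there is $\epsilon=\epsilon(\alpha,\kappa)>0$ such that: if $X\subseteq B(0,1)\subset\RR^2$ is a $(\delta,\alpha)$-set and $\Theta\subseteq\mathbb{S}^1$ is a $(\delta,\kappa)$-set, then $|\pi_e(X)|_\delta\ge\delta^{-\alpha/2-\epsilon}$ for some $e\in\Theta$. Applying this with $X=A_\delta$ and $\Theta=E_\delta$ (and parameters $\alpha-2\eta$, $\kappa/2$) gives some $e\in E_\delta$ with $|\pi_e(A_\delta)|_\delta\ge\delta^{-(\alpha-2\eta)/2-\epsilon(\alpha-2\eta,\kappa/2)}$. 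Since $\epsilon(\alpha-2\eta,\kappa/2)$ stays bounded below as $\eta\to0$, we may fix $\eta$ so small that $(\alpha-2\eta)/2+\epsilon(\alpha-2\eta,\kappa/2)>\alpha/2+\eta$; this contradicts Step~1 for all sufficiently small $\delta$ in our sequence. Hence $\dim E=0$.

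\emph{On Step 2, and the main obstacle.} All the depth is in Bourgain's discretized projection theorem. Its proof is a multiscale bootstrap: one splits $[\delta,1]$ into boundedly many subscales, and at any subscale at which $\pi_e(X)$ fails to expand, a Marstrand-type slicing/projection dichotomy combined with the Balog--Szemer\'edi--Gowers theorem and the Pl\"unnecke--Ruzsa inequalities manufactures a one-dimensional set with simultaneously small sumset and small product set, contradicting the discretized sum--product theorem. I expect the hard part to be the transfer from sum--product to projections: propagating the $(\delta,\alpha)$-non-concentration of $X$ through the Balog--Szemer\'edi--Gowers step and through the iteration over subscales without the quantitative gain $\epsilon$ degrading to $0$. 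The discretized sum--product theorem and Frostman's lemma can be invoked as black boxes. (A more recent route replaces the sum--product step with Shmerkin-type inverse theorems for the $L^q$ norms of self-convolutions of measures; this streamlines the bookkeeping but keeps the same architecture.)
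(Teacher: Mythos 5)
The paper itself offers no proof of Theorem \ref{bourgainThm}: it is quoted as a special case of Bourgain's results in \cite{B10}, so there is nothing internal to compare against, and your task reduces to whether your outline is a sound derivation from the cited discretized machinery. In architecture it is: the reduction of the Hausdorff-dimension statement to a single-scale covering bound via Frostman measures, dyadic pigeonholing, and passage to Katz--Tao $(\delta,\cdot)$-subsets is exactly the standard route (and is the same multiscale pigeonholing this paper carries out in detail in Section \ref{pinnedFalconerAndVisibilitySection} for the pinned-distance analogue), and Step 2 correctly defers all of the depth to Bourgain's discretized projection theorem, which is the discretized statement the paper itself describes in the remark following Theorem \ref{curvatureProjections}. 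Treating that theorem as a black box is as legitimate as the paper's own citation of \cite{B10}; your concluding paragraph about BSG/Pl\"unnecke--Ruzsa is descriptive and does not need to carry any weight.

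One quantitative point should be repaired. In Step 2 you choose $\eta$ so small that $(\alpha-2\eta)/2+\eps(\alpha-2\eta,\kappa/2)>\alpha/2+\eta$, but $\eps(\alpha-2\eta,\kappa/2)$ is itself evaluated at a parameter depending on $\eta$, and nothing in your stated form of the discretized theorem guarantees that $\eps$ stays bounded below as the first argument varies; as written the choice is circular. The standard fix is to invoke the discretized projection theorem in the form in which the non-concentration hypotheses are only required up to a $\delta^{-\eta}$ (or $\delta^{-c}$) loss and both $\eps$ and the allowed slack $\eta$ are produced by the theorem as functions of $(\alpha,\kappa)$ alone --- this is precisely how the hypotheses are formulated in Theorems \ref{pinnedDistanceCor} and \ref{curvatureProjections} of this paper. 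With that formulation you fix $\eps,\eta$ first, then run your Frostman and pigeonholing reduction at level $\eta$, and the contradiction goes through without any limit as $\eta\to 0$. A second, minor remark: Frostman's lemma gives $\nu(B(e,r))\leq Cr^{\kappa-\eta'}$ rather than $r^{\kappa}$, so your $E_\delta$ should be taken as a $(\delta,\kappa/2)$-set only after the same small loss; this affects nothing essential.
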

An example by Kaufman and Mattila \cite{KM} shows that \eqref{dimA2} cannot hold if $(\dim A)/2$ is replaced by $(\dim A)/2+c$ for any $c>0$. 


Estimates similar to those in Theorem \ref{KaufmanThm} hold for a broad class of generalized projections (see \cite{PS} and the references therein), and recently Shmerkin \cite{Shm} proved an analogue of Theorem \ref{bourgainThm} where the set of orthogonal projections $\{\pi_e\}_{e\in S^1}$ is replaced by a family $\{F_{\lambda}\}_{\lambda\in\Lambda}$ of nonsingular $C^2$ functions $F_\lambda\colon\RR^2\to\RR$, referred to as \emph{nonlinear projections}, that satisfy a natural transversality condition.

A particularly interesting class of nonlinear projections are the pinned distance functions $d_p(q) = |p-q|$. In \cite{Or}, Orponen proved a packing dimension version of the Falconer distance conjecture in the plane for Ahlfors-regular sets by showing that the nonlinear projection $d_p(q)$ could be analyzed using multi-scale averages of suitably chosen linear projections, and thus the problem of computing the size of the pinned distance set $\Delta_p(E)=\{d_p(q)\colon q\in E\}$ was closely related to the problem of computing the size of certain orthogonal projections of $E$ at various scales and locations. This powerful new idea has been extended in subsequent works \cite{KS, Shm2, Shm}. In particular, Shmerkin \cite{Shm} proved an analogue of Theorem \ref{bourgainThm} for pinned distances.


\begin{thm}[Shmerkin]\label{shmerkinPinnedDistThm}
Let $A\subset\RR^2$ be a Borel set. Then 
\begin{equation}\label{dimA20}
\dim\{p\in \RR^2\colon \dim \Delta_p(A) \leq (\dim A)/2 \}=0.
\end{equation}
\end{thm}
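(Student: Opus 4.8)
The plan is to deduce this from Shmerkin's general nonlinear projection theorem — the analogue of Bourgain's Theorem~\ref{bourgainThm} for a transversal family of $C^2$ maps $\{F_\lambda\}$ described above — by checking that the pinned distance functions form such a family; the engine inside that general theorem is Orponen's principle \cite{Or} (extended in \cite{KS,Shm2,Shm}) that a nonlinear projection is, at every scale, a superposition of linear ones. We may assume $\dim A>0$. First I would make two reductions. By inner regularity, and since a set of positive dimension is not concentrated at a point, for each ball $B$ in a fixed countable cover of $\RR^2$ we may choose a compact $K_B\subseteq A$ with $\mathrm{dist}(K_B,B)>0$ and $\dim K_B$ as close to $\dim A$ as we please; since $\Delta_p(K_B)\subseteq\Delta_p(A)$, it then suffices to bound $\dim\{p\in B:\dim\Delta_p(K_B)\le(\dim A)/2\}$ for each $B$.

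Second, I would record the linearization that makes the pinned distance family a transversal family of nonlinear projections on $B$. For $p\in B$ and $q\in K_B$ the quantity $|p-q|$ is bounded above and below, and on a small ball $B(q,r)$ the function $d_p$ agrees — up to an additive constant and an error $O(r^2)$ — with the linear projection $\pi_{e(p,q)}$ in the direction $e(p,q)=(q-p)/|q-p|$. The non-degeneracy that matters is that, for fixed $q$, the direction $e(p,q)$ turns with $p$ at a rate bounded below (transversally to the tangent of the circle through $q$ centred at $p$): this is just the non-vanishing curvature of circles, and it is uniform over the compact region $\{(p,q):p\in B,\ q\in K_B\}$. This is precisely the transversality hypothesis of Shmerkin's theorem, whose application to $\{d_p\}_{p\in B}$ gives $\dim\{p\in B:\dim\Delta_p(K_B)\le(\dim K_B)/2\}=0$.

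Finally I would transfer the estimate back from $K_B$ to $A$. This is the delicate step, since $(\dim A)/2>(\dim K_B)/2$, and by the Kaufman--Mattila example \cite{KM} the threshold $(\dim A)/2$ is sharp, so a naive inclusion is not enough. The point is that the nonlinear projection theorem actually produces a gain beyond the threshold: if the exceptional parameter set had positive dimension $s$, the theorem would force $\dim\Delta_p(K_B)\ge(\dim K_B)/2+\varepsilon(s)$ for a positive $\varepsilon(s)$ depending only on $s$, and choosing $K_B$ with $\dim K_B>\dim A-\varepsilon(s)$ would then contradict $\dim\Delta_p(A)\le(\dim A)/2$ for $p$ in that set. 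Hence $\dim\{p\in B:\dim\Delta_p(A)\le(\dim A)/2\}=0$ for each $B$, and a countable union over the cover finishes the proof. The main obstacle is everything bundled into the invocation of Shmerkin's theorem: one must make Orponen's multi-scale reduction quantitative — combining the single-scale linear-projection estimates over all intermediate scales $r$ and centres $q\in K_B$ without losing the improvement over $|K_B\cap B(q,r)|^{1/2}$ — and one must keep the transversality constant uniform, which is exactly why the excision achieving $\mathrm{dist}(K_B,B)>0$ is needed, since that constant degenerates as $|p-q|\to 0$. A lesser technical point is that $p$ ranges over a two-dimensional parameter space while $\{F_\lambda\}$ is nominally one-parameter, which is absorbed into the general (multi-parameter) form of the nonlinear projection theorem.
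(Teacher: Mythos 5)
A preliminary remark: the paper does not prove Theorem \ref{shmerkinPinnedDistThm} at all --- it is quoted from Shmerkin \cite{Shm} --- so there is no in-paper argument to compare against. Your overall route (check that the pinned distance functions $d_p$ form a transversal family of nonlinear projections and invoke the general exceptional-set theorem of \cite{Shm}, whose engine is the Orponen-style multiscale linearization) is indeed the route of the cited source, so the strategy is sound. The write-up, however, has genuine gaps. The first is the opening reduction: the claim that ``for each ball $B$ in a fixed countable cover of $\RR^2$ we may choose a compact $K_B\subseteq A$ with $\operatorname{dist}(K_B,B)>0$ and $\dim K_B$ as close to $\dim A$ as we please'' is false in general. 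If $A$ is contained in $B$ (or the dimension of $A$ concentrates in a neighborhood of $B$), no such $K_B$ exists; ``not concentrated at a point'' rules out concentration at a point, not in a ball. The pins you thereby fail to treat are exactly the interesting ones, e.g.\ $p\in A$ itself. The standard repair is a dyadic zooming/separation argument: either at some scale one finds a cube $Q$ with $\dim(A\cap Q)>\dim A-\eta$ and a positive-dimensional part of the exceptional set $E$ outside $3Q$ (giving the needed separation), or the cubes nest down to a single point $x_0$, in which case $E\setminus\{x_0\}$ is a countable union of zero-dimensional sets and one is done directly. Nothing of this kind appears in your proposal.

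Two further points. (i) In the transfer step you use a gain ``$\eps(s)$ depending only on $s$,'' but in Bourgain's and Shmerkin's theorems the gain depends on the dimension parameter as well (here on $\dim K_B$), so ``choose $K_B$ with $\dim K_B>\dim A-\eps(s)$'' is circular: $K_B$ is chosen using $\eps$, while $\eps$ is produced by applying the theorem to $K_B$. This is fixable (fix an a priori level $\alpha_0<\dim A$ first, apply the theorem at level $\alpha_0$ to any $K\subseteq A$ with $\dim K\geq\alpha_0$, and arrange $(\dim A-\alpha_0)/2<\eps(\alpha_0,s)$, which needs either a formulation with the threshold tied to $\alpha_0$ or local uniformity of $\eps$), but as written the quantifiers do not close. (ii) The pin set is two-dimensional, whereas the available form of Shmerkin's theorem (e.g.\ the one recalled as Theorem \ref{Shm} in this paper) is a one-parameter statement, with transversality expressed via $\partial_z\theta_{(x,y)}(z)$ for a real parameter $z$. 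Declaring that this is ``absorbed into the general (multi-parameter) form'' presupposes a theorem you have not stated; one must genuinely reduce a planar exceptional set of positive dimension to transversal one-parameter families (for instance through the direction set $(q-p)/|q-p|$, i.e.\ radial projections, or by restricting pins to curves), and it is precisely here that the degenerate configurations --- pins nearly collinear with the relevant piece of $A$, cf.\ Example \ref{elekesExample} and the railroad-tracks example of \cite{KT} --- must be confronted. That reduction is a substantive step, not bookkeeping, and its absence is the main mathematical gap beyond the flawed separation claim.
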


The following example due to Elekes \cite{El} shows that the quantity $(\dim A)/2$ on the LHS of \eqref{dimA20} cannot be replaced by $(\dim A)/2+c$ for any $c>0$. 
\begin{example}\label{elekesExample}
Let $0<\alpha<2$ and let $c>0$.  Let $X,Y,S\subset [0,1]$ be Borel sets (see e.g.~\cite{KM}) so that $\dim(X\times Y)=\alpha$, $\dim(S)>0$, and $\dim(X-sY)<\alpha/2+c$ for each $s\in S$.
For each $(a,b)\in X\times Y$, define $q_{a,b}=(a/2, \sqrt{1+b-a^2/4})$, and define $A = \{q_{a,b}\colon a\in X,\ b\in Y\}$. We have $\dim A = \alpha$. Define $K = S\times \{0\}$. 
Then for each $(x,y)=q_{a,b}\in A$ and $s\in S$, we have $|(s,0) - q_{a,b}|^2 =(s^2 + 1) + (b-sa)$. Thus for each $p=(s,0)\in K$, we have $\dim(\Delta_p(A))=\dim(B-sA)<\alpha/2+c$. 
We conclude that
\begin{equation}\label{exceptionalPinnedDistanceDirections}
\{p\in \RR^2\colon \dim \Delta_p(A) \leq (\dim A)/2+c \}\supset K,
\end{equation}
so in particular the set on the LHS of \eqref{exceptionalPinnedDistanceDirections} has positive dimension.  
\end{example}

In Example \ref{elekesExample}, the set $K$ is contained in a line. It is possible to construct slightly more complicated multi-scale examples where the set \eqref{exceptionalPinnedDistanceDirections} is not contained in one line, but can be contained in a union of countably many lines. 
The next theorem says that if $c>0$ is small, then this is essentially the only way that the set \eqref{exceptionalPinnedDistanceDirections} can have positive dimension. To make this statement precise, we will need the following definition.

\begin{defn}\label{nonConcentratedOnLineS}
We say a Borel set $K \subset \RR^2$ is \emph{curved} if there exist $\beta>0$ and $C>0$, and a Borel probability measure $\mu$ supported on $K$ so that for all $r>0$,
\begin{equation}\label{ballEstimateForTriples}
\mu^3(\{(p_1,p_2,p_3)\in K^3\colon |T(p_1,p_2,p_3)|<r\})\leq Cr^\beta,
\end{equation}
where $|T(p_1,p_2,p_3)|$ is the area of the triangle spanned by $p_1,p_2,p_3$. If $K$ is not curved, we say it is \emph{flat}. In particular, if $\dim(K)=0$ then $K$ is flat. 
\end{defn}
For example, if $K\subset\RR^2$ is contained in a countable union of lines, then it is flat. If $K$ has positive dimension and is contained in a smooth plane curve that has nonvanishing curvature (or more generally, whose curvature never vanishes to infinite order), then $K$ is curved. If $K=K_1\times K_2$, where $K_1,K_2\subset\RR$ are Borel sets of positive dimension, then $K$ is curved. If $K'\subset K$ has positive dimension and $K'$ is curved, then $K$ is curved. 

\begin{thm}\label{unifLowerBoundPinnedDistances}
For each $0<\alpha<2$ there exists $c(\alpha)>0$ so that the following holds. Let $A\subset\RR^2$ be a Borel set, with $\dim(A)=\alpha.$ Then the set 
$\{p\in \RR^2\colon \dim\Delta_p(A) \leq \dim(A)/2 + c\}$ is flat. 
\end{thm}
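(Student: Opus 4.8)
The strategy is to reduce Theorem~\ref{unifLowerBoundPinnedDistances} to a single-scale statement about nonlinear projections via a pigeonholing/discretization argument, and then to invoke the nonlinear projection theorem for $3$-webs with nonvanishing Blaschke curvature. First I would argue by contradiction: suppose the set $K = \{p : \dim\Delta_p(A) \leq \alpha/2 + c\}$ is curved, so there is a Frostman-type measure $\mu$ on $K$ satisfying the triangle non-concentration bound~\eqref{ballEstimateForTriples} with exponents $\beta, C$. After passing to a subset, I may assume $A$ carries a Frostman measure $\nu$ of exponent close to $\alpha$ (a standard consequence of $\dim A = \alpha$, losing an arbitrarily small amount in the exponent). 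The plan is to show that for $c = c(\alpha)$ chosen small enough (in terms of $\alpha$, and crucially \emph{not} depending on $\beta$ — or if it must, then one handles the $\beta$ dependence by the usual dyadic pigeonholing over the scale at which curvature is quantitatively visible), the existence of both measures is contradictory.

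\textbf{Discretization.} The core of the argument is to convert the Hausdorff-dimension hypothesis into a $(\delta,\alpha)_2$-set statement. For a density point $p_0 \in K$ and a small scale $\delta$, localize $A$ to a ball where it looks like a $(\delta, \alpha')_2$-set $A_\delta$ with $\alpha'$ close to $\alpha$; this uses the standard machinery converting Frostman measures to Katz--Tao discretized sets at a pigeonholed scale. The hypothesis $\dim \Delta_p(A) \leq \alpha/2 + c$ for $p$ in a positive-dimensional (indeed curved) set $K$ means that for many triples $p_1, p_2, p_3$ chosen $\mu^3$-randomly, the three pinned distance functions $d_{p_1}, d_{p_2}, d_{p_3}$ each compress $A_\delta$ down to a set of size roughly $\delta^{-(\alpha/2+c)} \approx |A_\delta|^{1/2 + c/\alpha}$, i.e. not much bigger than $|A_\delta|^{1/2}$. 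The triangle non-concentration~\eqref{ballEstimateForTriples} guarantees that with positive $\mu^3$-probability the triple $(p_1,p_2,p_3)$ is quantitatively non-collinear, which is exactly the geometric input needed to ensure that the $3$-web formed by the level sets of $d_{p_1}, d_{p_2}, d_{p_3}$ near a typical point of $A$ has Blaschke curvature bounded away from zero (the Blaschke curvature of a web of three concentric-circle foliations vanishes precisely when the three centers are collinear — this is the classical computation underlying the Elekes--R\'onyai connection, and is where the ``triangle area'' in the definition of \emph{curved} comes from). Then the nonlinear projection theorem quoted in the introduction for $3$-webs with nonvanishing Blaschke curvature forces at least one of $d_{p_i}(A_\delta)$ to have measure $\gg |A_\delta|^{1/2}$ — more precisely $\geq |A_\delta|^{1/2 - \eps}$ for an $\eps = \eps(\alpha) > 0$ — contradicting the compression for $c < \alpha\eps$.

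\textbf{Assembling the contradiction and the role of curvature.} One subtlety: the single-scale theorem gives a conclusion for \emph{one} scale and \emph{one} triple, whereas the Hausdorff-dimension hypothesis on $\Delta_p(A)$ is a multi-scale statement. I would handle this by the now-standard argument (going back to Orponen, and used by Shmerkin in Theorem~\ref{shmerkinPinnedDistThm}) of running the single-scale estimate at a dense sequence of scales and integrating: if $\dim \Delta_p(A) \leq \alpha/2 + c$ then at \emph{a positive proportion of scales} $\delta$ the set $\Delta_p(A)$ is $\delta$-covered by $\leq \delta^{-(\alpha/2+c)-o(1)}$ intervals, and this must happen simultaneously for $\mu$-many $p$; a Fubini/pigeonhole over (scale, triple) extracts a single scale $\delta$ and a quantitatively non-collinear triple $p_1,p_2,p_3$ at which all three images are small, which is the configuration the single-scale $3$-web theorem rules out. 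The curvature hypothesis on $K$ enters twice: it is what lets us find the non-collinear triple (via~\eqref{ballEstimateForTriples}), and — after choosing which triple — it ensures the Blaschke curvature lower bound is uniform enough to feed into the quantitative web theorem with constants depending only on $\alpha$ (and $\beta, C$, which can be absorbed since $c$ is allowed to depend on $\alpha$ only after we fix that a curved $K$ exists with \emph{some} $\beta, C$ and then derive a contradiction for that $\beta, C$; the apparent uniformity in $\beta$ is recovered because the final statement merely asserts flatness, i.e. the \emph{non-existence} of any such $\mu$).

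\textbf{Main obstacle.} The hard part will be the quantitative passage from ``$\mu^3$ of non-collinear triples is large'' to a \emph{uniform} lower bound on the Blaschke curvature of the corresponding webs over a positive-measure set of base points in $A$, together with verifying that the regularity ($C^2$ or real-analytic) hypotheses of the single-scale web theorem are met uniformly — the pinned distance functions $d_p$ are smooth away from $p$, but one must keep $p$ away from the support of $A_\delta$ and control derivatives. A secondary technical point is the multi-scale-to-single-scale reduction: one must ensure that the scale $\delta$ extracted by pigeonholing is a scale at which $A$ genuinely behaves like a $(\delta,\alpha)_2$-set and at which the triple's non-collinearity is visible at scale $\delta$ (not washed out), which forces a careful two-parameter pigeonholing over scales and triples simultaneously, rather than one after the other.
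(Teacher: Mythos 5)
Your proposal follows essentially the same route as the paper's proof: assume $K$ is curved, take Frostman-type measures on $A$ and $K$, pigeonhole over dyadic scales to isolate a single scale $\delta$, use the triangle non-concentration condition \eqref{ballEstimateForTriples} to extract a quantitatively non-collinear triple $p_1,p_2,p_3$ together with a large common sub-collection of $\delta$-squares of $A$ whose images under all three pinned-distance maps are small, and then contradict the single-scale statement (the paper quotes Theorem \ref{pinnedDistanceCor}, which already packages the Blaschke-curvature computation for concentric-circle webs that you propose to unpack). The one step you leave at sketch level --- obtaining a single common subset of $A$ compressed by all three pins simultaneously, rather than three $p$-dependent subsets --- is precisely what the paper's third-moment/H\"older selection of triples accomplishes, and your remark about a two-parameter pigeonholing over scales and triples points at the same mechanism.
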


\begin{cor}\label{pinnedDistancesOnGamma}
For each $0<\alpha<2$ there exists $c(\alpha)>0$ so that the following holds. Let $A\subset\RR^2$ be a Borel set, with $\dim(A)=\alpha.$ Let $\gamma\subset\RR^2$ be a smooth curve whose curvature never vanishes to infinite order. Then
\[
 \dim\{p\in \gamma \colon \dim\Delta_p(A) \leq \dim(A)/2 + c\}=0.
 \]
\end{cor}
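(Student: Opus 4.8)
The plan is to deduce Corollary~\ref{pinnedDistancesOnGamma} formally from Theorem~\ref{unifLowerBoundPinnedDistances}, together with the elementary closure properties of flat and curved sets recorded after Definition~\ref{nonConcentratedOnLineS}. Let $c=c(\alpha)>0$ be the constant supplied by Theorem~\ref{unifLowerBoundPinnedDistances}, and put
\[
E=\{p\in\RR^2\colon \dim\Delta_p(A)\leq\dim(A)/2+c\},
\]
so that $E$ is flat. Since the set appearing in the corollary is exactly $E\cap\gamma$, it suffices to prove $\dim(E\cap\gamma)=0$.

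Suppose, for contradiction, that $\dim(E\cap\gamma)>0$. Then $E\cap\gamma$ is a positive-dimensional subset of the smooth curve $\gamma$, whose curvature never vanishes to infinite order, so by the observation following Definition~\ref{nonConcentratedOnLineS} the set $E\cap\gamma$ is curved: it carries a Borel probability measure $\mu$ satisfying \eqref{ballEstimateForTriples}. But $\mu$ is then a Borel probability measure supported on $E$, and $\mu^3$ is supported on $(E\cap\gamma)^3\subset E^3$, so \eqref{ballEstimateForTriples} holds verbatim with $K$ replaced by $E$. Hence $E$ is curved, contradicting its flatness. Therefore $\dim(E\cap\gamma)=0$, which is the assertion of the corollary.

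It remains only to recall why a positive-dimensional subset $K$ of a smooth plane curve $\gamma$ with curvature not vanishing to infinite order is curved; this is the sole genuine ingredient beyond Theorem~\ref{unifLowerBoundPinnedDistances}. Writing $\gamma$ as a countable union of compact sub-arcs, some such arc $\gamma_0$ meets $K$ in a positive-dimensional set; fix a smooth regular parametrization $\phi\colon[0,1]\to\gamma_0$ (which is bi-Lipschitz onto its image, hence dimension-preserving), pull back a Frostman measure of some exponent $\beta>0$ on $K\cap\gamma_0$ to a measure $\nu$ on $[0,1]$, and set $\mu=\phi_*\nu$. Twice the area of the triangle spanned by $\phi(t_1),\phi(t_2),\phi(t_3)$ equals the absolute value of the $3\times 3$ determinant $F(t_1,t_2,t_3)$ whose $i$-th column records $1$ and the two coordinates of $\phi(t_i)$. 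Antisymmetry of $F$ under permuting its arguments forces $F=(t_1-t_2)(t_1-t_3)(t_2-t_3)\,G$ with $G$ smooth and symmetric, and $G$ on the diagonal is a nonvanishing smooth multiple of the curvature of $\gamma$; thus $G(t,t,t)$ vanishes to infinite order exactly where the curvature does, i.e.\ nowhere. A routine dyadic decomposition of $[0,1]^3$, organized around the finitely many (finite-order) zeros of the curvature on $\gamma_0$, then upgrades the non-concentration estimate for $\nu$ into the bound \eqref{ballEstimateForTriples} for $\mu$. Given Theorem~\ref{unifLowerBoundPinnedDistances} the corollary is immediate; the only mildly delicate point is this bookkeeping around the places where the curvature degenerates to finite order, and since all of the real work is carried out in Theorem~\ref{unifLowerBoundPinnedDistances}, there is no further obstacle.
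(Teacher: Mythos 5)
Your proposal is correct and follows exactly the route the paper intends: take the constant $c(\alpha)$ from Theorem~\ref{unifLowerBoundPinnedDistances}, note that the exceptional set $E$ is flat, and observe that a positive-dimensional subset of $\gamma$ would be curved (by the observation following Definition~\ref{nonConcentratedOnLineS}) and would force $E$ to be curved, a contradiction. Your additional sketch of why a positive-dimensional subset of a curve with only finite-order curvature zeros is curved is consistent with the fact the paper asserts without proof, so there is nothing further to flag.
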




Theorem \ref{unifLowerBoundPinnedDistances} is an example of a general phenomenon, which says that at a single scale, it is impossible for a set to have small image simultaneously under three nonlinear projections, provided these projections satisfy a certain non-degeneracy condition. This condition will be described below. 


\subsection{A single-scale estimate: the role of Blaschke curvature}
\begin{defn}
Let $U\subset\RR^2$ be a connected open set, and let $\phi_1,\phi_2,\phi_3\colon U\to\RR$ be smooth functions whose gradients are non-vanishing and pairwise linearly independent at each point of $U$. Blaschke~\cite{Bl} defined the curvature form

\begin{equation}\label{BlaschkeCurvatureForm}
2 \frac{\partial}{\partial \phi_1}\frac{\partial}{\partial \phi_2} \log \frac{\partial \phi_3 / \partial\phi_1}{\partial \phi_3 / \partial\phi_2}d\phi_1\wedge d\phi_2.
\end{equation}
\end{defn}
This curvature form is invariant under change of coordinates, and it captures important information about the 3-web given by the level-sets of the functions $\phi_1,\phi_2,\phi_3$ on $U$. Indeed, Blaschke~\cite[Section 9]{Bl} (see also \cite{Izo} for the exact formulation given above) showed that the curvature form \eqref{BlaschkeCurvatureForm} depends only on the 3-web (i.e. the triple of foliations defined by the level sets $\{\phi_i=c\}_{c\in\RR}$), and not on the choice of $\phi_1,\phi_2,\phi_3$.

If each of $\phi_1,\phi_2,\phi_3$ is linear, then the curvature form \eqref{BlaschkeCurvatureForm} vanishes identically. Conversely, if the curvature form \eqref{BlaschkeCurvatureForm} vanishes identically on $U$, then around each point $p\in U$ there is a local change of coordinates so that 
\begin{equation}\label{specialFormOfPhi}
\phi_1(x,y) = x;\quad \phi_2(x,y) = y;\quad \phi_3(x,y)=x+y.
\end{equation}
This observation is important in the context of projection theory for the following reason: If $\phi_1,\phi_2,\phi_3$ are of the form \eqref{specialFormOfPhi}, and if $E = A\times A$, where $A\subset\RR$ is an arithmetic progression, then $\#\phi_i(E)\leq 2(\#E)^{1/2}$ for each index $i=1,2,3$. 
Thus if the curvature form \eqref{BlaschkeCurvatureForm} vanishes identically, then for each integer $N\geq 1$ we can construct a set $E\subset U$ with $\#E=N$ so that $\#\phi_i(E)\leq 2N^{1/2}$ for $i=1,2,3$. Similarly, it is possible to construct discretized sets $E\subset U$ satisfying a  Frostman type non-concentration condition so that $\phi_1(E),\phi_2(E)$, and $\phi_3(E)$ each have size roughly $|E|^{1/2}$. In this section we will describe several converses to this statement, which are manifestations of the following principle: If the curvature form \eqref{BlaschkeCurvatureForm} does not vanish identically, then one of $\phi_1(E),\phi_2(E)$, or $\phi_3(E)$ must be substantially larger than $|E|^{1/2}$. 

Returning to the pinned Falconer distance problem, let $p_1,p_2,p_3$ be points in $\RR^2$ and let $\phi_i(q) = |p_i-q|^2$. As we will show in Section \ref{pinnedDistancesSection}, the curvature form \eqref{BlaschkeCurvatureForm} vanishes identically if and only if $p_1,p_2,p_3$ are collinear. If $p_1,p_2,p_3$ are not collinear, then the gradients of $\phi_i$ and $\phi_j$ are linearly dependent precisely on $\ell_{i,j}$, the line spanned by $p_i$ and $p_j$. We can now state the following single-scale version of Theorem  \ref{unifLowerBoundPinnedDistances}. 

\begin{thm}\label{pinnedDistanceCor}
For each $0<\alpha<2$, there exists $c = c(\alpha)>0$ such that the following holds for all $\delta>0$ sufficiently small. Let $p_1,p_2,p_3\subset [0,1]^2$ be three points that span a triangle with area at least $\delta^c$. Let $X\subset[0,1]^2$ be a union of $\delta$-squares with $|X|\geq\delta^{2-\alpha+c}$, and suppose that for all balls $B$ of diameter $r\geq\delta$, $X$ satisfies the Frostman-type non-concentration condition
\begin{equation}\label{nonConcentrationCondDist}
\begin{split}
|X \cap B| &\le  r^\alpha\delta^{2-\alpha-c}.
\end{split}
\end{equation}

Then for at least one index $i,$ we have
\begin{equation}\label{atLeastOneBigPinnedDistance}
|\Delta_{p_i}(X)|\geq \delta^{1-\alpha/2-c}.
\end{equation}
\end{thm}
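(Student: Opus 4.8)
The plan is to reduce this statement to the single-scale non-parallelizable $3$-web estimate (the discretized Elekes--R\'onyai-type theorem alluded to in the introduction) applied to the three pinned-distance-squared functions $\phi_i(q) = |p_i - q|^2$. First I would set up coordinates: since $p_1,p_2,p_3$ span a triangle of area at least $\delta^c$, I would apply an affine change of coordinates (with controlled distortion, depending only on the lower bound $\delta^c$ on the area and the fact that the $p_i$ lie in $[0,1]^2$) so that, say, $p_1$ and $p_2$ lie on convenient axes. In these coordinates one computes $\phi_1(q) = x^2 + y^2 + \text{(linear)}$, etc., and the key algebraic input — which the excerpt promises to verify in Section \ref{pinnedDistancesSection} — is that the Blaschke curvature form \eqref{BlaschkeCurvatureForm} of the triple $(\phi_1,\phi_2,\phi_3)$ vanishes at a point precisely when $p_1,p_2,p_3$ are collinear. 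So I would want a \emph{quantitative} version of this: on the region of $[0,1]^2$ that stays $\gtrsim \delta^{c'}$-far from the three exceptional lines $\ell_{i,j}$ (where two gradients become parallel), the curvature form has magnitude bounded below by a power of $\delta^c$. This quantitative non-vanishing is what feeds into the single-scale theorem.

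Next I would handle the set $X$. The Frostman condition \eqref{nonConcentrationCondDist} says $X$ is (essentially) a $(\delta,\alpha)_2$-set in the Katz--Tao sense, up to the $\delta^{-c}$ losses. The only obstruction to directly invoking the single-scale estimate is that $X$ might concentrate near one of the bad lines $\ell_{i,j}$, or near the boundary of the coordinate chart where the affine distortion is large. I would therefore partition $[0,1]^2$ into $O(1)$ regions: a small neighborhood $N$ of $\ell_{1,2}\cup\ell_{1,3}\cup\ell_{2,3}$ of width $\delta^{c_1}$, and the complement. On $N$, the set $\Delta_{p_i}(X\cap N)$ for an appropriate $i$ is already reasonably large just from the one-dimensional Frostman bound (a $\delta^{c_1}$-neighborhood of a line, intersected with a $(\delta,\alpha)_2$-set, still has many distinct distances from a point far from that line) — or, alternatively, by pigeonholing, if $X\cap N$ carries a definite fraction of $|X|$ then $X$ is essentially one-dimensional and the desired bound \eqref{atLeastOneBigPinnedDistance} follows from a cruder estimate. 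So I may assume $X' := X \setminus N$ carries, say, half the measure of $X$, and $X'$ is supported where the curvature form is quantitatively nonzero and the gradients are quantitatively pairwise independent.

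Now I would apply the single-scale non-parallelizable $3$-web theorem (Theorem \ref{pinnedDistanceCor}'s abstract parent, stated later in the paper) to $\phi_1,\phi_2,\phi_3$ restricted to the good region, with input set $X'$: since the web has Blaschke curvature of magnitude at least a fixed power of $\delta$, and $X'$ satisfies the $(\delta,\alpha)_2$ non-concentration condition with $|X'| \geq \delta^{2-\alpha+c}/2$, the theorem yields an index $i$ with $|\phi_i(X')| \geq \delta^{1-\alpha/2-c'}$ for some $c' = c'(\alpha) > 0$ (after adjusting constants to absorb the polynomial-in-$\delta$ losses from the coordinate change and the curvature lower bound). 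Finally, since $\phi_i(q) = |p_i-q|^2$ and $q$ ranges over a bounded set, the map $t \mapsto t^2$ is bi-Lipschitz on the relevant range (distances are bounded above, and bounded below away from $\ell_{i,j}$-neighborhoods only matters if $p_i \in X$, which can be excluded), so $|\Delta_{p_i}(X)| \geq |\Delta_{p_i}(X')| \gtrsim |\phi_i(X')|$ up to constants, giving \eqref{atLeastOneBigPinnedDistance}. The main obstacle I anticipate is the bookkeeping of the various powers of $\delta$: one must choose the width $\delta^{c_1}$ of the excised neighborhood $N$, the curvature lower bound, and the distortion of the affine map so that all the $\delta^{-c}$ losses can be simultaneously absorbed into a single small exponent $c(\alpha)$ in the conclusion, and verify that the exceptional-line contribution really is negligible rather than competing with the main term — this is where the argument is most delicate, though not conceptually hard.
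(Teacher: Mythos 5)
There is a genuine gap at the heart of your reduction: you assume that, quantitatively, the Blaschke curvature of the web $(\phi_1,\phi_2,\phi_3)$ is bounded below by a power of $\delta$ on the whole region that stays $\delta^{c'}$-far from the three lines $\ell_{i,j}$. That is false. The statement in the introduction is only that the curvature form vanishes \emph{identically} iff $p_1,p_2,p_3$ are collinear; for a non-collinear triple the curvature still has a nontrivial pointwise vanishing locus which is \emph{not} contained in $\ell_{1,2}\cup\ell_{1,3}\cup\ell_{2,3}$. Indeed, the paper's computation \eqref{computeCurvature} shows the curvature equals $b\,f/g^2$, where $g$ vanishes exactly on the three lines but $f$ is the explicit quintic \eqref{defnOfF}; for instance, when $p_3=(0,b)$ lies on the perpendicular bisector of $p_1p_2$, one has $f=x\cdot(\hbox{quartic})$, so the curvature vanishes along that bisector, a line which is none of the $\ell_{i,j}$. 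Consequently $X$ could a priori concentrate near $Z(f)$, where Theorem \ref{curvatureProjections}$^*$ cannot be applied, and your outline has no mechanism to rule this out. The paper devotes a substantial part of the argument to exactly this: a Remez-type inequality (Theorem \ref{remezInez}) shows $\{|f|<s\}$ has small area and lies in a thin neighborhood of an algebraic curve of degree $O(1)$; that curve is cut into $O(1)$ smooth arcs on each of which two of the $\phi_i$ have gradients transverse to the arc, and Lemma \ref{twoFunctionsOnASet} then shows that concentration of $X$ near any such arc already forces \eqref{atLeastOneBigPinnedDistance}. Only after excising these neighborhoods (and neighborhoods of the points $p_i$ and of the points $q_k$ where $\nabla\phi_k$ is normal to $\ell_{i,j}$) does a Whitney decomposition produce a square on which the hypotheses of Theorem \ref{curvatureProjections}$^*$ hold.

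A second, related problem is your treatment of the mass of $X$ near the lines $\ell_{i,j}$. For the complementary region to satisfy the hypotheses of Theorem \ref{curvatureProjections}$^*$ (which demand $|\nabla\phi_i\wedge\nabla\phi_j|\geq\delta^{\eta}$ and curvature $\geq\delta^{\eta}$ for the theorem's \emph{own} small $\eta(\alpha)$), the excised neighborhoods must have width $\delta^{O(c)}$, i.e.\ be extremely thin. But if half the mass of $X$ sits in a strip of width $\delta^{Cc}$ around a line, the ``one-dimensional Frostman'' count you invoke only yields about $\delta^{-Cc\alpha+O(c)}$ distinct pinned distances, which is nowhere near $\delta^{-\alpha/2-c}$; the crude estimate only closes if the strip has width roughly $\delta^{1/2}$ or more, and then the complement fails the $\delta^{\eta}$-nondegeneracy requirements. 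What is actually needed is the genuinely two-dimensional argument of Lemma \ref{twoFunctionsOnASet}: inside each ball of radius comparable to the strip width, two of the distance functions have transverse gradients along the strip, so one of them takes $\gtrsim(\hbox{local count})^{1/2}$ values, and summing over $\gtrsim s^{-\alpha}$ balls with disjoint images gives $\gtrsim s^{-\alpha/2}\delta^{-\alpha/2+O(c)}$ values, which beats $\delta^{-\alpha/2-c}$ once $s=\delta^{Cc}$ with $C=C(\alpha)$ large. So the overall strategy (reduce to the single-scale web theorem) is the right one, but the two places you flag as routine --- quantitative non-vanishing of the curvature off the lines, and the contribution of the thin bad neighborhoods --- are precisely where the proof requires new ingredients (the explicit curvature formula, Remez, and Lemma \ref{twoFunctionsOnASet}), and as written your argument does not go through.
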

\begin{rem}
Theorem \ref{pinnedDistanceCor} fails if we remove the condition that the three points span a triangle with large area. Indeed, both Example \ref{elekesExample} and the ``railroad tracks'' example from \cite[Figure 1]{KT} shows that if the three points $p_1,p_2,p_3$ are collinear (or almost collinear), then \eqref{atLeastOneBigPinnedDistance} can fail for all three points, or indeed, any number of points\footnote{more precisely, for any integer $N$, we can construct an example where an estimate of the form \eqref{atLeastOneBigPinnedDistance} fails for $N$ points.}. 
\end{rem}
\begin{rem}
Theorem \ref{pinnedDistanceCor} is a discretized version of \cite[Theorem 32]{ES} (see also \cite{ShaSoly}), which bounds the number of ``triple points'' determined by three families of circles. See \cite{El, RSS2} for further discussion of this problem.
\end{rem}

The next result is a version of Theorem \ref{pinnedDistanceCor} for arbitrary 3-webs.

\begin{thm}\label{curvatureProjections}
For each $0< \alpha< 2,$ there exists $c=c(\alpha)>0$ so that the following holds. Let $K\subset U\subset\RR^2$, where $K$ is compact and $U$ is open and connected. Let $\phi_1,\phi_2,\phi_3\colon U\to\RR$ be smooth functions whose gradients are pairwise linearly independent at each point of $U$, and suppose the Blaschke curvature form \eqref{BlaschkeCurvatureForm} is non-vanishing on $U$. Then the following is true for all $\delta>0$ sufficiently small.

Let $X\subset K$ be a union of $\delta$-squares, with $|X|\geq \delta^{2-\alpha+c}$, and suppose that for all balls $B$ of diameter $r\geq\delta$, $X$ satisfies the non-concentration condition
\begin{equation}\label{nonConcentrationCondX}
\begin{split}
|X \cap B| &\le  r^\alpha\delta^{2-\alpha-c}.
\end{split}
\end{equation}
Then for at least one index $i$ we have
\begin{equation}\label{atLeastOneBigProjection}
|\phi_i(X)|\geq \delta^{1-\alpha/2-c}.
\end{equation}
\end{thm}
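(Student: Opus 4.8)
\emph{Proof sketch.} The plan is to reduce Theorem~\ref{curvatureProjections} to a single-scale, discretized version of the Elekes--R\'onyai theorem by means of a change of coordinates that puts the triple $(\phi_1,\phi_2,\phi_3)$ into the normal form $\phi_1(x,y)=x$, $\phi_2(x,y)=y$, $\phi_3(x,y)=P(x,y)$, and then to argue by contradiction. The normal form is useful for two reasons. First, in these coordinates the curvature form \eqref{BlaschkeCurvatureForm} of the triple equals $2\,\partial_x\partial_y\log(P_x/P_y)\,dx\wedge dy$, so the hypothesis that \eqref{BlaschkeCurvatureForm} is non-vanishing becomes the statement that $\partial_x\partial_y\log(P_x/P_y)$ is bounded away from zero on the relevant chart; equivalently (by the discussion surrounding \eqref{specialFormOfPhi}, together with the fact that a multiplicative form $h(a(x)b(y))$ is a reparametrized additive one), $P$ is not locally of the form $h(a(x)+b(y))$, which is exactly the Elekes--R\'onyai non-degeneracy condition. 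Second, the hypothesis that $\nabla\phi_3$ is linearly independent from $\nabla\phi_1$ and from $\nabla\phi_2$ becomes $P_x\neq0$ and $P_y\neq0$, so $P$ is a submersion in each variable separately; the content of the Elekes--R\'onyai statement is then to improve on the trivial lower bound for $|P(X)|$ coming from this submersivity.

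To set up the reduction: since the curvature form is a web invariant (as recalled after \eqref{BlaschkeCurvatureForm}) and $(\phi_1,\phi_2)$ is a local diffeomorphism near every point of $U$ (its components have linearly independent gradients), the normal form is available on a neighbourhood of each point of the compact set $K$. Cover $K$ by finitely many such neighbourhoods. Each normalizing map $\Psi$ is bi-Lipschitz on a slightly smaller chart, with constants depending only on $K,U,\phi_1,\phi_2,\phi_3$, and it satisfies $\phi_1\circ\Psi=x$, $\phi_2\circ\Psi=y$, $\phi_3\circ\Psi=P$. Hence pulling $X$ back by $\Psi$ and re-gridding at scale $\delta$ produces a union of $\delta$-squares whose mass, Frostman exponent, and projection sizes agree with those of $X$ up to constants that can be absorbed into $c$ once $\delta$ is small (here one uses that $\pi_x(\Psi^{-1}(X))$ and $\pi_y(\Psi^{-1}(X))$ are within $O(\delta)$ of $\phi_1(X)$ and $\phi_2(X)$, and that $P(\Psi^{-1}(X))=\phi_3(X)$). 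Pigeonholing over the finite cover, and absorbing the resulting constant into $c$, it suffices to prove the theorem in the normal-form case.

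Now suppose, in the normal-form case, that $|\phi_i(X)|<\delta^{1-\alpha/2-c}$ for $i=1,2,3$. Let $A_1\subset\RR$ be the union of the closed $\delta$-intervals of a fixed grid meeting $\phi_1(X)=\pi_x(X)$, and let $A_2$ be the analogue for $\phi_2(X)$; then $X\subset A_1\times A_2$, with $|A_1|,|A_2|\lesssim\delta^{1-\alpha/2-c}$, so $|A_1\times A_2|\lesssim\delta^{2-\alpha-2c}$, while $|X|\geq\delta^{2-\alpha+c}$. Thus $X$ is a union of $\delta$-squares contained in the product $A_1\times A_2$, of relative measure at least $\delta^{3c}$, and still satisfying \eqref{nonConcentrationCondX} with $c$ replaced by $O(c)$. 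This is precisely the configuration handled by the single-scale discretized Elekes--R\'onyai theorem: since $0<\alpha/2<1$ (because $0<\alpha<2$), the factors $A_1,A_2$ lie in the range of the Katz--Tao discretized ring / sum--product estimate on which that theorem rests, and since $P$ is non-degenerate it follows that $|\phi_3(X)|=|P(X)|\geq\delta^{1-\alpha/2-c_0}$ for some $c_0=c_0(\alpha)>0$ independent of $c$ (the implicit threshold on $\delta$ being allowed to depend on $K,U,\phi_1,\phi_2,\phi_3,\alpha$). Choosing $c=c(\alpha)$ small enough that $c<c_0/10$ and that the bookkeeping above goes through, we obtain $\delta^{1-\alpha/2-c_0}\leq|\phi_3(X)|<\delta^{1-\alpha/2-c}$, a contradiction once $\delta$ is sufficiently small. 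This establishes \eqref{atLeastOneBigProjection} and hence the theorem.

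The substantive content is packaged into the discretized Elekes--R\'onyai theorem, whose proof---carried out at a single scale in the discretized category, in the spirit of the Elekes--Szab\'o / Elekes--R\'onyai dichotomy---ultimately reduces to the Katz--Tao discretized sum--product theorem; we expect to use it here as a black box. Within the present argument, the step requiring the most care is the normalization: one must verify that the change of coordinates can be carried out with uniformly bounded distortion over all of $K$, that re-gridding at scale $\delta$ degrades the Frostman exponent by no more than an arbitrarily small multiple of $c$, and that pairwise linear independence of the $\nabla\phi_i$ together with non-vanishing of \eqref{BlaschkeCurvatureForm} translate exactly into the submersivity and non-degeneracy of $P$ needed to invoke the discretized Elekes--R\'onyai theorem. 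The only genuinely geometric input beyond this is the identity expressing \eqref{BlaschkeCurvatureForm} in the normal-form coordinates, which follows by a direct computation and is the reason the normal form---rather than working with $\phi_1,\phi_2,\phi_3$ directly---is the right reduction.
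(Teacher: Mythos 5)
There is a genuine gap at the point where you invoke the discretized Elekes--R\'onyai theorem as a black box. Theorem \ref{main-entropy-growth} (and, underneath it, Proposition \ref{countingNumberQuadruplesNablePWedgeP} and Lemma \ref{EnergyDispersionForF}) requires the \emph{one-dimensional} Frostman-type non-concentration \eqref{nonConcentrationCond2} on the factors $A_1=\pi_x(X)$ and $A_2=\pi_y(X)$ at exponent roughly $\alpha/2$, not merely the planar condition \eqref{nonConcentrationCondX} on $X$. Your reduction only records that $X$ keeps its 2D non-concentration and that $|A_1|,|A_2|\approx\delta^{1-\alpha/2}$; this does not imply that $A_1,A_2$ are non-concentrated. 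For instance, with $\alpha=1$ take $A_1=[0,\delta^{1/2}]$ and $A_2$ a $\delta$-thickened $\delta^{1/2}$-separated set, and $X=A_1\times A_2$: then $X$ satisfies \eqref{nonConcentrationCondX}, $|A_1|=|A_2|=\delta^{1-\alpha/2}$, yet $A_1$ violates the exponent-$\alpha/2$ condition at every scale between $\delta$ and $\delta^{1/2}$. So the hypotheses of the ER-type input are simply not available after your step 2, and the condition cannot be dropped, since it is exactly what feeds the transversality/non-concentration requirements \eqref{nonconcorA1234}, \eqref{thetaimagenoncon} of Shmerkin's theorem. The paper closes precisely this gap with Lemma \ref{findingLargeNonconcentratedCartesianProduct}: using the assumed smallness of \emph{all three} images (the third projection is essential, via a popularity/fiber argument together with Lemma \ref{twoFunctionsOnASet} and a Whitney decomposition), one extracts a large subset $X'\subset X$ whose images under $\phi_1,\phi_2,\phi_3$ satisfy \eqref{ABNonConcentrated}. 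This is a substantive idea, not bookkeeping, and it is the main content your sketch omits.

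Two secondary points. First, the $P=\phi_3\circ(\phi_1\otimes\phi_2)^{-1}$ arising from smooth webs is only smooth, so Theorem \ref{main-entropy-growth} (stated for analytic/polynomial $P$, with the analyticity used via stratification and \L{}ojasiewicz to locate a good square) does not literally apply; the correct tool is the quantitative smooth statement, Proposition \ref{countingNumberQuadruplesNablePWedgeP} combined with Lemma \ref{CSLem}, with the lower bounds on $|\partial_xP|,|\partial_yP|,|\partial_{xy}P|,|K_P|$ supplied by compactness, \eqref{quantitativeLinIndependenceGradient}, \eqref{quantitativeCurvature}, and the identity \eqref{relateKpLog}. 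Second, the engine behind that proposition is Shmerkin's nonlinear projection theorem (Theorem \ref{Shm}), not the Katz--Tao/Bourgain discretized sum--product theorem; the paper notes that a sum--product/Ruzsa-calculus route would in any case yield exponents depending on $P$, which is contrary to the uniformity you want. Your normal-form reduction and the identification of non-vanishing Blaschke curvature with $\partial_x\partial_y\log(P_x/P_y)$ bounded away from zero are consistent with the paper's proof of Theorem \ref{curvatureProjections}$^*$, but as written the argument does not go through without the non-concentration upgrade described above.
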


Theorem \ref{curvatureProjections} is a discretized projection theorem in the spirit of \cite{B10, Shm}. In \cite{B10}, Bourgain proved that if $X\subset[0,1]^2$ is a set satisfying \eqref{nonConcentrationCondX}, and if $\Theta\subset S^1$ is a large (and non-concentrated) set of directions, then there must exist a direction $e\in\Theta$ so that the associated orthogonal projection $\pi_{e}(q)$ satisfies \eqref{atLeastOneBigProjection}. The set $\Theta$ must be large, since if we define $X=[0,1]^2 \cap (\delta^{\alpha}\ZZ)^2$, then whenever $s$ and $t$ are integers with $|s|$ and $|t|$ small, the projection $(x,y)\cdot (s,t)$ will have small cardinality and thus fail to satisfy \eqref{atLeastOneBigProjection}. Thus it is new, and rather surprising, that Theorem \ref{curvatureProjections} only requires three projections.

\subsection{A single-scale estimate when Blaschke curvature vanishes: the role of convexity}
Let us now consider a quadruple 
%
of functions $\phi_1,\ldots,\phi_4$ where each triple has vanishing Blaschke curvature. To simplify our discussion, we will work in coordinates where $\phi_1(x,y)=x,$ $\phi_2(x,y) = y$, $\phi_3(x,y)=x+y$, and $\phi_4(x,y) = u(x)+v(y)$. In this setting, we can still expect expansion provided that at least one of the functions $u$ or $v$ is strictly convex.

\begin{thm}\label{convexProjections}
For each $0< \alpha< 2,$ there exists $c=c(\alpha)>0$ so that the following holds. Let $u,v\colon [0,1]\to\RR$ be smooth. Suppose $u', v'$, and $u''$ are nonzero on $[0,1]$. Let $\phi_1(x,y)=x,$ $\phi_2(x,y)=y,$  $\phi_3(x,y)=x+y$, and $\phi_4(x,y)=u(x)+v(y)$. Then the following is true for all $\delta>0$ sufficiently small.

Let $X\subset [0,1]^2$ be a union of $\delta$-squares, with $|X|\geq \delta^{2-\alpha+c}$, and suppose that for all balls $B$ of diameter $r\geq\delta$, $X$ satisfies the non-concentration condition
\begin{equation}\label{nonConcentrationCondXConvex}
\begin{split}
|X \cap B| &\le  r^\alpha\delta^{2-\alpha-c}.
\end{split}
\end{equation}
Then for at least one index $i$ we have
\begin{equation}\label{atLeastOneBigProjectionConvex}
|\phi_i(X)|\geq \delta^{1-\alpha/2-c}.
\end{equation}
\end{thm}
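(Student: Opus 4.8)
The plan is to argue by contradiction, reducing the statement to a discretized analogue of the Elekes--Nathanson--Ruzsa convexity--sumset inequality. Suppose the conclusion fails, so that $|\phi_i(X)|<\delta^{1-\alpha/2-c}$ for all $i\in\{1,2,3,4\}$. Since $u',v'$ do not vanish on the compact interval $[0,1]$, the maps $u,v$ are bi-Lipschitz with uniform constants; since $u''$ does not vanish, $u'$ is strictly monotone, and after replacing $u$ by $-u$ if needed we may assume $u$ is strictly convex with $u''\geq c_0>0$. Put $A_1=\phi_1(X)=\pi_x(X)$ and $A_2=\phi_2(X)=\pi_y(X)$, so $X\subseteq A_1\times A_2$; since $|A_1||A_2|\geq|X|\geq\delta^{2-\alpha+c}$ while $|A_1|,|A_2|<\delta^{1-\alpha/2-c}$, we get $|A_1|,|A_2|=\delta^{1-\alpha/2+O(c)}$ and $X$ has density $\geq\delta^{O(c)}$ in $A_1\times A_2$. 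A routine dyadic-pruning argument---using that \eqref{nonConcentrationCondXConvex} prevents $X$ from concentrating in a rectangle of area $\ll|X|$---then produces a ``uniform'' sub-configuration: $X'\subseteq X$ with $|X'|\geq\delta^{O(c)}|X|$, and $A_1'\subseteq A_1$, $A_2'\subseteq A_2$, so that $X'\subseteq A_1'\times A_2'$, the set $X'$ has density $\geq\delta^{O(c)}$ in $A_1'\times A_2'$ at every scale $r\in[\delta,1]$, and each $A_i'$ satisfies $|A_i'\cap I|\leq r^{\alpha/2}\delta^{1-\alpha/2-O(c)}$ for every $r$-interval $I$.

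Next I would extract additive structure. As $\{x+y:(x,y)\in X'\}\subseteq\phi_3(X)$ has measure $<\delta^{1-\alpha/2-c}$ and $X'$ is dense in $A_1'\times A_2'$, Cauchy--Schwarz forces a large ($X'$-restricted) additive energy of $A_1',A_2'$; likewise, since $u,v$ are bi-Lipschitz, $\{u(x)+v(y):(x,y)\in X'\}$ is, up to bounded distortion, the $X'$-restricted sumset of $u(A_1')$ and $v(A_2')$, so $\phi_4$ being small forces a large restricted energy there as well. Applying the discretized Balog--Szemer\'edi--Gowers theorem to the $\phi_3$-condition, checking that the surviving part of $X'$ remains dense in the new subproduct, applying Balog--Szemer\'edi--Gowers again to the $\phi_4$-condition, and then invoking the Pl\"unnecke--Ruzsa inequalities, I would obtain sets $B_1\subseteq A_1'$ and $B_2\subseteq A_2'$, still of measure $\delta^{1-\alpha/2+O(c)}$ and still satisfying the Frostman bound with exponent $\alpha/2-O(c)$ (non-concentration only improves under passing to subsets), for which simultaneously
\[
|B_1+B_1|\leq\delta^{-O(c)}|B_1|\qquad\text{and}\qquad|u(B_1)+v(B_2)|\leq\delta^{-O(c)}|B_1|.
\]

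It remains to see this is impossible for strictly convex $u$, which is where the geometry enters---through a discretized Szemer\'edi--Trotter estimate, following Elekes' incidence argument for sum--product. For each pair $(a,c')\in B_1\times v(B_2)$ let $\Gamma_{a,c'}=\{(t+a,\,u(t)+c'):t\in[0,1]\}$, a translate of the graph of $u$. For every $b\in B_1$ the point $(b+a,\,u(b)+c')$ lies on $\Gamma_{a,c'}$ and in the grid $(B_1+B_1)\times(u(B_1)+v(B_2))$, and distinct $b$'s give $\delta$-separated points; hence each of the $\approx|B_1||B_2|/\delta^2$ curves is incident to at least $|B_1|/\delta$ of the $|B_1+B_1||u(B_1)+v(B_2)|/\delta^2$ cells of that grid. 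Since $u''\geq c_0>0$, $u'$ is strictly monotone, so no two distinct translates of the graph of $u$ are tangent and any two meet in at most one point; thus $\{\Gamma_{a,c'}\}$ is a family of curves with bounded pairwise intersection, and a discretized Szemer\'edi--Trotter bound (together with the non-concentration of $B_1,B_2$) gives
\[
|B_1+B_1|\cdot|u(B_1)+v(B_2)|\ \geq\ \delta^{-\eps_0+O(c)}\,|B_1|^{2}
\]
for a fixed $\eps_0=\eps_0(\alpha,c_0)>0$ (the exponent gain being the usual $\sim\alpha/4$ of Elekes' sum--product argument). Choosing $c$ small in terms of $\eps_0$, this contradicts the two displayed bounds above, completing the proof.

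I expect the main obstacle to be the last step's discretized Szemer\'edi--Trotter bound for the family $\{\Gamma_{a,c'}\}$. A clean pseudoline-type incidence count is not immediately available at scale $\delta$: two translates differing by a small vector cross at a correspondingly small angle, so their $\delta$-neighbourhoods can overlap along a segment of length $\approx\delta/(\text{separation})$ rather than in a single cell, and controlling this requires some care (a two-ends/popularity reduction, or restricting to well-separated translates and iterating). A secondary point is the double application of Balog--Szemer\'edi--Gowers in the middle step, where one must ensure the refinement making $\phi_3$-sums small leaves enough density to then make $\phi_4$-sums small, and that all refinements keep the non-concentration needed for the incidence argument. (When $\alpha>1$ one can alternatively exploit that \eqref{nonConcentrationCondXConvex} confines all but a negligible fraction of $X$ away from the curve $\{u'(x)=v'(y)\}$, where $(\phi_1,\phi_3,\phi_4)$ typically has non-vanishing Blaschke curvature, so Theorem~\ref{curvatureProjections} applies on finitely many pieces; but the argument above is what handles affine $v$ and small $\alpha$ uniformly.)
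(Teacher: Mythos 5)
Your reduction to the two smallness statements $|B_1+B_1|\leq\delta^{-O(c)}|B_1|$ and $|u(B_1)+v(B_2)|\leq\delta^{-O(c)}|B_1|$ is plausible in outline (the double restricted Balog--Szemer\'edi--Gowers step plus Pl\"unnecke--Ruzsa is delicate but standard technology), but the final step is a genuine gap, not just a technical obstacle to be smoothed over. The ``discretized Szemer\'edi--Trotter estimate'' you invoke for the family of translates $\Gamma_{a,c'}$ of the graph of $u$ --- an incidence bound giving a fixed power gain $\delta^{-\eps_0}$ over the Cauchy--Schwarz count at scale $\delta$ --- does not exist as an off-the-shelf tool. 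At scale $\delta$ such gains are simply false without non-concentration (the Katz--Tao ``railroad tracks'' construction \cite{KT} defeats the na\"ive count), and with only the $\alpha/2$-Frostman condition you retain, establishing \emph{any} $\eps_0>0$ gain is a statement of the same depth as Bourgain's discretized sum-product theorem (Theorem \ref{discRingThm}): for $u(t)=t^2$ your displayed inequality essentially \emph{is} that theorem, and for a general smooth strictly convex $u$ it is essentially equivalent to Theorem \ref{convexProjections} itself. So the argument reduces the theorem to an unproved input at least as strong as the theorem; the tangency/small-angle overlap issue you flag (nearby translates sharing a $\delta$-neighbourhood along a long arc) is one symptom of this, but a ``two-ends/popularity reduction'' will not conjure the missing sum-product-type mechanism.

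For comparison, the paper circumvents exactly this by importing the deep input from Shmerkin's nonlinear projection theorem rather than from an incidence count: assuming all four images are small, it forms $Y=\{(t,x,y):(t,x),(t,y)\in X\}$ (large by Cauchy--Schwarz), observes that the constraint forces the nonlinear projection $G(x,y,z)=u(2z-x)+v(y)$ to map a set $S\subset B\times B\times C$ of covering number $\gtrsim\delta^{-3\alpha/2+O(c)}$ into the small set $D=\phi_4(X)$, and then applies Lemma \ref{Shmcor} on a suitable sub-cube. There the hypotheses $u''\neq0$, $v'\neq0$ enter precisely as the transversality condition $|\partial_z\theta_{(x,y)}(z)|=\bigl|2u''(2z-x)v'(y)/((u'(2z-x))^2+(v'(y))^2)\bigr|\gtrsim\delta^{4c}$, i.e.\ convexity is converted into variation of the projection direction, and the sum-product-type content is supplied by Theorem \ref{Shm}. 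If you want to salvage your route, you would need to either prove the discretized incidence bound for translates of a convex graph under Frostman non-concentration (a substantial theorem in its own right) or, as the paper does, reformulate the configuration so that an existing discretized projection/sum-product theorem can be quoted.
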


Theorem~\ref{convexProjections} can be viewed as a continuous analogue of \cite[Theorem 1.2]{JRT}, and indeed our proof is inspired by certain ideas from \cite{JRT}. 

Finally, we remark that Theorem~\ref{convexProjections} allows us to prove similar expansion results for $d$-tuples of functions $\phi_1,\ldots,\phi_d$ for $d>4$. In brief, suppose that $\phi_1,\ldots,\phi_d$ are smooth functions, and that the Blaschke curvature of each triple $\phi_i,\phi_j,\phi_k$ is identically 0. Then after a change of coordinates we can write $\phi_1(x,y)=x,$ $\phi_2(x,y)=y,$  $\phi_3(x,y)=x+y$, and $\phi_i(x,y)=u_i(x)+v_i(y)$ for each $i\geq 4$. We can then apply Theorem~\ref{convexProjections} for each $i\geq 4$. If $u_i''$ and $v_i''$ are identically 0 for each index $i\geq 4$, then the $d$-web $(\phi_1,\ldots,\phi_d)$ is parallelizable. Thus Theorem~\ref{convexProjections} is a quantitative version of the principle that if $d$ functions $\phi_1,\ldots,\phi_d$ determine a non-parallelizable $d$-web and if $X$ is a $(\delta,\alpha)_2$ set, then $|\phi_i(X)|$ must be substantially larger than $|X|^{1/2}$ for at least one index $i$.

\subsection{The discretized Elekes-R\'onyai theorem}
Theorems \ref{unifLowerBoundPinnedDistances} and \ref{curvatureProjections} are closely related to the Elekes-R\'onyai theorem, which we will now briefly describe. In \cite{ErdSze}, Erd\H{o}s and Szemer\'edi proved that if $A\subset\RR$ is a finite set, then either the sum set $A+A=\{a+a^\prime\colon a,a^\prime\in A\}$ or the product set $A.A=\{aa^\prime\colon a,a^\prime\in A\}$ must have cardinality much larger than that of $A$. 
\begin{thm}[Erd\H{o}s-Szemer\'edi]\label{ESThm}
There exists $c>0$ so that for all finite sets $A\subset\RR$, we have
\begin{equation}\label{sumProdIneq}
\#(A+A) + \#(A.A) \gtrsim (\#A)^{1+c}.
\end{equation}
\end{thm}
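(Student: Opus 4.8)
The plan is to follow Elekes's deduction of a sum--product estimate from the Szemer\'edi--Trotter incidence theorem, which gives the result with $c = 1/4$. Write $N = \#A$. Consider the point set $P = (A+A)\times(A\cdot A)\subset\RR^2$, so $\#P = \#(A+A)\cdot\#(A\cdot A)$, and the family $\mathcal{L}$ of lines $\ell_{a,b} = \{(x,y)\colon y = a(x-b)\}$ indexed by pairs $(a,b)\in (A\setminus\{0\})\times A$. Since the slope of $\ell_{a,b}$ is $a$ and its $y$-intercept is $-ab$, distinct index pairs give distinct lines, so $\#\mathcal{L}\geq N(N-1)\gtrsim N^2$.

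First I would note that each line of $\mathcal{L}$ is incident to many points of $P$: for every $c\in A$ the point $(b+c,\,ac)$ lies on $\ell_{a,b}$ and belongs to $(A+A)\times(A\cdot A)$, and these $N$ points are distinct. Hence the number $I$ of point--line incidences between $P$ and $\mathcal{L}$ satisfies $I\geq \#\mathcal{L}\cdot N\gtrsim N^3$. On the other hand the Szemer\'edi--Trotter theorem gives $I\lesssim (\#P)^{2/3}(\#\mathcal{L})^{2/3}+\#P+\#\mathcal{L}$. We may assume $\#P\leq N^4$, for otherwise $\#(A+A)+\#(A\cdot A)\geq 2(\#P)^{1/2}>2N^2$ already gives \eqref{sumProdIneq}; and since $\#(A+A),\#(A\cdot A)\geq N$ we have $\#P\geq N^2=\#\mathcal{L}$. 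In this regime the first term of the Szemer\'edi--Trotter bound dominates, so
\begin{equation}\label{elekesMain}
N^3\;\lesssim\;(\#P)^{2/3}(N^2)^{2/3}\;=\;(\#P)^{2/3}N^{4/3},
\end{equation}
whence $\#P\gtrsim N^{5/2}$ and, by the AM--GM inequality, $\#(A+A)+\#(A\cdot A)\geq 2(\#P)^{1/2}\gtrsim N^{5/4}$, which is \eqref{sumProdIneq} with $c=1/4$.

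The step I expect to carry the real content is the Szemer\'edi--Trotter bound itself: the argument above is short but not self-contained, since it imports this incidence theorem, whose usual proof goes through the crossing-number inequality or a cell decomposition. If one prefers an entirely elementary route, at the cost of an unspecified small $c>0$, one can instead run the original argument of Erd\H{o}s and Szemer\'edi: supposing $\#(A+A)$ and $\#(A\cdot A)$ are both at most $K\cdot\#A$, one uses dyadic pigeonholing on the multiplicative energy of $A$ together with elementary sumset and productset inequalities to extract a large subset of $A$ that is simultaneously highly additively and multiplicatively structured, and then contradicts the fact that a set cannot resemble both an arithmetic and a geometric progression. For the purposes of the present paper either route suffices, and I would present the Elekes argument as the cleanest.
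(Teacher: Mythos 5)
Your argument is correct: this is Elekes's classical deduction of a sum--product bound from the Szemer\'edi--Trotter theorem, and the details check out. The lines $\ell_{a,b}$ with $a\in A\setminus\{0\}$, $b\in A$ are pairwise distinct, each contains the $N$ distinct points $(b+c,ac)$, $c\in A$, of $(A+A)\times(A\cdot A)$, so $I\gtrsim N^3$, and your verification that the term $(\#P)^{2/3}(\#\mathcal{L})^{2/3}$ dominates (using $N^2\lesssim\#P\leq N^4$) is sound; note only that the reduction ``we may assume $\#P\leq N^4$'' is vacuous, since $\#P\leq N^4$ always holds, so that sentence can simply be dropped. One should also dispose of $N=1$ (or $N$ bounded) by the implicit constant, which is routine.

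Be aware, though, that the paper does not prove this statement at all: Theorem \ref{ESThm} is quoted as background, with the proof attributed to Erd\H{o}s and Szemer\'edi \cite{ErdSze}, and nothing later in the paper depends on any particular proof of it. So there is no ``paper proof'' to compare against; your write-up is a correct standalone proof giving the explicit exponent $c=1/4$ (sharper than the unspecified $c>0$ of the original combinatorial argument you sketch at the end), at the cost of importing the Szemer\'edi--Trotter incidence theorem as a black box. Either route is acceptable for a cited background result; if you include a proof, the Elekes argument as you present it is the standard choice.
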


The Erd\H{o}s-Szemer\'edi Theorem quantifies the principle that a subset of $\RR$ cannot be approximately closed under both addition and multiplication. In \cite{ER}, Elekes and R\'onyai developed this idea in a slightly different direction, and proved the following:
\begin{thm}[Elekes-R\'onyai]\label{ElekesRonyai}
Let $P$ be a bivariate real polynomial. Then either $P$ is one of the special forms $P(x,y) = h(a(x)+b(y))$ or $P(x,y) = h(a(x)b(y))$, where $h,a,b$ are univariate real polynomials, or 
that for all finite sets of real numbers $A,B$ of cardinality $N$, we have 
\begin{equation}\label{growthPoly}
\#P(A,B) =\omega(N).
\end{equation}
\end{thm}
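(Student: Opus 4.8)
The plan is to argue by contradiction and to reduce the bounded-growth hypothesis to a violation of Theorem \ref{curvatureProjections} applied to the $3$-web $(\phi_1,\phi_2,\phi_3)=(x,y,P(x,y))$. So suppose $P$ is neither of the form $h(a(x)+b(y))$ nor of the form $h(a(x)b(y))$, yet there is a constant $C$ and arbitrarily large $N$ admitting finite sets $A,B\subset\RR$ with $\#A=\#B=N$ and $\#P(A,B)\le CN$; we will derive a contradiction. The first step is to locate a region where the Blaschke curvature does not vanish. The curvature form \eqref{BlaschkeCurvatureForm} of $(x,y,P)$ equals $2\,\partial_x\partial_y\log(P_x/P_y)\,dx\wedge dy$ on $\{P_xP_y\ne 0\}$, and I would first record the classical fact that it vanishes identically precisely when $P$ has one of the two stated polynomial forms. (The nontrivial implication uses polynomiality: vanishing of the form forces $P_x/P_y=g(x)/h(y)$ with $g,h$ rational, hence $P=F(G(x)+H(y))$ locally with $G'=g,\ H'=h$, and tracking which primitives $G,H$ and outer functions $F$ keep $P$ a polynomial yields either the additive normal form or, when a simple pole of $g$ or $h$ produces a logarithm, the multiplicative one.) Since $P$ is not special, the numerator of $\partial_x\partial_y\log(P_x/P_y)$ is a nonzero polynomial, so the set $\mathcal G$ of points where $P_xP_y\ne 0$ and the curvature is nonzero is dense and open with complement a finite union of algebraic curves; a fibrewise count (a nonzero univariate polynomial has few roots) shows $A\times B$ meets this complement in only $O(N)$ points. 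Pigeonholing over a fixed finite cover of a large box by small boxes contained in $\mathcal G$, we pass to subsets $A',B'$ of size $\gtrsim N$ lying in one small box $U'\subset\mathcal G$, still with $\#P(A',B')\le CN$.

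The second step is to regularize the finite grid to a single scale. Since $A'\times B'$ has no intrinsic scale, I would run the standard multiscale pigeonhole on $A'$ and on $B'$: for any small $\eta>0$ one finds a common dyadic scale $\delta$ and $\delta$-separated subsets $A''\subset A',\ B''\subset B'$ of size at least $N^{1-\eta}$ that are one-dimensional $(\delta,\beta)$-Frostman for some $\beta\in(0,1]$ with $N^{1-\eta}\le\delta^{-\beta}\le N$. Let $X$ be the union of the $\delta$-squares centred on $A''\times B''$; then $X\subset U'\subset\mathcal G$, and the one-dimensional non-concentration of $A'',B''$ yields the two-dimensional condition \eqref{nonConcentrationCondX} for the exponent $\alpha:=2\beta$ (the loss $\eta$ absorbed into the $c=c(\alpha)$ of Theorem \ref{curvatureProjections}), together with $|X|\approx\delta^{2-\alpha}\ge\delta^{2-\alpha+c}$; here $\eta$ is taken small relative to $c$, and in the degenerate case $\alpha=2$ one instead works with $\alpha=2-\eps$ for a tiny $\eps$, which only improves the non-concentration.

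The third step is the contradiction. Theorem \ref{curvatureProjections}, applied to $X$ with $(\phi_1,\phi_2,\phi_3)=(x,y,P)$, produces an index $i$ with $|\phi_i(X)|\ge\delta^{1-\alpha/2-c}$. But $\phi_1(X)$ and $\phi_2(X)$ are the $\delta$-neighbourhoods of $A''$ and $B''$, of measure $\approx\delta\cdot\delta^{-\beta}=\delta^{1-\alpha/2}<\delta^{1-\alpha/2-c}$; and $\phi_3(X)=P(X)$ lies in an $O(\delta)$-neighbourhood of the set $P(A'',B'')\subset P(A,B)$ of at most $CN$ real numbers, so $|\phi_3(X)|\lesssim\delta N\lesssim\delta^{1-\beta-2\eta}<\delta^{1-\alpha/2-c}$ once $\delta$ is small and $\eta<c/4$, using $N\le\delta^{-\beta/(1-\eta)}$. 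All three alternatives fail, the desired contradiction, so no such sequence of grids exists, i.e.\ $\#P(A,B)=\omega(N)$.

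The main obstacle I anticipate is the regularization of Step 2. Because the hypothesis $\#P(A,B)=O(N)$ places the configuration exactly at the critical exponent $|X|^{1/2}$ of Theorem \ref{curvatureProjections}, there is no slack: one must extract a single scale $\delta$ at which $X$ simultaneously obeys \eqref{nonConcentrationCondX}, has $|X|\ge\delta^{2-\alpha+c}$, and has $\#A$, $\#B$ and $\#P(A,B)$ all comparable to $\delta^{-\alpha/2}$ up to $\delta^{-o(1)}$ factors. Achieving this uniformly for arbitrary finite sets — in particular for highly multiscale sets such as geometric progressions, where no single scale is representative and one must still retain $\#A''\ge N^{1-\eta}$ — is the technical heart of the argument. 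A secondary, more routine, obstacle is the algebraic classification invoked in Step 1, which is classical but genuinely uses that $h,a,b$ are required to be polynomials.
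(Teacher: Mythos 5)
This theorem is not proved in the paper at all: it is the classical Elekes--R\'onyai theorem, quoted from \cite{ER} as background, and its known proofs are algebraic/incidence-geometric (Szemer\'edi--Trotter over families of curves), not deductions from discretized statements. Your proposed route --- deriving it from Theorem \ref{curvatureProjections} --- has a genuine gap at exactly the step you flag as the ``technical heart,'' and the gap is not a removable technicality: the single-scale regularization in Step 2 is false for arbitrary finite sets. You need one scale $\delta$ and subsets $A'',B''$ of size $\ge N^{1-\eta}$ that are $\delta$-separated, Frostman at exponent $\beta$ with $\delta^{-\beta}\in[N^{1-\eta},N]$, and (implicitly) with $\alpha=2\beta$ bounded away from the endpoints so that a single constant $c(\alpha)$ from Theorem \ref{curvatureProjections} beats the pigeonholing loss $\eta$. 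Take $A=B=\{2^{-2^i}\colon 1\le i\le N\}$: every subset of size $M\ge N^{1-\eta}$ has minimal gap at most $2^{-2^{M-1}}$, so any admissible $\delta$ has $\log(1/\delta)\gtrsim 2^{N^{1-\eta}}$, which forces $\beta\lesssim \log N/2^{N^{1-\eta}}\to 0$. Thus $\alpha=\alpha(N)\to 0$, the constant $c(\alpha(N))$ is not under your control, and the inequality of the shape $\beta\eta\lesssim c(2\beta)$ that you need to kill the $\phi_3$ alternative cannot be guaranteed; moreover the number of dyadic scales is $\log(1/\delta)$, doubly exponential in $N$, so the standard uniformization pigeonhole cannot retain $N^{1-\eta}$ points. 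This is precisely the well-known reason why single-scale discretized theorems (Bourgain's discretized sum--product, Shmerkin's projection theorem, the theorems of this paper) do not imply the cardinality statements of Erd\H{o}s--Szemer\'edi or Elekes--R\'onyai for arbitrary finite sets; the paper keeps the two settings separate for exactly this reason.

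There is a second, related, gap in Step 1: you pass to a fixed small box on which the curvature is bounded below by a constant, but arbitrary finite sets need not place $\gtrsim N^2$ points of $A\times B$ in any fixed compact box (they may be unbounded), and even inside a fixed box they may cluster at distance, say, $2^{-2^N}$ from the zero sets of $P_x$, $P_y$, $P_{xy}$ or of the curvature without lying on them; then no finite cover by boxes avoiding those curves captures a positive proportion of $A\times B$, and the quantitative hypotheses of Theorem \ref{curvatureProjections}$^*$ degenerate with $N$. In the paper the analogous difficulty is handled by Remez/\L{}ojasiewicz estimates together with non-concentration of the sets (Corollary \ref{analyticFunctionsLargeOnQ}), which is exactly what arbitrary finite sets lack. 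Your classification of polynomials with identically vanishing Blaschke curvature (additive normal form, with the multiplicative form arising from logarithmic primitives) is correct in spirit and consistent with Lemmas \ref{special} and \ref{whenIsPSpecialFormProp}, but the reduction of the counting statement to the discretized projection theorem cannot be completed as proposed.
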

If $P(x,y) = h(a(x)+b(y))$ or $P(x,y) = h(a(x)b(y))$, where $h,a,b$ are univariate real polynomials, then we call $P$ a (polynomial) special form. If $P$ is not a (polynomial) special form then we call it an expanding polynomial. Theorem \ref{ElekesRonyai} has since been generalized in several directions. See \cite{ES, RSS, RSZ, W, MRSW, RS, BB}, and the references therein. 

Similar questions can be asked for metric entropy. In this direction, Katz and Tao proposed the discretized ring conjecture \cite{KT}, which was solved by Bourgain \cite{B03}. The discretized ring conjecture (now a theorem) is similar to the Erd\H{o}s-Szemer\'edi theorem, except cardinality has been replaced by metric entropy. It was an important ingredient in the proof of Theorem \ref{bourgainThm}
\begin{thm}[Bourgain]\label{discRingThm}
For each $0<\alpha<1$, there is a number $c = c(\alpha)>0$ and $s = s(\alpha)>0$ so that the following holds for all $\delta>0$ sufficiently small. Let $A\subset [1,2]$ be a union of $\delta$-intervals, with $|A| = \delta^{1-\alpha}$. Suppose that $A$ satisfies the following non-concentration condition for each interval $J$
\begin{equation}\label{nonConcentrationDiscRingConj}
|A \cap J| \leq |J|^{\alpha}\delta^{1-\alpha-s}.
\end{equation}
Then
\begin{equation*}
|A+A| + |A.A| \geq \delta^{1-\alpha-c}.
\end{equation*}
 
\end{thm}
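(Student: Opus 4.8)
This is Bourgain's theorem from \cite{B03}; here is the line of attack I would take. Suppose for contradiction that $|A+A| + |A\cdot A| < \delta^{1-\alpha-c}$. Since $|A| = \delta^{1-\alpha}$, writing $K = \delta^{-c}$ this says $A$ has doubling at most $K$ under both addition and multiplication. The exponent $c=c(\alpha)>0$ will be chosen small, and $s=s(\alpha)>0$ afterwards, small enough that the non-concentration hypothesis survives being ``spent'' during the argument.

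\emph{Step 1: reduce to an approximate subfield.} The Ruzsa triangle inequality and Pl\"unnecke's inequality have discretized (metric-entropy) analogues, obtained by running the classical proofs with the Ruzsa covering lemma applied to unions of $\delta$-intervals. Hence every expression built from $O(1)$ copies of $A$ using $+,-,\cdot$ has measure $\le K^{O(1)}|A|$. Using non-concentration, a positive fraction of $A-A$ lies in $\{1/10 \le |x| \le 10\}$; call this set $R$. Then $R$ is a union of $\delta$-intervals with $|R| \asymp |A|$, it still satisfies the non-concentration condition after weakening $s$ by a bounded factor, and, since $R$ is bounded away from $0$, $|R+R| + |R-R| + |R\cdot R| + |R/R| \le K^{O(1)}|R|$. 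Thus $R$ is an \emph{approximate subfield} of $\RR$, and it suffices to show no such $R$ can be a $(\delta,\alpha)$-set with $0<\alpha<1$.

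\emph{Step 2: exploit scale-invariance and additive--multiplicative rigidity.} The key structural feature is that the approximate-subfield property is invariant under affine rescaling: since $(x-x_0)(y-x_0)=xy-x_0x-x_0y+x_0^2$ and similarly for sums, if $I$ is any interval then $(R\cap I-x_0)/|I|$, viewed at the finer relative scale $\delta/|I|$, is again an approximate subfield of comparable doubling, with a non-concentration constant that has deteriorated only by a controlled amount. Now invoke the structure theory for sets of small doubling: smallness of $|R+R|$ places a large subset of $R$ efficiently inside a bounded-rank generalized arithmetic progression (a discretized Freiman theorem for subsets of $\RR$, after a Balog--Szemer\'edi--Gowers reduction if needed), and smallness of $|R\cdot R|$ does the same for $\log R$. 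The aim is to show that being simultaneously a bounded-rank additive progression and (in log) a bounded-rank additive progression is so rigid that $R$ must, at some scale, concentrate a definite fraction of its mass into an interval of length $\le \delta^{\eta}$, for some $\eta=\eta(\alpha)>0$. If $\eta$ is large relative to the current non-concentration budget this is an outright contradiction; otherwise one zooms into (a sub-interval of) that interval, rescales to unit size, and repeats, each stage raising the scale to at least $\delta^{1-\eta}$, until the set lives at a fixed scale where its structure is trivial.

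\emph{Where the difficulty lies.} Two points carry the weight. First, the single-scale structural input: a discretized Freiman theorem in $\RR$ with polynomial-in-$K$ bounds, together with the proof that low-rank additive structure and low-rank multiplicative structure cannot coexist non-degenerately (a generalized arithmetic progression whose logarithm is again a low-rank generalized arithmetic progression is forced to cluster at some scale). Second, and more serious, the iteration must actually close with $c(\alpha)$ and $s(\alpha)$ independent of $\delta$: a bare rescaling only advances $\log(1/\delta)$ by a constant factor per stage, so roughly $\log\log(1/\delta)$ stages are needed, and the losses in the doubling constant and in the non-concentration exponent must be controlled so tightly that they do not accumulate over that many stages --- which forces each stage to produce a \emph{genuine} gain rather than a mere change of scale. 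Turning this into a clean quantitative statement is the substance of \cite{B03}. (An alternative to the Freiman route is incidence-geometric: smallness of $|R+R|$ and $|R\cdot R|$ would force anomalously many incidences between the point set $R\times R$ and a non-concentrated family of lines $y=b(x-a)$, contradicting a discretized Szemer\'edi--Trotter estimate --- but that estimate is of comparable depth, so this merely moves the difficulty.)
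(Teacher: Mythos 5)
There is no proof in the paper to compare against: Theorem \ref{discRingThm} is quoted as a known result of Bourgain \cite{B03} (the paper also points to the later, simpler proof of \cite{GKZ} via incidence geometry), so the only question is whether your sketch would stand on its own. It does not, and not only for the reason you concede at the end. The first concrete gap is in Step 1: from $|A+A|,|A\cdot A|\le K|A|$, Pl\"unnecke--Ruzsa calculus controls iterated sums of $A$ and, separately, iterated products of $A$, but it does \emph{not} control mixed expressions such as $(A-A)\cdot(A-A)\subset A\cdot A - A\cdot A - A\cdot A + A\cdot A$; bounding expressions that mix $+$ and $\cdot$ by $K^{O(1)}|A|$ is essentially the content of the theorem itself, so your claim that $R=A-A$ (suitably truncated) is an ``approximate subfield'' with small $R\cdot R$ and $R/R$ assumes what must be proved. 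The second gap is the structural input in Step 2: a discretized Freiman theorem with polynomial-in-$K$ bounds of the strength you need is not available, and neither known proof proceeds this way --- Bourgain's argument runs a multi-scale additive-combinatorial analysis (Balog--Szemer\'edi--Gowers, Pl\"unnecke--Ruzsa, and an induction on scales in which the non-concentration hypothesis is spent quantitatively), while \cite{GKZ} replaces the structure theory by a single-scale Szemer\'edi--Trotter-type incidence bound, which you mention only as an aside and correctly note is of comparable depth. Finally, as you acknowledge, the closing of the iteration with $c(\alpha),s(\alpha)$ uniform in $\delta$ is exactly where the work lies, so the proposal is a plausible roadmap with one incorrect reduction rather than a proof.
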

The non-concentration condition \eqref{nonConcentrationDiscRingConj} arises naturally when discretizing fractal sets; see \cite{KT} or Section \ref{dimExpansionSec} for details. In particular, in \cite{B10} Bourgain used a variant of Theorem~\ref{discRingThm} to prove the following ``dimension expansion'' result for subsets of $\RR$:
\begin{thm}[Bourgain]\label{growthInRings}
For each $0<\alpha<1$ there is a number $c=c(\alpha)>0$ so that the following holds. Let $A\subset\RR$ be a Borel set with $\dim A = \alpha$. Then there exists $\lambda \in A$ so that
\begin{equation*}
\dim (A+\lambda A)\geq\alpha+c.
\end{equation*}
\end{thm}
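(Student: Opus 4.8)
\emph{Proof strategy.} We reduce to a single-scale discretized sum-product/projection estimate and then propagate it through all scales. After passing to a compact subset of $A\setminus\{0\}$ of the same dimension and rescaling, we may assume $A\subset[1,2]$, so every multiplier $\lambda\in A$ is bounded away from $0$. Suppose, for contradiction, that $\dim(A+\lambda A)<\alpha+c$ for every $\lambda\in A$, where $c=c(\alpha)>0$ is to be chosen small. Fix $\alpha_1\in(\alpha-c,\alpha)$ and, by Frostman's lemma, a Borel probability measure $\mu$ supported on a compact subset of $A$ with $\mu(B(x,r))\le r^{\alpha_1}$ for all $x,r$; in particular $\mu$ is non-atomic. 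For $\lambda\in A$ put $S_\lambda(x,y)=x+\lambda y$ and $\nu_\lambda=(S_\lambda)_*(\mu\times\mu)$. Integrating over the first variable gives $\nu_\lambda(B(t,r))\le r^{\alpha_1}$ uniformly for $\lambda\in[1,2]$, so each $\nu_\lambda$ is a non-atomic measure supported on $A+\lambda A$ with a uniform Frostman bound, and our assumption forces $\dim_H\nu_\lambda<\alpha+c$ for every $\lambda\in A$. The plan is to contradict this by producing a single $\lambda\in A$ with $\dim_H\nu_\lambda\ge\alpha+c'$, where $c'=c'(\alpha)>0$ does not depend on $c$.

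The single-scale input is the following variant of Theorem~\ref{discRingThm}, which follows from Bourgain's discretized projection theorem \cite{B10} applied to the planar $(\delta,2\alpha')$-set $X\times X$ and to the family of projections $\pi_\lambda(x,y)=x+\lambda y$: for each $0<\alpha'<1$ there are $\epsilon=\epsilon(\alpha')>0$ and $s=s(\alpha')>0$ so that, for all small $\delta>0$, whenever $X,\Lambda\subset[1,2]$ are unions of $\delta$-intervals with $|X|=|\Lambda|=\delta^{1-\alpha'}$ and $|X\cap J|,|\Lambda\cap J|\le|J|^{\alpha'}\delta^{1-\alpha'-s}$ for all intervals $J$, then $|X+\lambda X|\ge\delta^{-\epsilon}|X|$ for every $\lambda\in\Lambda$ outside an exceptional set of relative measure at most $\delta^{\epsilon}$. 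This is valid for all $0<\alpha'<1$; we shall apply it with $\alpha'$ slightly below $\alpha_1$.

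To use this estimate at \emph{every} scale, I would pass to a multi-scale (CP-process) description of $\mu$, in the style of Furstenberg and Hochman. Discretizing at the dyadic scales $2^{-j}$ and conditioning and rescaling, view $\mu$ through local pictures $X_j\subset[0,1]$ at scales $j=1,\dots,m$; after pigeonholing over scales we may take each $X_j$ to be, up to losses of the form $2^{-cj}$, a $(2^{-j},\alpha'_j)$-set on which $\mu$ is nearly uniform. Call a block $[j,j+\ell]$ of scales \emph{rich} if the local exponents $\alpha'_i$ there all lie in a fixed interval $[\epsilon_0,1-\epsilon_0]$; since $\dim A=\alpha>0$, for $\mu$-typical points the rich blocks cover a proportion of all scales that is bounded below by a positive function of $\alpha$ alone. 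On each rich block, apply the single-scale estimate to the local picture of $\mu$, taken both as the set $X$ and — after a parallel localization of the multiplier variable — as the family $\Lambda$: for all but a concentrated set of $\lambda$'s, the measure $\nu_\lambda$ conditioned on that block spreads over a factor $2^{\epsilon\ell}$ more intervals of the finest scale than $\mu$ does, i.e. $\nu_\lambda$ gains $\epsilon$ units of dimension across the block. Telescoping the resulting local Frostman bounds for $\nu_\lambda$ over all blocks with $j\le m$, and averaging over the multiplier $\lambda$ against $\mu$ (so that the small exceptional sets do not accumulate), yields some $\lambda\in A$ with $\dim_H\nu_\lambda\ge\alpha+c'$, where $c'$ depends only on $\epsilon$, $\epsilon_0$ and the density of rich scales. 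Taking $c<c'$ gives the contradiction.

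The crux — and the step I expect to be the main obstacle — is this last, multi-scale argument. The difficulty is that the contradiction hypothesis controls only the \emph{Hausdorff} dimension of $A+\lambda A$, which by itself says nothing about the single-scale covering numbers $|X+\lambda X|_\delta$ at any individual scale; hence the single-scale estimate cannot be invoked just once, but must be run at essentially every scale simultaneously, uniformly over a robustly non-concentrated family of multipliers, with constants that do not degrade as $m\to\infty$. Making the local-picture/rich-scale bookkeeping precise, controlling the $\lambda$-dependence of the local refinements used to apply the single-scale estimate, and tracking the loss from the rare degenerate scales so as to recover a genuine Frostman — hence Hausdorff-dimensional — lower bound for $\nu_\lambda$, is the technical heart of the proof; the remainder is routine. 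This is, in essence, Bourgain's argument in \cite{B10}.
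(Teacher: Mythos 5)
First, note that the paper does not prove Theorem \ref{growthInRings} at all: it is quoted as background and attributed to Bourgain \cite{B10}, so there is no in-paper proof to match yours against. Judged on its own terms, your proposal has a genuine gap, and it sits exactly where you say it does. Your single-scale input (a discretized projection/sum-product estimate with a small exceptional set of multipliers, applied to $X\times X$ and the family $\pi_\lambda(x,y)=x+\lambda y$) is the right ingredient, but the entire passage from the hypothesis ``$\dim(A+\lambda A)<\alpha+c$ for every $\lambda\in A$'' to a usable discretized statement is deferred to an unexecuted CP-process/rich-block scheme. The assertions that rich blocks occupy a positive proportion of scales for $\mu$-typical points, that the dimension gain of $\nu_\lambda$ telescopes across blocks, and above all that one can average over $\lambda$ so that the scale-dependent exceptional sets ``do not accumulate'' and still land on a single $\lambda\in A$, are precisely the content of the theorem; as written they are claims, not proofs.

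Moreover, the premise behind that scheme --- that ``the single-scale estimate cannot be invoked just once, but must be run at essentially every scale simultaneously'' --- is not correct, and this is where your route overcomplicates. Because the smallness hypothesis holds for \emph{every} $\lambda\in A$, one can argue as this paper does for the analogous deductions in Sections \ref{pinnedFalconerAndVisibilitySection} and \ref{dimExpansionSec}: take a Frostman measure $\mu$ on $A$ (serving both as the set and as the multiplier family); for each $\lambda$ choose a covering of $A+\lambda A$ witnessing $\sum_i r_i^{\alpha+c}<1$; sum over dyadic scales, integrate in $\lambda$ against $\mu$, and pigeonhole a single scale $\delta=2^{-k}$ at which a $\gtrsim|\log\delta|^{-2}$ proportion of the mass survives. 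A further pigeonholing on the mass per $\delta$-interval produces a union of $\delta$-intervals $X$ with the Katz--Tao non-concentration inherited from Frostman, together with a set $\Lambda$ of multipliers of covering number $\gtrsim|\log\delta|^{-O(1)}\delta^{-\alpha+O(c)}$ (hence non-concentrated up to logarithmic losses) such that $\mathcal{E}_\delta(X+\lambda X)\lesssim\delta^{-\alpha-O(c)}$ for all $\lambda\in\Lambda$. Since $\Lambda$ is far larger than the exceptional set $\delta^{\epsilon}$ permitted by the discretized theorem, a single application at this one scale already gives the contradiction; no multi-scale bookkeeping, rich-block selection, or uniformity in $m$ is needed. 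In short: your outline names the correct single-scale theorem, but the reduction you leave open is the theorem's actual content, and the simpler covering-plus-pigeonholing reduction (the one used verbatim in this paper's proofs of Theorems \ref{unifLowerBoundPinnedDistances} and \ref{dimension-expander}) is what should replace the CP-process sketch.
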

Theorem~\ref{growthInRings} implies the Erd\H{o}s-Volkmann ring conjecture \cite{EV} (proved by Edgar and Miller \cite{EM} slightly earlier, and also addressed directly by Bourgain in \cite{B03}), which asserts that there does not exist a proper Borel subring of the reals with positive Hausdorff dimension. 

Theorem \ref{discRingThm} has seen a number of extensions and generalizations. In \cite{BG}, Bourgain and Gamburd proved a variant of \ref{discRingThm} with a less restrictive version of the non-concentration condition \eqref{nonConcentrationDiscRingConj}, while in \cite{GKZ} Guth, Katz, and the second author found a simple new proof of Theorem \ref{discRingThm} that yields an explicit lower bound on the exponent $\eps$. Theorem \ref{discRingThm} has also been generalized to $SU(2)$ \cite{BG} as well as other settings \cite{BG2, H, HS, S}.

We prove a discretized version of the Elekes-R\'onyai theorem for analytic functions, in the spirit of Theorem \ref{discRingThm}. Before stating our result, we will define what it means for an analytic function to be a special form (cf. \cite[Lemma 10]{RaSh}).
\begin{defn}\label{defnAnalyticSpecialForm}
Let $P(x,y)$ be analytic on a connected open set $U\subset\RR^2$. We say $P$ is an (analytic) special form if on each connected region of $U \backslash \big(\big\{ \frac{d}{dx}P = 0\big\}\cup \big\{ \frac{d}{dy} = 0\big\}\big)$, there are univariate real analytic functions $h$, $a$, and $b$ so that $P(x,y) = h(a(x)+b(y))$.
\end{defn}
Note that every polynomial special form is also an analytic special form. 
We are now ready to state our main result on expanding functions. 

\begin{thm}\label{main-entropy-growth}
Let $0< \kappa\leq \alpha< 1$, let $U\subset\RR^2$ be a connected open set that contains $[0,1]^2$, and let $P\colon U \to\RR$ be analytic (resp.~polynomial). Then either $P$ is an analytic (resp.~polynomial) special form, or there exists $\eps = \eps(\alpha,\kappa)>0$ and $\eta = \eta(\alpha,\kappa,P)>0$ so that the following is true for all $\delta>0$ sufficiently small.

Let $A,B\subset[0,1]$ be unions of $\delta$ intervals, and suppose that for all intervals $J$, $A$ and $B$ satisfy the non-concentration conditions
\begin{equation}\label{nonConcentrationCond2}
\begin{split}
|A \cap J| & \le  |J|^\kappa\delta^{1-\alpha-\eta},\\
|B \cap J| & \le  |J|^\kappa\delta^{1-\alpha-\eta}.
\end{split}
\end{equation}
Let $E\subset A\times B$ be a union of $\delta$-squares, with $|E|\geq \delta^{2-2\alpha+\eta}$. Then we have the growth estimate
\begin{equation}\label{entropyGrowth}
|P(E)|\geq \delta^{1-\alpha-\eps}.
\end{equation}
%
\end{thm}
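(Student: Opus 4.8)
The plan is to prove Theorem \ref{main-entropy-growth} by a combination of (a) a structural dichotomy for analytic functions $P$ that are \emph{not} special forms, extracted from the hypotheses via the Blaschke curvature calculation, and (b) a single-scale projection/sum-product argument driven by the discretized ring theorem (Theorem \ref{discRingThm}) and the convexity estimate (Theorem \ref{convexProjections}$^*$). The key reduction is this: given $P$ not a special form, I want to show that on some subrectangle we can make a smooth change of variables so that the three functions $\phi_1(x,y)=x$, $\phi_2(x,y)=y$, $\phi_3(x,y)=P(x,y)$ have \emph{non-vanishing Blaschke curvature} — in which case Theorem \ref{curvatureProjections}$^*$ applies directly and gives $|P(E)| \geq \delta^{1-\alpha/2-c} \geq \delta^{1-\alpha-\eps}$ — or else the curvature of this triple vanishes, forcing (by the converse statement recalled after \eqref{specialFormOfPhi}) that $P$ is locally of the form $h(a(x)+b(y))$, which is precisely the special-form alternative, contradicting our assumption unless the special-form structure is global. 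The subtlety is that vanishing of the Blaschke curvature of $(x,y,P)$ everywhere only yields a \emph{local} special form on each component of the complement of the critical set $\{\partial_x P = 0\}\cup\{\partial_y P=0\}$, so some care is needed to argue that "not an analytic special form" implies the curvature is somewhere non-vanishing; I expect to handle this by analyticity (the curvature form is a real-analytic function, so it vanishes identically or on a lower-dimensional set, and identical vanishing on a connected region propagates the special-form representation).

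Concretely, I would proceed as follows. \textbf{Step 1 (reduction to a single scale on a subrectangle).} Since $E \subset A\times B$ with $|E|\geq \delta^{2-2\alpha+\eta}$ and $A,B$ satisfy \eqref{nonConcentrationCond2}, a pigeonholing argument lets me pass to a subrectangle $R = I_x\times I_y \subset [0,1]^2$ of side-length comparable to a small constant (depending only on $P$, not $\delta$) on which $E\cap R$ still has measure $\gtrsim \delta^{2-2\alpha+O(\eta)}$ and still satisfies the non-concentration hypotheses (with slightly worse constants absorbed into $\eta$). I choose $R$ inside a region where $\partial_x P$ and $\partial_y P$ are bounded away from $0$, which is possible because $P$ is analytic and non-constant in each variable (if $P$ were constant in one variable it is trivially a special form). \textbf{Step 2 (the curvature dichotomy).} Compute the Blaschke curvature form \eqref{BlaschkeCurvatureForm} for the triple $(x, y, P)$; a direct calculation gives that it is (a nonzero multiple of) $\partial_x\partial_y \log\!\big(-\partial_x P / \partial_y P\big)\, dx\wedge dy$. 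If this does not vanish identically on $R$, then — shrinking $R$ to avoid the analytic zero set of the curvature — Theorem \ref{curvatureProjections}$^*$ applied with $K=\bar R$, $\phi_1=x$, $\phi_2=y$, $\phi_3=P$, and $X = E\cap R$ yields that one of $|\phi_1(X)|,|\phi_2(X)|,|\phi_3(X)|$ is $\geq \delta^{1-\alpha/2-c}$. But $|\phi_1(X)| = |\pi_x(E\cap R)| \leq |A| \le \delta^{1-\alpha+O(\eta)}$ and similarly for $\phi_2$, so for $\eta$ small relative to $c$ the large one must be $|\phi_3(X)| = |P(E\cap R)| \le |P(E)|$, giving \eqref{entropyGrowth} with $\eps = c/2$. \textbf{Step 3 (the degenerate case).} If instead the curvature of $(x,y,P)$ vanishes identically on $R$, then by the converse of the Blaschke theorem there is a local change of coordinates on $R$ taking $P$ to $x+y$; unwinding this, $P$ has the form $h(a(x)+b(y))$ on $R$. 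By analyticity this representation extends along $U\setminus(\{\partial_x P=0\}\cup\{\partial_y P=0\})$, so $P$ is an analytic special form — but we assumed it is not, a contradiction. Hence Step 2 always applies and we are done.

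\textbf{The main obstacle} I anticipate is Step 3 — making rigorous the passage from "Blaschke curvature of $(x,y,P)$ vanishes on an open set" to "$P$ is an analytic special form in the sense of Definition \ref{defnAnalyticSpecialForm}". The issue is that the local-to-global promotion of the representation $P = h(a(x)+b(y))$ requires tracking $h,a,b$ as one moves between components of the complement of the critical set, and across those components the functions $a,b,h$ may need to be reparametrized; one must check the definition is satisfied on \emph{each} connected component separately (which is exactly how Definition \ref{defnAnalyticSpecialForm} is phrased, so this may in fact be clean). A secondary technical point is verifying that after the pigeonholing in Step 1 and the shrinking in Step 2, the non-concentration exponents and the measure lower bound survive with the loss hidden in $\eta = \eta(\alpha,\kappa,P)$ — this is routine but must be done carefully so that $\eps$ depends only on $(\alpha,\kappa)$ while $\eta$ is allowed to depend on $P$ (through the size of the subrectangle $R$ and the lower bounds on $|\partial_x P|, |\partial_y P|$, and the curvature). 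I would also need to double-check the edge case where $P$ \emph{is} a special form but of the multiplicative type $h(a(x)b(y))$: on the relevant region $a(x),b(y)\neq 0$ this is $h(\exp(\log|a(x)|+\log|b(y)|))$, i.e. still of additive type after a further coordinate change, which is why Definition \ref{defnAnalyticSpecialForm} only lists the additive form — so the multiplicative case is subsumed and needs no separate treatment.
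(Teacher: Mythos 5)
Your overall dichotomy (Blaschke curvature of the web $(x,y,P)$ vanishes identically $\Rightarrow$ special form, via Lemma \ref{whenIsPSpecialFormProp}; otherwise run a discretized projection argument) is the right starting point, but the reduction to Theorem \ref{curvatureProjections}$^*$ has two genuine gaps and one exponent error. First, the parameter mismatch: treating $X=E$ as the input to the curvature theorem means its size exponent is $2\alpha$, so the theorem must be applied with parameter $2\alpha$ and requires the non-concentration $\mathcal{E}_\delta(X\cap B_r)\lesssim r^{2\alpha}\delta^{-2\alpha-\eta}$. But $E\subset A\times B$ with \eqref{nonConcentrationCond2} only yields $r^{2\kappa}\delta^{-2\alpha-2\eta}$, and for $\kappa$ strictly less than $\alpha$ this fails badly at scales $r$ near $\delta$ (the deficit is $\delta^{-2(\alpha-\kappa)}$, a fixed power, not absorbable into the theorem's $\eta$-slack). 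So your route only covers $\kappa=\alpha$; the statement allows arbitrary $0<\kappa\le\alpha$, and this is exactly why the paper's Proposition \ref{countingNumberQuadruplesNablePWedgeP} carries the two parameters $\alpha$ and $\kappa$ separately and why the paper proves Theorem \ref{main-entropy-growth} directly from that proposition (plus Cauchy--Schwarz, Lemma \ref{CSLem}) rather than from Theorem \ref{curvatureProjections}. Second, as written your Step 2 uses the threshold $\delta^{1-\alpha/2-c}$ (parameter $\alpha$): since $|A|,|B|$ can be as large as $\delta^{1-\alpha-\eta}\gg\delta^{1-\alpha/2-c}$, the coordinate projections may already exceed that threshold, so ``the large one must be $\phi_3$'' does not follow; the conclusion only becomes usable with the parameter $2\alpha$ and threshold $\delta^{1-\alpha-c}$, which brings you back to the first problem.

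The second genuine gap is the localization in Steps 1--2. You cannot pass to a fixed ($\delta$-independent) rectangle $R$ on which $\partial_xP,\partial_yP$ and the curvature are bounded below by constants while keeping $|E\cap R|\gtrsim\delta^{2-2\alpha+O(\eta)}$: since $|E|\ge\delta^{3\eta}|A\times B|$ is only a $\delta^{O(\eta)}$-fraction of the ambient product, the non-concentration hypotheses only prevent $E$ from concentrating within distance about $\delta^{O(\eta/\kappa)}$ of the zero sets of $\partial_xP$, $\partial_yP$, $\partial_{xy}P$, $K_P$ — for each $\delta$, essentially all of $E$ could sit within $o(1)$ of those sets, so ``shrinking $R$ to avoid the analytic zero set'' destroys the mass lower bound. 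One is forced to work on cubes whose side length shrinks with $\delta$, where \L{}ojasiewicz only gives lower bounds of the form $\delta^{O(\eta)}$ on the relevant quantities; making this quantitative is precisely the content of the paper's Section \ref{analFunctNonConcSec} (stratification, Lemma \ref{neighborhoodAnalyticVariety}, Theorem \ref{Lojasiewicz}, Corollary \ref{analyticFunctionsLargeOnQ}), and it is why Proposition \ref{countingNumberQuadruplesNablePWedgeP} is formulated with $\delta^{\eps}$-type thresholds rather than constants. Your proposal either needs to reproduce that machinery and then invoke an energy-dispersion statement valid for mismatched exponents $(\alpha,\kappa)$ — i.e., essentially the paper's proof — or it does not close. (Your side remarks on the multiplicative form $h(a(x)b(y))$ and on the local-to-global issue in Step 3 are fine and match the paper's Definition \ref{defnAnalyticSpecialForm} and Lemmas \ref{special}--\ref{whenIsPSpecialFormProp}.)
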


\begin{rem}
Note that the growth exponent $\eps$ in Theorem \ref{main-entropy-growth} is independent of $P$, and in particular independent of the degree of $P$ when $P$ is a polynomial. This is interesting for the following reasons. First, if $P$ is a polynomial then it might be possible (though likely difficult) to prove an analogue of Theorem \ref{main-entropy-growth} using Bourgain's discretized sum-product theorem (Theorem \ref{discRingThm}) and tools from Ruzsa calculus. For example, these ideas were used in \cite{BKT} to prove expansion over $\mathbb{F}_p$ for a certain explicitly specified polynomial.  However, the entropy growth exponent $\eps$ arising from such a strategy would necessarily depend on the polynomial $P$ (the degree of $P$, the number of monomial terms, etc).

Second, there is a general intuition in additive combinatorics that states that one should expect a quantity such as $P(A\times B)$ to be large when there are few solutions to $P(x,y) = P(x^\prime,y^\prime)$, and conversely. Concretely, if $A$ and $B$ have cardinality $\delta^{-\alpha}$, then the statements ``there are at about $\delta^{-3\alpha+\eps}$ $\delta$-separated solutions to $P(x,y) = P(x^\prime,y^\prime)$'' and ``$P(A,B)$ has measure about $\delta^{1-\alpha-\eps}$'' are often morally equivalent. In the present setting, however, this intuition is not entirely correct---there exists a sequence of polynomials $P_1,P_2,\ldots$ of increasing degree so that there are $\delta^{-3\alpha+o(1)}$ solutions to $P_j(x,y) = P_j(x^\prime,y^\prime)$, and yet the measure of $P_j(A \times B)$ remains small.
\end{rem}

We will also prove the following ``dimension expansion'' version of Theorem \ref{main-entropy-growth}, which is an analogue of Theorem \ref{growthInRings}.  
\begin{thm}\label{dimension-expander}
For every $0< \alpha< 1$, there exists $c=c(\alpha)>0$ so that the following holds. Let $U\subset\RR^2$  be a connected open set that contains $[0,1]^2$ and let $P\colon U \to\RR$ be analytic (resp.~polynomial). Then either $P$ is an analytic (resp.~polynomial) special form, or for every pair of Borel sets $A,B\subset [0,1]$ of dimension at least $\alpha$, we have 
\[
\dim P(A \times B) \ge \alpha+c.
\] 
\end{thm}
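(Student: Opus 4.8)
\textbf{Proof plan for Theorem \ref{dimension-expander}.} The plan is to deduce the Hausdorff-dimension statement from the single-scale estimate in Theorem \ref{main-entropy-growth} via a standard pigeonholing-across-scales argument, together with the regularization of Frostman measures into sets satisfying the Katz--Tao non-concentration condition \eqref{nonConcentrationCond2}. First I would fix $0<\alpha<1$ and set $\kappa=\alpha$ in Theorem \ref{main-entropy-growth}; this produces an exponent $\eps=\eps(\alpha)>0$. I claim $c=c(\alpha)$ can be taken to be a small multiple of $\eps$, say $c=\eps/4$. Suppose for contradiction that $P$ is not an analytic (resp.\ polynomial) special form, but there are Borel sets $A,B\subset[0,1]$ with $\dim A,\dim B\geq\alpha$ while $\dim P(A\times B)<\alpha+c$. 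By passing to compact subsets and invoking Frostman's lemma, fix compactly supported probability measures $\mu_A$ on $A$ and $\mu_B$ on $B$ with $\mu_A(J),\mu_B(J)\lesssim |J|^{\alpha}$ for all intervals $J$, and a measure $\nu$ on $P(A\times B)$ which (after a Frostman-type argument on the image set) concentrates: $\nu(I)\gtrsim |I|^{\alpha+c}$ fails to be ruled out, so $P(A\times B)$ can be covered efficiently at every small scale $\delta$ by about $\delta^{-(\alpha+c)}$ intervals of length $\delta$.

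The second step is the discretization and multiscale pigeonhole. Fix a small dyadic scale $\delta$ and discretize: let $A_\delta$ (resp.\ $B_\delta$) be the union of $\delta$-intervals meeting $A$ (resp.\ $B$); the Frostman condition on $\mu_A,\mu_B$ gives $|A_\delta\cap J|\lesssim |J|^\alpha\delta^{1-\alpha}$ for all intervals $J$ of length $\geq\delta$, which is even better than \eqref{nonConcentrationCond2}, and $|A_\delta|,|B_\delta|\gtrsim \delta^{1-\alpha}$ up to a subpolynomial loss; by a standard pigeonholing (removing the ``bad'' part of the mass, which by the non-concentration condition is negligible at most scales) one may arrange a density $\delta^{1-\alpha-\eta}$-type lower bound on $|A_\delta|$, $|B_\delta|$ for $\eta$ as small as we like, along a density-one set of scales $\delta$. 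The issue is that $A\times B$ need not have the required lower density $|E|\geq\delta^{2-2\alpha+\eta}$: it does, since $|(A\times B)_\delta|=|A_\delta||B_\delta|\gtrsim \delta^{2-2\alpha}$ up to subpolynomial factors, so taking $E=(A\times B)_\delta$ works provided $\eta$ is chosen small relative to the subpolynomial slack, which a dyadic-pigeonhole over scales supplies on a positive-density (in fact density-one) subsequence of scales. Now Theorem \ref{main-entropy-growth} applies and yields $|P(E)|\geq \delta^{1-\alpha-\eps}$ for all sufficiently small $\delta$ in this subsequence. On the other hand $P(E)$ is contained in a $O(\delta)$-neighborhood of $P(A\times B)$ (since $P$ is Lipschitz on the compact set $U\supset[0,1]^2$, enlarging $\delta$ by a constant), so $|P(E)|\lesssim \delta\cdot N_\delta(P(A\times B))\lesssim \delta^{1-(\alpha+c)+o(1)}=\delta^{1-\alpha-c+o(1)}$, where $N_\delta$ denotes the $\delta$-covering number. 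Comparing, $\delta^{1-\alpha-\eps}\lesssim \delta^{1-\alpha-c+o(1)}$, i.e.\ $\delta^{-\eps}\lesssim \delta^{-c+o(1)}$; since $c=\eps/4<\eps$, letting $\delta\to 0$ along the chosen subsequence gives a contradiction.

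The routine but slightly delicate points to be careful about: (i) the covering-number bound $N_\delta(P(A\times B))\lesssim\delta^{-(\alpha+c)}$ must hold \emph{for a subsequence of scales $\delta\to0$}, which follows from $\dim P(A\times B)<\alpha+c$ by definition of Hausdorff (and hence lower box-counting along subsequences) dimension; (ii) one must intersect this subsequence with the density-one subsequence of ``good'' scales for $A$ and $B$ coming from the non-concentration regularization, and check the intersection is infinite — this is immediate since the good set has density one; (iii) the Frostman measures $\mu_A,\mu_B$ can be taken with exponent any $\alpha'<\dim A$ arbitrarily close to $\alpha$, and after shrinking $\alpha$ slightly we may as well assume exponent exactly $\alpha$, which only improves the non-concentration inequality \eqref{nonConcentrationCond2}; (iv) the special-form dichotomy is inherited directly from Theorem \ref{main-entropy-growth} — if $P$ is not a special form there, it is not here.

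\textbf{Main obstacle.} The genuinely substantive input is Theorem \ref{main-entropy-growth} itself; granting it, the only real subtlety in this reduction is arranging the simultaneous lower density bounds $|A_\delta|,|B_\delta|\gtrsim\delta^{1-\alpha-\eta}$ and $|E|\gtrsim\delta^{2-2\alpha+\eta}$ together with the non-concentration condition \eqref{nonConcentrationCond2} at a common, infinite set of scales, while the image $P(A\times B)$ is simultaneously efficiently covered. I expect this bookkeeping — choosing $\eta$ small enough to absorb the subpolynomial factors that accumulate when passing from a Frostman measure to a clean $(\delta,\alpha)$-type set, and verifying that the relevant sets of ``good scales'' have full density so their intersection is infinite — to be the part requiring the most care, though it is conceptually standard (cf.\ the analogous passage from single-scale to Hausdorff statements in \cite{B10, Shm}).
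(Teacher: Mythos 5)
There is a genuine gap, in fact two related ones, both at the step where you pass from the Hausdorff-dimension hypotheses to the single-scale hypotheses of Theorem \ref{main-entropy-growth}. First, your point (i) is false: $\dim P(A\times B)<\alpha+c$ does \emph{not} give a covering-number bound $\mathcal{E}_{\delta}(P(A\times B))\lesssim\delta^{-(\alpha+c)}$ along any subsequence of scales. Hausdorff dimension does not control lower box dimension (a countable dense set has Hausdorff dimension $0$ and box dimension $1$), so an efficient cover of $P(A\times B)$ exists only with intervals of \emph{many different} lengths, and no single scale need be efficient for the whole image. The standard repair — and what the paper does in Section \ref{dimExpansionSec} — is to push a Frostman product measure forward under $P$ to get a measure $\lambda$ on the image, take a cover with $\sum r_i^{\alpha+(3/5)\eps}<1$, and pigeonhole over dyadic scales to find one scale $\delta$ at which the intervals of that length capture $\gtrsim|\log\delta|^{-2}$ of the $\lambda$-mass; one then controls only that sub-collection $C''$ (and the corresponding $\delta$-squares of $A\times B$ mapping into it), not the whole image. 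Second, your claim that the crude discretizations $A_\delta,B_\delta$ (all $\delta$-intervals meeting $A$, $B$) inherit \eqref{nonConcentrationCond2} from the Frostman measures is also false: the hypothesis is only $\dim A,\dim B\geq\alpha$, so $A$ could be $[0,1]$ itself or dense, making $|A_\delta|$ of order $1$, which violates \eqref{nonConcentrationCond2} with $J=[0,1]$ (that condition forces $\mathcal{E}_\delta(A_\delta)\lesssim\delta^{-\alpha-\eta}$). A Frostman measure gives an upper bound on the mass per interval, hence a \emph{lower} bound on the number of intervals carrying the mass, never an upper bound; "removing a bad part of the mass" does not fix this.

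These two issues are linked, and in the paper's argument they are resolved together: after selecting the scale $\delta$ and the set $C''$ as above, one keeps only the $\delta$-squares $Q$ with $P(Q)\cap C''\neq\emptyset$ and with comparable $\mu\times\nu$-mass, and the needed upper bounds on the sizes of the resulting sets $A_1,B_1$ (the analogue of your non-concentration at the top scale, see \eqref{sizeOfA0B0}) are derived from the \emph{contradiction hypothesis} that the image is small, via the fibering argument giving $|A_1|,|B_1|\lessapprox\delta^{1-\alpha-\eps/10}$ — not from the Frostman condition alone. Note also that the paper does not route this theorem through Theorem \ref{main-entropy-growth} at all: it first uses Lemma \ref{findGoodPoint} to localize to a fixed rectangle $I_0\times J_0$ where $\partial_xP,\partial_yP,\partial_{xy}P,K_P$ are bounded below and where $A,B$ retain local dimension, and then applies the energy dispersion estimate of Proposition \ref{countingNumberQuadruplesNablePWedgeP} directly, contradicting a \emph{lower} bound on the number of $\delta$-separated solutions of $P(x,y)=P(x',y')$ forced by the many squares mapping into the small set $C''$. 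Your route via Theorem \ref{main-entropy-growth} could in principle be salvaged with the same measure-theoretic selection, but as written the discretization and single-scale covering claims on which it rests do not hold.
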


\subsection{Proof ideas}
If $P(x,y)$ is a smooth function, we will define an auxiliary function $K_P(x,y)$ that measures the extent to which $P$ ``looks like'' a special form (in the sense of Definition \ref{defnAnalyticSpecialForm}) near the point $(x,y)$. $K_P(x,y)$ is closely related to the Blaschke curvature of the 3-web defined by the functions $\phi_1(x,y)=x,$ $\phi_2(x,y)=y$, and $\phi_3(x,y)=P(x,y)$ at the point $(x,y)$. We will show that if $P$ is analytic and $K_P$ vanishes identically, then $P$ is an analytic special form. 

At the other extreme, if $K_P$ is bounded away from 0, then we will prove that the set of solutions to $P(a,b)=P(a',b')$, where $a,a'\in A$ and $b,b'\in B$ has small $\delta$-covering number. We will call this an ``energy dispersion'' estimate, since it is an upper bound on the $L^2$ norm of the multiplicity function $m(z)=\mathcal{E}_{\delta}(P^{-1}(z)\cap (A\times B))$, where $\mathcal{E}_{\delta}(\cdot)$ denotes the $\delta$-covering number. This energy dispersion estimate will be stated precisely in Proposition \ref{countingNumberQuadruplesNablePWedgeP}, and it is one of the main technical results of this paper. It is  closely related to the $L^2$ flattening lemma that was used by Bourgain and Gamburd \cite{BG2, BG} to establish expansion in the Cayley graph of $SL_2(\mathbb{F}_p)$ and the existence of a spectral gap in certain free subgroups of $SU(2)$.

Theorem \ref{curvatureProjections} follows from Proposition \ref{countingNumberQuadruplesNablePWedgeP} and a formula that relates the auxiliary function $K_P$ to Blaschke curvature. Theorem \ref{pinnedDistanceCor} and Theorem \ref{unifLowerBoundPinnedDistances} are proved by combining Proposition \ref{countingNumberQuadruplesNablePWedgeP} with tools from real algebraic geometry, while Theorem \ref{main-entropy-growth} is proved by combining Proposition \ref{countingNumberQuadruplesNablePWedgeP} with tools from semi-analytic geometry.

Proposition \ref{countingNumberQuadruplesNablePWedgeP} is proved using ideas from additive combinatorics. We will give a brief sketch of the proof, which is inspired by the proof of the Elekes-R\'onyai theorem in \cite{RSZ}. The goal is to prove an estimate of the form $\mathcal{E}_{\delta}(\{P(a,b)=P(a',b')\})\leq\delta^{-3\alpha+\eps}$ for some $\eps=\eps(\alpha)>0$. Here and for the rest of this proof sketch, we will always suppose that $a,a'\in A$ and $b,b'\in B$. Suppose instead that $\mathcal{E}_{\delta}(\{P(a,b)=P(a',b')\})\geq \delta^{-3\alpha+\eps}$; if $\eps>0$ is sufficiently small we will obtain a contradiction.

For the purpose of this sketch, we will suppose that for each $x,x'$ and $y$, there is a unique $g(x,x',y)$ so that $P(x,y)=P(x',g(x,x',y)).$ In particular, if $P(a,b)=P(a',b'),$ then $g(a,a',b)\in B$. This means there are many triples $(a,a',b)\in A\times A\times B$  for which $g(a,a',b)\in B$. Concretely, we have an estimate of the form
\begin{equation}\label{manyTriplesInB}
|\{(a,a',b)\colon g(a,a',b)\in B\}|\geq\delta^{3-3\alpha+O(\eps)}.
\end{equation}
Next, we will write $g(a,a',b)=g_{b}(a,a')$; we will think of $\{g_b\}_{b\in B}$ as a family of (nonlinear) projections from $\RR^2\to\RR$. The bound \eqref{manyTriplesInB} implies that for many $b\in B$, there is a set $E_b\subset A\times A$ with $|E_b|\geq\delta^{O(\eps)}|A\times A|$ so that $g_b(E_b)\subset B$, and in particular, $|g_b(E_b)|\leq\delta^{-O(\eps)}|E_b|^{1/2}$. This is precisely the setting where Shmerkin's projection theorem (discussed in Section \ref{exceptionalDirectionsIntroSec}) can be applied. Shmerkin's projection theorem says that if the family of projection maps $\{g_b\}$ satisfies a certain non-degeneracy condition (which is indeed satisfied if the auxiliary function $K_P$ is bounded away from 0), then it is impossible for $|g_b(E_b)|$ to be small for many different projection maps $g_b$. This is a contradiction, which completes the proof. 

\subsection{Structure of the paper}\label{StructureOfPaperSec}
In Section \ref{ShmerkinSection} we will introduce Shmerkin's nonlinear discretized projection theorem, and we will recast this theorem into a (slightly technical) statement about energy dispersion; this will be Lemma \ref{EnergyDispersionForF}. In Section \ref{auxiliaryFunctionSection} we will introduce the auxiliary function $K_P,$ which helps quantify whether $P$ (locally) looks like a special form, and we show that it has two key properties. First, if $K_P$ vanishes identically, then $P$ is a special form. Second, when $K_P$ is large, then there can be few solutions to $P(x,y)=P(x',y')$; this will be Proposition \ref{countingNumberQuadruplesNablePWedgeP}. Proposition \ref{countingNumberQuadruplesNablePWedgeP} is used to prove most of the results in the paper. The structure of the paper is as follows.

\medskip

\begin{tikzpicture}
  \node (A) at (0.7,2) {Lem \ref{EnergyDispersionForF}};
  \node (B) at (3.7,3) {Prop \ref{countingNumberQuadruplesNablePWedgeP}};
  \node (C) at (3.7,1) {Thm \ref{convexProjections}};
  \node (D) at (6.7,3.8) {Thm \ref{main-entropy-growth}};
  \node (E) at (6.7,2.5) {Thm \ref{curvatureProjections}};
  \node (F) at (9.8, 2.5) {Thm \ref{pinnedDistanceCor}};
  \node (G) at (12.8,2.5) {Thm \ref{unifLowerBoundPinnedDistances}};
  \node (H) at (6.7,1.2) {Thm \ref{dimension-expander}};
   
   \draw[-implies, double distance=0.1cm]
   (1.6,2.2) -- (2.8,2.9);

   \draw[-implies, double distance=0.1cm]
   (1.6,1.8) -- (2.8,1.1);

   \draw[-implies, double distance=0.1cm]
   (4.6, 3.2) -- (5.8,3.7);

   \draw[-implies, double distance=0.1cm]
   (4.6, 2.9) -- (5.8,2.6);

   \draw[-implies, double distance=0.1cm]
   (4.6, 2.6) -- (5.8,1.4);


   \draw[-implies, double distance=0.1cm]
   (7.7, 2.5) -- (8.8,2.5);


   \draw[-implies, double distance=0.1cm]
   (10.8, 2.5) -- (11.9,2.5);

   \node (K) at (1.9,2.7) {\S \ref{auxiliaryFunctionSection}};

   \node (L) at (1.9,1.2) {\S \ref{convexitySec}};

   \node (M) at (5,3.7) {\S \ref{mainThmProofsSectionOne}};

   \node (N) at (5.3,3.0) {\S \ref{BlaschkeSection}};

   \node (O) at (4.8,1.8) {\S \ref{dimExpansionSec}};


   \node (Q) at (8.2,2.8) {\S \ref{pinnedDistancesSection}};

   \node (R) at (11.3,2.8) {\S \ref{pinnedFalconerAndVisibilitySection}};

\end{tikzpicture}

\subsection{Notation}
In what follows, $\delta$ will denote a small positive number. We will be interested in the asymptotic behaviour of various quantities as $\delta\searrow 0$. If $f$ and $g$ are functions, we write $f\lesssim g$ (or $f = O(g)$ or $g = \Omega(f)$) if there is a constant $C$ (independent of $\delta$) so that $f\leq Cg$. Such a constant will be called an ``implicit constant in the $\lesssim$ notation.'' If $f\lesssim g$ and $g\lesssim f$, we write $f\sim g$. While our main theorems involve functions in the plane, some of our intermediate results will be slightly more general and will involve functions with domain $\RR^d$. All implicit constants in the $\lesssim$ notation are allowed to depend on $d$. We will also write $f=O_\alpha(g)$ if there is a constant $C$ (which may depend on $\alpha$) so that $f\leq Cg$. 

If $X\subset\RR^d$, we will use $\mathcal{E}_{\delta}(X)$ to denote the $\delta$-covering number of $X$; $\#X$ to denote the cardinality of $X$; and $|X|$ to denote the Lebesgue measure of $X$. To improve clarity, we sometimes use $\vol_d(X)$ in place of $|X|$ to emphasize the ambient dimension. If $t>0$ we use $N_t(X)$ to denote the $t$-neighborhood of $X$. 

Our arguments will frequently involve sets of the form $I_1\times I_2\times\ldots\times I_d,$ where $I_1,\ldots,I_d$ are closed intervals. We will refer to such sets as rectangles.

\subsection{Thanks}
The authors would like to thank Pablo Shmerkin for numerous helpful comments and suggestions. The authors would like to thank Michael Christ for stimulating discussions and for alerting them to Blaschke curvature, and its connection to polynomial and analytic special forms \cite{Chr}. The authors would like to thank the anonymous referee for his or her suggestions and corrections.

\section{Shmerkin's nonlinear discretized projection theorem and its consequences}\label{ShmerkinSection}
In this section we will discuss Shmerkin's nonlinear generalization \cite{Shm} of Bourgain's discretized projection theorem \cite{B10}. Shmerkin's nonlinear projection theorem concerns smooth functions of the form $G\colon I\to\RR$, where $I\subset[0,1]^3$ is a rectangle. We will call these ``projection functions.''

In what follows, it will be helpful to introduce some additional notation. If $G\colon I\to \RR$ is a projection function, define $G_{(z)}(x,y) = G(x,y,z)$. In particular, 
\begin{equation*}
\nabla G_{(z)}(x,y)=\big( \partial_x  G(x,y,z),\ \partial_y  G(x,y,z)\big),
\end{equation*} 
so $\nabla G_{(z)}(x,y)$ is a vector in $\RR^2$. For such a function $G$, and for $(x,y,z)\in I$, define the map
\begin{equation}\label{defnTheta}
\theta_{(x,y)}(z) = \angle\operatorname{dir}(\nabla G_{(z)}(x,y)).
\end{equation}

With these definitions, we can now state Shmerkin's result from \cite{Shm}.
\begin{thm}[Shmerkin]\label{Shm}
For every $\eta>0$ and $C>0$, there exists $\tau=\tau(\eta)>0$ and $\delta_0=\delta_0(\eta,C)>0$ such that the following holds for all $0<\delta<\delta_0$. Let $I=I_1\times I_2\times I_3 \subset[0,1]^3$ be a rectangle, let $X\subset I_1\times I_2$ be a union of $\delta$-squares, and let $Z\subset I_3$ be a union of $\delta$-intervals.

Let $G\colon I\to\RR$ be a projection function that satisfies 
\begin{equation}\label{boundsOnG}
\sup_{z\in Z}\Vert G_{(z)}\Vert_{C^2(I_1\times I_2)}<C,\quad\quad\inf_{(x,y,z)\in X\times Z}|\nabla G_{(z)}(x,y)|>C^{-1}.
\end{equation}

Suppose that $X$ satisfies the non-concentration estimate
\begin{equation}\label{Xnonconcentration}
|X \cap Q|\leq \delta^{\eta}|X|,
\end{equation}
whenever $Q\subset[0,1]^2$ is a square of side-length $|X|^{1/2}$. Suppose furthermore that for each $(x,y)\in X$ and each arc $J\subset S^1$ of length $|J|\geq\delta$, we have the ``transversality'' estimate
\begin{equation}\label{thetaimagenoncon}
|\{z\in Z\colon \theta_{(x,y)}(z)\in J\}| \leq \delta^{-\tau}|J|^{\eta}|Z|.
\end{equation}

Then there is a small bad set $Z_{\operatorname{bad}}\subset Z$ with $|Z_{\operatorname{bad}}|\leq \delta^{\tau}|Z|$ so that for all $z\in Z\backslash Z_{\operatorname{bad}}$, the following holds: Let $X^\prime\subset X$ with $|X^\prime|\geq \delta^{\tau}|X|$. Then
\begin{equation}
\mathcal{E}_{\delta}(\{G(x,y,z)\colon (x,y)\in X^\prime\})\geq\delta^{-\tau}\mathcal{E}_{\delta}(X)^{1/2}.
\end{equation}
\end{thm}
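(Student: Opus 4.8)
The plan is to obtain Theorem \ref{Shm} from the main result of \cite{Shm}; the work consists of translating hypotheses and performing a routine single-scale discretization, and I would organize it as follows.

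First, I would set up the dictionary between the two formulations. Shmerkin's nonlinear projection theorem concerns a parametrized family of $C^2$ maps $\{\pi_\lambda\colon\RR^2\to\RR\}_\lambda$ subject to (i) a uniform $C^2$ bound, (ii) a uniform lower bound on $|\nabla\pi_\lambda|$, and (iii) a transversality condition asserting that for each fixed base point $p$ the assignment $\lambda\mapsto\operatorname{dir}(\nabla\pi_\lambda(p))$ is non-concentrated at every scale down to $\delta$. In our setting I would take the parameter to be $\lambda=z\in Z$ and the family to be $\pi_z=G_{(z)}$. Under this identification, \eqref{boundsOnG} is precisely (i)--(ii) with constant $C$; the hypothesis \eqref{Xnonconcentration} on $X$ is the Frostman-type non-concentration of the set being projected, tested at the critical scale $|X|^{1/2}$; and \eqref{thetaimagenoncon}, phrased through $\theta_{(x,y)}(z)=\angle\operatorname{dir}(\nabla G_{(z)}(x,y))$, is exactly the transversality hypothesis (iii). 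The point to verify carefully here is the quantifier structure: that \cite{Shm} produces an exponent $\tau$ depending only on $\eta$ while the scale threshold $\delta_0$ may depend on both $\eta$ and $C$.

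Second, I would pass from the dimension/measure formulation in \cite{Shm} to the single-scale $\delta$-discretized conclusion above and package the exceptional parameters. If \cite{Shm} already contains a single-scale version this is immediate; otherwise I would use the standard dictionary between $(\delta,t)$-sets and Frostman-regular measures --- after dyadic pigeonholing assume $\mathcal{E}_\delta(X)\approx\delta^{-t}$, build regular measures adapted to $X$ and $Z$, apply Shmerkin's theorem to see that the set of $\lambda$ for which $\pi_\lambda$ fails to expand has small Hausdorff dimension, and re-discretize to get $Z_{\operatorname{bad}}$ with $|Z_{\operatorname{bad}}|\le\delta^\tau|Z|$. The clause permitting an arbitrary subset $X'\subset X$ with $|X'|\ge\delta^\tau|X|$ is handled by noting that $X'$ still obeys a bound of the form \eqref{Xnonconcentration} with $\eta$ replaced by $\eta/2$ and that $\mathcal{E}_\delta(X')\ge\delta^\tau\mathcal{E}_\delta(X)$, so the already-established conclusion applied to $X'$ (with $\eta$ slightly shrunk) gives $\mathcal{E}_\delta(G_{(z)}(X'))\ge\delta^{-\tau'}\mathcal{E}_\delta(X')^{1/2}$, which after renaming exponents is the desired bound; one only checks that the bad set for $X'$ is contained in a controlled enlargement of the one for $X$, which is automatic because \eqref{thetaimagenoncon} never refers to $X$.

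The main obstacle in this deduction is bookkeeping rather than mathematics: one must confirm that every hypothesis of \cite{Shm} is implied by \eqref{boundsOnG}--\eqref{thetaimagenoncon} with exactly the stated dependence of constants, and in particular that Shmerkin's transversality condition is literally \eqref{thetaimagenoncon} and not an a priori stronger variant (for instance one involving the $z$-derivatives of $\theta_{(x,y)}$, or uniformity of the non-concentration over $(x,y)\in X$). If instead one wanted to prove Theorem \ref{Shm} without \cite{Shm}, the genuine difficulty would lie elsewhere: since the gradient direction $\theta_{(x,y)}(z)$ depends on the base point, the configuration cannot be fed directly into Bourgain's \emph{linear} discretized projection theorem \cite{B10}. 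One would instead run a multiscale induction on scales $\delta=\rho_0<\rho_1<\cdots<1$ in which, on each $\rho_j$-ball, $G_{(z)}$ is replaced by its affine approximation; the crux is showing that the resulting finite family of affine maps still inherits enough transversality from \eqref{thetaimagenoncon} to apply the linear theorem, and that the non-concentration of $X$ survives the scale decomposition. That propagation argument is the heart of \cite{Shm}, and I would not reproduce it.
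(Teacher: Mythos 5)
Your proposal matches what the paper actually does: Theorem \ref{Shm} is not proved in the paper at all, but is quoted directly as the discretized, single-scale statement from Shmerkin \cite{Shm}, so the ``dictionary'' step you describe is essentially vacuous here. One small caution: the uniformity in the subset $X'$ (a single bad set $Z_{\operatorname{bad}}$ working for \emph{all} $X'\subset X$ with $|X'|\ge\delta^\tau|X|$) is part of Shmerkin's statement itself and cannot be recovered, as your fallback suggests, by applying a non-uniform version to each $X'$ separately, since the bad set would then depend on $X'$ and there are far too many subsets to take a union over.
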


Our task for the remainder of this section is to reformulate Theorem \ref{Shm} as an energy dispersion estimate.

\subsection{Weakening the hypotheses of Theorem~\ref{Shm}}
Our first task is to state a version of Theorem~\ref{Shm} where the bounds \eqref{boundsOnG} are slightly less restrictive. We will also recast several of the hypotheses and conclusions to align more closely with the setup of Theorem \ref{main-entropy-growth}. 

\begin{lem}\label{Shmcor}
For every $0<\alpha<1$ and $0<\kappa\leq\alpha$, there exists $\sigma=\sigma(\alpha,\kappa)>0$ and $\delta_0=\delta_0(\alpha,\kappa)$ such that the following holds for all $0<\delta<\delta_0$.

 Let $I=I_1\times I_2\times I_3\subset[0,1]^3$ and let $G\colon I \to\RR$ be a projection function that satisfies the non-degeneracy conditions
\begin{align}
\Vert G\Vert_{C^2(I)}&\leq\delta^{-\sigma},\label{boundOnC2NormG}\\
\inf_{I}|\nabla G_{(z)}(x,y)|& \geq \delta^\sigma,\label{boundOnNablaG} \\
\inf_{I} |\partial_z \theta_{(x,y)}(z)|&\geq \delta^{\sigma}.  \label{lowerBoundThetap}
\end{align}

Let $A_1,A_2,A_3$ be sets, with $A_i\subset I_i,$ and let $A=A_1\times A_2\times A_3$. Suppose that for each index $i$ and each interval $J$ of length at least $\delta$, we have the non-concentration condition
\begin{equation}\label{nonconcorA1234}
\mathcal{E}_{\delta}(A_i\cap J)\le \delta^{-\sigma}|J|^\kappa\delta^{-\alpha}.
\end{equation}
Let $R\subset\RR$ be a set with $\mathcal{E}_{\delta}(R)\leq \delta^{-\alpha-\sigma}$.
Then 
\begin{equation}\label{volOfTriples}
\mathcal{E}_{\delta}\big( \{(a_1,a_2,a_3)\in A \colon G(a_1,a_2,a_3)\in R\} \big) \leq \delta^{-3\alpha+\sigma}.
\end{equation}
\end{lem}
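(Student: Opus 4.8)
The goal is to deduce Lemma~\ref{Shmcor} from Theorem~\ref{Shm} by a dyadic pigeonholing argument. Suppose, for contradiction, that the left-hand side of \eqref{volOfTriples} exceeds $\delta^{-3\alpha+\sigma}$ for a $\sigma$ to be chosen small in terms of $\alpha,\kappa$ (and ultimately in terms of the parameters $\eta,\tau$ produced by Theorem~\ref{Shm} for a suitable $\eta=\eta(\alpha,\kappa)$). Let $S=\{(a_1,a_2,a_3)\in A\colon G(a_1,a_2,a_3)\in R\}$, regarded as a union of $\delta$-cubes in $\RR^3$. The first step is to put $S$ into the ``fibered'' form required by Theorem~\ref{Shm}: by pigeonholing over the $\delta$-intervals comprising $A_3$, there is a subset $Z\subset A_3$ such that for every $z\in Z$ the fiber $S_z=\{(a_1,a_2)\in A_1\times A_2\colon (a_1,a_2,z)\in S\}$ satisfies $\mathcal{E}_\delta(S_z)\gtrsim \delta^{-2\alpha+O(\sigma)}$, and moreover $\mathcal{E}_\delta(Z)\gtrsim\delta^{-\alpha+O(\sigma)}$; one pays only $\log(1/\delta)$ losses, absorbed into $\delta^{-O(\sigma)}$. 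Here I use the upper bound $\mathcal{E}_\delta(A_3)\leq\delta^{-\alpha-\sigma}$ from \eqref{nonconcorA1234} with $J=I_3$ to see that the ``bad'' $z$ (those with small fibers) cannot account for most of $S$.

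The second step is to manufacture, for each good $z$, a large subset $X'\subset X$ to which Theorem~\ref{Shm} applies, where $X := A_1\times A_2$. For the conclusion of Theorem~\ref{Shm} to bite we need a lower bound $\mathcal{E}_\delta(X)^{1/2}$, so I set $X=A_1\times A_2$, note $\mathcal{E}_\delta(X)\sim\mathcal{E}_\delta(A_1)\mathcal{E}_\delta(A_2)$, and want $\mathcal{E}_\delta(A_i)\gtrsim\delta^{-\alpha+O(\sigma)}$; if this fails for some $i$ then in fact $\mathcal{E}_\delta(S)\leq\mathcal{E}_\delta(A_1)\mathcal{E}_\delta(A_2)\mathcal{E}_\delta(A_3)$ is already too small and we are done trivially, so we may assume it. Then $X^\prime:=S_z$ has $|X^\prime|\gtrsim\delta^{O(\sigma)}|X|$ as required by Theorem~\ref{Shm}. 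The non-concentration hypothesis \eqref{Xnonconcentration} for $X=A_1\times A_2$ at scale $|X|^{1/2}\sim\delta^{-\alpha}$... wait, $|X|^{1/2}$ in \emph{measure} terms is $\sim\delta^{1-\alpha}$, a length; a square of that side-length meets $A_1$ and $A_2$ in sets of $\delta$-covering number at most $\delta^{-\sigma}(\delta^{1-\alpha})^\kappa\delta^{-\alpha}$ by \eqref{nonconcorA1234}, and one checks this is $\leq\delta^{\eta}\mathcal{E}_\delta(X)$ provided $\kappa(1-\alpha)>\eta+O(\sigma)$, which holds once $\eta$ and $\sigma$ are chosen small enough depending on $\alpha,\kappa$. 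The transversality hypothesis \eqref{thetaimagenoncon} is where \eqref{lowerBoundThetap} enters: since $\partial_z\theta_{(x,y)}(z)\geq\delta^\sigma$ uniformly, the map $z\mapsto\theta_{(x,y)}(z)$ is a bi-Lipschitz-on-$\delta^{-\sigma}$-scales diffeomorphism onto its image, so $\{z\in Z\colon\theta_{(x,y)}(z)\in J\}$ is contained in an interval of length $\lesssim\delta^{-\sigma}|J|$, and then \eqref{nonconcorA1234} applied to $A_3\supset Z$ gives $\mathcal{E}_\delta$ of that set $\leq\delta^{-O(\sigma)}|J|^\kappa\delta^{-\alpha}\leq\delta^{-\tau}|J|^\eta|Z|$, again once $\kappa\geq\eta$ and $\sigma\ll\tau$. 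Finally the $C^2$ and gradient bounds \eqref{boundsOnG} needed by Theorem~\ref{Shm} with the \emph{fixed} constant $C$ are not literally available---we only have the $\delta$-dependent bounds \eqref{boundOnC2NormG}--\eqref{boundOnNablaG}---so strictly one should first record, as a separate weakening (``Weakening the hypotheses of Theorem~\ref{Shm}'', exactly the subsection heading that follows), a version of Theorem~\ref{Shm} in which $C$ is replaced by $\delta^{-\sigma}$; this is routine rescaling and swallowing powers of $\delta^{\sigma}$ into $\delta^{\tau}$, and I would cite it here rather than reprove it.

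The third step closes the contradiction. Theorem~\ref{Shm} produces a bad set $Z_{\mathrm{bad}}$ with $|Z_{\mathrm{bad}}|\leq\delta^\tau|Z|$; discard those $z$ (they contribute at most $\delta^{\tau}\mathcal{E}_\delta(Z)\cdot\mathcal{E}_\delta(A_1)\mathcal{E}_\delta(A_2)\leq\delta^{\tau-O(\sigma)}\cdot\delta^{-3\alpha}$ to $\mathcal{E}_\delta(S)$, which is acceptable once $\sigma\ll\tau$). For every remaining good $z$, applying the conclusion of Theorem~\ref{Shm} to $X^\prime=S_z$ gives $\mathcal{E}_\delta(G(S_z\times\{z\}))\geq\delta^{-\tau}\mathcal{E}_\delta(X)^{1/2}\gtrsim\delta^{-\tau-\alpha+O(\sigma)}$. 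But $G(S_z\times\{z\})\subset R$, so $\mathcal{E}_\delta(R)\geq\delta^{-\tau-\alpha+O(\sigma)}$, contradicting the hypothesis $\mathcal{E}_\delta(R)\leq\delta^{-\alpha-\sigma}$ as soon as $\sigma<\tau/2$, say. Tracing back, this forces $\mathcal{E}_\delta(S)\leq\delta^{-3\alpha+\sigma}$, which is \eqref{volOfTriples}. The main obstacle, and the place that requires genuine care rather than bookkeeping, is the matching of hypotheses: extracting the fibered structure with a large base $Z$ while simultaneously keeping the fibers large (step one), and verifying that the two non-concentration inputs \eqref{nonconcorA1234} of the lemma really do imply both \eqref{Xnonconcentration} at the single scale $|X|^{1/2}$ and the transversality bound \eqref{thetaimagenoncon} across all arcs $J$---the latter using \eqref{lowerBoundThetap} crucially to convert an arc-length bound on $\theta_{(x,y)}(z)$ into an interval-length bound on $z$. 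Once all quantifiers ($\eta$ then $\tau,\sigma$, then $\delta_0$) are chosen in the right order---$\eta=\eta(\alpha,\kappa)$ small enough that $\kappa(1-\alpha)>\eta$ and $\kappa\geq\eta$; then $\tau=\tau(\eta)$ from Theorem~\ref{Shm}; then $\sigma=\sigma(\alpha,\kappa)$ small compared to $\tau$---the argument is a standard pigeonhole-and-contradiction.
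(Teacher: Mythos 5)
Your overall route---pigeonholing over the $z$-fibers to extract a large $Z\subset A_3$ with large fibers $S_z$, checking \eqref{Xnonconcentration} at the single scale $|X|^{1/2}$ and the transversality bound \eqref{thetaimagenoncon} from \eqref{nonconcorA1234} together with \eqref{lowerBoundThetap}, discarding $Z_{\operatorname{bad}}$, and contradicting $\mathcal{E}_{\delta}(R)\leq\delta^{-\alpha-\sigma}$---is the same as the paper's, and those verifications are essentially correct. The genuine gap is exactly the point you set aside: the passage from the fixed-constant hypotheses \eqref{boundsOnG} of Theorem \ref{Shm} to the $\delta$-dependent bounds \eqref{boundOnC2NormG}--\eqref{boundOnNablaG}. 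You propose to cite ``a version of Theorem \ref{Shm} in which $C$ is replaced by $\delta^{-\sigma}$,'' justified by ``routine rescaling and swallowing powers of $\delta^{\sigma}$ into $\delta^{\tau}$.'' There is no such version to cite---producing it is precisely the content of Lemma \ref{Shmcor}, so the citation is circular---and the suggested mechanism fails as stated: Theorem \ref{Shm} is a black box whose threshold $\delta_0=\delta_0(\eta,C)$ depends on $C$, so taking $C=\delta^{-\sigma}$ makes the requirement $\delta<\delta_0(\eta,\delta^{-\sigma})$ self-referential, and no quantitative dependence of the conclusion on $C$ is available to absorb into $\delta^{\pm\tau}$. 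A single global rescaling does not repair this either: after dividing by $m=\inf_{I}|\nabla G_{(z)}|\geq\delta^{\sigma}$, both $\Vert\cdot\Vert_{C^2}$ and $\sup|\nabla G_{(z)}|$ can still be of size $\delta^{-2\sigma}$, because the hypotheses allow the gradient to vary by a factor $\delta^{-2\sigma}$ over $I$.

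The missing idea is localization before rescaling. One pigeonholes a sub-rectangle $I'=I_1'\times I_2'\times I_3'$ with $|I_1'|,|I_2'|\leq\delta^{2\sigma}$ and $|I_3'|\leq\delta^{4\sigma}$ that still captures a $\delta^{8\sigma}$-fraction of the triples counted in \eqref{volOfTriples}; on such a small rectangle, \eqref{boundOnC2NormG} forces $\sup_{I'}|\nabla G_{(z)}|\leq 4m$, so dividing by $m$ and rescaling the $x,y$ variables by $\delta^{-2\sigma}$ (leaving $z$ unscaled, so that \eqref{lowerBoundThetap} survives and can be fed into \eqref{thetaimagenoncon}) produces a function $\tilde G$ that genuinely satisfies \eqref{boundsOnG} with an absolute constant. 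One must then re-verify the non-concentration of the rescaled sets and the covering bound for the rescaled target $\tilde R$, each at the cost of a further $\delta^{-O(\sigma)}$, and only afterwards run your fiber-pigeonholing and contradiction argument. Without this step the argument never becomes a legitimate application of Theorem \ref{Shm}, so as written the proof has a real hole at its one non-routine point.
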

\begin{proof}
Our proof consists of three main steps. While there are a few technical details, each of these steps follow standard arguments. First, note that our projection function $G$ satisfies the conditions \eqref{boundOnC2NormG} and \eqref{boundOnNablaG}. These are different from the requirement \eqref{boundsOnG} that is needed when applying Theorem \ref{Shm}.  We will apply a re-scaling argument to create a new function $\tilde G$ that satisfies \eqref{boundsOnG}. 

Second, our projection function $G$ satisfies the condition \eqref{lowerBoundThetap}, and our sets $A_1,A_2,A_3$ satisfy the non-concentration condition \eqref{nonconcorA1234}. These are different from the corresponding requirements \eqref{thetaimagenoncon} and \eqref{Xnonconcentration}. We will verify that our new function $\tilde G$ and appropriately constructed sets $X$ and $Z$ satisfy \eqref{Xnonconcentration} and \eqref{thetaimagenoncon}.

Third, we will show that our desired bound \eqref{volOfTriples} follows by applying Theorem \ref{Shm} to our newly constructed function $\tilde G$. In brief, if the bound \eqref{volOfTriples} failed then there must exist a large subset $A_3^\prime\subset A_3$ so that $G(A_1,A_2,z)$ has small $\delta$-covering number for each $z\in A_3^\prime$. We will show that this contradicts Theorem \ref{Shm}. 

\medskip

\noindent \textbf {Constructing $\tilde G$: A re-scaling argument}.
First we will find a slightly smaller sub-rectangle of $I$ that still captures most of the triples from \eqref{volOfTriples}. Let $I'=I_1'\times I_2'\times I_3'\subset I$ be a rectangle with $|I_1'|,|I_2'|\leq \delta^{2\sigma}$ and $|I_3'|\leq\delta^{4\sigma}$  that maximizes the quantity
\begin{equation*}
\mathcal{E}_{\delta}\big( \{(a_1,a_2,a_3)\in I'\cap A \colon G(a_1,a_2,a_3)\in R\} \big).
\end{equation*}
Define $A_i^\prime = A_i\cap I_i'$. and define $A'=A_1'\times A_2'\times A_3'$.
Since $I\subset [0,1]^3$, by pigeonholing, we have
\begin{equation}\label{mostEntropyCapturedSubSquare}
\begin{split}
\mathcal{E}_{\delta}&\big( \{(a_1,a_2,a_3)\in A \colon G(a_1,a_2,a_3)\in R\} \big) \\
&\leq \delta^{-8\sigma} \mathcal{E}_{\delta}\big( \{(a_1,a_2,a_3)\in A^\prime\colon G(a_1,a_2,a_3)\in R\} \big). 
\end{split}
\end{equation}

Let
\begin{equation*}
m = \inf_{I^\prime}|\nabla G_{(z)}(x,y)|.
\end{equation*}
By \eqref{boundOnNablaG} we have $m\geq\delta^{\sigma}$. By \eqref{boundOnC2NormG} we have
\begin{equation}\label{nablaGRoughlyConst}
\sup_{I^\prime}|\nabla G_{(z)}(x,y)|\leq m + \delta^{-\sigma}\operatorname{diam}(I')\leq 4m. 
\end{equation}

Let $x_0$ be the left endpoint of $I_1'$, let $y_0$ be the left endpoint of $I_2'$, and let $z_0$ be the left endpoint of $I_3'$. Define $\tilde I_1 = \delta^{-2\sigma}(I_1'-x_0)$, $\tilde I_2 = \delta^{-2\sigma}(I_2'-y_0)$, and $\tilde I_3 = I_3' - z_0$. Thus $\tilde I\subset[0,1]^3$ is a rectangle with bottom corner $(0,0,0)$. For $(x,y,z)\in \tilde I$, define 
\begin{equation*}
\tilde G(x,y,z) = m^{-1}\delta^{-2\sigma}( G(\delta^{2\sigma} x+x_0, \delta^{2\sigma} y+y_0, z+z_0)-G(x_0,y_0,z_0)).
\end{equation*}
We have $\tilde G\colon \tilde I\to\RR$, with $\tilde G(0,0,0)=0$. Indeed, if $(x,y,z)\in\tilde I$, then $(\delta^{2\sigma} x+x_0\in I_1'; \delta^{2\sigma} y+y_0\in I_2';$ and $z+z_0\in I_3'$, and thus $G(\delta^{2\sigma} x+x_0, \delta^{2\sigma} y+y_0, z+z_0)-G(x_0,y_0,z_0)$ is well-defined. 

We have 
\begin{equation}\label{rescaledGTilde}
\begin{split}
\partial_x \tilde G(x,y,z) & = m^{-1} G_1(\delta^{2\sigma} x + x_0, \delta^{2\sigma} y + y_0, z+z_0),\\
\partial_y \tilde G(x,y,z) & = m^{-1} G_2(\delta^{2\sigma} x + x_0, \delta^{2\sigma} y + y_0, z+z_0),
\end{split}
\end{equation}
where for clarity we write $G_1(\cdot,\cdot,\cdot)$ (resp.~$G_2(\cdot,\cdot,\cdot)$ ) to denote the partial derivative of $G$ with respect to its first (resp.~second) variable; this notation will not be used anywhere else in the paper. \eqref{rescaledGTilde} and the definition of $m$ implies that 
\begin{equation}\label{infGradientTildeG}
\inf_{\tilde I}|\nabla \tilde G_{(z)}(x,y)|\geq 1.
\end{equation}
On the other hand, by \eqref{nablaGRoughlyConst} we have
\begin{equation*}
\sup_{\tilde I}|\nabla \tilde G_{(z)}(x,y)|\leq 6.
\end{equation*}
This implies 
$
|\tilde G(x, y, z)-\tilde G(0,0,z)| \le  10
$ for all $(x, y, z) \in \tilde I$.
By \eqref{boundOnC2NormG}, we have
$|\tilde G(0,0,z)|
\le \delta^{-\sigma}m^{-1}|z| \le \delta^{-2\sigma}|I_3'| \le 1,$
for all $z\in \tilde I_3$. Thus
\begin{equation}
|\tilde G(x, y, z)| \le  11\quad\textrm{for all }\ (x, y, z) \in \tilde I.
\end{equation}

Next, we can compute
\begin{equation*}
|\partial_{xx}\tilde G(x,y,z)| = |m^{-1}\delta^{2\sigma} (\partial_{xx} G)(\delta^{2\sigma} x + x_0, \delta^{2\sigma} y + y_0, z+z_0)|\leq 1,
\end{equation*}
where the final inequality used the bound $m\geq\delta^\sigma$ and \eqref{boundOnC2NormG}. A similar computation shows that the other second derivatives of $\tilde G$ with respect to $x$ and $y$ are bounded by 1, and thus
\begin{equation}\label{C2NormTildeG}
\sup_{z\in \tilde I_3 }\Vert \tilde G_{(z)}\Vert_{C^2(\tilde Q)}\leq 100.
\end{equation}
Finally, if $(x,y,z)\in\tilde I$, then
\begin{equation*}
\tilde\theta_{(x,y)}(z)=\theta_{(\delta^{2\sigma} x + x_0, \delta^{2\sigma} y + y_0)}(z+z_0),
\end{equation*}
so for each $(x,y,z)\in\tilde  I$, \eqref{lowerBoundThetap} implies
\begin{equation}\label{sizeThetaTildeG}
|\partial_z \tilde \theta_{(x,y)}(z)|\geq \delta^{\sigma}.
\end{equation}

Let $\tilde A_1 = N_{\delta}(\delta^{-2\sigma}(A_1^\prime-x_0))$, $\tilde A_2 = N_{\delta}(\delta^{-2\sigma}(A_2^\prime-y_0))$, and $\tilde A_3 = N_{\delta}(A_3^\prime-z_0)$, and define $\tilde A = \tilde A_1\times \tilde A_2\times \tilde A_3$.  We have that 
%
The sets $\tilde A_1$ and $\tilde A_2$ satisfy a slightly weaker version of \eqref{nonconcorA1234}---if $J\subset[0,1]$ is an interval, then
\begin{equation}\label{nonConcentrationTildeAi}
\begin{split}
\mathcal{E}_{\delta}(\tilde A_i\cap J) &\leq \delta^{-2\sigma}\mathcal{E}_{\delta}(A_i \cap \delta^{\rho}(J+x_0))\\
&\leq |J|^{\kappa}\delta^{-\alpha-3\sigma}.
\end{split}
\end{equation}
The set $\tilde A_3$ is just the $\delta$-thickening of a translate of $A_3^\prime$, so \eqref{nonConcentrationTildeAi} follows immediately from \eqref{nonconcorA1234}. 

Finally, let $\tilde R = m^{-1}\delta^{-2\sigma}(R - G(x_0,y_0,z_0))$ and note that $\mathcal{E}_{\delta}(\tilde R)\le \delta^{-\alpha-4\sigma}$.  Then 
\begin{equation}\label{entropyCapturedTildeAi}
\mathcal{E}_{\delta}\big( \{(a_1,a_2,a_3)\in A'\colon G(a_1,a_2,a_3)\in R\} \big)
\leq\delta^{4\sigma} \mathcal{E}_{\delta}\big( \{(a_1,a_2,a_3)\in \tilde A \colon \tilde G(a_1,a_2,a_3)\in \tilde R\} \big).
\end{equation}
Combining \eqref{mostEntropyCapturedSubSquare} and \eqref{entropyCapturedTildeAi}, we see that in order to establish \eqref{volOfTriples}, it suffices to prove the bound
\begin{equation}\label{modifiedVolOfTriples}
\mathcal{E}_{\delta}\big( \{(a_1,a_2,a_3)\in \tilde A \colon \tilde G(a_1,a_2,a_3)\in \tilde R\} \big)\lesssim \delta^{-3\alpha+5\sigma}.
\end{equation}
We will establish the bound \eqref{modifiedVolOfTriples} by applying Theorem \ref{Shm} to $\tilde G$. To do so, we must verify that $\tilde G$, $\tilde A_1$, $\tilde A_2$, and $\tilde A_3$ satisfy the hypotheses of Theorem \ref{Shm}.

\medskip
\noindent \textbf{Verifying the conditions for Theorem \ref{Shm}}
Let $\eta = (1-\alpha)\kappa$, and let $\tau>0$ be the value obtained by applying Theorem \ref{Shm} with this choice of $\eta$. Let 
\begin{equation}\label{sigmasmall}
\sigma < \min\big( (1-\alpha)\kappa/20,\ \tau/11\big).
\end{equation}
Define
\begin{equation*}
T:=\{(a_1,a_2,a_3)\in \tilde A\colon \tilde G(a_1,a_2,a_3)\in \tilde R\}.
\end{equation*}
Since $\tilde A_1,\tilde A_2$ an $\tilde A_3$ are unions of $\delta$ intervals, to obtain \eqref{modifiedVolOfTriples} it suffices to prove that
\begin{equation}\label{desiredBoundOnT}
\vol_3(T)\leq \delta^{3-3\alpha+5\sigma}.
\end{equation}

First, we can suppose that for each index $i$, 
\begin{equation}\label{AiNotTooSmall}
\vol_1(\tilde A_i)\geq \delta^{1-\alpha+14\sigma}.
\end{equation} 
Indeed, if \eqref{AiNotTooSmall} fails for some index $i$ then \eqref{desiredBoundOnT} follows from the trivial bound 
\begin{equation*}
\begin{split}
\vol_3(T) &\leq \vol_3(\tilde A_1 \times \tilde A_2 \times \tilde A_3)\\
&\leq \delta^{2-2\alpha-9\sigma}\min_{1\leq i\leq 3}\vol_1(A_i),
\end{split}
\end{equation*}
where on the last line we used \eqref{nonConcentrationTildeAi} with $J=[0,1]$.

Let $X=\tilde A_1\times \tilde A_2$ and let $Z = \tilde A_3$. We will verify that $\tilde G$ and the sets $X$ and $Z$ satisfy the hypotheses of Theorem \ref{Shm} with parameter $\eta$. To begin, \eqref{boundsOnG} follows from \eqref{infGradientTildeG} and \eqref{C2NormTildeG}. 

Next we will verify that $X$ satisfies the non-concentration estimate \eqref{Xnonconcentration}. Let $r = \vol_2(X)^{1/2}\leq \delta^{1-\alpha-3\sigma}$. Let $B(x,r)$ be a ball of radius $r$, and let $J_1\times J_2$ be a square of side-length $2r$ that contains $B(x,r)$. Then by \eqref{nonConcentrationTildeAi}, we have
\begin{equation*}
\begin{split}
\vol_2(X\cap B(x,r) )
&\le \vol_1(\tilde A_1\cap J_1)\vol_1(\tilde A_2\cap J_2)\\
&\leq \big( (\delta^{-3\sigma}(2\delta^{1-\alpha-3\sigma})^{\kappa}\delta^{1-\alpha}\big)^2\\
&\leq 4^\kappa \delta^{2\kappa(1-\alpha) - 40\sigma}  |\tilde A_1| |\tilde A_2|\\
&\leq \delta^{2\kappa(1-\alpha) - 40\sigma} \vol_2(X),
\end{split}
\end{equation*}
where on the third line we used \eqref{AiNotTooSmall} and on the last line we used \eqref{sigmasmall}. 

Finally, we will verify the transversality estimate \eqref{thetaimagenoncon}. Note that for each $(x,y)\in \tilde I_1\times \tilde I_2$, the function $\tilde\theta_{(x,y)}(z)$ is monotone and continuous on $\tilde I_3$. Thus for each interval $K$, the pre-image
\begin{equation*}J:=\theta_{(x,y)}^{-1}(K)\end{equation*} 
is  necessarily an interval.
Moreover, \eqref{sizeThetaTildeG} implies that for every $z_1,z_2\in J$, we have
\begin{equation*}
\frac{|\tilde \theta_{(x,y)}(z_1)-\tilde \theta_{(x,y)}(z_2)|}{|z_1-z_2|}\ge \delta^{\sigma}.
\end{equation*}
Thus $|J|\le \delta^{-\sigma}| K|$. 
It follows that for every interval $K\subset \RR$ of length at most $2r$, 
we have
\begin{align*}
|\{z\in \tilde A_3\colon \tilde \theta_{(x,y)}(z)\in K\} |
&=|\tilde A_3\cap J| \\
&\leq \delta^{-\sigma}|J|^{\kappa}\delta^{1-\alpha}\\
&\leq \delta^{-\sigma-\kappa \sigma}|K|^{\kappa}\frac{\delta^{1-\alpha}}{|\tilde A_3|}|\tilde A_3|\\ 
&\le \delta^{-(6+\kappa)\sigma}|\tilde A_3|r^{\eta}r^{\alpha\kappa}2^{\kappa}\\
&\le \delta^{-\tau}|\tilde A_3|r^{\eta},
\end{align*}
where we used \eqref{sigmasmall} and \eqref{AiNotTooSmall}.
Thus \eqref{thetaimagenoncon} holds.

\medskip

\noindent \textbf{Applying Theorem \ref{Shm}}
Apply Theorem \ref{Shm} to $\tilde G$, $X=\tilde A_1\times \tilde A_2$, and $Z=\tilde A_3$. We obtain a set $(\tilde A_3)_{\operatorname{bad}}\subset  \tilde A_3$ with
\begin{equation*}
\vol_1((\tilde A_3)_{\operatorname{bad}})\leq \delta^{\tau}\vol_1(\tilde A_3).
\end{equation*}

For each $c\in \tilde A_3$, define
\begin{equation*}
X_c:=\{(x,y)\in \tilde A_1\times \tilde A_2\colon \tilde G(x,y,c)\in \tilde R\}.
\end{equation*}
Let
\begin{equation*}
\tilde A_3':=\left\{z\in \tilde A_3\colon \vol_2(X_z)\ge \tfrac{\vol_3(T)}{2\vol_1(\tilde A_3)}\right\}.
\end{equation*}

Now, suppose \eqref{desiredBoundOnT} fails; we will obtain a contradiction. By Fubini, we have 
\begin{equation*}\vol_3(T)\le \vol_1(\tilde A_3')\vol_1(\tilde A_1)\vol_1(\tilde A_2)+\vol_1(\tilde A_3\setminus \tilde A_3')\cdot \tfrac{\vol_3(T)}{2\vol_1(\tilde A_3)},
\end{equation*}
which implies
\begin{align*}
|\tilde A_3'|
&\ge \frac{\vol_3(T)}{2|\tilde A_1||\tilde A_2|}\\
&=\frac{\vol_3(T)}{2|\tilde A_1||\tilde A_2||\tilde A_3|}|\tilde A_3|\\
&\ge \tfrac12 \delta^{14\sigma}|\tilde A_3|,
\end{align*}
where on the final line we used \eqref{nonConcentrationTildeAi} (with $J=[0,1]$). 

In particular, since $\sigma>0$ satisfies \eqref{sigmasmall} (and assuming $\delta>0$ is sufficiently small depending on $\alpha$ and $\kappa$), we have $|\tilde A_3'|> \delta^{\tau}|\tilde A_3|$ and thus
\begin{equation}
\tilde A_3'\not\subseteq (\tilde A_3)_{\rm bad}.
\end{equation}

Fix an element $z\in \tilde A_3'\backslash (\tilde A_3)_{\rm bad}$ and define $X^\prime = X_z$. 
By the definition of $\tilde A_3^\prime$, we have 
\begin{equation}
\vol_2(X_z)\ge
\tfrac{\vol_3(T)}{2|\tilde A_3|}\ge  \tfrac12\delta^{2-2\alpha+5\sigma}\geq \tfrac12 \delta^{11\sigma}\vol_2(X)\geq \delta^{\tau}\vol_2(X),
\end{equation}
and thus by Theorem \ref{Shm} we have
\begin{equation}
\mathcal{E}_{\delta}\big(\{\tilde G(x,y,z)\colon (x,y)\in X_z\}\big) \geq \delta^{-\tau}\mathcal{E}_{\delta}(X)^{1/2}.
\end{equation}
On the other hand, 
\begin{equation}
\mathcal{E}_{\delta}\big(\{\tilde G(x,y,z)\colon(x,y)\in X_z\}\big)
\le \mathcal{E}_{\delta}(\tilde R)
\le  \delta^{-\alpha-4\sigma}
\leq \delta^{-10\sigma}\mathcal{E}_{\delta}(X)^{1/2}\le \delta^{-\tau}\mathcal{E}_{\delta}(X)^{1/2}.
\end{equation}
This is a contradiction. We conclude that \eqref{desiredBoundOnT} holds, which completes the proof.
\end{proof}

\subsection{Reformulating Theorem \ref{Shm} as an energy dispersion estimate}\label{reformulationEnergyDispersionSec}
As the title suggests, in this section we will reformulate Theorem \ref{Shm} as an energy dispersion estimate. The basic idea is that we can count the number of solutions to $P(x,y) = P(x',y')$  with $x,x'\in A,\ y,y'\in B$ by counting the size of the intersection 
\begin{equation}\label{informalIntersection}
\{(x,x',y,y'): P(x,y)-P(x',y')=0\} \cap (A^2\times B^2).
\end{equation}
We will express the surface $Z(P(x,y)-P(x',y'))$ as the graph $y' = G(x,x',y'),$ and then use Lemma \ref{Shmcor} to estimate the size (or more accurately, $\delta$-covering number) of the set \eqref{informalIntersection}. We now turn to the details.

\begin{defn}\label{defnHFDefn}
If $F(x,x',y,y')$ is a function (that is at least twice differentiable on its domain), define
\begin{equation}\label{defnHF}
\begin{split}
H_F(x,x',y,y') &= (\partial_x F)(\partial_{y'} F)(\partial_{x'y}F)-(\partial_xF)(\partial_yF)(\partial_{x'y'}F)\\
&\qquad-(\partial_{x'}F)(\partial_{y'}F)(\partial_{xy}F)+(\partial_{x'}F)(\partial_yF)(\partial_{xy'}F).
\end{split}
\end{equation}
We will be particularly interested in functions $F$ of the form $F(x,x',y,y') = P(x,y) - P(x',y')$. In this case $H_F$ simplifies to
\begin{equation}\label{defnHFForP}
H_F(x,x',y,y') = \partial_xP(x,y)\partial_yP(x,y)\partial_{x'y'}P(x',y')-\partial_{x'}P(x',y')\partial_{y'}P(x',y')\partial_{xy}P(x,y).
\end{equation}
\end{defn}

\begin{defn}
Let $F\colon I_1\times I_2\times I_3\times I_4\to\RR$, and let $\pi$ be the projection to the first three coordinates.  We say $F$ has full projection if $\pi(Z(F))=I_1\times I_2\times I_3$. 
\end{defn}

\begin{lem}\label{EnergyDispersionForF}
For every $0< \alpha< 1$ and $0 < \kappa \leq \alpha,$ there exists $\eps=\eps(\alpha,\kappa)>0$ and $\delta_0 = \delta_0(\alpha,\kappa)$ such that the following holds for all $0<\delta<\delta_0$.

Let $I=I_1\times I_2\times I_3\times I_4\subset[0,1]^4$ be intervals and let $F\colon I\to\RR$ be a smooth function with full projection. Suppose that 
\begin{equation}\label{HFBigOnZAndLowerBoundPartials}
|H_F|,\ |\partial_x F|,\ |\partial_y F|,\ |\partial_{x'} F|,\ |\partial_{y'} F| \geq\delta^{\eps}\quad\textrm{on}\ Z(F)\cap I,
\end{equation}
and 
\begin{equation}\label{C3BoundForF}
\Vert F\Vert_{C^2(I)}\leq\delta^{-\eps}.
\end{equation}

Let $A_i\subset I_i$, $i=1,2,3,4$ be sets. Suppose that for each index $i=1,\ldots,4$ and each interval $J$ of length at least $\delta$, $A_i$ satisfies the non-concentration condition
\begin{equation}\label{noncon4}
\mathcal{E}_{\delta}(A_i\cap J)\le |J|^\kappa \delta^{-\alpha-\eps}.
\end{equation}


Then we have the energy dispersion estimate
\begin{equation}\label{energyNonConcentrationInLem}
\mathcal{E}_{\delta}\big((A_1\times A_2\times A_3\times A_4) \cap Z(F)  \big) \leq  \delta^{-3\alpha+\eps}.
\end{equation}
\end{lem}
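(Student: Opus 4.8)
The plan is to realise the hypersurface $Z(F)$ as a finite union of graphs $y' = G(x,x',y)$ over the first three coordinates and then to feed each such $G$ into Lemma~\ref{Shmcor}; the combination $H_F$ from \eqref{defnHF} will turn out to be exactly the quantity controlling the transversality hypothesis \eqref{lowerBoundThetap}. For the graph decomposition: since $|\partial_{y'}F|\geq\delta^\eps>0$ on $Z(F)\cap I$, the restriction to $Z(F)\cap I$ of the projection $\pi$ onto the first three coordinates is a local diffeomorphism; since $F$ has full projection and $I_1\times I_2\times I_3$ is simply connected, this map is a trivial covering, so $Z(F)\cap I=\bigsqcup_{j=1}^{k}\operatorname{graph}(G_j)$ for smooth functions $G_j\colon I_1\times I_2\times I_3\to I_4$. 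The $C^2$ bound \eqref{C3BoundForF} forces any two zeros of $y'\mapsto F(x,x',y,y')$ to be at least $\delta^{2\eps}$ apart, so $k\leq\delta^{-O(\eps)}$ (boundary effects are negligible for $\delta$-covering numbers). A $\delta$-cube of $(A_1\times A_2\times A_3\times A_4)\cap Z(F)$ lying over a fixed $\delta$-cube $Q'\subset I_1\times I_2\times I_3$ has its last coordinate within $\delta^{1-O(\eps)}$ of one of the $k$ values $G_j(Q')$, so \eqref{energyNonConcentrationInLem} reduces, up to a harmless $\delta^{-O(\eps)}$ factor, to proving for each $j$ that $\mathcal{E}_\delta(\{(a_1,a_2,a_3)\in A_1\times A_2\times A_3: G_j(a_1,a_2,a_3)\in R\})\leq\delta^{-3\alpha+\sigma}$, where $R:=N_{\delta^{1-O(\eps)}}(A_4)$ and $\sigma=\sigma(\alpha,\kappa)$ is the exponent from Lemma~\ref{Shmcor}.

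The heart of the matter is that $H_F$ is the rotation speed of the gradient of $G=G_j$. Differentiating $F(x,x',y,G)=0$, the implicit function theorem gives $\partial_xG=-\partial_xF/\partial_{y'}F$ and $\partial_{x'}G=-\partial_{x'}F/\partial_{y'}F$ along the graph, so the direction of $\nabla G_{(y)}:=(\partial_xG,\partial_{x'}G)$ is the direction of $(\partial_xF,\partial_{x'}F)$. With $\theta_{(x,x')}(y):=\angle\operatorname{dir}(\nabla G_{(y)})$ as in \eqref{defnTheta} — so the distinguished ``$z$''-variable of Lemma~\ref{Shmcor} is our third coordinate $y$ — differentiating $(\partial_xF,\partial_{x'}F)(x,x',y,G)$ in $y$ and dividing by the squared norm gives, after a short calculation,
\[
\partial_y\theta_{(x,x')}(y)=\frac{H_F}{(\partial_{y'}F)\big((\partial_xF)^2+(\partial_{x'}F)^2\big)}\qquad\text{on }Z(F).
\]
Together with \eqref{HFBigOnZAndLowerBoundPartials} and \eqref{C3BoundForF} this yields $|\partial_y\theta_{(x,x')}(y)|\geq\delta^{O(\eps)}$, which is the hypothesis \eqref{lowerBoundThetap} as long as $\eps$ is small enough that the implied exponent is $\leq\sigma$.

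The remaining hypotheses of Lemma~\ref{Shmcor} are routine: the bound $\|G\|_{C^2}\leq\delta^{-\sigma}$ in \eqref{boundOnC2NormG} follows from the implicit-function-theorem formulas for the first and second derivatives of $G$ — which involve only the second derivatives of $F$ — together with \eqref{C3BoundForF} and $|\partial_{y'}F|\geq\delta^\eps$; the lower bound \eqref{boundOnNablaG} follows from $|\partial_xG|=|\partial_xF|/|\partial_{y'}F|\geq\delta^{O(\eps)}$ using $|\partial_xF|\geq\delta^\eps$ and $|\partial_{y'}F|\leq\delta^{-\eps}$; the non-concentration \eqref{nonconcorA1234} for $A_1,A_2,A_3$ is inherited from \eqref{noncon4}; and $\mathcal{E}_\delta(R)\leq\delta^{-\alpha-O(\eps)}$ from \eqref{noncon4} with $J=I_4$. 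All of these hold once $\eps$ is small enough in terms of $\sigma(\alpha,\kappa)$, so Lemma~\ref{Shmcor} gives the bound claimed above. Feeding this back through the reduction of the first paragraph gives $\mathcal{E}_\delta((A_1\times A_2\times A_3\times A_4)\cap Z(F))\leq\delta^{-O(\eps)}\cdot\delta^{-3\alpha+\sigma}$; since $\sigma>0$ depends only on $(\alpha,\kappa)$, choosing $\eps=\eps(\alpha,\kappa)$ small enough that all accumulated losses $O(\eps)$ are $<\sigma-\eps$ turns this into \eqref{energyNonConcentrationInLem}. We also take $\delta_0(\alpha,\kappa)$ below the threshold of Lemma~\ref{Shmcor} and small enough for the finitely many ``$\delta$ sufficiently small'' steps.

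No single step is deep: the content is recognising that $H_F$ equals the angular derivative $\partial_y\theta$ of the gradient direction of the graph function $G$, after which Lemma~\ref{Shmcor} does all the work. The one place that genuinely requires care is the first paragraph — globalising the graph decomposition (via the covering-space argument, where full projection is essential), controlling the $\delta^{-O(\eps)}$ losses and the $\delta$-fuzz incurred in passing between $F$ and $G$, and then choosing $\eps$ and $\delta_0$ consistently across the reduction and Lemma~\ref{Shmcor}.
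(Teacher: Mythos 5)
Your proposal is correct and follows essentially the same route as the paper: reduce to a graph $y'=G(x,x',y)$ over the first three coordinates, verify the hypotheses of Lemma~\ref{Shmcor} with $R$ essentially $A_4$, and in particular establish the key identity $\partial_y\theta_{(x,x')}=H_F/\big((\partial_{y'}F)((\partial_xF)^2+(\partial_{x'}F)^2)\big)$ on $Z(F)$, which is exactly the paper's computation in \eqref{expressVarPhi}--\eqref{derivativePartialThetaBig}. The only differences are cosmetic: the paper works with a single global graph $G$ (absorbing the $\delta$-fuzz into a $\delta^{-\eps}$ loss rather than thickening $A_4$), whereas you allow a bounded number of sheets, which if anything is a more careful treatment of the same step.
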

\begin{proof}
By the bound \eqref{HFBigOnZAndLowerBoundPartials} for $\partial_{y'} F$ and the assumption that $F$ has full projection, we have that for each $(x,x',y)\in I_1\times I_2\times I_3$, there is precisely one $y' \in I_4$ so that $F(x,x',y,y')=0$, and
\begin{equation*}
\begin{split}
\mathcal{E}_{\delta}\big((A_1\times A_2\times A_3\times & A_4) \cap Z(F)  \big) \\
&\leq \delta^{-\eps}\mathcal{E}_{\delta}\{(x,x',y)\in A_1\times A_2\times A_3\colon\ \exists\ y'\in A_4\ \textrm{s.t.}\ F(x,x',y,y')=0\}.
\end{split}
\end{equation*}
In particular, to establish \eqref{energyNonConcentrationInLem} it suffices to prove the estimate
\begin{equation}\label{sufficesToBoundProjection}
\mathcal{E}_{\delta}\Big( \pi\big( (A_1\times A_2\times A_3\times  A_4) \cap Z(F) \big) \Big)\leq \delta^{-3\alpha+2\eps}.
\end{equation}


Since $F$ has full projection, there is a smooth function $G\colon I_1\times I_2\times I_3 \to I_4$ so that $F(x,x',y,G(x,x',y))=0$ for all $(x,x',y)\in I_1\times I_2\times I_3$.
By the implicit function theorem, \eqref{C3BoundForF}, and \eqref{HFBigOnZAndLowerBoundPartials},  we have the bound
\begin{equation*}
\delta^{2\eps} \le  \min_{I}  \left|\frac{\partial_x F}{\partial_{y'} F}\right|\le  \max_{I}  \left|\frac{\partial_x F}{\partial_{y'} F}\right| \le  \delta^{-2\eps},
\end{equation*}
and similarly for $\left|\partial_{x'} /\partial_{y'} F\right|$ and $\left|\partial_{y} F/\partial_{y'} F\right|$. Thus

\begin{equation}\label{nablaGEstimate}
\delta^{2\eps}\le   \min_{(x,x',y)\in I_1\times I_2\times I_3}|\nabla G_{(y)}(x,x')|.
\end{equation}

By continuing to differentiate our expression for $G$ and using \eqref{HFBigOnZAndLowerBoundPartials} for  $\partial_{y'} F$ and \eqref{C3BoundForF}, we obtain the bound

\begin{equation}\label{C2GEstimate}
\Vert G_{(y)}\Vert_{C^2(I_1\times I_2\times I_3)}\lesssim \delta^{-10\eps}.
\end{equation}

Define 
\begin{equation*}
\varphi_{(x,x')}(y):=\frac{\partial_{x'} G(x,x',y)}{\partial_x G(x,x',y)} =\tan(\theta_{(x,x')}(y)).
\end{equation*} 
We have
\begin{equation}\label{expressVarPhi}
\varphi_{(x,x')}(y)=\frac{\partial_{x'} F}{\partial_x F},
\end{equation}
and 
\begin{equation}
\begin{split}\label{expressionForPartialYVarphi}
\partial_y & \varphi_{(x,x')}(y)\\
&=\frac{(\partial_x F)(\partial_{y'} F)(\partial_{x'y}F)-(\partial_xF)(\partial_yF)(\partial_{x'y'}F)-(\partial_{x'}F)(\partial_{y'}F)(\partial_{xy}F)+(\partial_{x'}F)(\partial_yF)(\partial_{xy'}F)}{(\partial_{y'} F) (\partial_x F)^2}\\
&=\frac{H_F}{(\partial_{y'} F) (\partial_x F)^2},
\end{split}
\end{equation}
where in \eqref{expressVarPhi} and \eqref{expressionForPartialYVarphi}, $F$ (and its partial derivatives) and $H_F$ are evaluated at the point $(x,y,G(x,x',y))$. Thus by \eqref{HFBigOnZAndLowerBoundPartials} and \eqref{C3BoundForF}, we have
\begin{equation}
|\varphi_{(x,x')}(y)|\leq\delta^{-2\eps},
\end{equation}
while by \eqref{HFBigOnZAndLowerBoundPartials} and \eqref{expressionForPartialYVarphi}, we have
\begin{equation}\label{comparePartialPhiWithHF}
|\partial_y \varphi_{(x,x')}(y)|\geq \delta^{6\eps}|H_F|\geq\delta^{7\eps}.
\end{equation}
Next, we compute 
\begin{equation*}
\begin{split}
|\partial_y \varphi_{(x,x')}(y)|
&=|\partial_y(\tan(\theta_{(x,x')}))|\\
&=|(1+\tan^2(\theta_{(x,x')}(y))||\partial_y \theta_{(x,x')}(y)|,
\end{split}
\end{equation*}
and thus
\begin{equation}\label{derivativePartialThetaBig}
|\partial_y \theta_{(x,x')}(y)|= \frac{|\partial_y \varphi_{(x,x')}(y)|}{1+(\varphi_{(x,x')}(y))^2}\geq \delta^{11\eps}.
\end{equation}

The estimates \eqref{nablaGEstimate}, \eqref{C2GEstimate}, and \eqref{derivativePartialThetaBig} are precisely the requirements needed to apply Lemma \ref{Shmcor} to $G$ and the sets $A_1,A_2,A_3,$ and $R = A_4$.
If $\eps\leq \eps_1(\alpha,\kappa)>0$ and $0<\delta<\delta_0(\alpha,\kappa)>0$ are selected sufficiently sufficiently small, then there exists $\sigma=\sigma(\alpha,\kappa)>0$ so that
\begin{equation}\label{bdOnZcapQcapA}
\mathcal{E}_{\delta}\Big( \pi\big( (A_1\times A_2\times A_3\times  A_4) \cap Z(F) \big) \Big) \leq \delta^{-3\alpha+\sigma}.
\end{equation}
We conclude that \eqref{sufficesToBoundProjection} hold for some $\eps=\eps(\alpha,\kappa)>0$.
\end{proof}

\section{The auxiliary function $K_P$}\label{auxiliaryFunctionSection}
In this section we will define and study the auxiliary function $K_P$. 

\begin{defn}
Let $P(x,y)$ be smooth. Define
\begin{equation*}
K_P(x,y) = \nabla P\wedge \nabla\Big(\frac{\partial_x P\partial_y P}{\partial_{xy}P}\Big).
\end{equation*}
\end{defn}
\noindent A computation shows that (formally),
\begin{equation}\label{relateKpLog}
\begin{split}
\big(\partial_x P \partial_y P\big)^2&\partial_{xy}\left(\log\left(\frac{\partial_xP}{\partial_yP}\right)\right)\\
&=(\partial_yP)^2\big( \partial_x P \partial_{xxy}P - \partial_{xx}P\partial_{xy}P\big)
-(\partial_x P)^2 \big( \partial_y P \partial_{xyy}P - \partial_{xy}P\partial_{yy}P\big)\\
&\qquad\qquad\qquad\qquad\qquad\qquad\qquad\qquad\qquad\qquad=\big(\partial_{xy}P\big)^2\Big(\nabla P \wedge \nabla \Big(\frac{\partial_x P \partial_y P}{\partial_{xy}P}\Big) \Big).
\end{split}
\end{equation}
Then the middle expression is well-defined on the domain of $P$. The first equality holds on the domain of the LHS of \eqref{relateKpLog} (here we extend $\log$ to $\RR\backslash\{0\}$ by defining $\log(-x)=\log(x)$ for $x$ positive), while the second equality holds on the domain of the RHS of \eqref{relateKpLog}. The auxiliary function $K_P$ is useful because of the following result from Elekes and Ronyai~\cite{ER}. A detailed proof can be found in \cite[Lemma 10]{RaSh}.
\begin{lem}[Elekes and R\'onyai]\label{special}
Let $U\subset\RR^2$ be a connected open set and let $P\colon U\to\RR$ be analytic. If $P_x$ or $P_y$ vanishes identically on $U$, then $P$ is an analytic special form, in the sense of Definition \ref{defnAnalyticSpecialForm}. If neither $P_x$ nor $P_y$ vanishes identically on $U$, then $P$ is an analytic special form if and only if $\partial_{xy}\left(\log\left(\frac{\partial_xP}{\partial_yP}\right)\right)$ vanishes everywhere on $U$ that it is defined (i.e.~everywhere $\partial_xP$ and $\partial_yP$ are non-zero). 
\end{lem}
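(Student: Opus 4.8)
The plan is to recast everything in terms of the open set $\Omega:=U\setminus(\{P_x=0\}\cup\{P_y=0\})$; throughout, $\pi_x$ and $\pi_y$ denote the projections of a planar set to the $x$- and $y$-axes. First dispose of the degenerate cases: if $P_x\equiv0$ or $P_y\equiv0$ on $U$, then $\Omega=\emptyset$, so the condition in Definition~\ref{defnAnalyticSpecialForm} is vacuous and $P$ is an analytic special form. In the main case $P_x\not\equiv0$ and $P_y\not\equiv0$, so $\{P_x=0\}$ and $\{P_y=0\}$ are proper analytic subvarieties of the connected set $U$; then $\Omega$ is open and dense, and it is precisely the locus where $\log(P_x/P_y):=\log|P_x/P_y|$ — hence also $\partial_{xy}\log(P_x/P_y)$ — is defined, and there it is analytic. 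So I only need the claimed equivalence on $\Omega$, and by working componentwise it suffices to fix one connected component $V$ of $\Omega$.

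For the forward implication I would simply compute. If $P=h(a(x)+b(y))$ on $V$, then $P_x=h'(a+b)\,a'$ and $P_y=h'(a+b)\,b'$; since $P_x,P_y$ are nonvanishing on $V$, this forces $a'\neq0$ on $\pi_x(V)$, $b'\neq0$ on $\pi_y(V)$, and $P_x/P_y=a'(x)/b'(y)$ on $V$. Hence $\log|P_x/P_y|=\log|a'(x)|-\log|b'(y)|$ on $V$, so $\partial_{xy}\log(P_x/P_y)\equiv0$ on $V$. As this holds on each component of $\Omega$, the mixed derivative vanishes everywhere it is defined.

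For the converse, suppose $\partial_{xy}\log(P_x/P_y)\equiv0$ on $V$. The tool I would set up first is a patching observation: if $W\subset\RR^2$ is connected open and $g$ is analytic on $W$ with $\partial_y g\equiv0$ (resp.\ $\partial_x g\equiv0$), then $g(x,y)=\bar g(x)$ (resp.\ $\bar g(y)$) for some analytic $\bar g$ on the open interval $\pi_x(W)$ (resp.\ $\pi_y(W)$); this holds because on boxes $g$ is a function of $x$, any two such local functions are analytic continuations of one another along a path of $W$, and $\pi_x(W)$, being a connected subset of $\RR$, is simply connected, so there is no monodromy. Applying this to $\partial_x\log|P_x/P_y|$ and to $\partial_y\log|P_x/P_y|$ (both of which have the required vanishing partial on $V$ by hypothesis) produces an analytic $u$ on $\pi_x(V)$ with $u'(x)=\partial_x\log|P_x/P_y|$ on $V$ and an analytic $w$ on $\pi_y(V)$ with $w'(y)=\partial_y\log|P_x/P_y|$ on $V$; then $\log|P_x/P_y|-u(x)-w(y)$ has identically vanishing gradient on the connected set $V$, hence equals a constant $c$, so $P_x/P_y=\varepsilon e^{c}e^{u(x)}e^{w(y)}$ on $V$ for a fixed sign $\varepsilon$. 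I would then change variables by $\xi=\alpha(x)$, $\eta=\beta(y)$ with $\alpha'=e^{u}$ and $\beta'=\varepsilon e^{-c}e^{-w}$ — analytic diffeomorphisms of $\pi_x(V),\pi_y(V)$ onto intervals — after which $\tilde P(\xi,\eta):=P$ satisfies $(\partial_\xi-\partial_\eta)\tilde P\equiv0$ on the connected open set $\Phi(V)$, $\Phi=(\alpha,\beta)$. Rotating coordinates to $s=\xi+\eta$, $r=\xi-\eta$ turns this into $\partial_r\tilde P\equiv0$, and the patching observation once more gives $\tilde P=h(s)$ for an analytic $h$ on an interval. Therefore $P=h(\alpha(x)+\beta(y))$ on $V$, with $h,\alpha,\beta$ univariate real analytic, which is exactly the required representation.

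The hard part is this converse step, and specifically the passage from local to global on $V$: a connected component of the complement of an analytic variety can fail to be a product or to be simply connected (think of the complement of a circle), so one may not simply assert that $\partial_x\log|P_x/P_y|$ is a function of $x$. What makes the argument go through is that projecting to one coordinate lands in an \emph{interval}, hence a simply connected set, so analytic continuation of the local functions-of-one-variable is unambiguous; the decomposition $h(a(x)+b(y))$ is in any case rigid up to a positive affine reparametrization $(a,b,h)\mapsto(\lambda a+\mu_1,\lambda b+\mu_2,h(\lambda^{-1}(\cdot-\mu_1-\mu_2)))$ with $\lambda>0$, and this group is contractible, so nothing obstructs the gluing. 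Checking that the two changes of variables preserve analyticity and that the rotated patching step applies verbatim is then routine bookkeeping; a version of this argument — with the extra care needed near zeros of $P_x,P_y$ that we do not require here — appears in \cite[Lemma 10]{RaSh}.
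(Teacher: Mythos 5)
Your argument is correct, but it is worth noting that the paper does not actually prove this lemma: it attributes the statement to Elekes--R\'onyai and refers to \cite[Lemma 10]{RaSh} for a detailed proof, so your self-contained derivation is a genuine alternative to a citation. Your route is the classical one: the computation $P_x/P_y=a'(x)/b'(y)$ for the forward direction, and for the converse, integrating the hypothesis $\partial_{xy}\log|P_x/P_y|\equiv 0$ on a component $V$ to get $P_x/P_y=\varepsilon e^{c}e^{u(x)}e^{w(y)}$, changing variables $\xi=\alpha(x)$, $\eta=\beta(y)$ with $\alpha'=e^{u}$, $\beta'=\varepsilon e^{-c}e^{-w}$, and reducing to $\partial_r\tilde P\equiv 0$ after rotating to $s=\xi+\eta$, $r=\xi-\eta$; this matches the spirit of \cite{ER, RaSh} while correctly isolating the only delicate point, namely that a component of $U\setminus(\{P_x=0\}\cup\{P_y=0\})$ need not be a product or simply connected, so ``$\partial_y g\equiv 0$ implies $g=\bar g(x)$'' requires an argument. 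One refinement: your justification of that patching step via the monodromy theorem is slightly loose, since monodromy requires continuability of the germ along \emph{every} path in $\pi_x(W)$, which is not given; the clean elementary argument is to cover a path in $W$ by boxes, note that the local branches $\phi_i$ on intervals $I_i$ agree on the full (connected) intersections $I_i\cap I_{i+1}$ by the identity theorem, and glue inductively using that unions and intersections of overlapping intervals are again intervals, so a single analytic function on an interval of $\RR$ is obtained and in particular $g$ takes equal values at points with the same $x$-coordinate. With that small repair your proof is complete, including the vacuous degenerate case $P_x\equiv 0$ or $P_y\equiv 0$ under Definition \ref{defnAnalyticSpecialForm}, and it has the advantage of keeping the paper self-contained where it currently outsources the proof.
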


The identity \eqref{relateKpLog} and Lemma~\ref{special} have the following consequence.
\begin{lem}\label{whenIsPSpecialFormProp}
Let $U\subset\RR^2$ be a connected open set and let $P\colon U\to\RR$ be analytic (resp.~polynomial). Suppose that none of $\partial_x P$, $\partial_y P$, and $\partial_{xy}P$ vanishes identically on $U$. Then $K_P$ vanishes identically on $U$ if and only if $P$ is an analytic (resp.~polynomial) special form.
\end{lem}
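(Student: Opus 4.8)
The strategy is to reduce Lemma~\ref{whenIsPSpecialFormProp} to Lemma~\ref{special} via the identity \eqref{relateKpLog}. The key observation is that under the stated hypotheses---namely that none of $\partial_x P$, $\partial_y P$, $\partial_{xy} P$ vanishes identically---the function $\partial_{xy}\big(\log(\partial_x P/\partial_y P)\big)$ is well-defined on a \emph{dense open} subset $V\subset U$, namely where all three of $\partial_x P$, $\partial_y P$, $\partial_{xy} P$ are nonzero; this set $V$ is nonempty and open because $P$ is analytic and each of the three partials is a nonzero analytic function, so its zero set is a proper analytic subvariety with empty interior, and a finite union of such sets still has dense complement. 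On $V$, the identity \eqref{relateKpLog} gives the pointwise relation
\begin{equation*}
\big(\partial_{xy}P\big)^2 \, K_P = \big(\partial_x P \,\partial_y P\big)^2 \,\partial_{xy}\!\left(\log\!\left(\frac{\partial_x P}{\partial_y P}\right)\right),
\end{equation*}
and since $\partial_x P$, $\partial_y P$, $\partial_{xy} P$ are all nonzero on $V$, the left side vanishes at a point of $V$ if and only if the right side does.

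From here the argument splits into the two implications. For the ``if'' direction: if $P$ is an analytic special form, then by Lemma~\ref{special} (second case, which applies since neither $\partial_x P$ nor $\partial_y P$ vanishes identically) the quantity $\partial_{xy}(\log(\partial_x P/\partial_y P))$ vanishes everywhere on $V$; by the displayed identity, $K_P$ vanishes on $V$; and since $K_P$ is analytic on all of $U$ and $V$ is dense in the connected set $U$, $K_P$ vanishes identically on $U$. For the ``only if'' direction: if $K_P$ vanishes identically on $U$, then in particular it vanishes on $V$, so by the displayed identity $\partial_{xy}(\log(\partial_x P/\partial_y P))$ vanishes on $V$, which is exactly the set where it is defined; Lemma~\ref{special} then yields that $P$ is an analytic special form. (The polynomial case follows because the $h,a,b$ produced by Lemma~\ref{special} on $V$ can be taken polynomial when $P$ is---this is part of the content of the polynomial version of Elekes--R\'onyai; alternatively one invokes the polynomial statement of \cite{RaSh} directly.)

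The main technical point to be careful about---and the only place where a little thought is needed beyond bookkeeping---is the interplay between the local nature of Lemma~\ref{special} (which in the references is typically phrased on a connected set where the relevant partials are nonzero) and the global/analytic-continuation step needed to pass from ``$K_P=0$ on the dense open set $V$'' to ``$P$ is a special form on all of $U$,'' and conversely from a special form on $U$ back to the vanishing of $K_P$. One must check that the definition of analytic special form (Definition~\ref{defnAnalyticSpecialForm}) is stated precisely so as to accommodate this: it already only requires the representation $P = h(a(x)+b(y))$ on each connected component of the complement of $\{\partial_x P = 0\}\cup\{\partial_y P = 0\}$, which is exactly the kind of dense open set where Lemma~\ref{special}'s criterion lives. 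So the two definitions are compatible and no genuine continuation of the functions $h,a,b$ is needed---only the continuation of the \emph{analytic function} $K_P$, which is automatic. I expect the entire proof to be short: assemble the dense-open-set observation, quote the identity \eqref{relateKpLog}, quote Lemma~\ref{special}, and run the two implications.
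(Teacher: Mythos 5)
Your proposal is correct and follows essentially the same route as the paper, which presents the lemma as a direct consequence of the identity \eqref{relateKpLog} and Lemma~\ref{special}; your write-up just fills in the dense-open-set and analytic-continuation bookkeeping. The only slight imprecision is the phrase ``$K_P$ is analytic on all of $U$'': since $K_P$ involves division by $\partial_{xy}P$, the continuation step should be run on the everywhere-defined middle expression of \eqref{relateKpLog} (equivalently on $(\partial_{xy}P)^2K_P$), after which your conclusion is unchanged.
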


Next, we will show that if $K_P$ is far from 0, then $P$ satisfies an energy dispersion estimate.

\begin{prop}\label{countingNumberQuadruplesNablePWedgeP}
For every $0< \alpha< 1$ and $0 < \kappa \leq \alpha,$ there exists $\eps=\eps(\alpha,\kappa)>0$ and $\delta_0 = \delta_0(\alpha,\kappa)>0$ such that the following holds for all $0<\delta<\delta_0$. Let $I\subset[0,1]^2$ be a rectangle, and let $P\colon I\to[0,1]$ be smooth. Suppose that
\begin{equation}\label{lowerBoundPartialsAndK}
|\partial_x P|,\ |\partial_y P|,\ |\partial_{xy}P|,\ |K_P|\geq\delta^\eps\quad\textrm{on}\ I,
\end{equation}
and
\begin{equation}\label{C3BoundsForP}
\Vert P\Vert_{C^3(I)}\leq\delta^{-\eps}.
\end{equation} 

%
Let $A,B$ be sets with $A\times B\subset I$. Suppose that for all intervals $J$ of length at least $\delta$, $A$ and $B$ satisfy the non-concentration condition.
\begin{equation}\label{nonConcentrationABApBp}
\begin{split}
\mathcal{E}_{\delta}(A \cap J) &\le  |J|^\kappa\delta^{-\alpha-\eps},\\
\mathcal{E}_{\delta}(B \cap J) &\le  |J|^\kappa\delta^{-\alpha-\eps}.
\end{split}
\end{equation}
Then
\begin{equation}\label{semiDiscretizedEnergy}
\mathcal{E}_{\delta}\big(\{ (x,x',y,y')\in A\times A\times B\times B \colon P(x,y) = P(x',y')\}\big)\leq \delta^{-3\alpha+\eps}.
\end{equation}
\end{prop}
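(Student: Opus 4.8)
The plan is to reduce Proposition \ref{countingNumberQuadruplesNablePWedgeP} to Lemma \ref{EnergyDispersionForF} by setting $F(x,x',y,y') = P(x,y) - P(x',y')$. Under this substitution the set in \eqref{semiDiscretizedEnergy} is exactly $(A\times A\times B\times B)\cap Z(F)$, so the conclusion \eqref{energyNonConcentrationInLem} gives precisely the bound \eqref{semiDiscretizedEnergy} we want (with the sets $A_1 = A_3$ taken to be $A$ and $A_2 = A_4$ taken to be $B$, and $\kappa$, $\alpha$ unchanged). The non-concentration hypothesis \eqref{noncon4} for each of the four coordinate sets follows immediately from \eqref{nonConcentrationABApBp}. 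So the entire content of the proof is verifying the hypotheses \eqref{HFBigOnZAndLowerBoundPartials}, \eqref{C3BoundForF}, and the ``full projection'' condition for $F$, possibly after first passing to a sub-rectangle of $I$.

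First I would address the full-projection and partial-derivative issues. The $C^2$ bound \eqref{C3BoundForF} on $F$ is immediate from \eqref{C3BoundsForP}, since the partials of $F$ are just partials of $P$ at two points. The lower bounds $|\partial_x F|, |\partial_{x'} F|, |\partial_y F|, |\partial_{y'} F| \geq \delta^{\eps}$ are likewise immediate from the hypothesis $|\partial_x P|, |\partial_y P| \geq \delta^{\eps}$ in \eqref{lowerBoundPartialsAndK}, since e.g. $\partial_{y'} F(x,x',y,y') = -\partial_y P(x',y')$. For full projection, I need: for each $(x,x',y)$ there is some $y'$ with $P(x',y') = P(x,y)$, i.e. the value $P(x,y)$ lies in the range of $t \mapsto P(x',t)$. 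This is not automatic on the given rectangle $I = I_1\times I_2$; but since $\partial_y P$ is bounded away from $0$ with controlled sign on $I$ (it is continuous and nonvanishing on the connected rectangle, hence of fixed sign) and $\Vert P\Vert_{C^3} \leq \delta^{-\eps}$, the map $t\mapsto P(x',t)$ has image an interval of length $\gtrsim \delta^{\eps}|I_2|$, while $P(x,\cdot)$ has oscillation at most $\delta^{-\eps}|I_2|$. The standard fix is to shrink to a sub-rectangle: replace $I_1\times I_2$ by $I_1''\times I_2$ where $I_1''\subset I_1$ is a subinterval of length comparable to $\delta^{2\eps}|I_1|$ chosen so that $P(\cdot, \cdot)$ restricted there has oscillation in the first variable small enough that $P(x,y)$ always lands in the range of $P(x',\cdot)$ — and by pigeonholing we may assume $I_1''$ captures a $\delta^{O(\eps)}$-fraction of the $\delta$-covering number of the set in \eqref{semiDiscretizedEnergy}, at the cost of replacing $\eps$ by a slightly larger exponent. (Alternatively one can first extend $P$ slightly and/or shrink all of $I_1, I_2$; any of these bookkeeping devices works.)

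The main point, and the step I expect to require the most care, is verifying $|H_F| \geq \delta^{O(\eps)}$ on $Z(F)\cap I$. By \eqref{defnHFForP}, on $Z(F)$ we have
\[
H_F(x,x',y,y') = \partial_x P(x,y)\,\partial_y P(x,y)\,\partial_{x'y'} P(x',y') - \partial_x P(x',y')\,\partial_y P(x',y')\,\partial_{xy} P(x,y).
\]
Writing $Q := \dfrac{\partial_x P\,\partial_y P}{\partial_{xy}P}$, which is well-defined and of size between $\delta^{O(\eps)}$ and $\delta^{-O(\eps)}$ on $I$ by \eqref{lowerBoundPartialsAndK} and \eqref{C3BoundsForP}, one has on $Z(F)$
\[
H_F = \partial_{xy}P(x,y)\,\partial_{x'y'}P(x',y')\,\big( Q(x',y') - Q(x,y) \big).
\]
Thus $|H_F| \geq \delta^{2\eps}\cdot\delta^{O(\eps)}\cdot |Q(x',y') - Q(x,y)|$, so it suffices to bound $|Q(x',y') - Q(x,y)|$ from below on $Z(F)$, away from the diagonal — and here the lower bound $|K_P| \geq \delta^{\eps}$ enters, since $K_P = \nabla P \wedge \nabla Q$ measures exactly the failure of $Q$ to be constant along level sets of $P$. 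Concretely, fix $(x,y)$ and parametrize the level set $\{P = P(x,y)\}$; since $F(x,x',y,y')=0$ means $(x',y')$ lies on this level set, $Q(x',y') - Q(x,y)$ is the change in $Q$ along an arc of the level curve, whose derivative is (up to a factor $|\nabla P|^{-1}$ of controlled size) exactly $K_P$, which has fixed sign and magnitude $\geq \delta^{\eps}$ on $I$. Hence as $(x',y')$ moves a distance $\rho$ along the level curve away from $(x,y)$, $|Q(x',y') - Q(x,y)| \gtrsim \delta^{O(\eps)}\rho$. The remaining issue is the region where $(x',y')$ is within $\delta^{O(\eps)}$ of $(x,y)$ on the level curve — equivalently $(x',y')$ near $(x,y)$; but the set of such quadruples has $\delta$-covering number at most $\delta^{-3\alpha + O(\eps)}$ by a direct count (three free coordinates $x,y,x'$ up to $\delta^{O(\eps)}$ error, with $y'$ determined), so these quadruples can simply be discarded. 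After removing them, $|H_F| \geq \delta^{O(\eps)}$ holds on the rest of $Z(F)\cap I$, Lemma \ref{EnergyDispersionForF} applies, and combining its conclusion with the discarded count yields \eqref{semiDiscretizedEnergy} (again after relabeling $\eps$). The one genuinely delicate point is making the ``distance along the level curve'' argument quantitative with the scale-invariant powers of $\delta$, and handling the sign of $K_P$: that $K_P$ has constant sign on the connected set $I$ follows since it is continuous and $|K_P| \geq \delta^{\eps} > 0$ there, so $Q$ is strictly monotone along each level curve with a uniform rate, which is what makes the lower bound genuine rather than merely "generic."
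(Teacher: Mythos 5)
Your overall strategy is the paper's: set $F(x,x',y,y')=P(x,y)-P(x',y')$, reduce to Lemma \ref{EnergyDispersionForF}, and obtain the lower bound on $H_F$ from the factorization $H_F=\partial_{xy}P(x,y)\,\partial_{x'y'}P(x',y')\bigl(Q(x,y)-Q(x',y')\bigr)$ with $Q=\partial_xP\,\partial_yP/\partial_{xy}P$, using that $\partial_t\,Q(\gamma(t))=K_P/|\nabla P|$ along level curves $\gamma$ of $P$, so that $|H_F|$ can only be small when $(x',y')$ lies within a short arc of $(x,y)$, a region you then discard by a counting argument that (implicitly) uses the non-concentration of $A$ at that short scale to gain a factor $s^\kappa$. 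All of this matches the paper's proof of Proposition \ref{countingNumberQuadruplesNablePWedgeP}, including the treatment of full projection by localizing to small sub-rectangles (the paper additionally enlarges the $y'$-interval to $\tilde J_4$ and disposes of the rectangles abutting the endpoints of $I_2$ via the non-concentration of $B$; your ``shrink $I_1$'' device alone does not handle the boundary of $I_2$, but this is the same kind of bookkeeping).

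The genuine gap is in the final step, where you say ``after removing them, $|H_F|\ge\delta^{O(\eps)}$ holds on the rest of $Z(F)\cap I$, Lemma \ref{EnergyDispersionForF} applies.'' It does not apply as stated: the lemma requires its domain to be a product of four intervals on which $F$ has full projection and on which the lower bound \eqref{HFBigOnZAndLowerBoundPartials} holds on \emph{all} of $Z(F)$ inside that product. The complement of the near-diagonal bad set is not a product, so you must cover it by rectangles on each of which the hypotheses hold and then sum the lemma's conclusion over the pieces. This is not free: if the bad set is $\{|H_F\circ(\mathrm{id},G)|\le s_0\}$, the natural cover (the paper uses a Whitney decomposition of its complement, further subdivided so each piece has diameter $\lesssim\delta^{O(\eps)}s_0$, together with a short $y'$-interval $K_Q$ per piece to retain full projection and to keep $|H_F|\gtrsim\delta^{O(\eps)}s_0$ on the whole product rather than just on the graph) has $\sim\delta^{-O(\eps)}s_0^{-3}$ pieces, and the summed bound is $\delta^{-3\alpha+\eps'-O(\eps)}s_0^{-3}$ where $\eps'$ comes from Lemma \ref{EnergyDispersionForF}. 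So the threshold $s_0$ must be chosen to satisfy two competing constraints: $s_0^{\kappa}$ must beat the $\delta^{-O(\eps)}$ losses in the bad-region count, while $s_0^{-3}$ must be absorbed by $\delta^{\eps'}$; this forces $s_0=\delta^{\Theta(\eps/\kappa^2)}$ and then $\eps\ll\eps'\kappa^2$ (the paper takes $s_0=\delta^{100\eps/\kappa^2}$ and requires $10\eps+100\eps/\kappa^2\le\eps'$). Your sketch compresses this into ``relabeling $\eps$,'' but the decomposition-and-summation, and the interlocking choice of $s_0$ versus $\eps'$, are a necessary piece of the argument, not just a quantitative afterthought; without them the appeal to Lemma \ref{EnergyDispersionForF} is not licensed.
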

\begin{proof}
Define $F(x,x',y,y') =P(x,y)-P(x',y')$. If $I=I_1\times I_2$, then $F$ has domain $I'=I_1\times I_1\times I_2\times I_2$. By \eqref{lowerBoundPartialsAndK}, $\partial_{y'}F$ never changes sign on $I'$, and thus for each $(x,x',y)\in I_1\times I_1\times I_2$, there is at most one $y'\in I_2$ with $F(x,x',y,y')=0$. Our next task is to cut the domain of $F$ into smaller rectangles, so that on most rectangles, for each $x,x',y$, there is precisely one $y'$ with $F(x,x',y,y')=0$. The size of \eqref{semiDiscretizedEnergy} will be bounded by the corresponding sum taken over these sets (plus a small error term corresponding to the rectangles where the above property fails). 

Let $t_0=t_0(\kappa)>0$ be a small constant to be specified below. Cover $I'$ by interior-disjoint rectangles, each of whose dimensions is $~\frac{t_0}{6}\delta^{2\eps+5\eps/\kappa}$ (a slight adjustment might be needed if $6|I_i|/ (t_0\delta^{2\eps+5\eps/\kappa})$ is not an integer).
For each such rectangle $J=J_1\times J_2\times J_3\times J_4$, let $\tilde  J_4$ be an interval of length $t_0\delta^{5\eps/\kappa}$, with the same midpoint as $J_4$. Define $\tilde J = J_1\times J_2\times J_3\times \tilde J_4$. 

We claim that if $J\cap Z(F)\neq\emptyset$ and if $\tilde J\subset I',$ then for each $(x,x',y)\in J_1\times J_2\times J_3$, there is $y'\in\tilde J_4$ with $F(x,x',y,y')=0$. To prove this claim, observe that the assumption $J\cap Z(F)\neq\emptyset$ implies there exists $(x_0,x_0',y_0,y_0')\in J$ with $P(x_0,y_0)=P(x_0',y_0')$. By \eqref{C3BoundsForP}, we thus have 
\begin{align*}
|P(x,y)-P(x',y_0')|
&\le |P(x,y)-P(x_0,y_0)|+|P(x_0,y_0)-P(x_0',y_0')|+|P(x_0',y_0')-P(x',y_0')|\\
&\le \delta^{-\eps}|(x,y)-(x_0,y_0)|+ 0 +\delta^{-\eps}|x_0-x'|\\
&\leq \delta^{-\eps}\operatorname{diam}(J)\leq \frac{t_0}{2}\delta^{\eps+5\eps/\kappa}.
\end{align*} 
By \eqref{lowerBoundPartialsAndK}, there exists $y$ with $|y_0-y|\leq \frac{(t_0/2)\delta^{\eps+5\eps/\kappa}}{\delta^{\eps}}=\frac{t_0}{2}\delta^{5\eps/\kappa}$ with $P(x,y)=P(x',y')$; by our definition of $\tilde J_4$ we have $y\in\tilde J_4$. This completes the proof of the claim.

Finally, we will estimate the contribution to \eqref{semiDiscretizedEnergy} coming from rectangles $J$ for which $\tilde J$ is not contained in $I'$. Such rectangles must be of the form $J_1\times J_2\times J_3\times J_4$, where $J_4$ is near one of the endpoints of $I_2$; specifically, if $I_2=[a,b]$, then $J_4\subset [a, a+t_0\delta^{5\eps/\kappa}]\cup [b-t_0\delta^{5\eps/\kappa}, b]$. By \eqref{nonConcentrationABApBp}, if the constant $t_0=t_0(\kappa)>0$ is selected sufficiently small, then 
\[
\mathcal{E}_{\delta}\big(B\cap [a, a+t_0\delta^{5\eps/\kappa}]\big) \leq \delta^{-\eps}(t_0\delta^{5\eps/\kappa})^\kappa\delta^{-\alpha}\leq \frac{1}{4}\delta^{4\eps-\alpha},
\] 
and similarly for  $B\cap [b-t_0\delta^{5\eps/\kappa}, b].$ Using the bound $\mathcal{E}_{\delta}(A)\leq\delta^{-\eps-\alpha}$ and $\mathcal{E}_{\delta}(B)\leq\delta^{-\eps-\alpha}$ from \eqref{nonConcentrationABApBp}, we have  
\[
\mathcal{E}_{\delta}(\{(x',y,y')\in A\times B\times B\colon y'\in [a, a+t_0\delta^{5\eps/\kappa}]\cup [b-t_0\delta^{5\eps/\kappa}, b]\})\leq\frac{1}{2}\delta^{2\eps-3\alpha}.
\]
By \eqref{C3BoundsForP} (specifically the bound on $|\partial_{x}P|\leq\delta^{-\eps})$, this implies
\[
\mathcal{E}_{\delta}(\{(x,x',y,y')\in A\times A\times B\times B\colon P(x,y)=P(x',y'),\ y'\in [a, a+t_0\delta^{5\eps/\kappa}]\cup [b-t_0\delta^{5\eps/\kappa}, b]\})\leq\frac{1}{2}\delta^{\eps-3\alpha}.
\]
Thus the contribution from rectangles $J$ with $\tilde J\not\subset I'$ is at most half our desired final bound \eqref{semiDiscretizedEnergy}. 

The number of rectangles $J$ with $\tilde J\subset I'$ is at most  $6^4(t_0\delta^{2\eps+5\eps/\kappa})^{-4}\leq 6^4 t_0^{-4}\delta^{-28\eps/\kappa}$ (a better estimate is possible since not all rectangles $J$ intersect $Z(F)$, but this will not matter for our arguments). Thus to establish the bound \eqref{semiDiscretizedEnergy}, it suffices to establish the bound
\begin{equation}\label{semiDiscretizedEnergyInJ}
\mathcal{E}_{\delta}\big(\{ (x,x',y,y')\in J\cap (A\times A\times B\times B) \colon P(x,y) = P(x',y')\}\big)\leq \delta^{-3\alpha+30\eps/\kappa}
\end{equation}
for each rectangle $J$ with $J\cap Z(F)\neq\emptyset$ and $\tilde J\subset I'$. 

For the remainder of the argument we will fix such a rectangle $J=J_1\times J_2\times J_3\times J_4$. Unless stated otherwise, when we refer to variables $x,x',y,y'$, we will assume that $(x,x',y,y')\in J$. By construction, there is a smooth function $G\colon J_1\times J_2\times J_3\to\tilde J_4$ so that $F(x,x',y',G(x,x',y))=0$.

Recall Definition \ref{defnHFDefn}, which defines the auxiliary function $H_F$. By \eqref{defnHFForP} we have 
\begin{equation*}
\begin{split}
H_F(x,x',y,y') 
&=\partial_{xy}P(x,y)\partial_{x'y'}P(x^\prime,y^\prime)\Big(\frac{\partial_x P (x,y) \partial_y P(x,y)}{\partial_{xy}P(x,y)}-\frac{\partial_{x'}P(x^\prime,y^\prime)\partial_{y'}P(x^\prime,y^\prime)}{\partial_{x'y'}P(x^\prime,y^\prime)}\Big).
\end{split}
\end{equation*} 
Our next task is to control the size of the region where $H_F(x,x',y,G(x,x',y))$ is small. Fix $s_0>0$.
%
%
%

%
%
Fix $(x_0,y_0)\in J_1\times J_2$ and suppose there is a point $x_0'\in J_3$ with $|H(x_0,x_0',y_0,G(x_0,x_0',y_0))|\leq s_0$. Consider the curve 
\[
\gamma = \{(x',y')\in J_3\times \tilde J_4 \colon P(x_0,y_0) = P(x^\prime,y^\prime)\} = \{(x', G(x_0,x',y_0))\colon x'\in J_3\}.
\]
Then $\gamma$ is a smooth curve, and, by \eqref{lowerBoundPartialsAndK} and \eqref{C3BoundsForP}, at each point the tangent to this curve has slope $\leq\delta^{-2\eps}$. Let $\gamma(t)\colon S\to \RR^2$ be a unit speed parameterization of $\gamma$, where $S\subset \RR$ is an interval, $y_0'=G(x_0,x_0',y_0)$, and $\gamma(0) = (x_0',y_0')$. 

For each $t\in S$, the unit tangent vector to $\gamma$ at $t$ is given by $\frac{\nabla P^\perp (\gamma(t))}{|\nabla P(\gamma(t))|}$. Thus for all $t\in S$ we have
\begin{equation*}
\begin{split}
\Big|\partial_t\Big(\frac{\partial_xP(\gamma(t))\partial_yP(\gamma(t))}{\partial_{xy}P(\gamma(t))}\Big)\Big|&=\Big|\frac{\nabla P^\perp}{|\nabla P|} \cdot \nabla\big(\frac{(\partial_x P)(\partial_y P)}{\partial_{xy}P}\big)(\gamma(t))\Big| \\
&= \frac{1}{|\nabla P|} \Big|\Big (\nabla P\wedge \nabla\big(\frac{(\partial_x P)(\partial_y P)}{\partial_{xy}P}\big)\Big)(\gamma(t))\Big|\\
&= \frac{|K_P(\gamma(t))|}{|\nabla P(\gamma(t))|}\\
&\geq \delta^{2\eps}.
\end{split}
\end{equation*}
Therefore if $\gamma(t) = (x'(t),y'(t))$, then 
\[
\Big|\frac{\partial_{x'}P(x'(0),y'(0))\partial_{y'}P(x'(0),y'(0))}{\partial_{x'y'}P(x'(0),y'(0)}-\frac{\partial_{x'}P(x'(t),y'(t))\partial_{y'}P(x'(t),y'(t))}{\partial_{x'y'}P(x'(t),y'(t))}\Big|\geq \delta^{2\eps}t,
\]
and hence

\begin{equation}
\begin{split}
&\Big|\frac{\partial_x P (x_0,y_0) \partial_y P(x_0,y_0)}{\partial_{xy}P(x_0,y_0)}-\frac{\partial_{x'}P(x^\prime(t),y^\prime(t))\partial_{y'}P(x^\prime(t),y^\prime(t))}{\partial_{x'y'}P(x^\prime(t),y^\prime(t))}\Big|\\
&\geq \Big|\frac{\partial_{x'}P(x^\prime(t),y^\prime(t))\partial_{y'}P(x^\prime(t),y^\prime(t))}{\partial_{x'y'}P(x^\prime(t),y^\prime(t))}-\frac{\partial_{x'}P(x^\prime(0),y^\prime(0))\partial_{y'}P(x^\prime(0),y^\prime(0))}{\partial_{x'y'}P(x^\prime(0),y^\prime(0))}\Big| \\
&\qquad - \Big|\frac{\partial_x P (x_0,y_0) \partial_y P(x_0,y_0)}{\partial_{xy}P(x_0,y_0)}-\frac{\partial_{x'}P(x^\prime(0),y^\prime(0))\partial_{y'}P(x^\prime(t),y^\prime(t))}{\partial_{x'y'}P(x^\prime(0),y^\prime(0))}\Big|\\
&\geq \delta^{2\eps}t - \delta^{-2\eps}s_0.
\end{split}
\end{equation}
We conclude that
\begin{equation}\label{derivativeOfHFAsFuncT}
|H_F(x_0,x'(t),y_0,y'(t))|
\geq\big|\partial_{xy}P(x,y)\partial_{x'y'}P(x^\prime(t),y^\prime(t))\big|\big(\delta^{2\eps}t - s\big)\geq \delta^{2\eps}(\delta^{2\eps}t-2^{-\eps}s_0).
\end{equation}

Using \eqref{C3BoundsForP}, one can easily verify that $|\nabla H_F|\le 12 \delta^{-3\eps}$. 
Combined with \eqref{derivativeOfHFAsFuncT}, we have that 
if $(x,y)\in J_1\times J_2$ with $|x-x_0|\leq\delta$ and $|y-y_0|\leq\delta$, then
\begin{equation}\label{derivativeOfHFAsFuncTAtXY}
|H_F(x,x'(t),y,y'(t))|
\geq \delta^{2\eps}(\delta^{2\eps}t-\delta^{-2\eps}s_0)-12\delta^{1-3\eps}.
\end{equation}

Recall that $\gamma(t)$ is a unit speed parameterization, and so $|(x_0',y_0')-\gamma(t)|\le t$. We conclude that for each $(x_0,y_0)\in J_1\times J_2$, if there exists a point $x_0'\in J_3$ with $|H(x_0,x_0',y_0,G(x_0,x_0',y_0))|\leq s_0$, then for each $s\geq s_0$ we have that the set 
\[
\{x'\in J_3\colon |H_F(x,x',y,G(x,x',y))|\leq s\ \quad\textrm{for some}\ x\in J_1\cap [x_0-\delta,x_0+\delta],\ y\in J_2\cap [y_0-\delta,y_0+\delta]\}
\]
is contained in an interval of length at most $2\delta^{-6\eps}s$ (this interval necessarily contains $x_0'$).

Since there are at most $\delta^{-2\alpha-2\eps}$ $\delta$-separated points $(x_0,y_0)\in J_1\times J_2$ for which there exists $x_0'\in J_3$ with $|H(x_0,x_0',y_0,G(x_0,x_0',y_0))|\leq s_0$, we can use \eqref{nonConcentrationABApBp} to bound the covering number of each corresponding interval; setting $s=2s_0$, we conclude that
\begin{equation}\label{coveringNumberSmallH}
\begin{split}
\mathcal{E}_{\delta}\Big\{(x,x',y)\colon |H(x,x',y,G(x,x',y))|\leq s_0\Big\} & \lesssim \delta^{-2\alpha-2\eps}\big((\delta^{-6\eps}(2s_0))^\kappa\delta^{-\alpha-\eps}\big)\\
& \lesssim \delta^{-3\alpha-9\eps}s_0^\kappa.
\end{split}
\end{equation}
Since $\Vert \nabla G\Vert \leq\delta^{-2\eps}$, this implies
\begin{equation}\label{coveringNumberBdCloseToW}
\mathcal{E}_{\delta}\big(\{ (x,x',y,y')\in J_1\times J_2\times J_3\times J_4\colon P(x,y)=P(x',y'),\ |H(x,x',y,y')|\leq s_0\} \big)\lesssim \delta^{-3\alpha-11\eps}s_0^\kappa.
\end{equation}

Next, let 
\begin{equation}\label{defnW}
W = \Big\{(x,x',y)\in J_1\times J_2\times J_3\colon |H(x,x',y,G(x,x',y))|\leq c_0\delta^{5\eps}s_0\Big\},
\end{equation}
where $c_0>0$ is a small constant to be specified below. 
Cover $\RR^3 \backslash W$ by interior-disjoint cubes, with the property that each cube $Q$ in the cover has side-length comparable to its distance from $W$. The existence of such a collection of cubes is guaranteed by the Whitney cube decomposition (see Theorem \ref{whitneyCubeDecomp} below). Let $\mathcal{Q}_1$ denote the set of cubes $Q$ satisfying $\operatorname{dist}(Q,W)\leq c_0\delta^{5\eps}s_0$, and let $\mathcal{Q}_2$ denote the set of cubes $Q$ that intersect $J_1\times J_2\times J_3$, but which are not in $\mathcal{Q}_1$. 

We claim that
\begin{equation}
\bigcup_{Q\in\mathcal{Q}_1}Q \subset \Big\{(x,x',y)\colon |H(x,x',y,G(x,x',y))|\leq s_0\Big\}.
\end{equation}
Indeed, if $Q\in\mathcal{Q}_1$, then $\operatorname{dist}(Q, W)\leq c_0\delta^{5\eps}s_0$. Since the diameter of $Q$ is $O(c_0\delta^{5\eps}s_0)$, we have that each point in $Q$ has distance $O(c_0\delta^{5\eps}s_0)$ from $W$.
Using \eqref{lowerBoundPartialsAndK}, and \eqref{C3BoundsForP} one can verify that the gradient of the trivariate function 
$$
(x,x',y)\mapsto H(x,x',y, G(x,x',y))$$ has norm $O(\delta^{-5\eps})$. 
Thus for every $(x,x',y)\in Q$, we have

\begin{equation}\label{sizeOfHOfG}
|H(x,x',y,G(x,x',y))|\lesssim \delta^{-5\eps}(c_0\delta^{5\eps}s_0).
\end{equation}
If we select $c_0$ from \eqref{defnW} sufficiently small, then the LHS of \eqref{sizeOfHOfG} is at most $s_0$. Fixing this choice of $c_0$, we have

\begin{equation}\label{decomposeJ1J2J3}
J_1\times J_2\times J_3 \subset \bigcup_{Q\in\mathcal{Q}_2}Q \ \cup\ \Big\{(x,x',y)\colon |H(x,x',y,G(x,x',y))|\leq s_0\Big\}.
\end{equation}

Recall that our goal is to establish \eqref{semiDiscretizedEnergyInJ}. We will do this by bounding the number of quadruples $(x,x',y,y')\in A\times A \times B \times B$ with $(x,x',y)\in \bigcup_{Q\in\mathcal{Q}_2}Q$, and the number of quadruples with $(x,x',y)\in (J_1\times J_2\times J_3)\ \backslash\ \bigcup_{Q\in\mathcal{Q}_2}Q$. By \eqref{decomposeJ1J2J3}, we have $|H(x,x',y,G(x,x',y))|\leq s_0$ for each $(x,x',y)$ of the latter kind, and thus we can use \eqref{coveringNumberBdCloseToW} to bound the contribution to  \eqref{semiDiscretizedEnergyInJ} of quadruples arising from such $(x,x',y)$. We will select

\begin{equation}\label{choiceS0}
s_0 = \delta^{100\eps/\kappa^2}.
\end{equation}
With this choice of $s_0$, the RHS of \eqref{coveringNumberBdCloseToW} is $O(\delta^{-3\alpha-11\eps+100\eps/\kappa})$, and thus in turn is at most $\frac{1}{2}\delta^{-3\alpha+30\eps/\kappa}$. Comparing with \eqref{semiDiscretizedEnergyInJ}, this contribution is acceptable. 

Our next task is to obtain an analagous bound for 
\begin{equation}\label{boundSumOverQ2}
\sum_{Q\in\mathcal{Q}_2} \mathcal{E}_{\delta}\big(\{ (x,x',y,y')\in J\cap (A\times A\times B\times B) \colon P(x,y) = P(x',y'),\ (x,x',y)\in Q\}\big).
\end{equation}

Each cube in $\mathcal{Q}_2$ intersects $J_1\times J_2\times J_3 \subset [0,1]^3$, and each cube has side-length $\gtrsim \delta^{5\eps}s_0$. For technical reasons, it will be convenient to further partition the cubes in $\mathcal{Q}_2$ into a set of $O(\delta^{-48\eps}s_0^{-3})$ smaller rectangular prisms, each of which has diameter at most $\delta^{16\eps}s_0$. Denote this new set of prisms by $\mathcal{Q}_3.$

Fix $Q = Q_1\times Q_2\times Q_3 \in\mathcal{ Q}_3$, so $Q\times {\tilde J}_4 = Q_1\times Q_2\times Q_3\times {\tilde J}_4$. 
Note that $F$ has full projection on $Q\times {\tilde J}_4$. 
We claim that there exists a subinterval $K_Q\subset \tilde{J}_4$ such that $F$ has a full projection on $Q\times K_Q$ and $K_Q$ has length at most $\delta^{14\eps}s_0$. 
Indeed, first note that, by \eqref{lowerBoundPartialsAndK} and \eqref{C3BoundsForP}, $|\nabla G|$ is bounded by $\delta^{-2\eps}$ on $Q\times \tilde{J}_4$.
Thus, for $(x_1,x_1',y_1,y_1'), (x_2,x_2',y_2,y_2')\in (Q\times \tilde{J}_4)\cap Z(F)$, we have 
\begin{equation}\label{distanceGvaluesOnQ}
|y_1'-y_2'|\lesssim \delta^{-2\eps}(\delta^{16\eps}s_0),
\end{equation}
where here we used the property that the diameter of $Q$ is at most $\delta^{16\eps}s_0$. 

Finally, fix $Q = Q_1\times Q_2\times Q_3\in \mathcal{Q}_3$ and let $K_Q$ be as above. By the definition of the collection $\mathcal{Q}_2$, we have 
$
|H_F(x,x',y,G(x,x',y))|\gtrsim \delta^{5\eps}s_0
$ on each point in $Q$. Since $(Q\times K_Q)\cap Z(F)\neq \emptyset$, there exists $(x_1,x_1',y_1,y_1')\in Q\times K_Q$, such that $y_1'=G(x_1,x_1',y_1)$. Recalling that $|\nabla H_F|\le 12 \delta^{-3\eps}$, we conclude that 
$$
|H_F(x,x',y,y')|\ge \delta^{10\eps}s_0-12 \delta^{-3\eps}(\delta^{14\eps}s_0)\ge \tfrac12 \delta^{10\eps}s_0,
$$
for every $(x,x',y,y')\in Q\times K_Q$.

Let $\eps'$ and $\delta_0'$ be the value of $\eps$ and $\delta_0$ coming from Lemma \ref{EnergyDispersionForF} (with the value of $\alpha$ and $\kappa$ specified as input to the present lemma). If $\eps$ is selected sufficiently small so that $10\eps+ 100\eps/\kappa^2\leq \eps'$ (i.e. $\tfrac12\delta^{10\eps}s_0\geq\delta^{\eps'}$), then $F$ satisfies hypotheses \eqref{HFBigOnZAndLowerBoundPartials} and \eqref{C3BoundForF}, and the sets $A\cap Q_1, B\cap Q_2, A\cap Q_3$ and $B\cap K_Q$ satisfy the hypothesis \eqref{noncon4}. By construction, the function $F\colon Q_1\times Q_2 \times Q_3 \times K_Q\to\RR$ has full projection. We conclude that for all $0<\delta<\delta_0$,
\begin{equation}\label{boundForOneJ}
\mathcal{E}_{\delta}\Big(  (A\times A\times B\times B) \cap (Q_1\times Q_2\times Q_3\times {\tilde J}_4) \cap  Z(F)  \Big) \leq  \delta^{-3\alpha+\eps'}.
\end{equation}
Summing over all $O(\delta^{-48\eps}s_0^{-3})$ such cubes, we conclude that 
\begin{equation}\label{sumOverQ2Bd}
\eqref{boundSumOverQ2}\lesssim \delta^{-3\alpha-48\eps+\eps'} s_0^{-3}.
\end{equation}
Finally, using our choice of $s_0$ from \eqref{choiceS0}, we conclude that if $\eps>0$ is selected sufficiently small (depending on $\eps'$, which in turn depends on $\alpha$ and $\kappa$), then $\eqref{boundSumOverQ2}\leq \frac{1}{2}\delta^{-3\alpha+30\eps/\kappa}$. 
We conclude that \eqref{semiDiscretizedEnergyInJ} holds, and we are done. 
\end{proof}

\section{Blaschke curvature and discretized projections} \label{BlaschkeSection}

In this section we will prove Theorem \ref{curvatureProjections}. First, we will show that if a non-concentrated set has small image under three different maps whose gradients are pairwise linearly independent at each point, then the image of this set under these maps must also be non-concentrated. 

\begin{lem}\label{findingLargeNonconcentratedCartesianProduct}
Let $Q\subset[0,1]^2$ be a square, let $\alpha,\eta>0$, $C\geq 1$. Then the following is true for all $\delta>0$ sufficiently small. Let $f_1,f_2,f_3\in C^2(Q)$. Suppose that for each $i=1,2,3$ we have
\begin{equation}\label{boundC2NormFi}
\Vert f_i\Vert_{C^2(Q)}\leq C,
\end{equation}
and for each $i\neq j$ and each $z\in Q$ we have
\begin{equation}\label{boundsWedgeOfP}
|\nabla f_i(z) \wedge \nabla f_j(z)|\geq C^{-1}. 
\end{equation}
Let $X\subset Q$ with with $\mathcal{E}_{\delta}(X) \geq \delta^{-\alpha}$. Suppose that for all balls $B$ of radius $r\geq\delta$ we have 
\begin{equation}\label{nonConcentrationOnBalls}
\mathcal{E}_{\delta}(X \cap B) \leq r^{\alpha}\delta^{-\alpha-\eta}.
\end{equation}
Suppose that for each $i=1,2,3$ we have
\begin{equation}\label{EHasSmallProjections}
\mathcal{E}_{\delta}(f_i(X)) \leq \delta^{-\alpha/2-\eta}.
\end{equation}
Then there exists $X'\subset X$, with 
\begin{equation}\label{ABLargeIntersectionX}
\mathcal{E}_{\delta}(X')\gtrsim (\delta^{\eta}C^{-1})^{O(1)}\mathcal{E}_{\delta}(X),
\end{equation}
such that for all intervals $J$ of length at least $\delta$, and each index $i=1,2,3$, we have
\begin{equation}\label{ABNonConcentrated}
\begin{split}
&\mathcal{E}_{\delta}(f_i(X') \cap J)\leq (C |\log\delta|\delta^{-\eta})^{O(1)} |J|^{\alpha/2}\delta^{-\alpha/2}.
\end{split}
\end{equation}
\end{lem}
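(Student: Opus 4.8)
The plan is to build $X'$ by a pigeonholing/refinement procedure that kills the concentration at \emph{every} dyadic scale simultaneously, losing only polylogarithmic factors. First I would fix the three functions and observe that, since each $\nabla f_i \wedge \nabla f_j$ is bounded below and each $\Vert f_i\Vert_{C^2}\leq C$, each map $f_i$ is bi-Lipschitz on $O(1)$-sized pieces; more importantly, the pair $(f_i,f_j)$ is a $C^2$-diffeomorphism from $Q$ (after subdividing into $O_C(1)$ subsquares) onto its image, with Jacobian bounded above and below by powers of $C$. Consequently $\mathcal{E}_\delta(X) \sim_C \mathcal{E}_\delta\big((f_i,f_j)(X)\big)$, and a $\delta$-ball in the image pulls back to a $C^{O(1)}\delta$-ball in $Q$; this lets me transfer the non-concentration hypothesis \eqref{nonConcentrationOnBalls} back and forth between $X$ and the planar sets $(f_i,f_j)(X)$.

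Next I would set up the refinement. Write $f(X) := (f_1(X), f_2(X), f_3(X))$ and think of $X$ as (essentially) a subset of $f_1(X)\times f_2(X)\times f_3(X)$ via the injective map $z\mapsto (f_1(z),f_2(z),f_3(z))$ — injective up to the $O_C(1)$ subdivision, since any two of the $f_i$ already separate points. The hypothesis \eqref{EHasSmallProjections} says each factor $f_i(X)$ has covering number at most $\delta^{-\alpha/2-\eta}$. For each $i$ and each dyadic scale $\delta \leq \rho \leq 1$, I would like \eqref{ABNonConcentrated}, i.e.\ $\mathcal{E}_\delta(f_i(X')\cap J) \lesssim (C|\log\delta|\delta^{-\eta})^{O(1)} \rho^{\alpha/2}\delta^{-\alpha/2}$ for all $J$ with $|J|=\rho$. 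The standard device: for each $i$ and each dyadic $\rho$, call an interval $J$ of length $\rho$ \emph{heavy} if it violates this bound; because $f_i(X)$ has total covering number $\lesssim \delta^{-\alpha/2-\eta}$, the union of heavy intervals can contain at most a $\big((C|\log\delta|\delta^{-\eta})^{-O(1)}\big)$-fraction of $f_i(X)$ once the $O(1)$ exponent is chosen large enough — summing the geometric series over the $O(|\log\delta|)$ dyadic scales and over $i=1,2,3$. Removing from $X$ all points $z$ with some $f_i(z)$ in some heavy interval leaves $X'$ with $\mathcal{E}_\delta(X') \gtrsim (C|\log\delta|\delta^{-\eta})^{-O(1)}\mathcal{E}_\delta(X)$, which is \eqref{ABLargeIntersectionX} after relabeling the $O(1)$ power, and by construction $X'$ satisfies \eqref{ABNonConcentrated} at every dyadic scale, hence (up to a factor $2$) at every scale $\geq \delta$.

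The one genuine subtlety — and what I expect to be the main obstacle — is that removing heavy intervals repeatedly across scales could in principle cascade: deleting points to fix scale $\rho$ might create new heavy intervals at a finer scale. This is handled by doing the removal greedily from the coarsest scale to the finest (or, cleaner, by noting the removed set is an \emph{a priori} bounded fraction so a single union bound over all scales suffices), using that the bound we demand at scale $\rho$, namely $\rho^{\alpha/2}\delta^{-\alpha/2}$, is exactly what a $(\delta,\alpha/2)$-type set of total size $\delta^{-\alpha/2}$ can be forced to satisfy after polylog refinement — this is the single-scale analogue of the elementary fact that any measure can be refined to be Frostman. The interplay with the functions $f_i$ is mild here: \eqref{boundC2NormFi} and \eqref{boundsWedgeOfP} are used only to guarantee the coordinate changes $(f_i,f_j)$ are quantitatively nondegenerate so that "interval of length $\rho$ in the image of $f_i$" and "slab of width $\sim_C \rho$ in $Q$" are interchangeable, and so that the map $z \mapsto (f_1(z),f_2(z),f_3(z))$ is injective on each of the $O_C(1)$ pieces; none of the three functions needs to be linear, and no transversality beyond pairwise linear independence of gradients is used. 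Finally I would reassemble the $O_C(1)$ pieces, keeping the largest, which costs only another $C^{O(1)}$ factor absorbed into \eqref{ABLargeIntersectionX}.
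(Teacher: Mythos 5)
There is a genuine gap at the central step of your plan: the claim that deleting from $X$ the $f_i$-preimages of ``heavy'' intervals removes only a $(C|\log\delta|\delta^{-\eta})^{-O(1)}$-fraction of $X$. Your justification counts in the wrong set: knowing that $\mathcal{E}_{\delta}(f_i(X))\leq\delta^{-\alpha/2-\eta}$ controls how much of $f_i(X)$ the heavy intervals can cover (and even that is false -- if $f_i(X)$ is itself concentrated, e.g.\ packed inside an interval of length $\delta^{1/2}$, then \emph{every} interval at finer scales is heavy and they cover all of $f_i(X)$), but it says nothing about how much of $X$ their preimages contain, because fibers of $f_i$ typically carry entropy $\sim\delta^{-\alpha/2}$ and heavy intervals may capture a disproportionate share of $X$. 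If you instead try to bound the removed entropy of $X$ directly, the hypotheses give: at scale $\rho$ there are at most $\sim\delta^{-\eta}K^{-1}\rho^{-\alpha/2}$ heavy intervals, and by \eqref{nonConcentrationOnBalls} each preimage slab of width $\rho$ holds up to $\rho^{\alpha-1}\delta^{-\alpha-\eta}$ entropy, giving a total of $K^{-1}\delta^{-\alpha-2\eta}\rho^{\alpha/2-1}$, which dwarfs $\mathcal{E}_{\delta}(X)$ as $\rho\to\delta$; so the union bound over scales does not close. Relatedly, the ``elementary fact'' you invoke -- that any set can be refined, keeping a polylog fraction, to satisfy a Frostman condition at the prescribed exponent $\alpha/2$ -- is false ($\delta^{-\alpha/2}$ intervals packed inside an interval of length $\delta^{1-\alpha/2}$ admit no such large subset). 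The conclusion \eqref{ABNonConcentrated} is not a one-function-at-a-time cleanup; it is a structural statement that can fail for a single $f_i$ and is only forced by playing the three functions against each other.

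That interplay is exactly what the paper's proof supplies and what your proposal omits: you never use the smallness of $f_3(X)$ to control the concentration of $f_1(X')$. After reducing to a subsquare where each $f_i\otimes f_j$ is injective, the paper builds a bipartite graph between $\delta$-nets of $f_1(X_0)$ and $f_2(X_0)$, runs a popularity argument, and then argues: if $\mathcal{E}_{\delta}(f_1(X_1)\cap J)=Kt^{\alpha/2}\delta^{-\alpha/2}$ for an interval $J$ of length $t$, then the smallness of $f_2(X)$ from \eqref{EHasSmallProjections} forces each popular fiber to have multiplicity $m\gtrsim Kt^{\alpha/2}\delta^{-\alpha/2}$ (up to $\delta^{O(\eta)}$ and polylog losses), the planar non-concentration \eqref{nonConcentrationOnBalls} produces $\gtrsim Kt^{-\alpha/2}$ separated fibers, and the pairwise transversality makes their $f_3$-images disjoint intervals each of entropy $\gtrsim m$; summing gives $\mathcal{E}_{\delta}(f_3(X_1))\gtrsim K^{2}\delta^{-\alpha/2}$ up to the same losses, so \eqref{EHasSmallProjections} for $i=3$ caps $K$ at $(C|\log\delta|\delta^{-\eta})^{O(1)}$. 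This third-function mechanism (iterated for $i=1,2,3$ to produce $X'=X_3$) is the essential content of the lemma; without it, or a substitute for it, your refinement scheme cannot establish \eqref{ABNonConcentrated} while preserving \eqref{ABLargeIntersectionX}.
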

\begin{proof}
First, we claim that the set $X$ cannot be too concentrated near the boundary of $Q$, i.e.
\begin{equation}\label{XNearBdryQ}
\mathcal{E}_{\delta}\big(X \cap N_{C\delta}(\operatorname{bdry}(Q))\big)\leq \frac{1}{2}\mathcal{E}_{\delta}(X).
\end{equation}
Indeed, if \eqref{XNearBdryQ} failed, then there is a line segment $\ell\subset[0,1]^2$ (one of the four edges of the square $Q$) with $\mathcal{E}_{\delta}(X \cap N_{C\delta}(\ell))\geq \frac{1}{8}\mathcal{E}_{\delta}(X)$. But then by \eqref{boundC2NormFi} and \eqref{boundsWedgeOfP}, we can find a segment $\ell'\subset\ell$ with $\mathcal{E}_{\delta}(X \cap N_{C\delta}(\ell'))\geq C^{-O(1)}\mathcal{E}_{\delta}(X)$ and an index $i$ so that $|\nabla f\cdot v|\geq C^{-O(1)})$ on $\ell$, where $v$ is the unit vector parallel to $\ell$. But this implies 
\[
\mathcal{E}_{\delta}(f_i(X \cap N_{C\delta}(\ell')))\geq C^{-O(1)}\mathcal{E}_{\delta}(X),
\]
which contradicts \eqref{EHasSmallProjections}, provided $\delta>0$ is sufficiently small (depending on $\alpha, \eta$, and $C$).

Next, by \eqref{boundC2NormFi} and \eqref{boundsWedgeOfP}, we can select a square $Q'\subset Q\backslash N_{C\delta}(\operatorname{bdry}(Q))$  with 
\[
\mathcal{E}_{\delta}(X\cap Q')\geq C^{-O(1)}\mathcal{E}_{\delta}(X)
\]
so that for each $i\neq j$, $f_i\otimes f_j\colon Q'\to \RR^2$ is injective on $N_{C\delta}(Q')\subset Q$, where $N_{C\delta}(Q')$ is the $C\delta$-neighborhood of $Q'$. In particular, since \eqref{boundC2NormFi} and \eqref{boundsWedgeOfP} continue to hold on $Q'$, if $z, z'\in Q'$, then
\begin{equation}\label{fifjComparableZiZj}
|(f_i(z),f_j(z))-(f_i(z'),f_j(z'))|\geq C^{-O(1)}|z-z'|.
\end{equation}

Next, let $X_0\subset X\cap Q'$ be a maximal $C^{O(1)}\delta$ separated subset of $X\cap Q'$, where the implicit constant is chosen so that if $z,z'\in X_0$, then for each $i\neq j$ we have
\begin{equation}\label{quantitativeDiffeoDelta}
|(f_i(z),f_j(z))-(f_i(z'),f_j(z'))|\geq 2\delta. 
\end{equation}
Note that $\#X_0 \geq C^{-O(1)}\mathcal{E}_{\delta}(X),$ so
\begin{equation}\label{lowerBdEntropyX0}
\mathcal{E}_{\delta}(X_0) \gtrsim (\delta^{\eta}C^{-1})^{O(1)}\delta^{-\alpha}. 
\end{equation}
In the argument that follows, we will show that if $X_0$ is a set satisfying \eqref{nonConcentrationOnBalls}, \eqref{EHasSmallProjections}, \eqref{fifjComparableZiZj}, \eqref{quantitativeDiffeoDelta}, and \eqref{lowerBdEntropyX0}, then there is a set $X_1\subset X_0$ that continues to satisfy these inequalities (with slightly worse implicit constants for \eqref{lowerBdEntropyX0}), and that also satisfies \eqref{ABNonConcentrated} for $i=1$. We can then repeat this argument to find a set $X_2\subset X_1$ that continues to satisfy the above inequalities, and also satisfies \eqref{ABNonConcentrated} for $i=2$, and then a set $X_3\subset X_2$ that satisfies  \eqref{ABNonConcentrated} for $i=3$. To complete the proof of the lemma, we define $X'=X_3$. 

Our next goal is to construct the set $X_1$ described above. Let $A_0\subset f_1(X_0)$ and $B_0\subset f_2(X_0)$ be maximal $\delta$-separated sets. Define $E_0\subset A_0\times B_0$, with $(x,y)\in E_0$ if there exists a point $z\in X_0$ with $|x-f_1(z)|\leq\delta,\ |y-f_2(z)|\leq\delta$. Let $\pi_x,\pi_y$ be the projections from $E_0$ to $A_0$ and $B_0$, respectively. By \eqref{EHasSmallProjections} we have $\#A_0 \leq \delta^{-\alpha/2-\eta}$ and $\#B_0 \leq \delta^{-\alpha/2-\eta}$, and by construction we have $\#E_0 = \#X_0\geq (\delta^{\eta}C^{-1})^{O(1)}\delta^{-\alpha}$. 

Using a standard popularity argument (see e.g. \cite[Lemma 2.8]{DG}), we can select sets $A\subset A_0$, $B\subset B_0$ and an edge set $E\subset E_0$ so that $\#E\geq (\#E_0)/2$; each element of $A$ is adjacent to at least $(\#E_0)/(4 \# A_0)\gtrsim (\delta^{\eta}C^{-1})^{O(1)}\delta^{-\alpha/2}$ and at most $\#B=\delta^{-\alpha/2-\eta}$ elements of $B$. Similarly, each element of $B$ is adjacent to at least $\gtrsim (\delta^{\eta}C^{-1})^{O(1)}\delta^{-\alpha/2}$ and at most $\delta^{-\alpha/2-\eta}$ elements of $A$.

Define $X_1=X_0\cap N_{\delta}( (f_1\otimes f_2)^{-1}(E)).$ We claim that $X_1$ satisfies \eqref{ABNonConcentrated} for $i=1$. To verify this, let $J\subset\RR$ be an interval of length $t\geq\delta$ and suppose
\[
\mathcal{E}_{\delta}(f_1(X_1) \cap J) = K t^{\alpha/2} \delta^{-\alpha/2},
\]
for some $K\geq 1$. Our goal is to show that $K\leq (C |\log\delta|\delta^{-\eta})^{O(1)}$. 

By construction we have $\#(A\cap J) = \mathcal{E}_{\delta}(f_1(X_1) \cap J) = K t^{\alpha/2} \delta^{-\alpha/2}$. For each $z=(x,y)\in E$, define 
\[ 
m(z)= \#\pi_y^{-1}(\pi_y(z)) = \#\{ x'\in A\colon (x',y)\in E\}.
\]
We have $1\leq m(z)\leq \#(A\cap J)\leq\delta^{-1}$.

By dyadic pigeonholing, there is a number $1\leq m \leq \#(A\cap J)$ and a set $E'\subset E \cap f_1^{-1}(J))$ with 
\[
\#E' \geq |\log\delta|^{-1} \#(E \cap f_1^{-1}(J))\gtrsim \big( \#(A\cap J)\big)\big((\delta^{\eta}C^{-1})^{O(1)}\delta^{-\alpha/2}\big) =(\delta^{\eta}C^{-1}|\log\delta|^{-1})^{O(1)} K t^{\alpha/2}\delta^{-\alpha},
\]
so that $m(z)\sim m$ for all $z\in E'$. Note that if $z=(x,y)\in E'$ and $x'\in A$, then $(x',y)\in E'$.

We have $\pi_y(E')\subset B$, and thus $\#(\pi_y(E'))\leq \#B\leq \delta^{-\alpha/2-\eta}$. On the other hand $(\#E')/m \leq \#(\pi_y(E'))$, so by \eqref{EHasSmallProjections} we have
\[
m \gtrsim \frac{(\#E')}{\#(\pi_y(E')} \gtrsim 
\frac{(\delta^{\eta}C^{-1}|\log\delta|^{-1})^{O(1)} K t^{\alpha/2}\delta^{-\alpha}}{\delta^{-\alpha/2-\eta}}= (\delta^{\eta}C^{-1}|\log\delta|^{-1})^{O(1)} Kt^{\alpha/2}\delta^{-\alpha/2}.
\]

By \eqref{nonConcentrationOnBalls} we have that for each $T\geq 1$,
\[
\mathcal{E}_{C^Tt}(E') \geq \frac{\# E'}{(C^{T}t)^{\alpha}\delta^{-\alpha-\eta}}\gtrsim C^{-T\alpha} (\delta^{\eta}C^{-1}|\log\delta|^{-1})^{O(1)} K t^{-\alpha/2}.
\]

Thus we can select a $C^Tt$-separated set $B'\subset B$ with 
\[
\#B' \gtrsim C^{-T\alpha}(\delta^{\eta}C^{-1}|\log\delta|^{-1})^{O(1)} K t^{-\alpha/2},
\]
so that for each $b\in B'$, there is a set $E'_b\subset E'$ with $\#E'_b\sim m$, and  $\pi_y(z) = b$ for each $z\in E'_b$. Let $X_b = (f_1\otimes f_2)^{-1}(E'_b)$. Since $f_1(X_b)$ is contained in an interval of length $t$ and since $f_2(X_b)$ is contained in an interval of length $\delta$ for each $z\in E'_b$, by \eqref{fifjComparableZiZj} (with $i=1,j=2$) we have that $X_b$ is contained in a ball of radius  $C^{O(1)}t$. Thus by \eqref{boundC2NormFi}, $f_3(X_b)$ is contained in an interval of length $C^{O(1)}t$. By \eqref{fifjComparableZiZj} (with $i=2,j=3)$, we have that $\mathcal{E}_{\delta}(f_3(X_b))\gtrsim C^{-O(1)}m$. Next, note that for all $b\in B'$, $f_1(X_b)\subset J$, which is an interval of length $t$. Thus if the constant $T=O(1)$ is chosen sufficiently large, then by \eqref{fifjComparableZiZj} (with $i=1,j=3)$, we have that if $b,b'\in B'$ are distinct, then the sets $f_3(X_b)$ and $f_3(X_b)$ are contained in disjoint intervals (of length $C^{O(1)}t$ ). 

Thus 
\[
\mathcal{E}_{\delta}(f_3( (f_1\circ f_2)^{-1}(E')) \geq \sum_{b\in B'} \mathcal{E}_{\delta}(f_3(E'_b)) \geq C^{-O(1)}  (\#B') m.
\]

We conclude that
\begin{equation}
\begin{split}
\mathcal{E}_{\delta}(f_3(X_1)) & \geq C^{-O(1)} (\#B') m\\
& \gtrsim (\delta^{\eta}C^{-1}|\log\delta|^{-1})^{O(1)} \big(K t^{-\alpha/2}\big)\big(Kt^{\alpha/2}\delta^{-\alpha/2}\big)\\
& = K^2  (\delta^{\eta}C^{-1}|\log\delta|^{-1})^{O(1)} \delta^{-\alpha/2}.
\end{split}
\end{equation}
Comparing this with \eqref{EHasSmallProjections} for $i=3$, we conclude that
\[
K \leq (C|\log\delta|   \delta^{-\eta})^{O(1)}.
\]
Thus the set $X_1$ satisfies \eqref{ABNonConcentrated} for $i=1$, which completes the proof. 
\end{proof}

Next, we will show that if a non-concentrated set is contained in the small neighborhood of a curve, and if two functions have linearly independent gradients on this curve, and these gradients are not normal to the curve, then at least one of the functions must have large image on the set.
\begin{lem}\label{twoFunctionsOnASet}
Let $0<\delta<s$, and let $\eta>0$. Let $\gamma\subset[0,1]^2$ be a simple (no self-intersections) smooth curve. Let $f_1,f_2\colon N_{s}(\gamma)\to\RR$ be functions, with
\begin{equation}\label{upperBoundC1NormFi}
\Vert f_i\Vert_{C^1(N_s(\gamma))}\leq\delta^{-\eta},
\end{equation}

\begin{equation}\label{f1f2LargeWedge}
|\nabla f_1(x)\wedge \nabla f_2(x)|\geq\delta^{\eta},\quad x\in\gamma,
\end{equation}

\begin{equation}\label{FiCdotVx}
|\nabla f_i(x)\cdot v(x)|\geq\delta^{\eta}, \quad x \in\gamma,
\end{equation}
where $v(x)$ is the unit vector tangent to $\gamma$ at $x$.

Let $A\subset N_{s}(\gamma)$ be a set, with $\mathcal{E}_{\delta}(A)\geq\delta^{-\alpha+\eta}$, and suppose that for all balls $B$ of radius $r\geq\delta$, $A$ satisfies the non-concentration condition
\begin{equation}\label{nonConcTwoFuncLem}
\mathcal{E}_{\delta}(A\cap B)\leq r^\alpha\delta^{-\alpha-\eta}.
\end{equation}
Then for at least one index $i$, we have
\begin{equation}\label{conclusionTwoFunctionsOnASet}
\mathcal{E}_{\delta}(f_i(A))\gtrsim s^{-\alpha/2} \delta^{-\alpha/2+5\eta}|\log\delta|^{-1}.
\end{equation}
\end{lem} 
\begin{proof}
Cover $N_s(\gamma)$ by finitely overlapping balls of radius $O(s)$. After dyadic pigeonholing, we can select a set of $N$ balls, each of which satisfy $\mathcal{E}_{\delta}(A\cap B) \gtrsim |\log\delta|^{-1}\delta^{-\alpha+\eta}N^{-1}$. Denote this set of balls by $\mathcal{B}$. By \eqref{nonConcTwoFuncLem}, we have
\begin{equation}\label{NIsLarge}
N\gtrsim |\log\delta|^{-1}\delta^{2\eta}s^{-\alpha}. 
\end{equation} 
By \eqref{upperBoundC1NormFi}, for each ball $B\in\mathcal{B}$ and each index $i$, we have that $f_i(B)$ is contained in an interval of length $O(s\delta^{-\eta})$. Let $\gamma(t)$ be a unit speed parameterization of $\gamma$. By \eqref{FiCdotVx}, if $|t-t'|\geq Cs\delta^{-2\eta}$ for a sufficiently large constant $C=O(1)$, then if $B,B'\in\mathcal{B}$ are balls containing $\gamma(t)$ and $\gamma(t')$ respectively, then the intervals $f_i(B)$ and $f_i(B')$ are disjoint. Fix such a choice of $C$. After refining our collection of balls by a multiplicative factor of $O(\delta^{2\eta})$, we can suppose that for each pair of distinct balls $B$ and $B'$ and each index $i$, the intervals $f_i(B)$ and $f_i(B')$ are disjoint. Let $\mathcal{B}'\subset\mathcal{B}$ be such a refinement. 

By \eqref{f1f2LargeWedge}, for each ball $B\in\mathcal{B}$ there exists an index $i$ so that
\[
\mathcal{E}_{\delta}(f_i(A\cap B))\gtrsim \delta^{\eta}\mathcal{E}_{\delta}(A\cap B)^{1/2}\gtrsim |\log\delta|^{-1/2}\delta^{-\alpha/2+(3/2)\eta}N^{-1/2}
\]
Choose an index $i$ so that the above inequality holds for at least half of the balls $B\in\mathcal{B}'$; denote this set of balls $\mathcal{B}''$.  We have
\begin{equation*}
\begin{split}
\mathcal{E}_{\delta}(f_i(A))&\geq \sum_{B\in\mathcal{B}''}\mathcal{E}_{\delta}(f_i(A\cap B))\\
&\gtrsim (\delta^{2\eta}N)|\log\delta|^{-1/2}\delta^{-\alpha/2+(3/2)\eta}N^{-1/2}\\
&\gtrsim N^{1/2} \delta^{-\alpha/2+(7/2)\eta}|\log\delta|^{-1/2}\\
&\gtrsim s^{-\alpha/2} \delta^{-\alpha/2+5\eta}|\log\delta|^{-1}. \qedhere
\end{split}
\end{equation*}
\end{proof}

With these lemmas, we are now ready to begin the proof of Theorem \ref{curvatureProjections}. We will actually state and prove the following slightly more general (and technical) version of the theorem.

\begin{curvatureProjectionsThm}
For each $0<\alpha<2$, there exists $\eta=\eta(\alpha)>0$ and $\delta_0=\delta_0(\alpha)>0$ so that the following is true for all $0<\delta<\delta_0$. Let $Q\subset [0,1]^2$ be a square, and let $\phi_1,\phi_2,\phi_3\colon Q\to\RR$ be smooth, with 
\begin{equation}\label{boundOnC3Phi}
\Vert \phi_i\Vert_{C^2(Q)}\leq\delta^{-\eta}.
\end{equation}
Suppose that for each $i\neq j$, we have
\begin{equation}\label{quantitativeLinIndependenceGradient}
|\nabla \phi_i\wedge \nabla \phi_j|\geq\delta^{\eta}\quad\textrm{on}\ Q,
\end{equation}
and 
\begin{equation}\label{quantitativeCurvature}
\Big| \frac{\partial}{\partial \phi_1}\frac{\partial}{\partial \phi_2} \log \frac{\partial \phi_3 / \partial\phi_1}{\partial \phi_3 / \partial\phi_2}\Big|\geq\delta^{\eta}\quad\textrm{on}\ Q.
\end{equation}

Let $X\subset Q$ be a set with $\mathcal{E}_{\delta}(X)\geq \delta^{-\alpha}$, and suppose that for all balls $B$ of diameter $r\geq\delta$, $X$ satisfies the non-concentration condition
\begin{equation}\label{nonConcentrationCondXCoveringNumber}
\begin{split}
\mathcal{E}_{\delta}(X \cap B) &\le  r^\alpha\delta^{-\alpha-\eta}.
\end{split}
\end{equation}
Then for at least one index $i\in \{1,2,3\}$ we have
\begin{equation}\label{atLeastOneBigProjectionCoveringNumber}
\mathcal{E}_{\delta}(\phi_i(X))\geq \delta^{-\alpha/2-\eta}.
\end{equation}
\end{curvatureProjectionsThm}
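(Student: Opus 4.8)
The plan is to argue by contradiction. Suppose $\mathcal{E}_{\delta}(\phi_i(X)) < \delta^{-\alpha/2-\eta}$ for all three indices $i$; I want a contradiction for $\eta=\eta(\alpha)$ small enough. The first move is to upgrade this hypothesis into a \emph{Cartesian product with non-concentrated factors}. Apply Lemma~\ref{findingLargeNonconcentratedCartesianProduct} with $f_i=\phi_i$ and $C=\delta^{-\eta}$ (the $\delta$-dependence of $C$ only costs factors $\delta^{O(\eta)}$, which will be harmless): this produces $X'\subset X$ with $\mathcal{E}_{\delta}(X')\geq \delta^{O(\eta)}\mathcal{E}_{\delta}(X)\geq \delta^{-\alpha+O(\eta)}$ such that $\mathcal{E}_{\delta}(\phi_i(X')\cap J)\leq \delta^{-O(\eta)}|J|^{\alpha/2}\delta^{-\alpha/2}$ for every interval $J$ and every $i$. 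Note $X'\subset X$ still inherits the ball non-concentration \eqref{nonConcentrationCondXCoveringNumber}.

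Next I would localize and change coordinates. Pass to a subsquare $Q_0\subset Q$ of side $\sim\delta^{2\eta}$ carrying $\gtrsim\delta^{O(\eta)}\mathcal{E}_{\delta}(X')$ of $X'$, small enough (by \eqref{boundOnC3Phi} and \eqref{quantitativeLinIndependenceGradient}) that $\Phi:=(\phi_1,\phi_2)$ is a $\delta^{-O(\eta)}$-bi-Lipschitz diffeomorphism on $Q_0$. After the coordinate change $x=\phi_1$, $y=\phi_2$, write $\phi_3=P:=\phi_3\circ\Phi^{-1}$, a smooth function with $\Vert P\Vert_{C^3}\leq\delta^{-O(\eta)}$ (for fixed smooth $\phi_i$ and $\delta$ small this is automatic; in general it follows from a routine mollification). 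The two structural hypotheses transfer cleanly: \eqref{quantitativeLinIndependenceGradient} gives $|\partial_x P|,|\partial_y P|\geq\delta^{O(\eta)}$, and because the Blaschke $2$-form is invariant and $\partial/\partial\phi_1,\partial/\partial\phi_2$ become $\partial_x,\partial_y$, hypothesis \eqref{quantitativeCurvature} becomes $|\partial_{xy}\log(\partial_xP/\partial_yP)|\geq\delta^{\eta}$. Setting $\tilde X=\Phi(X'\cap Q_0)\subset A\times B$ with $A=\phi_1(X'\cap Q_0)$, $B=\phi_2(X'\cap Q_0)$, the factors $A,B$ satisfy the $\kappa=\alpha/2$ non-concentration required in Proposition~\ref{countingNumberQuadruplesNablePWedgeP} (up to $\delta^{-O(\eta)}$), while $\mathcal{E}_{\delta}(\tilde X)\geq\delta^{-\alpha+O(\eta)}$ and $\mathcal{E}_{\delta}(P(\tilde X))<\delta^{-\alpha/2-\eta}$.

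The third step is a dichotomy on the size of $\partial_{xy}P$, exploiting the identity \eqref{relateKpLog}. On the good region $\{|\partial_{xy}P|\geq\delta^{\eps_0}\}$, \eqref{relateKpLog} together with the bounds on $\partial_xP,\partial_yP$ and on $\partial_{xy}\log(\partial_xP/\partial_yP)$ yields $|K_P|\geq\delta^{O(\eta)}$, so (after subdividing this region into $O(\delta^{-O(\eps_0)})$ rectangles and passing to a popular one) $P$ meets all hypotheses of Proposition~\ref{countingNumberQuadruplesNablePWedgeP} with parameters $(\alpha/2,\alpha/2)$; that proposition bounds the energy $\mathcal{E}_{\delta}(\{(x,x',y,y')\in A^2\times B^2:P(x,y)=P(x',y')\})$ by $\delta^{-3\alpha/2+\eps}$ with $\eps=\eps(\alpha)>0$. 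But a Cauchy–Schwarz argument, fibering $\tilde X$ over the $\delta$-intervals of $P(\tilde X)$, gives the lower bound $\gtrsim\mathcal{E}_{\delta}(\tilde X)^2/\mathcal{E}_{\delta}(P(\tilde X))\gtrsim\delta^{-3\alpha/2+O(\eps_0)+O(\eta)}$ for this same energy — a contradiction once $\eta\ll\eps_0\ll\eps$. On the bad region $\{|\partial_{xy}P|<\delta^{\eps_0}\}$, I would use \eqref{relateKpLog} again to deduce $|\nabla P\wedge\nabla(\partial_{xy}P)|\geq\delta^{O(\eta)}$ there; hence the region lies in the $\delta^{\eps_0-O(\eta)}$-neighborhood $N_s(\gamma)$ of the smooth curve $\gamma=\{\partial_{xy}P=0\}$, and since the gradients $\nabla\phi_1=(1,0)$, $\nabla\phi_2=(0,1)$, $\nabla\phi_3=\nabla P$ are pairwise transverse, at least two of them are non-orthogonal to $\gamma$. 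Applying Lemma~\ref{twoFunctionsOnASet} with that pair, this curve, and width $s$ forces $\mathcal{E}_{\delta}(\phi_i(X'\cap Q_0))\gtrsim s^{-\alpha/2}\delta^{-\alpha/2+O(\eta)}\geq\delta^{-\alpha/2-\alpha\eps_0/2+O(\eta)}$ for some $i$, again contradicting the standing assumption once $\eta\ll\eps_0$. Since $\mathcal{E}_{\delta}(X'\cap Q_0)\geq\delta^{-\alpha+O(\eta)}$, one of the two regions carries at least half of it, so one of the two contradictions must occur; this proves \eqref{atLeastOneBigProjectionCoveringNumber}.

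The hard part will be the middle step: converting the \emph{single} geometric condition (non-vanishing Blaschke curvature) into the \emph{several} pointwise analytic conditions demanded by Proposition~\ref{countingNumberQuadruplesNablePWedgeP}. In particular, non-vanishing Blaschke curvature does not by itself give a lower bound on $|\partial_{xy}P|$ — one genuinely has to isolate the degeneracy locus of this mixed partial and dispatch it with the curve estimate of Lemma~\ref{twoFunctionsOnASet}, and then verify that on that locus the relevant two of the three coordinate functions are transverse to the degeneracy curve (the pairwise transversality of the three gradients is exactly what makes this possible). The remaining burden is bookkeeping: fixing the hierarchy $\eta\ll\eps_0\ll\eps(\alpha)$, and checking that every coordinate-change, localization, and subdivision loss is merely a power $\delta^{O(\eta)}$ or $\delta^{O(\eps_0)}$; the $C^2$-versus-$C^3$ regularity gap is a minor technical wrinkle absorbed into the smoothness of the $\phi_i$.
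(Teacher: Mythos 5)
Your skeleton is the same as the paper's: assume all three images are small, upgrade $X$ via Lemma \ref{findingLargeNonconcentratedCartesianProduct} so that the three projections are non-concentrated at exponent $\alpha/2$, change variables by $\psi=(\phi_1,\phi_2)$ so that $\phi_3$ becomes $P=\phi_3\circ\psi^{-1}$, and conclude with Proposition \ref{countingNumberQuadruplesNablePWedgeP} plus Cauchy--Schwarz (Lemma \ref{CSLem}). Where you diverge is the verification of the hypothesis $|\partial_{xy}P|\geq\delta^{\eps}$: the paper passes to a genuine square inside $\psi(Q')$ (its Claim 1 on boundary non-concentration followed by the Whitney decomposition, Theorem \ref{whitneyCubeDecomp}) and then asserts that \eqref{quantitativeCurvature} together with \eqref{relateKpLog} forces $|\partial_{xy}P|\gtrsim\delta^{O(\eta)}$ there, while you treat the locus where $\partial_{xy}P$ is small separately. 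Your skepticism about that pointwise lower bound is reasonable --- \eqref{relateKpLog} gives $(\partial_{xy}P)^2K_P=(\partial_xP\,\partial_yP)^2\cdot(\mathrm{curvature})$, and $K_P$ is not bounded above near $Z(\partial_{xy}P)$ (for $P=x+y+x^2y$ the curvature is $\approx 2$ near the origin while $\partial_{xy}P$ vanishes on $\{x=0\}$) --- and your observation that on $\{\partial_{xy}P\approx 0\}$ the curvature bound instead forces $|\nabla P\wedge\nabla\partial_{xy}P|\gtrsim\delta^{O(\eta)}$ is correct.

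The gap is in how you dispatch that bad region. Lemma \ref{twoFunctionsOnASet} takes a \emph{single} simple smooth curve and a \emph{fixed} pair of functions transverse to it, but $\gamma=\{\partial_{xy}P=0\}$ need not be one curve: for merely smooth $\phi_i$ with norms $\delta^{-O(\eta)}$ there is no bound on the number of its connected components better than about $\delta^{-\eps_0+O(\eta)}$ (along a level curve of $P$, consecutive zeros of $\partial_{xy}P$ need only be $\sim\delta^{\eps_0+O(\eta)}$ apart, since $\partial_{xy}P$ must climb from $0$ to $\delta^{\eps_0}$ and back at speed $\leq\delta^{-O(\eta)}$), and moreover the transverse pair among $\phi_1,\phi_2,\phi_3$ can change along a component, forcing further cutting. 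Pigeonholing the bad mass onto one piece costs a factor $\delta^{O(\eps_0)}$, which enters Lemma \ref{twoFunctionsOnASet} through its largeness hypothesis $\mathcal{E}_{\delta}(A)\geq\delta^{-\alpha+\eta}$ and is amplified by the exponent $5\eta$ in its conclusion; since the gain is only $s^{-\alpha/2}=\delta^{-(\alpha/2)\eps_0+O(\eta)}$ and $\alpha/2<5$, the loss and the gain are of the same order in $\eps_0$ and the hierarchy $\eta\ll\eps_0\ll\eps$ does not close the argument. It is telling that in the one place the paper genuinely performs such an excision --- Theorem \ref{pinnedDistanceCor} in Section \ref{pinnedDistancesSection} --- the degeneracy locus is the zero set of an explicit bounded-degree polynomial, and Remez (Theorem \ref{remezInez}) plus degree counts cap the number of curve pieces and of pair-switching points at $O(1)$; no such tool is available for a general smooth web, so this step needs a genuinely new idea (e.g.\ summing the images over components using their separation under one of the $\phi_i$) rather than bookkeeping. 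Separately, you omit the domain step the paper spends real effort on: Proposition \ref{countingNumberQuadruplesNablePWedgeP} needs $P$ defined, with all the stated lower bounds, on a rectangle containing $A\times B$, whereas $P$ lives only on the curvilinear region $\psi(Q_0)$; ruling out concentration of the retained set near the boundary of that region (the paper's Claim 1, again via Lemma \ref{twoFunctionsOnASet}) and then extracting a square by Whitney is a necessary step, not a $\delta^{O(\eta)}$ loss.
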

\begin{rem}
To see that Theorem \ref{curvatureProjections}$^*$ implies Theorem \ref{curvatureProjections}, cover $K\subset U$ by open squares, whose closures are contained in $U$. Since $K$ is compact, there is a square $S\subset U$ with $\mathcal{E}_{\delta}(X\cap S)\gtrsim\delta^{-\alpha}$. Since $\phi_1,\phi_2,\phi_3$ have gradients that are pairwise linearly independent, and since the Blaschke curvature form is non-vanishing on $U$, we have that \eqref{quantitativeLinIndependenceGradient} and \eqref{quantitativeCurvature} holds for all $\delta>0$ sufficiently small. 
\end{rem}
\begin{proof}
Suppose that \eqref{atLeastOneBigProjectionCoveringNumber} is false for $i=1,2,3$. If $\eta=\eta(\alpha)>0$ and $\delta_0=\delta_0(\alpha)>0$ are selected sufficiently small, we will obtain a contradiction.

First, we will select a sub-square $Q'\subset Q$ with $\mathcal{E}_{\delta}(X\cap Q')\gtrsim \delta^{-\alpha+O(\eta)}$ so that for each $i\neq j$, $\phi_i\otimes\phi_j$ is injective on $Q'$.  Next, we apply Lemma \ref{findingLargeNonconcentratedCartesianProduct} to $X\cap Q'$ and the functions $\phi_1,\phi_2,\phi_3$. We obtain a set $X'\subset Q'$ with $\mathcal{E}_{\delta}(X')\gtrsim \delta^{-\alpha+O(\eta)}$ so that for all intervals $J$ of length at least $\delta$, and for each index $i$, we have
\begin{equation}
\mathcal{E}_{\delta}(\phi_i(X'\cap J))\lesssim |J|^{\alpha/2}\delta^{-\alpha/2+O(\eta)}.
\end{equation}

The boundary of $Q'$ is a union of four line segments. We will first prove the following claim:
{\bf Claim 1:} If the constant $C=C(\alpha)$ is chosen sufficiently large, then for each of these line segments $L$, we have 
\[
\mathcal{E}_{\delta}(N_{\delta^{C\eta}}(L)\cap X') \leq \frac{1}{8}\mathcal{E}_{\delta}(X').
\]
Suppose not. Cover $L$ by finitely overlapping balls of radius $\delta^{100\eta}$. By pigeonholing, there must exist a ball $B$ with
\[
\mathcal{E}_{\delta}(N_{\delta^{C\eta}}(L)\cap B \cap X') \geq\frac{1}{8} \delta^{100\eta} \mathcal{E}_{\delta}(X')\gtrsim \delta^{-\alpha+O(\eta)}.
\]
Let $v$ be the unit vector parallel to $L$. By \eqref{quantitativeLinIndependenceGradient}, there exist indices $i\neq j$ so that $|v\cdot \nabla \phi_i(x_0)|\gtrsim\delta^\eta$ and $|v\cdot \nabla \phi_j(x_0)|\gtrsim\delta^\eta$, where $x_0$ is the center of the ball $B$. But by \eqref{boundOnC3Phi} and the fact that $B$ has radius $\delta^{100\eta}$, we have that $|v\cdot \nabla \phi_i(x_0)|\gtrsim\delta^\eta$ and $|v\cdot \nabla \phi_j(x_0)|\gtrsim\delta^\eta$ for all $x\in B$. We now apply Lemma~\ref{twoFunctionsOnASet} to conclude that (after interchanging the indices $i$ and $j$ if necessary)
\[
\mathcal{E}_{\delta}(\phi_i(N_{\delta^{C\eta}}(L)\cap B \cap X'))\gtrsim \delta^{-C\eta\alpha/2}\delta^{-\alpha/2+O(\eta)}|\log\delta|^{-1}.
\]
If $C=C(\alpha)$ is selected sufficiently large, then \eqref{atLeastOneBigProjectionCoveringNumber} holds for $\phi_i$ and we are done. This concludes the proof of Claim 1.

Next, let $\psi=\phi_1\otimes\phi_2$ and let $V=\psi(Q')$. By \eqref{boundOnC3Phi} and \eqref{quantitativeLinIndependenceGradient}, we have that $|D\psi|=O(\delta^{-\eta})$, and $|\det D\psi|\gtrsim\delta^{O(\eta)}$. This implies that the $\delta^{C\eta}$-neighborhood of the boundary of $Q'$ is mapped to the $\delta^{C+O(\eta)}$ neighborhood of the boundary of $V$. In particular, by Claim 1 we can select $C'=C'(\alpha)$ so that if $Y \subset\psi(X')$ denotes the subset of $\psi(X')$ that has distance at least $\delta^{C'\eta}$ from the boundary of $V$, then $\mathcal{E}_{\delta}(Y)\gtrsim\delta^{-\alpha+O(\eta)}$. To simplify notion, in what follows we will use $O_\alpha(\eta)$ to denote a quantity that is bounded by $C_0\eta$, where $C_0=C_0(\alpha)$ may depend on $\alpha$. 

Next we will introduce the Whitney cube decomposition. The version stated here can be found in \cite[Appendix J]{Gra}.
\begin{thm}[Whitney]\label{whitneyCubeDecomp}
Let $\Omega$ be an open non-empty proper subset of $\RR^d$. Then there exists a countable family $\mathcal{Q}$ of closed cubes such that
\begin{itemize}
\item $\bigcup_{Q\in\mathcal{Q}} Q = \Omega$, and the cubes in $\mathcal{Q}$ have disjoint interiors.
\item For each cube $Q\in\mathcal{Q}$, we have $2Q\not\subset\Omega$, where $2Q$ denotes the 2-fold dilate of $Q$. 
\end{itemize}
\end{thm}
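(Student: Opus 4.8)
The plan is to construct $\mathcal{Q}$ by a Whitney-type argument, but with cubes chosen slightly larger and differently positioned than in the textbook decomposition, so that it is the \emph{small} dilate $2Q$ --- rather than some large dimension-dependent dilate --- that leaves $\Omega$. For $x\in\Omega$ set $r(x)=\operatorname{dist}_\infty(x,\Omega^c)$; this is positive because $\Omega$ is open, finite because $\Omega$ is proper, and $1$-Lipschitz for $\|\cdot\|_\infty$. The elementary building block is the observation that the closed cube $C_x$ with centre $x$ and side length $r(x)$ is \emph{good}, meaning $C_x\subset\Omega$ and $2C_x\not\subset\Omega$: every point of $C_x$ lies within $\infty$-distance $r(x)/2<r(x)$ of $x$, so $C_x$ misses $\Omega^c$; and a point of $\Omega^c$ nearest to $x$ (one exists, as $\Omega^c$ is closed and nonempty) lies at $\infty$-distance exactly $r(x)$ from $x$, hence on the boundary of $2C_x$, which has half-side $r(x)$. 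So $\{C_x:x\in\Omega\}$ already covers $\Omega$ by good cubes, and the whole problem is to thin this cover to a subfamily with pairwise disjoint interiors that still covers $\Omega$.

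\textbf{Main steps.} First I would split $\Omega$ into the dyadic shells $S_j=\{x\in\Omega:2^{-j-1}<r(x)\le 2^{-j}\}$, $j\in\ZZ$; since $r$ is $1$-Lipschitz, $r$ has size $\sim 2^{-j}$ throughout any cube of side $\sim 2^{-j}$ that meets $S_j$, so neighbouring shells are adjacent in the Whitney sense. Mimicking the classical scheme, I would then populate $\Omega$ scale by scale with good cubes whose side lengths are of order $2^{-j}$ on $S_j$, drawn at scale $j$ from a lattice of mesh of that order whose offset is chosen (as a function of $j$, and with all constants depending only on $d$) so that the cubes produced are good. Passing to the \emph{maximal} cubes of the resulting collection --- a cube being kept unless it lies inside a strictly larger cube of the collection --- yields a family $\mathcal{Q}$ that covers $\Omega$ (each point was covered at some scale, and every covering cube is kept or lies in a kept larger one) and has pairwise disjoint interiors (the purpose of the maximal-cube step). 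Each kept cube $Q$ retains the pair of properties ``$Q\subset\Omega$ and $\operatorname{dist}_\infty(c_Q,\Omega^c)\le\ell(Q)$'' --- this pair is exactly ``$Q$ is good'', it is inherited from the single-scale collection, and it survives the maximal selection --- and $\operatorname{dist}_\infty(c_Q,\Omega^c)\le\ell(Q)$ says precisely that $2Q$ meets $\Omega^c$, i.e.\ $2Q\not\subset\Omega$. Countability of $\mathcal{Q}$ is automatic from disjoint interiors and positive volume.

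\textbf{Main obstacle.} The crux --- and the reason one cannot simply quote the textbook Whitney decomposition --- is the tension between a cube being good (so that the \emph{small} dilate $2Q$ pokes out of $\Omega$) and the cubes forming an exact tiling of $\Omega$. In the standard decomposition the cubes come from the fixed dyadic grid and satisfy $\operatorname{dist}(Q,\Omega^c)\gtrsim\operatorname{diam}(Q)$, which forces $2Q\subset\Omega$ and lets only a much larger dilate $C_dQ$ escape $\Omega$; for the present statement we need instead $\operatorname{dist}(Q,\Omega^c)\lesssim\ell(Q)$, i.e.\ cubes pushed toward $\Omega^c$. For some open sets this excludes the fixed grid outright: already for the half-line $\Omega=(0,\infty)\subset\R$ there is \emph{no} good dyadic cube whatsoever, so a good tiling must use shifted, non-dyadic cubes --- for $(0,\infty)$ the natural choice is the self-similar family $\{[3^k/2,\,3^{k+1}/2]:k\in\ZZ\}$. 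The genuine work is therefore to choose the lattice offsets at neighbouring scales compatibly enough that the maximal-cube selection still produces an exact tiling; once that two-parameter (scale and offset) bookkeeping is in place, checking the three conclusions --- covering, disjoint interiors, and $2Q\not\subset\Omega$ via $\ell(Q)\sim r|_Q$ --- is routine, with all implicit constants depending only on $d$.
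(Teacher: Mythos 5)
Your diagnosis of the difficulty is correct and worth taking seriously: the classical Whitney cubes (Stein; Grafakos, Appendix J) satisfy $\sqrt{d}\,\ell(Q)\le\operatorname{dist}(Q,\Omega^c)\le 4\sqrt{d}\,\ell(Q)$, so every point of $2Q$ lies within Euclidean distance $\sqrt{d}\,\ell(Q)/2<\operatorname{dist}(Q,\Omega^c)$ of $Q$ and hence $2Q\subset\Omega$; only the dilate $C_dQ$ with $C_d=1+8\sqrt{d}$ is guaranteed to leave $\Omega$. Your example of $(0,\infty)$ having no good dyadic interval is also correct. However, your construction does not close the gap you yourself identify, and the two steps you defer are exactly where it breaks. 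First, at a fixed scale $j$ a single lattice offset cannot make every scale-$j$ cube good: already for $\Omega=\RR\setminus\{0,t\}$ with $t$ irrational and $2^{-j}\ll t$, the cubes near $0$ and those near $t$ require incompatible offsets, so the offset must vary with location, not just with scale. Second, and more seriously, once the grids at different scales (or at different locations within one scale) are shifted independently, two cubes in the resulting collection can overlap partially with neither containing the other; the ``pass to maximal cubes'' step then no longer yields pairwise disjoint interiors. The nestedness of a single dyadic grid is precisely what makes the classical selection work, and you have not supplied a substitute for it. As written, the covering, the disjointness, and the goodness of the selected cubes are all unverified at the point where the actual difficulty lives.

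For comparison, the paper does not prove Theorem \ref{whitneyCubeDecomp} at all: it cites Grafakos, whose decomposition yields only $C_dQ\not\subset\Omega$ for the dimension-dependent constant above, not $2Q\not\subset\Omega$; the constant $2$ in the statement appears to be an overstatement of the cited result. Every use of the theorem in the paper (lower-bounding the side length of cubes that meet a set lying at distance $\ge t$ from $\Omega^c$, and hence counting them) only needs that \emph{some} fixed dilate $C_dQ$ escapes $\Omega$. The efficient repair is therefore to quote or prove the classical two-sided statement and weaken $2Q$ to $C_dQ$, rather than to attempt the genuinely harder, and to my knowledge nonstandard, tiling in which every cube hugs $\Omega^c$ to within half its side length. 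If you do want the literal statement with the constant $2$, you need a construction in which the cube containing a point $x$ has side comparable to $\operatorname{dist}_\infty(x,\Omega^c)$ \emph{and} is pushed toward a nearest point of $\Omega^c$, together with a verified gluing of these locally adapted, non-nested grids into an exact tiling; that argument is currently missing.
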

Apply Theorem \ref{whitneyCubeDecomp} to $V$. While the set of cubes $\mathcal{Q}$ might be infinite, the second item from Theorem \ref{whitneyCubeDecomp} implies that $\delta^{O_\alpha(\eta)}$ cubes from $\mathcal{Q}$ intersect $Y$. Hence we can find a cube $Q_0\subset V$ of with $\mathcal{E}_{\delta}(Q_0\cap Y)\gtrsim\delta^{-\alpha+O_\alpha(\eta)}$. This cube has side-length at most $\delta^{-\eta}$. Cutting this cube into sub-cubes if necessary, we can find a cube $Q_1\subset Q_0$ with side-length at most one, with $\mathcal{E}_{\delta}(Q_1\cap Y)\gtrsim\delta^{-\alpha+O_\alpha(\eta)}.$ Let $Y'=(Y\cap Q_1)-(x_1,y_1),$ where $(x_1,y_1)$ is the bottom left corner of $Q_1$. Let $A = \pi_x(Y')$ and $B = \pi_y(Y')$. Then $A,B\subset[0,1]$. Since $X'$ satisfies the conclusion of Lemma \ref{findingLargeNonconcentratedCartesianProduct}, for all intervals $J$ of length at least $\delta$, we have the non-concentration estimates
\begin{equation*}
\begin{split}
\mathcal{E}_{\delta}(A\cap J)&\leq |J|^{\alpha/2}\delta^{-\alpha/2+O(\eta)},\\
\mathcal{E}_{\delta}(B\cap J)&\leq |J|^{\alpha/2}\delta^{-\alpha/2+O(\eta)}.
\end{split}
\end{equation*}
Define $P = \phi_3(\psi^{-1})$. We have that 
\begin{equation}\label{PC2NormOnQ}
\Vert P\Vert_{C^2(Q_1)}\lesssim \delta^{-O(\eta)},
\end{equation}
and
\begin{equation}\label{boundsOnPartialPxPy}
|\partial_x P|\gtrsim\delta^{O(\eta)},\quad |\partial_y P|\gtrsim\delta^{O(\eta)}\quad\textrm{on}\ Q_1.
\end{equation}
Since the Blaschke curvature form is invariant under change of coordinates, by \eqref{quantitativeLinIndependenceGradient} and \eqref{quantitativeCurvature}, we have  
\[
\bigg|\partial_{xy}\bigg(\log\Big(\frac{\partial_xP}{\partial_yP}\Big)\bigg)\bigg| \gtrsim\delta^{O(\eta)}\quad\textrm{on}\ Q_1.
\]
By \eqref{relateKpLog}, this implies
\[
|\partial_{xy}P|\gtrsim\delta^{O(\eta)}\quad\textrm{on}\ Q_1.
\]
Define $A'\times B'=A\times B\cap Q_1$. The function $P$ and the sets $A',B', Q_1\cap Y'$ satisfy the hypotheses of Proposition \ref{countingNumberQuadruplesNablePWedgeP} (with $\kappa=\alpha$). We conclude that if $\eta=\eta(\alpha)>0$ is chosen sufficiently small, then there exists $\eps=\eps(\alpha)>0$ so that
\[
\mathcal{E}_{\delta}\big(\{ (x,x',y,y')\in A'\times A'\times B'\times B' \colon P(x,y) = P(x',y')\}\big)\leq \delta^{-(3/2)\alpha+\eps}.
\]
By Lemma \ref{CSLem}, this implies
\[
\mathcal{E}_{\delta}(P(Q_2\cap Y'))\gtrsim \frac{ \delta^{-2\alpha+O(\eta)}}{\delta^{-(3/2)\alpha+\eps}}=\delta^{-\alpha/2-\eps+O(\eta)},
\]
and thus 
\[
\mathcal{E}_{\delta}(\phi_3(X))\gtrsim \delta^{-\alpha/2-\eps+O(\eta)}. 
\]
If $\eta=\eta(\alpha)>0$ is sufficiently small, then we have $\mathcal{E}_{\delta}(\phi_3(X))\geq \delta^{-\alpha/2-\eta},$ as desired.  
\end{proof}

\section{Pinned distinct distances: a single-scale estimate}\label{pinnedDistancesSection}
In this section we will prove Theorem \ref{pinnedDistanceCor}. To begin, we will perform some computations to calculate the Blaschke curvature for the (squared) distance functions from three points. 
\subsection{Blaschke curvature estimates for pinned distances}
In this section we will compute the Blaschke curvature for the pinned distance functions from three points. Define
\[
p_1=(t,0),\quad p_2=(-t,0),\quad p_3=(a,b).
\]

Define
\begin{equation}
\begin{split}
\phi_1(x,y) &= (x-t)^2 + y^2,\\
\phi_2(x,y) &= (x+t)^2 + y^2,\\
\phi_3(x,y) &= (x-a)^2 + (y-b)^2.
\end{split}
\end{equation}

We have 
\begin{equation}
\begin{split}
x &= \frac{\phi_2-\phi_1}{4t},\\
y &=  \Big( \frac{\phi_1}{2} + \frac{\phi_2}{2} - t^2 - \frac{(\phi_2 -\phi_1)^2}{16t^2}\Big)^{1/2},
\end{split}
\end{equation}
and thus
\[
\phi_3 = \frac{1}{16}\Big( \Big(\frac{4 at + \phi_1 - \phi_2}{t}\Big)^2
+ \Big(-4b +   \sqrt{8\phi_1 + 8 \phi_2 - (\phi_1-\phi_2)^2/t^2 -16t^2}\ \ \Big)^2\Big).
\]
We therefore have
\begin{equation}
\begin{split}
\frac{\partial\phi_3}{\partial\phi_1}&=\frac{ - b x + (t+a)y  -bt}{2 ty},\\
\frac{\partial\phi_3}{\partial\phi_2}&=\frac{  \phantom{-}b x + (t-a)y-bt}{2 ty},
\end{split}
\end{equation}
i.e. $\frac{\partial\phi_3}{\partial\phi_1}$ vanishes on the line passing through $p_1=(-t,0)$ and $p_3=(a,b)$, and $\frac{\partial\phi_3}{\partial\phi_2}$ vanishes on the line passing through $p_2=(t,0)$ and $p_3=(a,b)$. Both functions are finite off the line passing through $p_1=(-t,0)$ and $p_2(t,0)$. 

Next, we compute
\[
d\phi_1\wedge d\phi_2 = (-8ty) dxdy,
\] 
and finally, we compute
\begin{equation}\label{computeCurvature}
\Big(\frac{\partial}{\partial \phi_1}\frac{\partial}{\partial \phi_2} \log \frac{\partial \phi_3 / \partial\phi_1}{\partial \phi_3 / \partial\phi_2}\Big)\ d\phi_1\wedge d\phi_2
=b\frac{f(x,y)}{g^2(x,y)}dxdy,
\end{equation}
where 
\begin{equation}\label{defnOfF}
\begin{split}
f(x,y) = 
&\big(a b^2 t^4 - b^2 t^4 x - 2 a b^2 t^2 x^2 + 2 b^2 t^2 x^3 + a b^2 x^4 - b^2 x^5\big)\\
&+\big(-4 a b t^4 + 4 a^2 b t^2 x + 4 b t^4 x - 4 a^2 b x^3 -  4 b t^2 x^3 + 4 a b x^4 \big)y\\
&+\big(-3 a^3 t^2 - a b^2 t^2 + 3 a t^4 + 3 a^2 t^2 x +  3 b^2 t^2 x - 3 t^4 x + 3 a^3 x^2 \\
&\qquad\qquad\qquad\qquad\qquad\qquad- 3 a b^2 x^2 -  3 a t^2 x^2 - 3 a^2 x^3 + b^2 x^3  + 3 t^2 x^3 \big)y^2\\
&+\big(2 a b t^2 + 2 a^2 b x  - 2 b^3 x  - 6 b t^2 x + 2 a b x^2 \big)y^3\\
&+\big(a^3 + 2 a b^2  - a t^2  - 3 a^2 x  + 2 b^2 x +  3 t^2 x \big)y^4\\
&
+\big(- 2 a b\big)y^5,
\end{split}
\end{equation}
and 
\[
g(x,y) = y \big(- b x + (t+a)y  -bt\big) \big(b x + (t-a)y-bt\big).
\]
We see that $|g|=O(1)$ on $[0,1]^2$, and $g$ vanishes on the three lines spanned by the points $(-t,0),\ (t,0)$, and $(a,b)$. We see that $f$ is a polynomial of degree at most 5 in the variables $x$ and $y$, and the coefficient of $x^5$ is $-b^2$.

\subsection{Pinned distances from three points}
In this section we will prove Theorem \ref{pinnedDistanceCor}. Suppose the proposition fails for some $c>0$. If $c$ is sufficiently small (depending on $\alpha$), we will obtain a contradiction. Applying a rotation, translation, and reflection, we can suppose that $p_1=(t,0)$, $p_2=(-t,0)$ and $p_3=(a,b)$, with $t\gtrsim \delta^{c}$ and $b\gtrsim\delta^{c}$. 

Define $\phi_i(z)=|p_i-z|^2$. $|\nabla\phi_i|$ is bounded away from 0 on $[0,1]^2$ except in a small neighborhood of $p_i$. $\phi_i$ and $\phi_j$ have linearly independent gradients except on the line $\ell_{ij}$ that contains $p_i$ and $p_j$. If $v_{ij}$ is a unit vector parallel to $\ell_{ij}$ and if $k$ is the remaining index, then $\nabla\phi_k(z)\cdot v_{ij}$ vanishes at one point on $\ell_{ij}$, which we will denote by $q_k$.

Our next task is to find a connected open region $U$ so that $\mathcal{E}_{\delta}(X \cap U)\geq\frac{1}{2}\delta^{-\alpha+c}$, and $U$ is far from the points $p_i$ and $q_i$, and the lines $\ell_{ij}$. The set $U$ will be $[0,1]^2$, with certain neighborhoods of $p_i,q_i$ and $\ell_{ij}$ removed. To begin, define
\[
U_0=[0,1]^2\backslash \Big( \bigcup_{i} N_{C^{-1} \delta^{2c/\alpha}}(p_i)\ \cup\ \bigcup_{i} N_{C^{-1} \delta^{2c/\alpha}}(q_i)\Big).
\]
If $C=O(1)$ is selected sufficiently large, then by \eqref{nonConcentrationCondDist}, we have $\mathcal{E}_{\delta}(X\cap U_0)\geq\frac{3}{4}\delta^{-\alpha+c}$. We have
\[
\delta^{2c/\alpha}\lesssim |\nabla\phi_i|\lesssim 1\quad\textrm{on}\ U_0.
\]
Furthermore, since $B(q_i,\ C^{-1}\delta^{2c/\alpha})$ is disjoint from $U_0$ for each index $i$, we have
\begin{equation}
\begin{split}
&|\nabla\phi_1(x)\wedge\nabla\phi_2(x)|,\ |\nabla\phi_2(x)\wedge\nabla\phi_3(x)|\gtrsim \delta^{O_\alpha(c)}\quad\textrm{for all}\ x\in \ell_{p_1,p_3}\cap U_0, \\
&|\nabla\phi_1(x)\wedge\nabla\phi_3(x)|,\ |\nabla\phi_2(x)\wedge\nabla\phi_3(x)|\gtrsim \delta^{O_\alpha(c)}\quad\textrm{for all}\ x\in \ell_{p_1,p_2}\cap U_0, \\
&|\nabla\phi_1(x)\wedge\nabla\phi_2(x)|,\ |\nabla\phi_1(x)\wedge\nabla\phi_3(x)|\gtrsim \delta^{O_\alpha(c)}\quad\textrm{for all}\ x\in \ell_{p_2,p_3}\cap U_0.
\end{split}
\end{equation}
Applying Lemma~\ref{twoFunctionsOnASet} to $\phi_1$ and $\phi_2$ with $s=\delta^{cC}$ and $\eta \sim \delta^{O_\alpha(c)}$, then either 
\begin{equation}\label{PhiIXBig}
\mathcal{E}_{\delta}(\phi_i\big(X\cap U_0 \cap N_{\delta^{C c}}(\ell_{p_1,p_2}))\big)\gtrsim \delta^{O_\alpha(c)-\alpha cC/2-\alpha/2}|\log\delta|^{-1}
\end{equation}
for at least one index $i\in\{1,2\}$, or
\begin{equation}\label{XSmallOnLine}
\mathcal{E}_{\delta}(X \cap U_0 \cap N_{\delta^{C c}}(\ell_{p_1,p_2}))\leq \frac{1}{12}\delta^{-\alpha+c}.
\end{equation}
If $C = C(\alpha)$ is chosen sufficiently large and $\delta>0$ is sufficiently small, then \eqref{PhiIXBig} implies \eqref{atLeastOneBigPinnedDistance} and we are done. Thus we may suppose that \eqref{XSmallOnLine} holds. We argue similarly for $\ell_{p_2,p_3}$ and $\ell_{p_1,p_3}$. Define 
\[
U_1=U_0\backslash\{ N_{\delta^{C c}}(\ell_{p_1,p_2})\ \cup\ N_{\delta^{C c}}(\ell_{p_2,p_3})\ \cup\ N_{\delta^{C c}}(\ell_{p_1,p_3})\}.
\]
Then for each pair of indices $i\neq j$, we have $|\nabla\phi_i(x)\wedge\nabla\phi_j(x)|\gtrsim\delta^{O_\alpha(c)}$ for all $x\in U_1.$ In fact, the above inequality holds on a slightly larger set; if $C_1=C_1(\alpha)$ is selected sufficiently large, then 
\begin{equation}\label{largeWedge}
|\nabla\phi_i(x)\wedge\nabla\phi_j(x)|\gtrsim\delta^{O_\alpha(c)}\quad\textrm{for all}\ x\in N_{\delta^{C_1c}}(U_1).
\end{equation}

Our next task is to bound the size of the subset of $X\cap U_1$ where the function $f$ from \eqref{defnOfF} is small. Let $s>0$ be a small parameter. Since $f$ is a polynomial of degree at most 5, which contains the monomial $-b^2x^5$, and since $b\gtrsim\delta^{c}$, there exists an absolute constant $C_0$ so that
\[
\sup_{(x,y)\in [0,1]^2}|f(x,y)|\geq C_0^{-1} \delta^{C_0 c}.
\]

Next, we will introduce the following Remez-type inequality. See \cite[Theorem 2]{BG3}
\begin{thm}[Remez]\label{remezInez}
Let $P\colon\RR^d\to\RR$ be a polynomial of degree at most $D$. Let $\Omega\subset\RR^d$ be an open convex set. Let $m=\sup_{z\in \Omega}|P(z)|$. Then for all $0<\lambda<1$, 
\[
|\{z\in \Omega \colon |P(z)|\leq\lambda m\}|\leq 4d|\Omega|\lambda^{1/D}.
\]
\end{thm}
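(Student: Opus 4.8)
The plan is to deduce this Remez-type inequality from the classical one-dimensional Remez inequality by a polar-coordinate averaging argument (essentially the route of Brudnyi and Ganzburg). Throughout I would assume $P\not\equiv 0$ and $\Omega$ bounded, so that $m=\sup_\Omega|P|\in(0,\infty)$; the remaining cases are trivial or vacuous. Fix $\lambda\in(0,1)$, put $W=\{z\in\Omega:|P(z)|\le\lambda m\}$ and $\mu=|W|/|\Omega|$, and assume $\mu>0$ (otherwise there is nothing to prove). The one-dimensional input I would quote is the sharp Remez inequality: if $q$ is a real polynomial of degree at most $D$ on a compact interval $I$ and $G\subset I$ is measurable with $|G|>0$, then $\sup_I|q|\le T_D\bigl(2|I|/|G|-1\bigr)\sup_G|q|$, where $T_D$ is the degree-$D$ Chebyshev polynomial, extended to $[1,\infty)$ by $T_D(x)=\cosh\!\bigl(D\operatorname{arccosh}x\bigr)$ (increasing there).

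The first real step is to produce a good chord. I would fix a small $\varepsilon>0$ and a point $a\in\Omega$ with $|P(a)|\ge(1-\varepsilon)m$, and use that $\Omega$, being convex, is star-shaped about $a$. Writing points of $\Omega$ in polar coordinates about $a$ as $z=a+t\omega$ with $\omega\in S^{d-1}$ and $0\le t<\rho(\omega)$, one has $|\Omega|=\tfrac1d\int_{S^{d-1}}\rho(\omega)^d\,d\omega$ and $|\Omega\setminus W|=\int_{S^{d-1}}\int_0^{\rho(\omega)}\mathbf{1}_{\Omega\setminus W}(a+t\omega)\,t^{d-1}\,dt\,d\omega$. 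Let $I_\omega=\{a+t\omega:0\le t\le\rho(\omega)\}$ and let $g(\omega)$ be the fraction of $I_\omega$ lying outside $W$. By the bathtub (rearrangement) principle — to minimise $\int_0^{\rho}\mathbf{1}_S(t)\,t^{d-1}\,dt$ over sets $S$ of fixed length one pushes $S$ toward $t=0$ — one gets $\int_0^{\rho(\omega)}\mathbf{1}_{\Omega\setminus W}(a+t\omega)\,t^{d-1}\,dt\ge(\rho(\omega)g(\omega))^d/d$, hence $|\Omega\setminus W|/|\Omega|\ge\bigl(\int_{S^{d-1}}\rho^d g^d\,d\omega\bigr)/\bigl(\int_{S^{d-1}}\rho^d\,d\omega\bigr)$, a $\rho^d$-weighted average of $g^d$. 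Therefore some $\omega_0$ satisfies $g(\omega_0)^d\le 1-\mu$; writing $I=I_{\omega_0}$, the chord $I$ passes through $a$ and has $|W\cap I|/|I|\ge 1-(1-\mu)^{1/d}$.

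Next I would apply the one-dimensional Remez inequality on $I$ with good set $W\cap I$. Since $a\in I$ and $\sup_{W\cap I}|P|\le\sup_W|P|\le\lambda m$, and since $T_D$ is increasing on $[1,\infty)$ while $x\mapsto 2/x-1$ is decreasing, this gives $(1-\varepsilon)m\le|P(a)|\le\sup_I|P|\le T_D\!\bigl(\tfrac{1+(1-\mu)^{1/d}}{1-(1-\mu)^{1/d}}\bigr)\lambda m$; letting $\varepsilon\to0$ and dividing by $m$ yields $T_D\!\bigl(\tfrac{1+(1-\mu)^{1/d}}{1-(1-\mu)^{1/d}}\bigr)\ge 1/\lambda$. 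To finish I would extract an explicit bound: Bernoulli's inequality $(1-\mu/d)^d\ge1-\mu$ gives $(1-\mu)^{1/d}\le 1-\mu/d$, so $\tfrac{1+(1-\mu)^{1/d}}{1-(1-\mu)^{1/d}}=\tfrac{2}{1-(1-\mu)^{1/d}}-1\le\tfrac{2d}{\mu}$; combining this with monotonicity of $T_D$ and the elementary bound $T_D(x)=\cosh(D\operatorname{arccosh}x)\le(x+\sqrt{x^2-1})^D\le(2x)^D$ for $x\ge1$ gives $1/\lambda\le T_D(2d/\mu)\le(4d/\mu)^D$, i.e. $\mu\le 4d\lambda^{1/D}$, which is the claim since $\mu=|W|/|\Omega|$.

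As for difficulty: there is no deep obstacle — the argument is soft. The step requiring the most care is quoting the one-dimensional Remez inequality in the correct sharp form, valid for an arbitrary measurable subset $G$ (not merely a subinterval), and verifying the two monotonicity facts about $T_D$ on $[1,\infty)$. The polar-coordinate/bathtub bookkeeping in the second paragraph is the conceptual heart but is routine once set up, and the concluding Chebyshev estimate is elementary.
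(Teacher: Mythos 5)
The paper does not prove this statement at all---it is quoted directly from Brudnyi--Ganzburg \cite[Theorem 2]{BG3}---and your argument is a correct reconstruction of essentially that source's proof: the reduction to one dimension along a chord through a near-extremal point (polar coordinates plus the bathtub principle giving a direction with $g(\omega_0)^d\le 1-\mu$), the sharp one-dimensional Remez inequality with the Chebyshev polynomial $T_D$, and the elementary estimates $(1-\mu)^{1/d}\le 1-\mu/d$ and $T_D(x)\le (2x)^D$ that convert $T_D\bigl(\tfrac{1+(1-\mu)^{1/d}}{1-(1-\mu)^{1/d}}\bigr)\ge 1/\lambda$ into exactly the claimed bound $\mu\le 4d\lambda^{1/D}$. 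The only caveat is that the case $P\equiv 0$ is not ``trivial or vacuous'' (the stated inequality actually fails there), but this is a degenerate convention built into the statement itself rather than a defect of your argument.
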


Applying Theorem \ref{remezInez}, we have
\begin{equation}\label{upperBoundAreaPleqS}
|\{(x,y)\in [0,1]^2\colon |f(x,y)|< s \}|\leq 8(C_0 \delta^{-C_0c} s)^{1/5}.
\end{equation}
We will specify the choice of $s=\delta^{O(c)}$ below. The set $S=\{(x,y)\in [0,1]^2\colon |f(x,y)|< s\}$ is relatively open in $[0,1]^2$. The boundary of this set is contained in 
\[
Z=Z(x)\cup Z(y)\cup Z(x-1)\cup Z(y-1)\cup Z(f-s)\cup Z(f+s).
\]
In particular, if 
\begin{equation}\label{defnW}
w=\big(8(C_0 \delta^{-C_0c} s)^{1/5}\big)^{1/2},
\end{equation}
then 
\begin{equation}\label{SContainedInNwZ}
S\subset N_{w}(Z).
\end{equation}
To verify \eqref{SContainedInNwZ}, note that if $z\in S \backslash  N_{w}(Z)$, then $\operatorname{dist}(z,Z)\geq w$, and hence $B(z,w)\subset S$, and thus $|S|\geq|B(z,w)|\geq \pi w^2,$ which contradicts \eqref{upperBoundAreaPleqS}.

Define $g=xy(x-1)(y-1)(f-s)(f+s)$. Without loss of generality we will suppose $g$ is square-free (if not, we will remove any repeated components). Then $g$ is a polynomial of degree at most 14, and $Z=Z(g)$. At each point $p \in Z$ for which $\nabla g$ is non-zero, the unit tangent vector to $Z$ is given by $v_p = \frac{(\partial_y g,\ -\partial_x g)}{|\nabla g|}$. By \eqref{largeWedge}, for each $p\in Z\cap U_1$, there are at least two indices $i$ for which $|v_p\cdot \nabla\phi_i(p)|\gtrsim \delta^{O_\alpha(c)}$, i.e. there exists a constant $C_3=C(\alpha)$ (that is independent of $p$ and $i$) so that
\[
\big( (\partial_y g)(\partial_x \phi_i) - (\partial_x g)(\partial_y\phi_i)\big)^2  \geq C_3 \delta^{C_3 c}\big( (\partial_x g)^2 + (\partial_y g)^2\big).
\] 

Cut the curve $Z$ at each point where it intersects $B(0,2)$; at each point where $|\nabla g|$ vanishes; and at each point satisfying
\[
\big( (\partial_y g)(\partial_x \phi_i) - (\partial_x g)(\partial_y\phi_i)\big)^2  = C_3 \delta^{C_\alpha c}\big( (\partial_x g)^2 + (\partial_y g)^2\big)\quad\textrm{for some index}\ i.
\] 
By slightly perturbing the constant $C_3$ if necessary, we can ensure that the latter set of points is finite. All together, we have cut $Z$ at $O(1)$ points, and after removing these points, $Z\cap B(0,1)$ is a union of $M = O(1)$ simple smooth curves. For each such curve $\gamma$, there are at least two indices $i,j$ so that $|v_p\cdot \nabla\phi_i(p)|\geq C_3 \delta^{C_3 c}$ for all $p\in\gamma$, and similarly for $\phi_j$. 

Let $C_4=C_4(\alpha)$ be chosen below, and fix $s=\delta^{C_5c}$ for $C_5=C_5(\alpha)$, so that the quantity $w$ from \eqref{defnW} satisfies $w=\delta^{C_4 c}$. We now attempt to apply Lemma~\ref{twoFunctionsOnASet} to each of the simple smooth curves constructed above. We conclude that if there is at least one curve $\gamma$ so that
\begin{equation}\label{curveLargeXIntersect}
\mathcal{E}_{\delta}(X \cap N_{\delta^{C_4 c}}\gamma)\geq (10M)^{-1} \delta^{-\alpha+c},
\end{equation}
then by Lemma~\ref{twoFunctionsOnASet} there is an index $i$ so that
\begin{equation}\label{largePhiIX}
\mathcal{E}_{\delta}(\phi_i(X))\geq \mathcal{E}_{\delta}(\phi_i(X \cap N_{\delta^{C_4 c}}\gamma))\gtrsim \delta^{-\alpha/2-C_4 c \alpha/2 +O_\alpha(c)}|\log\delta|^{-1}.
\end{equation}
If $C_4=C_4(\alpha)$ is chosen sufficiently large, then the RHS of \eqref{largePhiIX} is larger than $\delta^{-\alpha/2-c}$, and we are done. Hence we can conclude that \eqref{curveLargeXIntersect} fails for each of the curves $\gamma$, with $C_4$ as described above. We fix this choice of $C_4$, which in turn fixes the value of $C_5$ and $s=\delta^{C_5c}$.

Define $U = U_1\backslash\bigcup_{\gamma} \cap N_{2\delta^{C_4c}}\gamma$. With $s$ as defined above, we have
\begin{equation}\label{sizeOfFOnNbhdOfU2}
|f(z)|\geq s\quad\textrm{for all}\ z\in N_{\delta^{C_4c}}(U).
\end{equation}

Finally, let $C_6=\max(C_1,C_4)=O_\alpha(1)$, where $C_1$ is the constant from \eqref{largeWedge}. By Theorem \ref{whitneyCubeDecomp}, we can cover $N_{\delta^{C_6c}}(U)$ by interior-disjoint squares, so that the 2-fold dilate of each square is not contained in $N_{\delta^{C_6c}}(U)$. This mean, in particular, that each of the squares intersecting $U$ has side-length $\gtrsim \delta^{C_6c}$, and hence at most $O(\delta^{-C_6c})$ such squares intersect $U$. Thus by pigeonholing, we can select a square $Q$ with $\mathcal{E}_{\delta}(Q\cap X)\gtrsim\delta^{-\alpha+O_\alpha(c)}$. By \eqref{largeWedge} and \eqref{sizeOfFOnNbhdOfU2}, we have
\begin{equation}\label{controlOfphiOnQ}
|\nabla\phi_i\wedge\nabla\phi_j|\gtrsim\delta^{O_\alpha(c)},\quad |f|\gtrsim \delta^{O_\alpha(c)}\quad\textrm{on}\ Q.
\end{equation}

Thus if $c$ and $\delta_0$ are selected sufficiently small (depending on $\alpha)$, then by \eqref{computeCurvature} and \eqref{controlOfphiOnQ}, the functions $\phi_1,\phi_2,\phi_3$ satisfy the hypotheses of Theorem \ref{curvatureProjections}$^*$ on $Q$. We conclude that there exists an index $i$ so that
\begin{equation}\label{phiILarge}
\mathcal{E}_{\delta}( \phi_i(X) ) \geq \mathcal{E}_{\delta}(\phi_i(X\cap Q)) \geq \delta^{-\alpha/2-2c}.
\end{equation}
Finally, since $\phi_i(x)=(\Delta_{p_i}(x))^2$ and $|\phi_i(x)|\leq 2$ for $x\in [0,1]^2$, \eqref{phiILarge} implies that
\begin{equation}\label{finalIneq}
\mathcal{E}_{\delta}( \Delta_{p_i}(X) ) \gtrsim \delta^{-\alpha/2-2c}.
\end{equation}
Decreasing $\delta_0$ slightly if necessary to account for the implicit constant in the quasi-inequality \eqref{finalIneq}, we obtain \eqref{atLeastOneBigPinnedDistance}.

\section{Convexity and discretized projections}\label{convexitySec}
In this section we will prove Theorem \ref{convexProjections}. We will actually state and prove the following slightly more general (and technical) version of the theorem.
\begin{convexProjectionsThm}
For each $0< \alpha< 2,$ there exists $c=c(\alpha)>0$ and $\delta_0=\delta_0(\alpha)>0$ so that the following holds for all $0<\delta<\delta_0$. Let $I,J\subset[0,1]$ be intervals and let $u\colon I\to\RR,\ v\colon J\to\RR$ be smooth. Suppose that
\begin{equation}\label{derivbounds}
\begin{split}
\delta^c\leq \Big|\frac{d}{dx}u(x)\Big|\leq\delta^{-c},\quad \delta^c\leq \Big|\frac{d^2}{(dx)^2}u(x)\Big|\leq\delta^{-c}, &\quad\textrm{for}\ x\in I,\\
\delta^c\leq \Big|\frac{d}{dx}v(x)\Big|\leq\delta^{-c},\quad\quad\quad\; \Big|\frac{d^2}{(dx)^2}v(x)\Big|\leq\delta^{-c}&\quad\textrm{for}\ x\in J.
\end{split}
\end{equation}
Let $\phi_1(x,y)=x,$ $\phi_2(x,y)=y,$  $\phi_3(x,y)=\frac12(x+y)$, and $\phi_4(x,y)=u(x)+v(y)$. 
Let $X\subset I\times J$, with $\mathcal{E}_{\delta}(X)\ge \delta^{-\alpha+c}$, and suppose that for all squares $Q$ of sidelength $r\geq\delta$, $X$ satisfies the non-concentration condition
\begin{equation}\label{nonConcentrationCondXConvexEntropy}
\mathcal{E}_{\delta}(X\cap Q) \le  r^\alpha\delta^{-\alpha-c}.
\end{equation}
Then for at least one index $i$ we have
\begin{equation*}\tag{\ref{atLeastOneBigProjectionConvex}}
\mathcal{E}_{\delta}(\phi_i(X))\geq \delta^{-\alpha/2-c}.
\end{equation*}
\end{convexProjectionsThm}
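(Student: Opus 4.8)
The plan is to argue by contradiction: suppose $\mathcal{E}_\delta(\phi_i(X)) < \delta^{-\alpha/2-c}$ for every $i\in\{1,2,3,4\}$, and extract a contradiction once $c=c(\alpha)>0$ is small enough. Indices $1$ and $2$ immediately supply product structure: put $A:=\pi_x(X)$, $B:=\pi_y(X)$, so $\mathcal{E}_\delta(A),\mathcal{E}_\delta(B)<\delta^{-\alpha/2-c}$, while $X\subset A\times B$ and $\mathcal{E}_\delta(X)\ge\delta^{-\alpha+c}$; hence $\mathcal{E}_\delta(A),\mathcal{E}_\delta(B)=\delta^{-\alpha/2+O(c)}$ and $X$ meets a $\delta^{O(c)}$-fraction of the $\delta$-squares of $A\times B$. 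The gradients of $\phi_1,\phi_2,\phi_3$ are $(1,0),(0,1),(\tfrac12,\tfrac12)$, pairwise linearly independent, so Lemma \ref{findingLargeNonconcentratedCartesianProduct} applies with $f_i=\phi_i$ (adjusting parameters to absorb the $\delta^{c}$ discrepancies in its hypotheses); it produces $X'\subset X$ with $\mathcal{E}_\delta(X')\gtrsim\delta^{O(c)}\mathcal{E}_\delta(X)$ on which $A:=\phi_1(X')$, $B:=\phi_2(X')$, \emph{and} $\phi_3(X')$ all satisfy the $(\delta,\alpha/2)$-non-concentration bound $\mathcal{E}_\delta(\,\cdot\,\cap J)\lesssim|J|^{\alpha/2}\delta^{-\alpha/2-O(c)}$. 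After relabelling, the situation is: $A,B$ are non-concentrated sets of dimension $\alpha/2$, $X\subset A\times B$ is dense (density $\ge\delta^{O(c)}$), $\mathcal{E}_\delta(\phi_3(X))\le\delta^{-\alpha/2-O(c)}$ is non-concentrated, and $\mathcal{E}_\delta(\phi_4(X))<\delta^{-\alpha/2-c}$, where $\phi_4(x,y)=u(x)+v(y)$ with $|u'|,|v'|,|u''|\in[\delta^{c},\delta^{-c}]$.

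The heart of the argument is a discretized convex sum--product estimate — the single-scale analogue of \cite[Theorem 1.2]{JRT} — showing this configuration is impossible. The mechanism is the Elekes--Nathanson--Ruzsa one, transplanted to scale $\delta$: consider $\Phi=(\phi_3,\phi_4)\colon X\to\RR^2$, whose image lies in the small product $\phi_3(X)\times\phi_4(X)$. For each $b\in B$, the horizontal fibre of $X$ over $b$ is carried by $\Phi$ into a $\delta$-neighbourhood of the graph of $w\mapsto u(2w-b)+v(b)$, a translate of the (reflected, rescaled) graph of the strictly convex $u$; the lower bound $|u''|\ge\delta^{c}$ forces any two such graphs, from distinct $b,b'\in B$, to be $\delta$-close only on an interval of length $O(\delta^{1-O(c)})$ — this is the sole place where $u''\neq 0$ is used. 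Running an incidence/averaging argument on the $\approx\delta^{-\alpha/2}$ convex graphs against $\phi_3(X)\times\phi_4(X)$, and using the non-concentration of $A$ and $B$ to control how many points of $X$ can sit on one graph, one needs a quantitative substitute for Szemer\'edi--Trotter; this is exactly where Lemma \ref{EnergyDispersionForF} enters, applied to an auxiliary function $F(x,x',y,y')$ built from $u$ and $v$ — a divided-difference combination of $u(x)-u(x')$ and $v(y)-v(y')$ along the $\phi_3$-fibre $x+y=x'+y'$ — chosen so that its mixed invariant $H_F$ is, on $Z(F)$, bounded below by a power of $|u''|\ge\delta^{c}$, and whose zero set records a point lying on two of the convex graphs. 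The energy-dispersion bound so obtained, fed back into the incidence count, gives a \emph{definite} power gain
\[
\mathcal{E}_\delta(\phi_3(X))\cdot\mathcal{E}_\delta(\phi_4(X))\;\ge\;\delta^{-\alpha-c_0}
\]
with $c_0=c_0(\alpha)>0$ independent of $c$. Taking $c<c_0/4$ contradicts $\mathcal{E}_\delta(\phi_3(X))\mathcal{E}_\delta(\phi_4(X))<\delta^{-\alpha-2c}$, and we are done. The case where $v$ is affine — so $\phi_4$ is a strictly convex function of $x$ plus a linear function of $y$ — is the prototype and is subsumed; it is precisely the configuration excluded by the classical Elekes--Nathanson--Ruzsa inequality.

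The main obstacle is this last step: running the convex sum--product argument in the $\delta$-discretized continuous setting rather than for finite sets. Three points need care. First, one must prove cleanly that two translates of the graph of $u$ can be $\delta$-close only on a short interval, with the length governed purely by the lower bound on $|u''|$, and track this through the passage to $\delta$-neighbourhoods. Second, the auxiliary $F$ has to be arranged so that \emph{all} of $|\partial_x F|,\ldots,|\partial_{y'}F|,|H_F|$ exceed $\delta^{\eps}$ on $Z(F)$ — the bound on $H_F$ being the real content, and the one that pulls $u''\neq0$ into the hypotheses — and so that the conclusion $\mathcal{E}_\delta(\cdots)\le\delta^{-3\alpha/2+\eps}$ of Lemma \ref{EnergyDispersionForF} converts, after the incidence step, into an honest power improvement over the trivial $\delta^{-\alpha}$ rather than merely recovering it. Third, $\phi_3(X)$ and $\phi_4(X)$ are \emph{restricted} images along the graph $X$, not genuine sumsets $A+B$ and $u(A)+v(B)$; one either arranges the incidence count to see only $X$ (as in \cite{JRT}) or first extracts large sub-product-sets with small full sumsets via a discretized Balog--Szemer\'edi--Gowers argument, being careful that non-concentration of $A,B$ survives. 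Beyond this there is the routine but lengthy task of absorbing the many $\delta^{O(c)}$ losses from Steps 1--2 and from the incidence estimate into the final choice $c<c_0/4$.
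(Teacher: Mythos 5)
Your opening moves coincide with the paper's: assume all four images are small, use $X\subset\phi_1(X)\times\phi_2(X)$ to get the product structure, and invoke Lemma \ref{findingLargeNonconcentratedCartesianProduct} to make $\phi_1(X),\phi_2(X),\phi_3(X)$ non-concentrated at exponent $\alpha/2$. But the heart of your argument --- the ``discretized convex sum--product'' step where $u''\neq 0$ must be converted into a power gain --- is not actually carried out, and the way you propose to carry it out does not work as stated. You never define the auxiliary function $F$, and the natural candidates fail the key hypothesis of Lemma \ref{EnergyDispersionForF}: for $F(x,x',y,y')=P(x,y)-P(x',y')$ with $P(x,y)=x+y$ or $P(x,y)=u(x)+v(y)$, formula \eqref{defnHFForP} gives $H_F\equiv 0$ because $\partial_{xy}P\equiv 0$ --- indeed the whole premise of this theorem is that every triple among $\phi_1,\dots,\phi_4$ has vanishing Blaschke curvature, so the four-variable machinery keyed to a single $P$ is exactly the tool that is unavailable here. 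The configuration you want to count, $x+y=x'+y'$ together with $u(x)+v(y)=u(x')+v(y')$, is codimension two, not a hypersurface $Z(F)$, and once you restrict ``along the $\phi_3$-fibre'' you have three free variables, which is the setting of Lemma \ref{Shmcor}, not Lemma \ref{EnergyDispersionForF}. The surrounding incidence bookkeeping is also unjustified: with one convex curve $\Gamma_b$ per $b\in B$ (about $\delta^{-\alpha/2}$ curves) against the point set $\phi_3(X)\times\phi_4(X)$ (at most $\delta^{-\alpha-2c}$ points), even a full Szemer\'edi--Trotter bound gives $(NM)^{2/3}\approx\delta^{-\alpha}$, i.e.\ exactly borderline with no gain; and the Cauchy--Schwarz/energy variant is defeated by the diagonal, which already contributes $\mathcal{E}_{\delta}(X)\geq\delta^{-\alpha+c}$, more than the threshold $\delta^{-\alpha+4c}$ you would need to violate. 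So the claimed bound $\mathcal{E}_{\delta}(\phi_3(X))\,\mathcal{E}_{\delta}(\phi_4(X))\geq\delta^{-\alpha-c_0}$ is asserted, not proved, and the obstacles you list at the end are precisely the missing proof.

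For comparison, the paper takes a different and complete route that avoids incidences altogether. A Cauchy--Schwarz in the shared first coordinate produces the triple set $Y=\{(t,x,y)\in A\times B\times B:\ (t,x),(t,y)\in X\}$ with $\mathcal{E}_{\delta}(Y)\gtrsim\delta^{-3\alpha/2+3c}$; solving $z=\tfrac12(t+x)$ for $t$ re-expresses this data as a three-variable nonlinear projection problem for $G(x,y,z)=u(2z-x)+v(y)$, whose values on a large set $S\subset B\times B\times C$ lie in the small set $D=\phi_4(X)$. The hypotheses of Lemma \ref{Shmcor} are then verified directly, the crucial transversality \eqref{lowerBoundThetap} coming from the computation $\partial_z\theta_{(x,y)}(z)=2u''(2z-x)v'(y)/\big((u'(2z-x))^2+(v'(y))^2\big)\gtrsim\delta^{4c}$ --- this is the only place $u''\neq0$ is used --- and after a localization to a good cube (discarding boundary cubes where the domain condition $2z-x\in I$ fails), Lemma \ref{Shmcor} bounds $\mathcal{E}_{\delta}(S)$ by $\delta^{-3\alpha/2+\sigma}$, contradicting the lower bound once $c$ is small. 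If you want to rescue your approach, the repair is essentially to perform this substitution: eliminate one variable with the linear relation and feed the resulting trivariate constraint to Lemma \ref{Shmcor}, which is exactly what the paper's $G$ does.
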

\begin{proof}
Let $X$ be a union of $\delta$-squares such that $\mathcal{E}_\delta(X)= \delta^{-\alpha+c}$ and $X$ satisfies \eqref{nonConcentrationCondXConvexEntropy}. 
Put $A=\phi_1(X)$, $B=\phi_2(X)$, $C=\frac12\phi_3(X)$, and $D=\phi_4(X)$. 
Suppose that 
\begin{equation}\label{allSetsSmall}
\mathcal{E}_{\delta}(A),\; \mathcal{E}_{\delta}(B),\; \mathcal{E}_{\delta}(C),\; \mathcal{E}_{\delta}(D)\le \delta^{-\alpha/2-c}.
\end{equation}
If $c,\delta>0$ are sufficiently small, we will reach a contradiction. 
Replacing $X$ by a large subset of $X$, we may assume without loss of generality that
\begin{equation}\label{nonconABC}
\begin{split}
&\mathcal{E}_{\delta}(\phi_i(X) \cap K)\le \delta^{-c} |K|^{\alpha/2}\delta^{-\alpha/2},
\end{split}
\end{equation}
for all intervals $K$ of length at least $\delta$ and for each index $i=1,2,3$.
Indeed, this follows from Lemma~\ref{findingLargeNonconcentratedCartesianProduct} applied to $\phi_1$, $\phi_2$, $\phi_3$ and the set $X\subset I\times J$.

Note  that our assumption \eqref{allSetsSmall} and the fact that $X\subset A\times B$ imply 
\begin{equation}\label{ABlarge}
\mathcal{E}_{\delta}(A), \mathcal{E}_{\delta}(B)\ge \delta^{-\alpha/2+2c}.
\end{equation}

Similarly, we have
\begin{equation}\label{lowerC}
\mathcal{E}_{\delta}(C)\gtrsim \delta^{-\alpha/2+2c}.
\end{equation}
Indeed, let $\Gamma$ be the graph of $(x,y)\mapsto \frac12(x+y)$ restricted to $(x,y)\in X$. Then $\mathcal{E}_{\delta}(\Gamma)\approx \mathcal{E}_{\delta}(X)$. Up to reorder of the coordinates, $\Gamma$ is also the graph of $(x,z)\mapsto 2z-x$ restricted to the set $Z=\{(x,\tfrac12(x+y))\mid (x,y)\in X\}\subset A\times C$.
Thus
$$
\delta^{-\alpha+c}\approx  \mathcal{E}_{\delta}(\Gamma)\approx \mathcal{E}_{\delta}(Z) \lesssim \mathcal{E}_{\delta}(A\times C)\le \delta^{-\alpha/2-c}\mathcal{E}_{\delta}(C)
$$
which gives \eqref{lowerC}.

Let 
$$
Y:=\{(t,x,y)\in A\times B\times B\mid (t,x),\; (t,y)\in X \}.
$$
We have
\begin{equation}\label{Ylarge}
\mathcal{E}_{\delta}(Y)\gtrsim \delta^{-3\alpha/2+3c}.
\end{equation} 
Indeed, by Cauchy--Schwarz inequality,
$$
|Y|
= \int_{t\in A}|\phi_1^{-1}(t)\cap X|^2dt\ge 
\left(\int_{t\in A}|\phi_1^{-1}(t)\cap X|dt\right)^2/|A|=|X|^2/|A|  \gtrsim
\delta^{3-3\alpha/2+3c}.
$$

Consider the variety $V\subset[0,1]^4\times \RR$ given by
\begin{align*}
z&=\tfrac12(t+x)\\
w&=u(t)+v(y).
\end{align*}
Let $\Gamma'\subset V$ denote the graph of $(t,x,y)\mapsto (\tfrac12(t+x),u(t)+v(y))$ restricted to $Y$.
Then
$$
\mathcal{E}_{\delta}(\Gamma')\ge \mathcal{E}_{\delta}(Y)\gtrsim \delta^{-3\alpha/2+3c}.
$$
Let $S\subset B\times B\times C$ denote the projection of $\Gamma'$ to the $xyz$-space. We can view $\Gamma'$ as the graph of $(x,y,z)\mapsto (t,w)$ restricted to $S$. 
Indeed, for $(x,y,z)\in S$ we have 
$(t,w)=(2z-x,u(2z-x)+v(y))$. Then
\begin{equation}\label{Slarge}
\mathcal{E}_{\delta}(S)\gtrsim \delta^{c}\mathcal{E}_{\delta}(\Gamma')\gtrsim \delta^{-3\alpha/2+4c},
\end{equation}
where for this we used the upper bounds from \eqref{derivbounds}. 

Let
$$
G(x,y,z)=u(2z-x)+v(y),
$$
which is defined on 
$$
{\rm dom}(G)=[0,1]^3\cap \{2z-x\in I, \ y\in J\}.
$$
Then $S\subset {\rm dom}(G)\cap (B\times B\times C)$, $G(S)\subset D$, and $\mathcal{E}_{\delta}(S)\gtrsim \delta^{-3\alpha/2+4c}$.

We are almost ready to apply Lemma~\ref{Shmcor} to $G$. As a final preparation step we would like to restrict $G$ to a cube that contains a portion of $S$. For this we fix a parameter $0<r<1$, and consider the family ${\mathcal Q}$ of cubes in $[0,r]^3+r\ZZ^3$ that intersect $S$. 
Using $\mathcal{E}_{\delta}(S)\le \mathcal{E}_{\delta}(B\times B\times C)$, we have 
\begin{equation}\label{sizeQ}
\#{\mathcal Q}\le r^{-3\alpha/2-3c}.
\end{equation}

Recall that ${\rm dom}(G)=[0,1]^3\cap \{2z-x\in I, \ y\in J\}$. Write $I=[a_0,a_1]$
and let $\partial {\mathcal Q}\subset {\mathcal Q}$ denote the subset of cubes that intersect one of the planes $2z-x=a_0$ and $2z-x=a_1$.  
We claim that 
\begin{equation}\label{boundaryDGsmall}
\mathcal{E}_{\delta}\left(S\cap \bigcup_{Q\in\partial{\mathcal Q}} Q\right)
\lesssim r^{\alpha/2}\delta^{-3\alpha/2-3c}.
\end{equation}
Indeed, let $\mathcal T$ denote the projection of the cubes in $\partial \mathcal Q$ to the $xy$-plane. Then $\mathcal T$ is a family of interior-disjoint squares of side length $r$  that intersect $B\times B$. 
Each square in $\mathcal T$ is the projection of at most four cubes from $\partial \mathcal Q$, as is easy to verify. Thus, we have
\begin{align*}
|S\cap \bigcup_{Q\in\partial{\mathcal Q}} Q|
&\le \sum_{Q\in \partial{\mathcal Q}}|(B\times B\times C)\cap Q| \\ 
&= \sum_{Q=T\times K\in \partial{\mathcal Q}}|(B\times B)\cap T||C\cap K| \\ 
&\le \delta^{-c}r^{\alpha/2}\delta^{1-\alpha/2}\sum_{Q=T\times K\in \partial{\mathcal Q}}|(B\times B)\cap T| \\ 
&\lesssim 4\delta^{-c}r^{\alpha/2}\delta^{1-\alpha/2}|B\times B| \\ 
&\lesssim \delta^{-3c}r^{\alpha/2}\delta^{3-3\alpha/2},
\end{align*}
where for the third line we use the non-concentration inequality \eqref{nonconABC}.
This proves \eqref{boundaryDGsmall}.

Letting $r\approx\delta^{14c/\alpha}$ and in view of \eqref{Slarge} and \eqref{boundaryDGsmall}, we get
\begin{equation}\label{interiorDGlarge}
\mathcal{E}_{\delta}\left(S\cap \bigcup_{Q\in{\mathcal Q}\setminus\partial{\mathcal Q}} Q\right)
\ge \frac12 \mathcal{E}_{\delta}(S).
\end{equation}
By the pigeonhole principle and \eqref{sizeQ}, there exists a cube $Q_0\in {\mathcal Q}\setminus\partial{\mathcal Q}$ such that 
\begin{equation}\label{ScapQ0large}
|S\cap Q_0|\ge \delta^{3-3\alpha/2+\eta},
\end{equation}
where $\eta=c(37+42c/\alpha)$.

Finally, note that $Q_0\cap {\rm dom}(G)$ is a rectangular box, and write $Q_0\cap {\rm dom}(G)=I_1\times I_2\times I_3$. 
We apply Lemma~\ref{Shmcor} to $G:I_1\times I_2\times I_3\to \RR$; let $\sigma=\sigma(\alpha)$ be the quantity given by the lemma. Write $A_1=B\cap I_1$, $A_2=B\cap I_2$ and $A_3=C\cap I_3$. Combining  
\eqref{allSetsSmall} and \eqref{ScapQ0large}, we see that
$$
\mathcal{E}_{\delta}(A_i)\ge \delta^{-\alpha/2+\eta+2c},~~\text{for $i=1,2,3$}.
$$
By \eqref{nonconABC}, we have the non-concentration inequality 
$$
\mathcal{E}_{\delta}(A_i\cap K)\le 
 \delta^{-c}r^{\alpha/2}\delta^{-\alpha/2},
$$
where $K$ is an interval of length $r\ge \delta$, for $i=1,2,3$. 

We have 
\begin{align*}
G_x&=-u'(2z-x),~~G_y=v'(y),~~G_z=2u'(2z-x)
\end{align*}
and
\begin{align*}
G_{xx}&=u''(2z-x),~~
G_{xy}=G_{yz}=0,~~G_{xz}=-2u''(2z-x)\\
G_{yy}&=v''(y),~~G_{zz}=4u''(2z-x).
\end{align*}
In view of \eqref{derivbounds}, we have
$$
\|G\|_{C^2(I_1\times I_2\times I_3)}\lesssim \delta^{-c}
$$
and
$$
\inf_{I_1\times I_2\times I_3}|\nabla G_{(z)}(x,y)|\ge \delta^c.
$$
So $G$ satisfies \eqref{boundOnC2NormG} and \eqref{boundOnNablaG}.

We next verify \eqref{lowerBoundThetap}. We have
\begin{align*}
\partial_z \theta_{(x,y)}(z)
&=\partial_z \arctan\frac{G_y}{G_x}\\
&=\frac{G_{yz}G_x-G_yG_{xz}}{G_x^2+G_y^2}\\
&=\frac{2u''(2z-x)v'(y)}{(u'(2z-x))^2+(v'(y))^2}.
\end{align*}
Thus, in view of \eqref{derivbounds}, we have
\begin{align*}
\inf_{I_1\times I_2\times I_3} |\partial_z \theta_{(x,y)}(z)|&\gtrsim \delta^{4c}. 
\end{align*}

Thus Lemma~\ref{Shmcor} tells us that
$$
\mathcal{E}_{\delta}(S\cap Q_0)\le \mathcal{E}_{\delta}\left(\{(x,y,z)\in A_1\times A_2\times A_3\mid G(x,y,z)\in D \}\right)\le \delta^{-3\alpha/2 +\sigma}.
$$
However, choosing $c=c(\alpha)>0$ sufficiently small,
this contradicts \eqref{ScapQ0large}. Thus our assumption \eqref{allSetsSmall} is false, which proves the theorem. 
\end{proof}

\section{Exceptional vantage points for pinned distances}\label{pinnedFalconerAndVisibilitySection}
In this section we will prove Theorem \ref{unifLowerBoundPinnedDistances} using its single-scale variant, Theorems \ref{pinnedDistanceCor}. 
Suppose that the set 
\[
K = \{p\in \RR^2\colon \dim\Delta_p(A) \leq (\dim A)/2 + c\}
\] 
is curved. If $c$ is sufficiently small (depending on $\alpha$) then we will obtain a contradiction. Our arguments will be similar to those in \cite[Section 7]{B10}. Our first task will be to reduce to the case where $A$ and $K$ are contained in $[0,1]^2$. 

Let $\mu$ be a Borel probability measure on $K$ that satisfies \eqref{ballEstimateForTriples}. Select $R$ sufficiently large so that $\dim(A \cap B(0,R))\geq\dim A - c/2$ and $\mu(B(0,R))\geq 1/2$. Define $h(x,y) = (2R)^{-1}(x,y) + (1/2, 1/2)$, so $h(B(0,R))\subset[0,1]^2$. Abusing notation slightly, we will replace $K$ by $h(K\cap B(0,R))$, and we will replace $A$ by $h(A\cap B(0,R))$. Then $K,A\subset [0,1]^2$, $\dim(A)\geq\alpha-c/2$, and $\dim \Delta_p(A)\leq\alpha/2+c< (\dim A)/2 + 2c$ for each $p\in K$. Continuing our abuse of notation, we will let $\mu$ be a Borel probability measure on $K$ that satisfies  \eqref{ballEstimateForTriples}.

Let $\nu$ be a Borel probability measure on $A$ that satisfies the Frostman condition 
\begin{equation}\label{frostmanOnA}
\nu(A \cap B(x,r))<Cr^{\alpha-c}.
\end{equation}
Let $\delta_0=\delta_0(\dim A)$ be the quantity from Theorem \ref{pinnedDistanceCor}, and let $k_0\in\ZZ$ be sufficiently large so that $2^{-k_0}<\delta_0$. Let $p\in K$. Since $\dim(\Delta_p(A))< \dim(A)/2 + 2c$, we can cover $\Delta_p(A)$ by a union of intervals $\bigcup_{I\in \mathcal{G}_p}I = \bigcup_{k\geq k_0}\bigcup_{I\in \mathcal{G}_{p,k}}I$, where $\mathcal{G}_{p,k}$ is a set of at most $\delta^{-\alpha/2-2c}$ intervals of length $2^{-k}$. Since $\nu$ is a probability measure, we have 
\[
\sum_{k\geq k_0}\sum_{I\in\mathcal{G}_{p,k}}\nu( d_p^{-1}(I)) \geq 1.
\]
Integrating with respect to $\mu$, we conclude that
\[
\sum_{k\geq k_0}\int \sum_{I\in\mathcal{G}_{p,k}}\nu( d_p^{-1}(I)) d\mu(p) =\int \sum_{k\geq k_0}\sum_{I\in\mathcal{G}_{p,k}}\nu( d_p^{-1}(I)) d\mu(p) \geq 1,
\]
and thus there exists $k\geq k_0$ so that
\[
\int \sum_{I\in\mathcal{G}_{p,k}}\nu( d_p^{-1}(I)) d\mu(p) \geq \frac{6}{\pi^2}k^{-2}.
\]
Fix this choice of $k$. Define $\delta=2^{-k}$ (so $k^{-2} \sim  |\log\delta|^{-2}$). Cover $[0,1]^2$ by squares of the form $[n\delta,\ (n+1)\delta]\times[m\delta,\ (m+1)\delta]$. After dyadic pigeonholing, we can choose a set $\mathcal{S}$ of such squares, and a number $w>0$ so that if we define $A' = A \cap \bigcup_{S\in\mathcal{S}}S,$ then
\begin{equation}\label{mostMassCaptured}
\int \sum_{I\in\mathcal{G}_{p,k}}\nu(A'\cap d_p^{-1}(I)) d\mu(p) \gtrsim |\log\delta|^{-3},
\end{equation}
and $w\leq \nu(A'\cap S)<2w$ for each $S\in\mathcal{S}$. By \eqref{mostMassCaptured}, we have $|\log\delta|^{-3} w^{-1} \lesssim  \#\mathcal{S}\leq w^{-1}$. By \eqref{frostmanOnA} we have $w\leq C \delta^{\alpha-c}$. Define $A_{\delta}=\bigcup_{S\in\mathcal{S}}S$. By \eqref{mostMassCaptured} we have $\nu(A_{\delta})\gtrsim|\log \delta|^{-3}$, and thus $\#\mathcal{S}\gtrsim \delta^{-3}/(C \delta^{\alpha-c})\gtrsim \delta^{-\alpha+c}|\log\delta|^{-2}.$

Note that if $p\in\RR^2$, $I$ is an interval of length $\delta$, and $d_p(z)\in I$, then by the triangle inequality, $\Delta_p(S)\subset 3I$, where $S\in\mathcal{S}$ is a square containing $z$, and $3I$ is the interval of length $3\delta$ with the same midpoint as $I$. In particular, 
\[
\nu( d_p^{-1}(I))\lesssim w \#\{S\in\mathcal{S}\colon d_p(S) \subset 3I\}\lesssim(\#\mathcal{S})^{-1}\#\{S\in\mathcal{S}\colon d_p(S) \subset 3I\}.
\] 
For each $p\in K,$ define $\mathcal{I}_p=\{3I\colon I\in \mathcal{G}_{p,k}\}$ (note that $\#\mathcal{I}_p\leq \delta^{-\alpha/2-2c})$ and define 
\[
\mathcal{S}_p = \bigcup_{I \in \mathcal{I}_p} \{S\in\mathcal{S}\colon \Delta_p(S)\subset I\}.
\]
The sets $\{S\in\mathcal{S}\colon \Delta_p(S)\subset I\}$ are boundedly overlapping as $I$ ranges over the elements in $\mathcal{I}_p$, and thus \eqref{mostMassCaptured} becomes
\[
\int (\# \mathcal{S}_p) d\mu(p)  \gtrsim |\log\delta|^{-3}(\#\mathcal{S}).
\]
For each $S\in\mathcal{S}$, define $m(S) = \mu(\{p \in K\colon S\in \mathcal{S}_p\})$. We have

\begin{align}
|\log\delta|^{-9}(\#\mathcal{S})^3&\lesssim  \Big(\sum_{S\in\mathcal{S}}m(S)\Big)^3\leq(\#\mathcal{S})^2\Big(\sum_{S\in\mathcal{S}} m(S)^3\Big),\label{S3}
%
\end{align}
and thus
\begin{align}
&\int \#(\mathcal{S}_{p_1}\cap \mathcal{S}_{p_2} \cap \mathcal{S}_{p_3})d\mu(p_1)d\mu(p_2)d\mu(p_3)\gtrsim |\log\delta|^{-9}(\#\mathcal{S}),\label{TriplesSi}%
\end{align}
Define
\begin{align}
\mathcal{T} & = \big\{(p_1,p_2,p_3)\in K^3\colon \#(\mathcal{S}_{p_1}\cap \mathcal{S}_{p_2} \cap \mathcal{S}_{p_3}) \geq C_0^{-1}|\log\delta|^{-9}(\#\mathcal{S})\big\},\label{quadruplesBigSquareIntersection}
\end{align}
If $C_0$ is a sufficiently large absolute constant, then $\mu^3(\mathcal{T})\geq C_0^{-1}|\log\delta|^{-9}$.

Next, by \eqref{ballEstimateForTriples}, for each $r>0$ we have
\begin{align} 
&\mu^3\{(p_1,p_2,p_3)\colon |T(p_1,p_2,p_3)|\leq r \}\leq C r^{\beta},\label{triplesSmallTriangle}
\end{align}

Comparing \eqref{quadruplesBigSquareIntersection} and \eqref{triplesSmallTriangle}, if the constant $C_1$ is chosen sufficiently large, then there exists $(p_1,p_2,p_3)\in K^3$ so that
\begin{align}
&\#(\mathcal{S}_{p_1}\cap \mathcal{S}_{p_2} \cap \mathcal{S}_{p_3}) \geq C_0^{-1}|\log\delta|^{-9}(\#\mathcal{S})\label{lotsOfSInsideSp1p2p3p4},
\end{align}
and 
\begin{align}
&|T(p_1,p_2,p_3)|> C_1^{-1}|\log\delta|^{-9/\beta},\label{p1p2p3p4SpanBigTriangles}
\end{align}

Define $\mathcal{S}'=\mathcal{S}_{p_1}\cap \mathcal{S}_{p_2} \cap \mathcal{S}_{p_3}$. Then $\mathcal{E}_{\delta}(\Delta_{p_i}(\mathcal{S}')\lesssim \delta^{-\alpha/2-2c}$ for each index $i$. If $c=c(\alpha)>0$ and $\delta_0=\delta_0(\alpha)>0$ are selected sufficiently small, then this violates Theorem \ref{pinnedDistanceCor}.

\section{Discretized expanding polynomials} \label{mainThmProofsSectionOne}
In this section we will prove Theorem \ref{main-entropy-growth}. Recall that Proposition \ref{countingNumberQuadruplesNablePWedgeP} controls the number of solutions to $P(a,b) = P(a',b')$ on a region where $P_x,P_y,P_{xy}$ and $K_P$ are large. If any of these quantities vanish identically, then $P$ is a special form. But it is possible that none of these quantities vanish identically, but nonetheless they are small on a substantial portion of $X\subset A\times B$. In the next sub-section, we will show that after restricting to a suitable sub-square of $[0,1]^2$, we can suppose that $P_x,P_y,P_{xy}$ and $K_P$ are large.

\subsection{Analytic varieties do not concentrate on fractal sets}\label{analFunctNonConcSec}
In this section we will introduce several technical results showing how products of non-concentrated sets can intersect thin neighborhoods of varieties $Z(f)$, where $f$ is an analytic function. Our setup will be as follows. Let $A_1,\ldots,A_d\subset[0,1]$ be sets and let $A = A_1\times\cdots\times A_d$. Let $\delta>0$ and suppose that for all intervals $J$ of length at least $\delta$, each set $A_i$ satisfies the non-concentration condition
\begin{equation*}
\mathcal{E}_{\delta}(A_i \cap J) \leq \delta^{-\eta}|J|^{\kappa}\mathcal{E}_{\delta}(A_i).
\end{equation*}

We would like to show that if $f\colon\RR^d\to\RR$ is analytic, then $|N_{t}(Z(f))\cap A|$ must be much smaller than $|A|$. Results of this type, combined with Whitney's cube decomposition, will allow us to find a (reasonably) small collection of interior disjoint cubes whose union covers most of $A$, so that $f$ is not too small on each cube. Results of this type will help us control the size of various ``bad'' regions where discretized expansion might fail.

\begin{thm}[Stratification of the vanishing locus of an analytic function \cite{Lo2}]\label{stratificationRealAnalSet}
Let $U\subset\RR^d$ be an open set that contains $[0,1]^d$ and let $\phi\colon U \to\RR$ be real analytic and not identically zero. Then there is a finite set $\mathcal{M}$ of smooth (proper) submanifolds of $\RR^d$ so that $Z(\phi)\cap [0,1]^d \subset \bigcup_{M\in\mathcal{M}}M$.
\end{thm}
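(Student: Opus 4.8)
The plan is to obtain this as a direct consequence of \L ojasiewicz's stratification theorem for semi-analytic sets, followed by a compactness reduction. Recall that a subset of $\RR^d$ is \emph{semi-analytic} if near each of its points it is cut out by finitely many analytic equalities and inequalities; in particular $Z(\phi)\cap U$ is semi-analytic (indeed it is an analytic set). \L ojasiewicz's theorem, which is exactly the content of the cited reference \cite{Lo2}, then provides a \emph{stratification} of $Z(\phi)\cap U$: a locally finite partition into connected, locally closed real-analytic submanifolds of $\RR^d$, the strata. I would take $\mathcal{M}_0$ to be this locally finite family, so that $\bigcup_{M\in\mathcal{M}_0}M = Z(\phi)\cap U \supseteq Z(\phi)\cap[0,1]^d$.

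Next I would upgrade ``locally finite'' to ``finite'' using compactness of $[0,1]^d$: only finitely many strata of $\mathcal{M}_0$ can meet $[0,1]^d$, and I would let $\mathcal{M}$ be precisely this finite subfamily, so that $\mathcal{M}$ is a finite collection of smooth submanifolds of $\RR^d$ with $Z(\phi)\cap[0,1]^d\subseteq\bigcup_{M\in\mathcal{M}}M$. The only remaining point is to verify that each $M\in\mathcal{M}$ is \emph{proper}, i.e.\ has dimension at most $d-1$. For this I would argue by contradiction: a stratum $M$ of dimension $d$ would be a nonempty open subset of $\RR^d$ contained in $Z(\phi)$; being connected and meeting the connected set $[0,1]^d$, it would lie in the connected component $U'$ of $U$ that contains $[0,1]^d$, so $\phi$ would vanish on a nonempty open subset of $U'$ and hence, by the identity theorem for real-analytic functions, vanish identically on $U'$, contrary to hypothesis. (Here ``not identically zero'' is of course to be read as not identically zero on the component of $U$ meeting $[0,1]^d$; the statement is false otherwise.)

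The only genuinely substantial ingredient is the \L ojasiewicz stratification theorem itself, which is deep but entirely standard; everything else is routine. If one prefers a self-contained argument, the theorem can instead be proved by induction on $\dim Z(\phi)$: where $\nabla\phi\neq 0$ the implicit function theorem presents $Z(\phi)$ locally as a smooth hypersurface, while $Z(\phi)\cap Z(|\nabla\phi|^2)$ is again the common zero locus of finitely many analytic functions and has strictly smaller dimension, so the inductive hypothesis applies to it; the same compactness step then reduces the resulting locally finite family to a finite one. The one technical subtlety to watch in either approach is to ensure the pieces are honest embedded (locally closed) submanifolds of $\RR^d$ rather than merely immersed or abstract manifolds — the \L ojasiewicz strata are locally closed real-analytic submanifolds, which is what is asserted.
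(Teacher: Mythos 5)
Your main route is essentially the paper's: the paper states this theorem as a quoted result of \L{}ojasiewicz \cite{Lo2} and gives no proof, and your argument simply makes the routine reductions explicit (local finiteness plus compactness of $[0,1]^d$ gives finiteness, and the identity theorem rules out a $d$-dimensional stratum, modulo the correct reading of ``not identically zero'' on the component containing $[0,1]^d$); this is all correct. One caveat on your optional ``self-contained'' aside only: the claim that $Z(\phi)\cap Z(|\nabla\phi|^2)$ has strictly smaller dimension than $Z(\phi)$ is false in general (e.g.\ $\phi(x)=x_1^2$, where $\nabla\phi$ vanishes on all of $Z(\phi)$), so that induction would need a different measure of progress, but this does not affect your main argument.
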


\begin{lem}
\label{neighborhoodAnalyticVariety}
Let $M$ be a smooth (proper) submanifold of $\RR^d$, and suppose $M \cap [0,1]^d$ is compact. Then there is a constant $C>0$ so that the following holds. 

Let $A_1,\ldots,A_d\subset[0,1]$ be sets and let $A = A_1\times\cdots\times A_d$. Let $\delta>0$ and suppose that for all intervals $J$ of length at least $\delta$, each set $A_i$ satisfies the non-concentration condition
\begin{equation*}
\mathcal{E}_{\delta}(A_i \cap J) \leq \delta^{-\eta}|J|^{\kappa}\mathcal{E}_{\delta}(A_i).
\end{equation*}
Then
for each $s>\delta$ we have
\begin{equation}\label{thickenedNbhdEstimate}
\mathcal{E}_{\delta}\big(  A\cap N_{s}(M) \big) \leq C \delta^{-\eta} s^{\kappa}\mathcal{E}_{\delta}(A).
\end{equation}
\end{lem}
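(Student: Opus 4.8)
\textbf{Proof plan for Lemma \ref{neighborhoodAnalyticVariety}.}

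The plan is to reduce the estimate to the local structure of the manifold $M$, and then to a single coordinate projection argument. First I would use compactness of $M\cap[0,1]^d$ to cover it by finitely many open boxes on each of which $M$ is a graph: since $M$ is a smooth submanifold of dimension, say, $m\le d-1$, around each point of $M\cap[0,1]^d$ there is a box $R$ and a splitting of the coordinates into ``base'' coordinates $x_{i_1},\dots,x_{i_m}$ and ``fiber'' coordinates such that $M\cap R$ is the graph of a Lipschitz (indeed smooth) function from the base variables to the fiber variables, with Lipschitz constant bounded by some absolute $L$ depending only on the finite cover (hence only on $M$). The number of boxes in the cover, and $L$, depend only on $M$; these get absorbed into the constant $C$. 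So it suffices to prove \eqref{thickenedNbhdEstimate} with $M$ replaced by a single such graph $\Gamma$ inside a box $R$, at the cost of a constant factor.

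Next, fix such a graph $\Gamma\subset R$, say with fiber coordinate $x_j$ (if the fiber has dimension $>1$ the same argument works coordinate by coordinate; pick any one fiber coordinate $x_j$ and note $N_s(\Gamma)$ is contained in the set where $x_j$ lies within $Cs$ of the value prescribed by the graphing function of the other coordinates). The key point is then: for each fixed choice of a $\delta$-cube $Q'$ in the $(d-1)$ remaining coordinates (i.e. all coordinates except $x_j$), the slice $\{t : (\dots,t,\dots)\in N_s(\Gamma),\ \text{other coords in }Q'\}$ is contained in an interval of length $O(s)$ in the $x_j$-variable. Applying the non-concentration hypothesis for $A_j$ to this interval gives
\[
\mathcal{E}_{\delta}\big(A_j \cap \{x_j\text{-slice}\}\big) \lesssim \delta^{-\eta}s^{\kappa}\,\mathcal{E}_{\delta}(A_j).
\]
Summing this over all $\delta$-cubes $Q'$ in the other $d-1$ coordinates that meet $\prod_{i\ne j}A_i$, and using the product structure $\mathcal{E}_{\delta}(A)\sim \prod_i \mathcal{E}_{\delta}(A_i)$ (which holds since each $A_i$ is, up to the usual $\delta$-thickening, a union of $\delta$-intervals), yields
\[
\mathcal{E}_{\delta}\big(A\cap N_s(\Gamma)\big) \lesssim \Big(\prod_{i\ne j}\mathcal{E}_{\delta}(A_i)\Big)\cdot \delta^{-\eta}s^{\kappa}\mathcal{E}_{\delta}(A_j) = \delta^{-\eta}s^{\kappa}\mathcal{E}_{\delta}(A),
\]
which is the desired bound. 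Summing over the finitely many graphs in the cover of $M$ completes the proof.

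The main technical obstacle I anticipate is bookkeeping with the covering numbers rather than Lebesgue measures: I need that $A_i$ may be replaced by its $\delta$-neighborhood without changing $\mathcal{E}_{\delta}$ by more than a constant, that slicing commutes with $\mathcal{E}_{\delta}$ up to constants, and that the non-concentration hypothesis — stated for intervals of length $\ge\delta$ — applies to the length-$O(s)$ slice interval (which is fine since we assume $s>\delta$, up to enlarging the interval slightly and adjusting $C$). A secondary point is making sure $N_s(\Gamma)$ is genuinely captured by the ``$x_j$ within $O(s)$ of the graph value'' set: this needs the graphing function to be Lipschitz with a constant uniform over the cover, which is where compactness of $M\cap[0,1]^d$ and finiteness of the cover are essential. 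None of these steps is deep, but they must be assembled carefully so that every implicit constant depends only on $M$ (through the finite cover and the Lipschitz bounds) and not on $\delta$, $s$, or the sets $A_i$.
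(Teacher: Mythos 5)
Your proposal is correct and follows essentially the same route as the paper: compactness gives a finite cover of $M\cap[0,1]^d$ by pieces that are Lipschitz graphs over a coordinate hyperplane (the paper phrases this as choosing, near each point, a coordinate direction $e_{i(p)}$ transverse to $T_p(M)$), and then on each piece the $s$-neighborhood meets every column over the base coordinates in an interval of length $O(s)$ in the distinguished coordinate, so the non-concentration hypothesis applied to that single $A_i$ yields the $\delta^{-\eta}s^{\kappa}$ gain while the remaining coordinates are counted trivially. The only cosmetic difference is that the paper organizes the count via $s$-cubes meeting $M$ with boundedly many cubes per projected column, whereas you slice by $\delta$-columns directly; the bookkeeping is equivalent.
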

\begin{proof}
For each $p\in M\cap [0,1]^d$, let $i(p)\in \{1,\ldots,d\}$ be an index so that the coordinate unit vector $e_{i(p)}$ is not contained in the tangent space $T_p(M)$. If $t>0$ is selected sufficiently small, then the set $M_p = M\cap B(p,t)$ is a smooth manifold, and there exists a constant $C_p$ so that for all $q,q'\in M_p$, we have
\begin{equation}\label{eiCoordinateBd}
|e_{i(p)}\cdot q - e_{i(p)}\cdot q'|\leq  C_p|\pi_{p}(q)-\pi_{p}(q')|,
\end{equation}
where $\pi_p\colon\RR^{d}\to\RR^{d-1}$ is the orthogonal projection to the $(d-1)$-dimensional subspace $e_{i(p)}^\perp\subset\RR^d$. 
Indeed, the above inequality follows from the fact that $\inf_{v\in T_p(M)}\angle(e_{i(p)},v)>0$, and the map $q\mapsto T_q(M)$ is continuous. 

Since $M\cap [0,1]^d$ is compact, we can cover $M\cap [0,1]^d$ by a finite set $\mathcal{M}$ of manifolds of the form $M_p$. It suffices to prove  \eqref{thickenedNbhdEstimate} for each of these manifolds. Let $C_0=\max_{M_p\in\mathcal{M}} C_p$. 

Let $M_p\in\mathcal{M}$. Let $s\geq\delta$ and let $\mathcal{Q}$ be the set of cubes of side-length $s$ aligned with the grid $(s\ZZ)^d$ that intersect $M_p$. We claim that at most $C_1$ cubes from $\mathcal{Q}$ can have the same projection under $\pi_{p}$, for some $C_1$ that depends only on $d$ and $C_0$. Indeed, if $C_1>2$ cubes have the same projection under $e_{i(p)}$, then select points $q,q'\in M_p$ from two cubes that have greatest distance. Since there are at least $C_1-2$ cubes in-between the cubes containing $q$ and $q'$, we have $|e_{i(p)}\cdot q - e_{i(p)}\cdot q'|\geq (C_1-2)s$. On the other hand, since all of the cubes (and in particular, the cubes containing $q$ and $q'$) have the same projection under $\pi_p$, we have that $|\pi_{p}(q)-\pi_{p}(q')|\leq \sqrt{d-1}s$. Thus by \eqref{eiCoordinateBd}, we must have $(C_1-2)s \leq C_0\sqrt{d-1}s,$ so $C_1\leq C_0\sqrt{d-1}+2$, which proves our claim. 

For each such cube $Q\in\mathcal{Q}$ we have the estimate 
\begin{equation}\label{coveringNumberSCube}
\mathcal{E}_{\delta}(Q\cap A) \leq \delta^{-\eta}s^{\kappa}\mathcal{E}_{\delta}(A_{i(p)}) \mathcal{E}_{\delta}( \pi_{p}(A\cap Q)).
\end{equation}
Summing \eqref{coveringNumberSCube} over all cubes in $\mathcal{Q}$ and using the fact that at most $C_1$ cubes have the same projection under $\pi_p$, we obtain
\begin{equation*}
\begin{split}
\mathcal{E}_{\delta}(N_s(M)\cap A)&\leq C_1 \delta^{-\eta}s^{\kappa}\mathcal{E}_{\delta}(A_{i(p)})\mathcal{E}_{\delta}( \pi_p(A))\\
&\leq C_1 \delta^{-\eta}s^{\kappa}\mathcal{E}_{\delta}(A).\qedhere
\end{split}
\end{equation*}
\end{proof}

\begin{rem}
Note that in the special case where where exists an index $i$ so that $e_i$ is not contained in $T_p(M)$, for every $p\in M$, it is sufficient in Lemma~\ref{neighborhoodAnalyticVariety} to require the non-concentration condition only for $A_i$.
\end{rem}

Combining Lemma~\ref{neighborhoodAnalyticVariety} and Theorem~\ref{stratificationRealAnalSet}, we obtain the following.

\begin{cor}\label{coveringNbhAnalyticSet}
Let $U\subset\RR^d$ be an open set that contains $[0,1]^d$ and let $\phi\colon U \to\RR$ be real analytic and not identically zero. Then there is a constant $C$ so that the following holds. 

Let $A_1,\ldots,A_d\subset[0,1]$ be sets and let $A = A_1\times\cdots\times A_d$. Let $\delta>0$ and suppose that for all intervals $J$ of length at least $\delta$, each set $A_i$ satisfies the non-concentration condition
\begin{equation*}
\mathcal{E}_{\delta}(A_i \cap J) \leq \delta^{-\eta}|J|^{\kappa}\mathcal{E}_{\delta}(A_i).
\end{equation*}
Then
for each $s>\delta$ we have
\begin{equation}\label{CoveringThinNbhdZPhi}
\mathcal{E}_{\delta}\big(  A\cap N_{s}(Z(\phi)) \big) \leq C \delta^{-\eta} s^{\kappa}\mathcal{E}_{\delta}(A).
\end{equation}
\end{cor}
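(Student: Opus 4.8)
The plan is to reduce the corollary to Lemma~\ref{neighborhoodAnalyticVariety} by means of the stratification theorem, Theorem~\ref{stratificationRealAnalSet}, and then to sum the resulting estimates over the finitely many strata. The lemma already does all the real work — it produces exactly the bound $\mathcal{E}_\delta(A\cap N_s(M))\le C_M\,\delta^{-\eta}s^\kappa\,\mathcal{E}_\delta(A)$ under precisely the non-concentration hypothesis imposed on the $A_i$ here — so the only thing to arrange is that $Z(\phi)$, near $[0,1]^d$, is contained in a finite union of such manifolds.

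Since $[0,1]^d$ is compact and contained in the open set $U$, I would first fix $\epsilon>0$ with $Q_\epsilon:=[-\epsilon,1+\epsilon]^d\subset U$, and then apply Theorem~\ref{stratificationRealAnalSet} on an open cube containing $Q_\epsilon$. This yields a finite collection $\mathcal{M}$ of smooth proper submanifolds of $\RR^d$ with $Z(\phi)\cap Q_\epsilon\subset\bigcup_{M\in\mathcal{M}}M$; after replacing each $M$ by its intersection with a fixed closed ball containing $Q_\epsilon$, each $M\cap[0,1]^d$ is compact, so Lemma~\ref{neighborhoodAnalyticVariety} applies to every $M\in\mathcal{M}$.

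Now fix $0<s<\epsilon$; the case $s\ge\epsilon$ is immediate from $\mathcal{E}_\delta(A\cap N_s(Z(\phi)))\le\mathcal{E}_\delta(A)$ together with $s^\kappa\ge\epsilon^\kappa$, absorbing $\epsilon^{-\kappa}$ into the final constant. If $x\in A\subset[0,1]^d$ satisfies $\operatorname{dist}(x,Z(\phi))<s$, then a witnessing point of $Z(\phi)$ lies in $x+B(0,s)\subset Q_\epsilon$, hence in some $M\in\mathcal{M}$; therefore
\[
A\cap N_s(Z(\phi))\subset\bigcup_{M\in\mathcal{M}}\bigl(A\cap N_s(M)\bigr).
\]
Applying Lemma~\ref{neighborhoodAnalyticVariety} to each $M\in\mathcal{M}$ and summing over the finitely many $M$ gives \eqref{CoveringThinNbhdZPhi} with $C=\max\bigl(\epsilon^{-\kappa},\ \sum_{M\in\mathcal{M}}C_M\bigr)$, a constant depending only on $\phi$ and $d$.

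The only point that requires any care is the boundary effect: a point of $A$ close to $Z(\phi)$ may be close to a portion of $Z(\phi)$ lying just outside $[0,1]^d$, which is why the stratification must be performed on the slightly enlarged cube $Q_\epsilon$ rather than on $[0,1]^d$ itself (and why each stratum must be truncated to a compact piece before invoking the lemma). Once this is set up, the corollary is a routine consequence of Lemma~\ref{neighborhoodAnalyticVariety} and the finiteness of the stratification $\mathcal{M}$.
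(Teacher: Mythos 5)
Your proposal matches the paper's argument: the paper obtains this corollary exactly by combining Theorem \ref{stratificationRealAnalSet} with Lemma \ref{neighborhoodAnalyticVariety}, i.e., covering $Z(\phi)$ near the unit cube by the finitely many smooth strata and summing the covering-number bound of the lemma over them, which is precisely your reduction. Your additional care with the enlarged cube $Q_\epsilon$ (to capture zeros of $\phi$ lying just outside $[0,1]^d$) and the trivial case $s\ge\epsilon$ is a sensible refinement the paper leaves implicit; the only cosmetic slip is that intersecting a stratum $M$ with a closed ball containing $Q_\epsilon$ does not by itself make $M\cap[0,1]^d$ compact, but this hypothesis of Lemma \ref{neighborhoodAnalyticVariety} is in any case understood to hold for the strata furnished by the stratification.
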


\begin{thm}[\L{}ojasiewicz's inequality \cite{Lo}]\label{Lojasiewicz}
Let $U\subset\RR^d$ be open, let $\phi\colon U \to\RR$ be real analytic and let $K\subset\RR^d$ be compact. Then there exists a constant $C>0$ so that for each $z\in K$, we have
\begin{equation*}
|\phi(z)| \geq C^{-1}\cdot  {\rm dist}(z, Z(\phi))^{C}.
\end{equation*} 
\end{thm}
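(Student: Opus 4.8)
The plan is to derive this from the structure theory of subanalytic sets; the statement is local and is really a question about the relative rates of vanishing of $\phi$ and the distance function $\rho(x):={\rm dist}(x,Z(\phi))$. We may assume $K\subset U$, and, replacing $U$ by a slightly smaller open set with compact closure in $U$, that $\overline U$ is compact and $\phi$ extends analytically to a neighborhood of $\overline U$. We may also assume $Z(\phi)\cap\overline U\neq\emptyset$, since otherwise $|\phi|$ is bounded below on the compact set $\overline U$ while $\rho$ is bounded above, and the inequality is immediate for large $C$. By a routine compactness argument it then suffices to prove a local version: for each $p\in Z(\phi)\cap K$ there is a closed ball $\overline B=\overline{B(p,r)}$ with $\overline{B(p,2r)}\subset U$ and constants $c_0,q>0$ such that $|\phi(x)|\ge c_0^{-1}\rho(x)^q$ for all $x\in\overline B$ (points of $K$ at positive distance from $Z(\phi)$ are handled as above). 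Covering $K$ by finitely many such balls, taking the worst constants, and, after a harmless rescaling, packaging $c_0$ and the rational exponent $q$ into a single constant, one obtains the form stated in the theorem.

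For the local estimate, first observe that $\rho$ is a continuous ($1$-Lipschitz) \emph{subanalytic} function on $\overline B$: the set $Z(\phi)$ is analytic, and since $\rho(x)\le|x-p|<r$ on $\overline B$, the nearest-point distance only involves the relatively compact piece $Z(\phi)\cap\overline{B(p,2r)}$, so the graph of $\rho|_{\overline B}$ is the image of a compact subanalytic set under a proper analytic map, hence subanalytic. Now consider $\Phi\colon\overline B\to[0,\infty)^2$, $\Phi(x)=(|\phi(x)|,\rho(x))$, and set $\Sigma=\Phi(\overline B)$. Then $\Sigma$ is compact and subanalytic — this uses the stability of subanalytic sets under images by proper analytic maps — and has the crucial property that $\Sigma\cap\{t=0\}\subseteq\{(0,0)\}$, because $\rho(x)=0$ forces $x\in Z(\phi)$ and hence $|\phi(x)|=0$. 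Since $\rho$ is continuous on the connected set $\overline B$ with $\rho(p)=0$, it attains every value $t\in(0,t_1]$, where $t_1=\max_{\overline B}\rho$, so the fiber $\Sigma_t=\{s:(s,t)\in\Sigma\}$ is a nonempty compact subset of $(0,\infty)$ for each such $t$. Define $\nu(t)=\min\Sigma_t$, a positive one-variable subanalytic function on $(0,t_1]$; the local estimate is exactly the assertion $\nu(t)\ge c_0^{-1}t^q$.

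To prove this I would invoke the Puiseux-type structure theorem for subanalytic functions of one real variable (equivalently, the Curve Selection Lemma): after a monomial substitution $t=u^N$ such a function is, near $0$, given by a convergent fractional-power series, so either $\nu\equiv0$ on some interval $(0,t_0]$ — impossible here since $\nu>0$ — or $\nu(t)=c\,t^{q}(1+o(1))$ as $t\to0^+$ for some $c>0$ and some rational $q>0$, whence $\nu(t)\ge\tfrac c2\,t^q$ on $(0,t_0]$. Two short compactness arguments close the remaining gaps: $\liminf_{t\to0^+}\nu(t)=0$ (otherwise $|\phi|$ would be bounded below on a full neighborhood of the zero $p$ of $\phi$, using that $|\phi|$ is in any case bounded below away from $Z(\phi)$), and $\nu$ is bounded below by a positive constant on each compact subinterval $[t_0,t_1]$ (a limit argument otherwise produces $x_*$ with $\rho(x_*)>0$ and $\phi(x_*)=0$). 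Enlarging $q$ and $c_0$ to combine the regimes, we get $\nu(t)\ge c_0^{-1}t^q$ on all of $(0,t_1]$, and composing with $\rho$ yields $|\phi(x)|\ge c_0^{-1}\rho(x)^q$ on $\overline B$.

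The main obstacle is the input from subanalytic geometry: the fact that the image of a compact subanalytic set under an analytic map is subanalytic (Gabrielov, Hironaka), together with the one-variable Puiseux/structure theorem, or equivalently the Curve Selection Lemma for subanalytic sets. An essentially equivalent route applies the Curve Selection Lemma directly to the subanalytic sets $\{x\in\overline B:|\phi(x)|<\rho(x)^n\}$ for each $n$, but then one must also invoke the uniform finiteness of the Puiseux exponents that can occur along analytic arcs near $Z(\phi)$ in order to convert the resulting family of arcs into a single contradiction; a third option is to resolve $Z(\phi)$ to normal crossings via Hironaka's theorem and verify the inequality in the resolved coordinates. In every case the elementary part — the reductions and the two compactness arguments — is routine, and the weight of the proof sits entirely in this structure theory.
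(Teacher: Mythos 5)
The paper does not prove this statement at all: Theorem \ref{Lojasiewicz} is imported as a classical black box, with a citation to \L{}ojasiewicz's 1959 paper, and is only ever invoked in the small-distance regime (inside Corollary \ref{analyticFunctionsLargeOnQ}, to get $|f_j|\geq\delta^w$ off a thin neighborhood of $Z$). So there is no proof in the paper to compare against; what you have written is a genuine proof sketch, and it is the standard subanalytic-geometry route: reduce to a local statement at a point of $Z(\phi)\cap K$, note that $\rho={\rm dist}(\cdot,Z(\phi))$ is subanalytic, push forward by $x\mapsto(|\phi(x)|,\rho(x))$ using the stability of subanalytic sets under proper analytic images (Gabrielov/Hironaka), and apply the one-variable Puiseux structure theorem (equivalently, curve selection) to the fiberwise minimum $\nu(t)$. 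The elementary reductions, the positivity and nonemptiness of the fibers, and the two compactness arguments are all sound (the case $\nu(t)\to c>0$, i.e.\ exponent $0$, which you exclude via the $\liminf$ argument, is in any event harmless since it gives a stronger bound), so the weight of the argument sits exactly where you say it does. This is an acceptable, if heavy, derivation of a result the authors simply cite.

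One caveat concerns your final step of ``packaging $c_0$ and the rational exponent $q$ into a single constant $C$ after a harmless rescaling.'' The single-constant form stated in the paper is actually false for a general compact $K$ once ${\rm dist}(z,Z(\phi))$ can exceed $1$: for $\phi(x)=x^4\big((x-10)^2+\tfrac1{10}\big)$ on $K=[-1,11]$, any $C<4$ fails near $x=0$ and any $C\geq 4$ fails at $x=10$, since $1000=|\phi(10)|<C^{-1}10^{C}$ there; rescaling the domain does not rescue the statement in the original coordinates, because the obstruction is intrinsic. What your argument really proves (and what \L{}ojasiewicz's theorem asserts) is the two-constant inequality $|\phi(z)|\geq c^{-1}{\rm dist}(z,Z(\phi))^{q}$ on $K$, which collapses to the one-constant form exactly when the distances involved are at most $1$. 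Since the paper only applies the theorem with distances of size $\delta^{w/C_1}\ll1$, nothing downstream is affected, but your write-up should either state the two-constant version or add the hypothesis ${\rm dist}\leq 1$ rather than appeal to a rescaling.
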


Combining Theorem \ref{whitneyCubeDecomp}, Theorem \ref{Lojasiewicz}, and Corollary \ref{coveringNbhAnalyticSet}, we obtain the following 
\begin{cor}\label{analyticFunctionsLargeOnQ}
Let $U\subset\RR^d$ be an open set that contains $[0,1]^d$ and let $f_1,\ldots,f_k\colon U\to\RR$ be real analytic functions, none of which are identically zero. Then there is a constant $C_1=C_1(f_1,\ldots,f_k)$ so that the following holds.

Let $A_1,\ldots,A_d\subset[0,1]$ be sets and let $A = A_1\times\cdots\times A_d$. Let $\delta>0$ and suppose that for all intervals $J$ of length at least $\delta$, each set $A_i$ satisfies the non-concentration condition
\begin{equation*}
\mathcal{E}_{\delta}(A_i \cap J) \leq \delta^{-\eta}|J|^{\kappa}\mathcal{E}_{\delta}(A_i).
\end{equation*}

Let $\delta>0$ and $0<w<\kappa/2$. Then there is a set of cubes $\mathcal{Q}$ with disjoint interiors that are contained in $[0,1]^d$, with $\#\mathcal{Q} \lesssim \delta^{-dw}$ and
\begin{equation}\label{cubeHasLargeVolume}
\mathcal{E}_{\delta}\big(A\ \backslash\  \bigcup_{Q\in\mathcal{Q}}Q\big) \leq C_1 \delta^{\frac{w\kappa}{C_1}-\eta}\mathcal{E}_{\delta}(A),
\end{equation}
so that for each index $j$ we have

\begin{equation}\label{fjRoughlyConstOnQ}
|f_j(z)|\geq  \delta^{w}\ \textrm{on}\ \bigcup_{Q\in\mathcal{Q}}Q.
\end{equation}
\end{cor}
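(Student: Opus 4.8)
## Proof proposal for Corollary \ref{analyticFunctionsLargeOnQ}

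The plan is to apply \L{}ojasiewicz's inequality (Theorem \ref{Lojasiewicz}) to pass from the hypothesis that $f_j$ is small to the hypothesis that the point is close to the zero set $Z(f_j)$, then use Corollary \ref{coveringNbhAnalyticSet} to bound the $\delta$-covering number of the portion of $A$ that lies in such a neighborhood, and finally use the Whitney cube decomposition (Theorem \ref{whitneyCubeDecomp}) to produce the covering family $\mathcal{Q}$. First I would set $f = f_1 f_2 \cdots f_k$; this is real analytic on $U$ and not identically zero, so $Z(f) = \bigcup_j Z(f_j)$. By Theorem \ref{Lojasiewicz} applied to $f$ on the compact set $[0,1]^d$, there is a constant $C_0 = C_0(f_1,\ldots,f_k)$ so that $|f(z)| \geq C_0^{-1}\operatorname{dist}(z, Z(f))^{C_0}$ for all $z\in[0,1]^d$. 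Hence if $\operatorname{dist}(z, Z(f)) \geq C_0^{1/C_0}\delta^{w/C_0}$ then $|f(z)| \geq \delta^{w}$, and since $|f_j(z)| \leq |f(z)|$ fails in the wrong direction, I instead argue coordinatewise: if $\operatorname{dist}(z, Z(f_j)) \geq s_0 := C_0^{1/C_0}\delta^{w/C_0}$ for \emph{every} $j$ — equivalently $\operatorname{dist}(z, Z(f)) \geq s_0$ — then applying \L{}ojasiewicz to each individual $f_j$ (absorbing all constants into a single $C_0$) gives $|f_j(z)| \geq \delta^w$ for every $j$. So it suffices to work with the open set $\Omega := [0,1]^d \setminus \overline{N_{s_0}(Z(f))}$ (take $\Omega$ to be the interior of $[0,1]^d$ minus a closed neighborhood, so that $\Omega$ is open and proper in $\mathbb{R}^d$; the boundary layer of $[0,1]^d$ of width comparable to $s_0$ is a lower-order contribution that I will fold into the error term).

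Next I would apply Theorem \ref{whitneyCubeDecomp} to $\Omega$, obtaining a countable family $\widetilde{\mathcal{Q}}$ of interior-disjoint closed cubes with $\bigcup_{Q\in\widetilde{\mathcal{Q}}}Q = \Omega$ and $2Q \not\subset \Omega$ for each $Q$. The Whitney property $2Q\not\subset\Omega$ forces $\operatorname{dist}(Q, \mathbb{R}^d\setminus\Omega) \lesssim \operatorname{diam}(Q)$, and since $\mathbb{R}^d\setminus\Omega \supset N_{s_0}(Z(f))$, every cube $Q$ has $\operatorname{dist}(Q, Z(f)) \leq s_0 + O(\operatorname{diam}(Q))$; I would now \emph{discard} from $\widetilde{\mathcal{Q}}$ every cube of side-length smaller than $\delta$ (their union is contained in $N_{O(\delta)}(\partial\Omega) \subset N_{O(s_0)}(Z(f))$, again a contribution to the error), and also group the remaining cubes dyadically by side-length. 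A cube $Q\in\widetilde{\mathcal{Q}}$ of side-length $\ell \geq \delta$ satisfies $Q \subset N_{O(\ell + s_0)}(Z(f))$. By Corollary \ref{coveringNbhAnalyticSet} applied to $\phi = f$, for each dyadic scale $\ell \in [\delta, 1]$ the union of cubes at that scale lies in $N_{O(\ell)}(Z(f)) \cup N_{O(s_0)}(Z(f))$, which has $\mathcal{E}_\delta(\,\cdot\,\cap A) \lesssim \delta^{-\eta}(\ell^\kappa + s_0^\kappa)\mathcal{E}_\delta(A)$; summing over the $O(|\log\delta|)$ dyadic scales $\ell \leq s_0$ and noting that for $\ell > s_0$ one should instead count those cubes directly, I would conclude that $\mathcal{E}_\delta\big(A \cap \bigcup\{Q : \operatorname{side}(Q) \leq s_0\}\big) \lesssim \delta^{-\eta}|\log\delta|\, s_0^\kappa \mathcal{E}_\delta(A)$. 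Finally, let $\mathcal{Q}$ be the set of Whitney cubes of side-length in $(s_0, 1]$ (further subdivided into cubes of side exactly between $s_0$ and $2s_0$ if one wants a uniform lower bound on side-length). Since these cubes are interior-disjoint, contained in $[0,1]^d$, and each has side-length $> s_0 \sim \delta^{w/C_0}$, we get $\#\mathcal{Q} \lesssim s_0^{-d} \sim \delta^{-dw/C_0}$, and after relabeling $w/C_0$ as $w$ (or, equivalently, noting the statement only requires $\#\mathcal{Q} \lesssim \delta^{-dw}$, which holds a fortiori since $C_0 \geq 1$), \eqref{cubeHasLargeVolume} follows with exponent $w\kappa/C_1 - \eta$ for $C_1 = C_1(f_1,\ldots,f_k)$ large enough to absorb $C_0$, the $|\log\delta|$ factor, and the constant from Corollary \ref{coveringNbhAnalyticSet}, while \eqref{fjRoughlyConstOnQ} holds by the \L{}ojasiewicz estimate since every point of every $Q\in\mathcal{Q}$ has distance $> s_0$ from $Z(f)$.

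The main obstacle I anticipate is bookkeeping the constants and exponents so that a single $C_1 = C_1(f_1,\ldots,f_k)$ works uniformly: the \L{}ojasiewicz exponent $C_0$ depends on the $f_j$, it appears both in the relation $s_0 \sim \delta^{w/C_0}$ (affecting the cube count $\#\mathcal{Q} \lesssim \delta^{-dw/C_0}$, which is fine since this is stronger than $\delta^{-dw}$) and in the loss $s_0^\kappa \sim \delta^{w\kappa/C_0}$ in \eqref{cubeHasLargeVolume}, and one must check these are compatible with the requirement $0 < w < \kappa/2$ (used, e.g., to guarantee $s_0 < 1$ and that the exponent $w\kappa/C_1 - \eta$ is the claimed form). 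A secondary, purely technical point is handling the boundary layer $N_{O(s_0)}(\partial [0,1]^d)$ and the sub-$\delta$ Whitney cubes cleanly — these are genuinely negligible, but they must be explicitly absorbed into the $C_1\delta^{w\kappa/C_1 - \eta}\mathcal{E}_\delta(A)$ error term rather than ignored. Everything else is a direct assembly of the three cited results.
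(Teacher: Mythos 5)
Your proposal is correct and follows essentially the same route as the paper's proof: \L{}ojasiewicz's inequality (Theorem \ref{Lojasiewicz}) applied to $f=f_1\cdots f_k$ to convert smallness of the $f_j$ into proximity to $Z(f)$, a Whitney decomposition (Theorem \ref{whitneyCubeDecomp}) to produce the cube family, and Corollary \ref{coveringNbhAnalyticSet} to show the discarded portion of $A$ has small covering number. The only deviation is minor and harmless: you apply Whitney to the interior of $[0,1]^d$ minus $\overline{N_{s_0}(Z(f))}$ and keep the cubes of side length greater than $s_0\sim\delta^{w/C_0}$, whereas the paper applies it to the complement of $Z$ and keeps the cubes meeting $[0,1]^d\setminus N_{C_1\delta^{w/C_1}}(Z)$; your variant makes the containment of the cubes in $[0,1]^d$ and the bound $|f_j|\geq\delta^w$ on entire cubes immediate, at the cost of the boundary layer $N_{O(s_0)}(\partial[0,1]^d)$, which you correctly flag and which is absorbed into the error term exactly as you indicate (via the non-concentration of the $A_i$, or Lemma \ref{neighborhoodAnalyticVariety} applied to the faces).
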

We remark briefly on how the above results yield Corollary \ref{analyticFunctionsLargeOnQ}. Let $Z = \bigcup_{i=1}^k \{f_i=0\}$. By Theorem \ref{Lojasiewicz}, there is a constant $C_1$ so that  $|f_j|\geq\delta^w$ on $[0,1]^d\backslash N_{C_1 \delta^{w/C_1}}(Z)$. Without loss of generality we can suppose that $C_1\geq 1$, and hence $C_1 \delta^{w/C_1}\geq\delta^w$. Apply Theorem \ref{whitneyCubeDecomp} to the compliment of $Z$; the set of cubes intersecting $[0,1]^d\backslash N_{C_1\delta^{w/C_1}}(Z)\subset [0,1]^d\backslash N_{\delta^w}(Z)$ has cardinality $O(\delta^{-2dw})$. Finally, \eqref{cubeHasLargeVolume} follows from Corollary \ref{coveringNbhAnalyticSet} (after possibly increasing the constant $C_1$).

\subsection{Energy Dispersion implies Entropy Growth}
Corollary \ref{analyticFunctionsLargeOnQ} allows us to find a large square $Q$ where the functions $f_i$ are bounded away from zero. Proposition \ref{countingNumberQuadruplesNablePWedgeP} then says that there are few solutions to $P(a,b) = P(a',b')$ on this square. The next step is to use Cauchy-Schwarz to conclude that $P\big((A\times B) \cap Q\big)$ must be large. Here is a precise version of that statement
\begin{lem}\label{CSLem}
Let $Q\subset[0,1]$ be a square, and let $P\colon Q\to\RR$ be a smooth function with 
\[
|\partial_x P|\geq c\ \ \textrm{and}\ \ |\partial_y P|\geq c\quad\textrm{on}\ Q.
\]
Let $X\subset Q$ be a union of squares of side-length $\delta$. Then
\[
\mathcal{E}_{\delta}(P(X))\gtrsim \frac{c\big(\mathcal{E}_{\delta}(X)\big)^2}{\mathcal{E}_{\delta}\big(\{(x,y,x',y')\in X^2\colon P(x,y) = P(x',y')\}\big)}.
\]
\end{lem}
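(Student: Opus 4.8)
The plan is to run the standard Cauchy--Schwarz (``energy'') argument, using the lower bounds on the partial derivatives of $P$ only at the end, to pass from a count of approximate solutions to a count of exact ones.

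First I would discretize the image: cover $P(X)\subset\RR$ by the essentially disjoint length-$\delta$ intervals $I_j=[j\delta,(j+1)\delta)$ that meet $P(X)$, so their number is comparable to $\mathcal{E}_\delta(P(X))$, and set $n_j=\mathcal{E}_\delta\big(X\cap P^{-1}(I_j)\big)$. Since these intervals cover $P(X)$ and $X$ is a union of $\delta$-squares, every $\delta$-square of $X$ is counted by some $n_j$, so $\sum_j n_j\gtrsim\mathcal{E}_\delta(X)$; Cauchy--Schwarz then gives $\mathcal{E}_\delta(X)^2\lesssim\mathcal{E}_\delta(P(X))\cdot\sum_j n_j^2$. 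It therefore suffices to show $\sum_j n_j^2\lesssim c^{-1}\,\mathcal{E}_\delta\big(\{(x,y,x',y')\in X^2:P(x,y)=P(x',y')\}\big)$.

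The second step rewrites $\sum_j n_j^2$ as an energy: for each $j$, $n_j^2\lesssim\mathcal{E}_\delta\big((X\cap P^{-1}(I_j))\times(X\cap P^{-1}(I_j))\big)$, and since $P$ is smooth on the compact square $Q$ it is Lipschitz, so the $P$-image of any $\delta$-ball meets only $O(1)$ of the intervals $I_j$. Hence the sets $(X\cap P^{-1}(I_j))^2$ have bounded overlap at scale $\delta$, and summing in $j$ gives $\sum_j n_j^2\lesssim\mathcal{E}_\delta\big(X^2\cap\{|P(x,y)-P(x',y')|<\delta\}\big)$. (The implicit constant here depends mildly on $\|P\|_{C^1(Q)}$; in the applications in this paper that norm is $\delta^{-O(\eta)}$, and this factor is harmlessly absorbed, matching the exponents claimed when Lemma~\ref{CSLem} is invoked.) Finally comes the submersion step: because $\partial_xP$ is continuous and bounded away from $0$ by $c$ on the connected set $Q$, it has a fixed sign, so for each fixed $y$ the map $x\mapsto P(x,y)$ is strictly monotone with a $(1/c)$-Lipschitz inverse in $x$; fibering $\RR^4$ over the coordinates $(y,x',y')$, the $x$-fiber of $\{|P(x,y)-P(x',y')|<\delta\}$ lies in an interval of length $O(\delta/c)$ about the unique exact solution, so comparing with the exact-solution fiber yields $\mathcal{E}_\delta\big(X^2\cap\{|P(x,y)-P(x',y')|<\delta\}\big)\lesssim c^{-1}\mathcal{E}_\delta\big(X^2\cap\{P(x,y)=P(x',y')\}\big)$. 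Chaining the three estimates proves the claim.

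The main (and essentially only) obstacle is the last comparison: a near-solution quadruple located within $O(\delta/c)$ of $\partial X$ need not lie near an \emph{exact} solution that stays inside $X^2$. To handle this cleanly I would either permit an $O(\delta/c)$-enlargement of $X$ in the definition of the exact solution set, or (preferably, since this is how the lemma is actually used) carry a $\delta$-thickened equality $|P(x,y)-P(x',y')|\le\delta$ throughout, which is exactly the object produced by Proposition~\ref{countingNumberQuadruplesNablePWedgeP}; in either case the exponents in all downstream applications are unchanged.
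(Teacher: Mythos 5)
Your overall Cauchy--Schwarz skeleton is the same as the paper's, but the step you flag at the end as ``the main obstacle'' is in fact the heart of the lemma, and your proposed ways around it change the statement rather than prove it. The paper resolves the obstacle by using the hypothesis that $X$ is a union of $\delta$-squares, which you never exploit: one picks a $\delta$-separated set $\tilde X\subset X$ (essentially the square centers) with $N_{\delta/2}(\tilde X)\subset X$, rounds $P$ to multiples of $\tfrac12 c\delta$, and observes that a collision of rounded values forces $|P(\tilde x,\tilde y)-P(\tilde x',\tilde y')|\le\tfrac12 c\delta$; since $|\partial_y P|\ge c$, one can then move $\tilde y'$ by at most $\delta/2$ to an exact solution, and the moved point is still in $X$ precisely because $X$ contains the $\delta/2$-neighborhood of $\tilde X$. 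This converts every approximate collision into an exact solution \emph{inside} $X^2$, so the exact-solution energy appearing in the statement genuinely dominates the collision count; no enlargement of $X$ and no thickening of the equation is needed. Your ``preferred'' fix---carrying $|P(x,y)-P(x',y')|\le\delta$ throughout because ``this is exactly the object produced by Proposition~\ref{countingNumberQuadruplesNablePWedgeP}''---is not correct as stated: that proposition bounds $\mathcal{E}_\delta$ of the exact solution set $\{P(x,y)=P(x',y')\}$, not of a $\delta$-thickened version, so a lemma phrased with the thickened energy would not chain with it in Sections \ref{BlaschkeSection} and \ref{mainThmProofsSectionOne} without an additional (and again boundary-sensitive) comparison between thickened and exact energies. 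The alternative fix via an $O(\delta/c)$-enlargement of $X$ could be made to work, but only after rechecking the downstream non-concentration hypotheses for the enlarged sets, i.e.\ it proves a different lemma and pushes work into the applications.

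A secondary deviation: your bounded-overlap step uses an upper bound on $\|P\|_{C^1(Q)}$ (the image of a $\delta$-ball meets $O(1)$ intervals $I_j$), which is not among the hypotheses; the lemma as stated has an implicit constant depending only on absolute constants, and it is applied to functions whose $C^1$ norm is $\delta^{-O(\eta)}$, so your constant is $\delta$-dependent and only ``absorbable'' after re-examining each application. The paper's rounding at scale $\tfrac12 c\delta$ avoids any upper derivative bound: distinct rounded values are $\tfrac12 c\delta$-separated, so $\mathcal{E}_\delta(P(X))\gtrsim c\,\#\tilde P(\tilde X)$ uses only the separation of the value grid, and the lower bounds $|\partial_x P|,|\partial_y P|\ge c$ are the sole quantitative inputs. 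So the missing idea is concrete: round the values at scale $c\delta$ and use the $\delta$-square structure of $X$ to upgrade near-collisions to exact solutions within $X$; without it, the chain of inequalities you propose does not yield the stated inequality.
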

\begin{proof}
Since $X$ is a union of squares of side-length $\delta$, we can select a $\delta$-separated set $\tilde X\subset X$ with $N_{\delta/2}(\tilde X) \subset X$ and $\#\tilde X = \mathcal{E}_{\delta}(X)$.

Define 
\begin{equation*}
\tilde P(x,y)=\frac{1}{2}c\delta\lfloor 2(c\delta)^{-1}P(x,y)\rfloor.
\end{equation*}
Let $\mathcal{Q}$ be the set of quadruples $(\tilde x, \tilde y, \tilde x', \tilde y')\in (\tilde X)^2$  with  $\tilde P(\tilde x, \tilde y) = \tilde P(\tilde x^\prime,\tilde y^\prime).$ For such a quadruple we have
\begin{equation}\label{tildePCloseToP}
|P(\tilde x, \tilde y) - P(\tilde x^\prime,\tilde y^\prime)|\leq \frac{1}{2}c\delta.
\end{equation}
Since $|\partial_y P|\geq c $ on $Q$, there exists $y^\prime$ with $|\tilde y'-y'|<\delta/2$ so that $P(\tilde x,\tilde y) = P(\tilde x^\prime,y^\prime)$. Since $N_{\delta/2}(\tilde X) \subset X$, we have $(\tilde x',y')\in X$. Since the quadruples in $\mathcal{Q}$ are $\delta$-separated, the corresponding quadruples $\{(\tilde x,\tilde y,\tilde x', y'\}$ are $\delta/2$ separated. In particular, we have 
\begin{equation}\label{boundSizeQVsPQuadruples}
\# \mathcal{Q}
\leq 16 \mathcal{E}_{\delta}\big(\{(x,y,x',y')\in X^2\colon P(x,y) = P(x',y')\}\big).
\end{equation}
By Cauchy-Schwarz, 
\begin{equation}\label{CSBoundTildeP}
\# \tilde P(\tilde X) \geq
(\# \tilde X)^2/ \#\mathcal{Q}.
\end{equation}
Finally, since $\tilde P$ takes values in $(\frac{1}{2}c\delta)\ZZ$ and in view of \eqref{tildePCloseToP},
 \begin{equation}\label{controlPAB}
\mathcal{E}_{\delta}(P(X))   \gtrsim c\big(\#\tilde P(\tilde X)\big).
\end{equation} 
The result now follows by combining \eqref{boundSizeQVsPQuadruples}, \eqref{CSBoundTildeP}, and \eqref{controlPAB}. 
\end{proof}

\subsection{Proof of Theorem \ref{main-entropy-growth}} 
We are now ready to prove Theorem \ref{main-entropy-growth}. We can assume that none of $\partial_x P$, $\partial_y P,$ $\partial_{xy}P$, or $K_P$ vanishes identically, since if any of these quantities vanish identically, then by Lemma~\ref{whenIsPSpecialFormProp}, $P$ is a special form and we are done. 

Let $C_1$ be the constant obtained by applying Corollary \ref{analyticFunctionsLargeOnQ} to the functions $\partial_x P$, $\partial_y P,$ $\partial_{xy}P$, and $K_P$. We will apply this lemma with $w = 2C_1\eta/\kappa$ ($\eta>0$ will be chosen small enough that $w<\kappa/2$, and thus the hypothesis of Lemma \ref{analyticFunctionsLargeOnQ} is satisfied), and let $\mathcal{Q}$ be the resulting collection of cubes. With this choice of $w$, we have
\[
\mathcal{E}_{\delta}\Big( (A\times B) \backslash \bigcup_{Q\in\mathcal{Q}}Q\Big)\leq \frac{1}{2}\delta^{\eta}\mathcal{E}_{\delta}(A\times B),
\]
and thus

\begin{equation*}
\mathcal{E}_{\delta}\Big( E \cap \bigcup_{Q\in\mathcal{Q}}Q\Big)\geq \frac{1}{2}\mathcal{E}_{\delta}(E),
\end{equation*}
and 
\[
\#\mathcal{Q}\lesssim \delta^{-8C_1\eta/\kappa}.
\]
By pigeonholing, there is a cube $Q\in\mathcal{Q}$ so that
\begin{equation*}
\mathcal{E}_{\delta}((A\times B)\cap Q) \gtrsim \delta^{8w/\kappa}\mathcal{E}_{\delta}(E)\gtrsim \delta^{9C_1\eta/\kappa-2\alpha}.
\end{equation*}
On this cube, each of the functions $\partial_x P$, $\partial_y P,$ $\partial_{xy}P$, and $K_P$ size at least $\delta^w \gtrsim \delta^{C_1\eta/\kappa}$. Note as well that $|\nabla P|\lesssim 1$ on $Q$.

Let $A_1\times B_1 = (A\times B)\cap Q$ and let $E_1=E\cap Q$.  If $\eta = \eta(\alpha,\kappa,C_1)$ is selected sufficiently small, then by Proposition \ref{countingNumberQuadruplesNablePWedgeP}, there exists $\eps^\prime = \eps^\prime(\alpha,\kappa)>0$ so that 
\[
{\mathcal E}_\delta\big(\{ ( x,  x',  y,  y')\in A_1^2\times B_1^2\colon P(x,y)=P(x',y')\}\big)\lesssim \delta^{-3\alpha+\eps^\prime}. 
\]
and thus
\begin{equation}\label{quadruples}
{\mathcal E}_\delta\big(\{ ( x,  y,  x',  y')\in E_1^2 \colon P(x,y)=P(x',y')\}\big)\lesssim \delta^{-3\alpha+\eps^\prime}
\end{equation}
Applying Lemma \ref{CSLem} to $A_1\times B_1\subset Q$ with $c = \delta^w$, we conclude that
\[
\mathcal{E}_{\delta}(P(E))\geq \mathcal{E}_{\delta}(P(E_1))\gtrsim   \frac{\delta^{w} (\delta^{9C_1\eta/\kappa-2\alpha})^2}{\delta^{-3\alpha+\eps^\prime}}\geq \delta^{20C_1\eta/\kappa-\alpha-\eps'}.
\]
To complete the proof, select $\eta=\eta(\alpha,\kappa,C)$ sufficiently small so that $\eps^\prime\geq 40\eta C_1/\kappa$, and select $\eps = \eps^\prime/2$.

\section{Dimension expansion for analytic functions}\label{dimExpansionSec}
In this section we will prove Theorem \ref{dimension-expander}.
The basic idea is we select a square $Q$ so that the sets $A',B'$ defined by $A'\times B' = (A\times B)\cap Q$ have large dimension, and $\partial_xP,\ \partial_yP, \partial_{xy}P$, and $K_P$ are large on $Q$. We then discretize and apply Proposition \ref{countingNumberQuadruplesNablePWedgeP}.

\subsection{Finding a good square}
\begin{defn}
Let $A\subset\RR$, let $x\in\RR$, and let $\beta>0$. We say that $A$ has {\it local dimension} $\geq\beta$ at $x$ if $\dim(A\cap U)\geq\beta$ for every neighborhood $U$ of $x$. Otherwise we say $A$ has local dimension $<\beta$ at $x$. 
\end{defn}

\begin{lem}\label{localDimMaximal}
Let $A\subset\RR$. Then for each $\eps>0$, there is at least one point $x\in \RR$ where $A$ has local dimension $\geq\dim(A)-\eps$. 
\end{lem}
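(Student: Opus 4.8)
The statement is that for $A \subset \RR$ and $\eps > 0$, some point $x$ has $\dim(A \cap U) \geq \dim(A) - \eps$ for every neighborhood $U$ of $x$. The plan is to argue by contradiction using the countable stability of Hausdorff dimension together with the Lindel\"of property (second countability) of $\RR$. Suppose no such point exists; then for every $x \in \RR$ there is an open neighborhood $U_x$ with $\dim(A \cap U_x) < \dim(A) - \eps$. The collection $\{U_x\}_{x \in \RR}$ is an open cover of $\RR$, and since $\RR$ is second countable (hence Lindel\"of), there is a countable subcover $\{U_{x_n}\}_{n \in \mathbb{N}}$.

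Now $A \subset \bigcup_n (A \cap U_{x_n})$, so by the countable stability of Hausdorff dimension,
\begin{equation*}
\dim(A) = \dim\Big( \bigcup_n (A \cap U_{x_n}) \Big) = \sup_n \dim(A \cap U_{x_n}) \leq \dim(A) - \eps,
\end{equation*}
which is absurd (note $\dim(A) \leq 1 < \infty$, so this is a genuine contradiction rather than $\infty \leq \infty$). Hence some point $x$ has local dimension $\geq \dim(A) - \eps$.

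I do not anticipate a real obstacle here; the only thing to be slightly careful about is the edge case $\dim(A) = 0$, where the conclusion is vacuous (every point trivially has local dimension $\geq -\eps$, or one simply notes the statement is empty of content), and the case where $A$ itself might be unbounded — but the Lindel\"of argument applies to all of $\RR$ without any boundedness assumption, so this is not an issue. One could alternatively localize the cover to a bounded interval containing a positive-dimensional piece of $A$, but the global version is cleanest. If a version for $A \subset \RR^d$ were needed later, the identical argument works verbatim since $\RR^d$ is also second countable.
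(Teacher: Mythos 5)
Your argument is correct and is essentially the same as the paper's: both proceed by contradiction, extract a countable subcover of the neighborhoods $U_x$ (the paper implicitly uses the Lindel\"of property you invoke), and conclude via countable stability of Hausdorff dimension that $\dim(A)\leq\dim(A)-\eps$, a contradiction. No issues.
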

\begin{proof}
Suppose not. For each $x\in \RR$, let $U_x$ be a neighborhood of $x$ with $\dim(A\cap U_x)\leq \dim(A)-\eps$. Let $\mathcal{U}\subset \{U_x\colon x\in A\}$ be a countable sub-cover of $\RR$. Then $\dim(A) = \max_{U\in\mathcal{U}}\dim(A\cap U)\leq \dim(A)-\eps$, which is impossible.
\end{proof}

\begin{cor}\label{ManyPointsAchieveLocalDim}
Let $A\subset\RR$, let $\eps>0$, and let $B$ be the set of points $x\in\RR$ where $A$ has local dimension $\ge \dim(A)-\eps$. Then either $A\setminus B=\emptyset$ or $\dim A\setminus B < \dim A$. In particular, if $\dim A>0$ then 
there are infinitely many points $x\in\RR$ where $A$ has local dimension $\geq\dim(A)-\eps$. 
\end{cor}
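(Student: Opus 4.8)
The plan is to prove the stated dichotomy first, by a direct covering argument essentially identical in spirit to the proof of Lemma \ref{localDimMaximal}, and then to deduce the ``in particular'' clause from the dichotomy together with the fact that finite (indeed countable) sets have Hausdorff dimension zero.

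First I would dispose of the degenerate case $\dim A\le\eps$: here $\dim A-\eps\le 0$, so for every $x\in\RR$ and every neighborhood $U$ of $x$ we trivially have $\dim(A\cap U)\ge 0\ge\dim A-\eps$, whence $B=\RR$ and $A\setminus B=\emptyset$, so the dichotomy holds. Assuming now $\dim A>\eps$ and $A\setminus B\ne\emptyset$, for each $x\in A\setminus B$ the definition tells us that $A$ fails to have local dimension $\ge\dim A-\eps$ at $x$, so there is an open neighborhood $U_x$ with $\dim(A\cap U_x)<\dim A-\eps$. The family $\{U_x\colon x\in A\setminus B\}$ is an open cover of $A\setminus B$; since $\RR$ is second countable I would extract a countable subcover $\{U_{x_i}\}_{i\ge1}$, so that $A\setminus B\subset\bigcup_i(A\cap U_{x_i})$, and then countable stability of Hausdorff dimension gives $\dim(A\setminus B)\le\sup_i\dim(A\cap U_{x_i})\le\dim A-\eps<\dim A$. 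This proves the dichotomy (and slightly strengthens Lemma \ref{localDimMaximal}).

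For the ``in particular'' clause, assume $\dim A>0$; I would show $B$ is infinite by cases. If $\dim A\le\eps$ then $B=\RR$ and we are done. Otherwise apply the dichotomy. If $A\setminus B=\emptyset$, then $A\subset B$, and since $\dim A>0$ forces $A$ to be infinite (finite sets have dimension $0$), $B$ is infinite. If instead $\dim(A\setminus B)<\dim A$, then by finite stability of Hausdorff dimension $\dim A=\max\big(\dim(A\cap B),\dim(A\setminus B)\big)$ forces $\dim(A\cap B)=\dim A>0$, so $A\cap B$, hence $B$, is infinite.

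I do not anticipate a serious obstacle here: the whole argument is soft, relying only on the Lindelöf property of $\RR$, the finite and countable stability of Hausdorff dimension under unions, and the vanishing of Hausdorff dimension on finite sets. The only places that call for a little care are the bookkeeping in the degenerate regime $\dim A\le\eps$ and the final step converting positivity of dimension into infinitude of $B$.
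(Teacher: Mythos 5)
Your proof is correct, and it is the same covering argument the paper intends: the corollary is meant to follow by applying the countable-subcover/countable-stability reasoning from the proof of Lemma \ref{localDimMaximal} to the set $A\setminus B$, exactly as you do, with the "in particular" clause then following from finite stability and the fact that sets of positive dimension are infinite. Your explicit handling of the degenerate case $\dim A\le\eps$ is a reasonable extra bit of bookkeeping but does not change the argument.
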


\begin{lem}\label{findGoodPoint}
Let $f$ be a function that is analytic and not identically zero on an open neighborhood of $[0,1]^d$, let $\eps>0$, and let $A_1,\ldots,A_d\subset [0,1]$ have positive Hausdorff dimension. Then there is a point $p=(p_1,\ldots,p_d)\in (0,1)^d\backslash Z(f)$ so that for each index $i$, $A_i$ has local dimension $\geq\dim(A_i)-\eps$ at $p_i$.
\end{lem}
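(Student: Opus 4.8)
The plan is to build the point $p$ one coordinate at a time, using at each step the fact that the ``good'' set of coordinates is essentially as large as $A_i$ (Corollary~\ref{ManyPointsAchieveLocalDim}), while the ``bad'' set $Z(f)$ is a thin analytic variety that cannot eat up such a set (Corollary~\ref{coveringNbhAnalyticSet}, or more simply its dimension-theoretic consequence). For each index $i$, let $B_i\subset[0,1]$ denote the set of points where $A_i$ has local dimension $\geq\dim(A_i)-\eps$. By Corollary~\ref{ManyPointsAchieveLocalDim}, since $\dim A_i>0$ the set $B_i\cap A_i$ is nonempty, and in fact $\dim(A_i\setminus B_i)<\dim A_i$, so $\dim(A_i\cap B_i)=\dim A_i>0$; in particular $A_i\cap B_i$ is uncountable. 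Also note $B_i$ is dense in $A_i\cap B_i$ (indeed it contains it). Write $B=B_1\times\cdots\times B_d$, which has positive dimension in each factor.

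The heart of the argument is that $B\setminus Z(f)$ is nonempty — in fact, $B$ meets the open set $(0,1)^d\setminus Z(f)$. Since $f$ is analytic and not identically zero on a neighborhood of $[0,1]^d$, by Theorem~\ref{stratificationRealAnalSet} the set $Z(f)\cap[0,1]^d$ is contained in a finite union of proper smooth submanifolds $M$ of $\RR^d$; each such $M$ has empty interior, and more quantitatively, by Lemma~\ref{neighborhoodAnalyticVariety} applied with $\kappa>0$ chosen below $\min_i\dim(A_i\cap B_i)$ and with a small exponent $\eta$, any product of sets satisfying a mild non-concentration estimate is not concentrated in a neighborhood of $M$. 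Rather than invoke the full discretized machinery, the cleanest route is: pick in each coordinate a point $p_i^{(0)}\in A_i\cap B_i\cap(0,1)$ with $A_i\cap B_i$ having positive dimension in every neighborhood of $p_i^{(0)}$ (possible since local dimension is itself positive near a.e.\ point of a positive-dimensional set, using Lemma~\ref{localDimMaximal} applied to $A_i\cap B_i$). Then we perturb: we must find $p=(p_1,\dots,p_d)$ with $p_i$ in a prescribed small neighborhood $U_i$ of $p_i^{(0)}$ inside $B_i$, and $f(p)\neq 0$. This is exactly the statement that $f$ does not vanish identically on $B_1'\times\cdots\times B_d'$ where $B_i'=B_i\cap U_i$ is a set of positive dimension (hence infinite, hence not contained in any real-analytic proper subvariety in that coordinate). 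Concretely, fix $p_1,\dots,p_{d-1}$ first so that the analytic function $t\mapsto f(p_1,\dots,p_{d-1},t)$ is not identically zero — at the initial stage $f(p_1^{(0)},\dots,p_{d-1}^{(0)},\,\cdot\,)$ could in principle be identically zero, so one argues by (reverse) induction on the number of coordinates already fixed: if for \emph{every} choice of the first $d-1$ coordinates in $B_1'\times\cdots\times B_{d-1}'$ the last slice vanishes identically, then $f$ vanishes on $B_1'\times\cdots\times B_{d-1}'\times[0,1]$; since $B_1'\times\cdots\times B_{d-1}'$ has positive dimension in each factor it is Zariski-dense in the first $d-1$ coordinates (an infinite set is not contained in the zero set of a nonzero univariate analytic function), forcing $f\equiv 0$, a contradiction.

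Running this descent, we obtain: there is a choice of $(p_1,\dots,p_{d-1})\in B_1'\times\cdots\times B_{d-1}'$ for which $g(t):=f(p_1,\dots,p_{d-1},t)$ is a nonzero analytic function on a neighborhood of $[0,1]$. Its zero set is discrete, hence countable, so $B_d'\setminus Z(g)$ is uncountable and in particular nonempty; pick $p_d$ in it. Then $p=(p_1,\dots,p_d)\in(0,1)^d\setminus Z(f)$ and $p_i\in B_i$ for every $i$, i.e.\ $A_i$ has local dimension $\geq\dim(A_i)-\eps$ at $p_i$, which is the claim.

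The main obstacle — and the only genuinely non-bookkeeping step — is handling the possibility that $f$ restricted to the candidate product of positive-dimensional sets vanishes identically even though $f$ itself is not identically zero. This is resolved by the induction in the previous paragraph, whose engine is the elementary fact that a set of positive Hausdorff dimension in $\RR$ is infinite and therefore cannot lie in the zero set of a nonzero real-analytic function of one variable; propagating this through all $d$ coordinates via Fubini-style slicing is what makes the argument go. All the other ingredients (existence of a point of maximal local dimension, positivity of local dimension near most of a positive-dimensional set, thinness of $Z(f)$) are quoted directly from Lemma~\ref{localDimMaximal}, Corollary~\ref{ManyPointsAchieveLocalDim}, and Theorem~\ref{stratificationRealAnalSet}.
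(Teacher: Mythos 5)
Your proposal is correct and rests on the same mechanism as the paper's proof: combine Corollary \ref{ManyPointsAchieveLocalDim} (the good set in each coordinate is infinite, indeed of full dimension) with the one-variable identity theorem for analytic functions, applied coordinate-by-coordinate to see that only degenerate slices of $Z(f)$ can swallow such a set. The paper packages this as a short induction on $d$ (choosing $p_d$ first to avoid the finitely many hyperplane slices on which $f$ vanishes identically), while your contradiction/descent through the first $d-1$ coordinates, and the auxiliary neighborhoods $U_i$, are only cosmetic variations of the same argument.
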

\begin{proof}
We will prove the result by induction on $d$. When $d=1$ the result follows from Corollary~\ref{ManyPointsAchieveLocalDim} and the fact that since $f$ is analytic and not identically zero on a neighborhood of $[0,1]$, $Z(f)\cap [0,1]$ is finite. Now suppose the result is true for $d-1$ and let $f,$ $\eps$, and $A_1,\ldots,A_d$ be as in the statement of the lemma. Since $f$ is analytic and not identically zero on a neighborhood of $[0,1]^d$, there are finitely many points $p_d\in [0,1]$ so that the (truncated) hyperplane $\{x_d=p_d\}\cap [0,1]^d$ is contained in $Z(f) \cap [0,1]^d$. Use Corollary \ref{ManyPointsAchieveLocalDim} to select a point $p_d\in (0,1)$ so that $A_d$ has local dimension $\geq\dim(A_d)-\eps$ at $p_d$, and the truncated hyperplane $\{x_d=p_d\}\cap [0,1]^d$ is not contained in $Z(f)\cap [0,1]^d$. Then the function $(x_1,\ldots,x_{d-1})\mapsto f(x_1,\ldots,x_{d-1},p_d)$ is analytic and not identically zero on a neighborhood of $[0,1]^{d-1}$. We now find the point $p=(p_1,\ldots,p_{d-1},p_d)$ by applying the induction hypothesis to this function and the sets $A_1,\ldots,A_{d-1}$.
\end{proof}

We are now ready to prove Theorem \ref{dimension-expander}. For the reader's convenience we will restate it here.
\begin{dimension-expanderThm}
For every $0< \alpha< 1$, there exists $c=c(\alpha)>0$ so that the following holds. Let $U\subset\RR^2$  be a connected open set that contains $[0,1]^2$ and let $P\colon U \to\RR$ be analytic (resp.~polynomial). Then either $P$ is an analytic (resp.~polynomial) special form, or for every pair of Borel sets $A,B\subset [0,1]$ of dimension at least $\alpha$, we have 
\[
\dim P(A \times B) \ge \alpha+c.
\] 
\end{dimension-expanderThm}
\begin{proof}
Let $\eps=\eps(\alpha)$ and $\delta_0=\delta_0(\alpha,P)$ be the quantities obtained by applying Proposition \ref{countingNumberQuadruplesNablePWedgeP} to $P$ with parameters $\alpha$ and $\kappa = \alpha/2$. We will prove that 
\begin{equation}\label{deimPAB}
\dim(P(A,B))>\alpha+\eps/2.
\end{equation} 

Without loss of generality we can suppose that $P(A,B)\subset [0,1]$. Suppose that $P$ is not a special form and \eqref{deimPAB} is false. Since $P$ is not a special form, we have that none of $\partial_x P$, $\partial_y P$ or $\partial_{xy} P$ vanish identically. By Lemma~\ref{whenIsPSpecialFormProp}, $K_P$ does not vanish identically. Define
\begin{equation*}
f= (\partial_x P)(\partial_y P)(\partial_{xy}P)^3(K_P).
\end{equation*}
Note that initially $f$ is not defined on $Z(\partial_{xy}P)$, but if we define $f$ to be $0$ on this set, then $f$ is analytic on a neighborhood of $[0,1]^2$, so we can use Lemma \ref{findGoodPoint} to select a point $p=(p_1,p_2) \in (0,1)^2\backslash Z(f)$ so that $A$ and $B$ have local dimension $\geq \alpha-\eps/20$ at $p_1$ and $p_2$, respectively. Let $I_0,J_0\subset [0,1]$ be closed intervals containing $p_1$ and $p_2$ respectively so that $(I_0\times J_0)\cap Z(f)=\emptyset$ and $p$ is contained in the interior of $I_0\times J_0$. Then there is a number $c>0$ so that 
\begin{equation}\label{lowerBoundKeyQuantities}
|\partial_x P(x,y)|\geq c,\ |\partial_y P(x,y)|\geq c,\ |\partial_{xy}P(x,y)|\geq c,\ |K_p(x,y)|\geq c\quad\forall\ (x,y)\in I_0\times J_0. 
\end{equation}
Define $A^\prime = A\cap I_0$ and $B^\prime = B\cap J_0$. Let $C = P(A',B')$.

Since $\dim(A')\geq \alpha-\eps/20$ and $\dim(B')\geq \alpha-\eps/20$, by Frostman's lemma (see e.g.~\cite[Theorem 8.8]{Mat}), there are Borel  measures $\mu$ and $\nu$, supported on $A'$ and $B'$ respectively, with $\mu(A')>0,$ and $\nu(B')>0$, so that
\begin{equation}\label{FrostmanConditionIneq}
\begin{split}
&\mu(A\cap I)\leq |I|^{\alpha-\eps/10}\ \textrm{for every interval}\ I,\\
&\nu(B\cap J)\leq |J|^{\alpha-\eps/10}\ \textrm{for every interval}\ J.
\end{split}
\end{equation}
In particular, $\mu$ and $\nu$ have no atoms. Let $\lambda$ be the pushforward of $\mu\times\nu$ by $P$, i.e. for each Borel set $U \subset \RR$, we have 
\begin{equation*}
\lambda(U) = (\mu\times\nu) (\{(x,y)\in A\times B: P(x,y) \in U\}).
\end{equation*}
Observe that $\lambda$ is a measure supported on $C$, and $\lambda(C)>0$. Let $\delta_1\leq\delta_0$ be a number of the form $2^{-k_1}$ for some positive integer $k_1$. In what follows, all implicit constants will be independent of $\delta_1$. If $\delta_1>0$ is selected sufficiently small (depending only on $\alpha$), 
then we will eventually show that \eqref{deimPAB} must hold.

Let $\{[x_i, x_i+r_i]\}$ be a covering of $C$ by intervals of length at most $\delta_1$, with
\begin{equation}\label{sumOfRi}
\sum_i r_i^{\alpha+(3/5)\eps}<1.
\end{equation}
Such a covering must exist, since by assumption we have $\dim(C)\leq \alpha+\eps/2$. Without loss of generality we can suppose that each interval is of the form $[n2^{-k},(n+1)2^{-k}]$ for some $k\geq k_1$ and some integer $1\leq n< 2^{-k_1}$. 

For each $k\geq k_1$, let $m_k = \sum \lambda([x_i,x_i+r_i])$, where the sum is taken over all intervals of length $r_i=2^{-k}$. Since $\sum_{k\geq k_1} m_k \geq \lambda(C)>0$ and $\sum_{k\geq k_1}k^{-2}\le \pi^2/6$, there must exist an index $k_2\geq k_1$ with $m_{k_2} \gtrsim \lambda(C)k_2^{-2}$. Define $\delta = 2^{-k_2}$, so $0<\delta\leq\delta_1$.

At this point it will be helpful to introduce some additional notation. We say $X\lessapprox Y$ if there is an absolute constant $K_1>0$ and a constant $K_2 >0$ (which may depend on $\lambda(C)$)  so that $A\leq K_2 |\log\delta|^{K_1} B$. If $A\lessapprox B$ and $B\lessapprox A$, we say $A\approx B$. In the arguments that follow, we can always take $K_1\leq 100$. With this notation we have $1\lessapprox m_{k_2}$. 

Let $C^\prime=\bigcup [x_i,x_i+r_i]$, where the union is taken over all intervals of length $r_i=\delta$ in the covering. Then $\lambda(C^\prime)=m_{k_2}\gtrapprox 1$, and $C^\prime$ is a union of interior-disjoint intervals of the form $[n\delta,(n+1)\delta]$, where $1\leq n<\delta^{-1}$ is an integer. After dyadic pigeonholing, we can select a set $C^{\prime\prime}\subset C^{\prime}$ that is again a union of $\delta$-intervals with
\begin{equation}\label{massOfCpp}
\lambda(C^{\prime\prime})\gtrapprox 1,
\end{equation} 
and each $\delta$-interval $I\subset C^{\prime\prime}$ has measure $\lambda(I)\sim \lambda(C^{\prime\prime})(\delta|C^{\prime\prime}|^{-1})$. 
Note that 
\begin{equation}\label{sizeOfCalC}
\delta^{1-\alpha+\eps/10}\lessapprox |C^{\prime\prime}|\le \delta^{1-\alpha-(3/5)\eps}.
\end{equation}
The lower bound on $|C''|$ follows from \eqref{FrostmanConditionIneq} and \eqref{massOfCpp}, while the upper bound follows from \eqref{sumOfRi}. 

Cover $I_0\times J_0$ by squares of the form $[n\delta,(n+1)\delta]\times[m\delta,(m+1)\delta]$, and let $\mathcal{Q}_0$ be the set of squares for which $P(Q)\cap C^{\prime\prime}\neq\emptyset$. We have
\begin{equation*}
\sum_{Q\in\mathcal{Q}_0}(\mu\times\nu)(Q)\geq \lambda(C^{\prime\prime})\gtrapprox 1. 
\end{equation*}
By dyadic pigeonholing, there is a set $\mathcal{Q}\subset\mathcal{Q}_0$ and numbers $m_x,m_y$ so that
\begin{equation}\label{QCapturesMostMass}
\sum_{Q\in\mathcal{Q}}(\mu\times\nu)(Q)\gtrapprox 1,
\end{equation}
$m_x\leq (\mu(\pi_x(Q))\leq 2m_x$, and $m_y\leq (\mu(\pi_x(Q))\leq 2m_y$ for each $Q\in\mathcal{Q}$. In particular,
\begin{equation*}
\#\mathcal{Q} \approx (m_xm_y)^{-1}.
\end{equation*}

Our next goal is to show that $\mathcal{Q}$ resembles (a dense subset of) a Cartesian product. Let $A_1=\bigcup_{Q\in\mathcal{Q}}\pi_x(Q)$ and let $B_1 = \bigcup_{Q\in\mathcal{Q}}\pi_y(Q).$ We have that $A_1$ and $B_1$ are interior-disjoint unions of $\delta$-intervals contained in $I_0$ and $J_0$, respectively; $m_x\leq \mu(I)<2m_x$ for each $\delta$ interval $I\subset A_1$; $m_y\leq \nu(J)<2m_y$ for each $\delta$ interval $J\subset B_1$; and $Q\subset A_1\times B_1$ for each square $Q\in\mathcal{Q}$. 

Clearly $A_1\times B_1$ is a union of $\delta$-squares, and there are at least $\#\mathcal{Q}\approx (m_xm_y)^{-1}$ squares in this union. On the other hand, we have $m_xm_y \leq (\mu\times\nu)(Q)< 4 m_x m_y$ for each $\delta$-square $Q\subset A_1\times B_1$, so $A_1\times B_1$ is a union of $\lesssim (m_xm_y)^{-1}$ $\delta$-squares. 

Our final task is to show that $A_1$ and $B_1$ each have size  roughly $\delta^{1-\alpha}$. First, since $\mathcal{Q}$ contains $\gtrapprox (m_xm_y)^{-1}$ squares and $A_1\times B_1$ contains $\lesssim (m_xm_y)^{-1}$ squares, there exists $y_0\in J_0$ so that the line $y=y_0$ intersects $\gtrapprox  \delta^{-1}|A_1|$ squares from $\mathcal{Q}$. By \eqref{sizeOfCalC} and \eqref{lowerBoundKeyQuantities}, we have
\begin{equation*}
\delta^{1-\alpha-\eps/10}\gtrapprox |C''|\geq \bigcup_{\substack{Q\in\mathcal{Q}\\
Q\cap \{y =y_0\}\neq\emptyset}} P(Q) \gtrsim \delta\#(\{Q\in\mathcal{Q}\colon Q\cap \{y=y_0\}\neq\emptyset\})\gtrapprox |A_1|,
\end{equation*}
and an identical argument shows that
\begin{equation*}
|B_1|\lessapprox  \delta^{1-\alpha-\eps/10}.
\end{equation*}

But \eqref{FrostmanConditionIneq} and \eqref{QCapturesMostMass} implies that $|A_1|,|B_1|\gtrapprox \delta^{1-\alpha+\eps/10}.$ 
We conclude that
\begin{equation}\label{sizeOfA0B0}
\delta^{1-\alpha+\eps/10}\lessapprox |A_1|,|B_1|\lessapprox  \delta^{1-\alpha-\eps/10}.
\end{equation}

Next, if $Q,Q^\prime\in \mathcal{Q}$ are two squares with the property that $P(Q)$ and $P(Q^\prime)$ intersect a common $\delta$-interval from $C''$, and if $c>0$ is the constant from \eqref{lowerBoundKeyQuantities}, then there are points $(x,y)\in N_{c^{-1}\delta}(Q)$ and $(x',y')\in N_{c^{-1}\delta}(Q')$ so that $P(x,y) = P(x',y')$. In particular, if we define $A_2 = N_{c^{-1}\delta}A_1$ and $B_2 = N_{c^{-1}\delta}B_1$, then
\begin{equation*}
\begin{split}
\mathcal{E}_{\delta}\big(& \{x,x',y,y'\in A_2^2\times B_2^2\colon P(x,y) = P(x',y')\big)\\
&\gtrsim \#\{ (Q,Q') \in \mathcal{Q}^2\colon P(Q)\ \textrm{and}\ P(Q')\ \textrm{intersect a common $\delta$-interval from}\ C^{\prime\prime}\}\\
&\gtrsim (\#\mathcal{Q})^2(\delta|C''|^{-1})\\
&\gtrapprox \delta^{-3\alpha + (4/5)\eps},
\end{split}
\end{equation*}
i.e. there is an absolute constant $K_1>0$ and a constant $K_2>0$ (which may depend on $\lambda(C)$) so that
\begin{equation}\label{lowerBoundNumberQuadruples}
\mathcal{E}_{\delta}\big( \{x,x',y,y'\in A_2^2\times B_2^2\colon P(x,y) = P(x',y')\big)\geq K_2^{-1}|\log\delta|^{-K_1}\delta^{-3\alpha + (4/5)\eps}.
\end{equation}

Finally, by \eqref{FrostmanConditionIneq}, the sets $A_2$ and $B_2$ obey the hypotheses of Proposition \ref{countingNumberQuadruplesNablePWedgeP} with parameters $\alpha$ and $\kappa = \alpha/2$. Since $\delta\leq\delta_0$, there exists a constant $K_3$ (which may depend on $\alpha$) so that 
\begin{equation}\label{upperBoundNumberQuadruples}
\mathcal{E}_{\delta}\big( \{x,x',y,y'\in A_2^2\times B_2^2\colon P(x,y) = P(x',y')\big)\leq K_3 \delta^{-3\alpha + \eps}.
\end{equation}
Thus if $\delta_1$ is selected sufficiently small (depending on $K_1,K_2,K_3$), then \eqref{upperBoundNumberQuadruples} contradicts \eqref{lowerBoundNumberQuadruples}. We conclude that \eqref{deimPAB} must hold.
\end{proof}

\end{document}